\documentclass[a4paper]{article}

\usepackage{microtype}
\usepackage{amscd}
\usepackage{amsmath,amssymb,amsfonts, accents}
\usepackage{xstring}
\usepackage{xcolor}
\usepackage[mathscr]{euscript}
\usepackage{xypic}
\xyoption{rotate}
\usepackage[cmtip,all,2cell]{xy}
\entrymodifiers={+!!<0pt,\fontdimen22\textfont2>}

\usepackage{tikz-cd}
 \usetikzlibrary{decorations.pathmorphing}

\usepackage{url}
\usepackage{latexsym}
\usepackage{amsthm}
\usepackage[latin1]{inputenc}
\usepackage{multicol}
\usepackage{enumitem}
\usepackage[colorlinks=true, linkcolor={blue!50!black}, pdfhighlight=/O, ocgcolorlinks=true]{hyperref}
\usepackage{graphicx}
\usepackage{color}
\usepackage{mathtools}
\usepackage[colorinlistoftodos]{todonotes}

\usepackage[affil-it]{authblk}

%%%%%%%%  PAGECOUNT-REDUCING MAGIC  %%%%%%%%

\usepackage[a4paper, total={5.5in,8.5in}]{geometry}

%%%%%%%%  THEOREM ENVIRONMENTS  %%%%%%%%
\numberwithin{equation}{section}
\theoremstyle{plain}
\newtheorem{theorem}[equation]{Theorem}
\newtheorem{lemma}[equation]{Lemma}
\newtheorem{proposition}[equation]{Proposition}
\newtheorem{corollary}[equation]{Corollary}

\theoremstyle{definition}
\newtheorem{assumption}[equation]{Assumption}
\newtheorem{definition}[equation]{Definition}
\newtheorem{construction}[equation]{Construction}
\newtheorem{example}[equation]{Example}
\newtheorem{remark}[equation]{Remark}
\newtheorem{variant}[equation]{Variant}
\newtheorem{observation}[equation]{Observation}

%%%%%%%% ENUMERATE & ITEMIZE &&&&&&&

\setlist[enumerate]{label=(\arabic*), leftmargin=*}
\setenumerate{label=(\arabic*), leftmargin=*}
\setlist[itemize]{label=$\vcenter{\hbox{\footnotesize$\bullet$}}$, leftmargin=*}
\setitemize{label=$\vcenter{\hbox{\footnotesize$\bullet$}}$, leftmargin=*}

%%%%%%%%  MATH  %%%%%%%%
\newcommand{\mb}[1]{\mathbf{#1}}
\newcommand{\mm}[1]{\mathrm{#1}}
\newcommand{\mf}[1]{\mathfrak{#1}}
\newcommand{\mc}[1]{\mathcal{#1}}
\newcommand{\ul}[1]{\underline{#1}}
\newcommand{\ol}[1]{\overline{#1}}

\newcommand{\cat}[1]{
\StrLen{#1}[\mystrlen]
\ifnum\mystrlen=1 \mathscr{#1}
\else \mathrm{#1}
\fi}
\newcommand{\scat}[1]{\mb{#1}}

\newcommand*\cocolon{% colon for maps appearing on the right
        \nobreak
        \mskip6mu plus1mu
        \mathpunct{}%
        \nonscript
        \mkern-\thinmuskip
        {:}%
        \mskip2mu
        \relax
}

\newcommand{\sS}[0]{\cat{S}}
\newcommand{\colim}{\operatornamewithlimits{\mathrm{colim}}}
\newcommand{\hocolim}{\operatornamewithlimits{\mathrm{hocolim}}}
\newcommand{\Hom}[0]{\mm{Hom}}
\newcommand{\Map}[0]{\mm{Map}}
\renewcommand{\rto}[1]{\stackrel{#1}{\rt}}
\renewcommand{\lto}[2][\;]{\xleftarrow[#1]{#2}}
\newcommand{\rt}[0]{\longrightarrow}

%% Twisting morphisms
\newcommand{\drt}[0]{\mathrel{
  \rt\mkern-19mu{\color{white}\boldsymbol{\cdot}}\mkern-15mu{\color{white}\boldsymbol{\cdot}}\mkern17mu
}}

\def\dar{\ar@{*{\hspace{-1pt}}*{\hspace{1pt}\text{-}}>}}
\def\ddar{\ar@{*[left]{\hspace{-1pt}}*[left]+<-4.5pt, 0pt>{\hspace{1pt}\text{-}}>}}

\newcommand{\Fun}[0]{\cat{Fun}}

\newcommand{\op}[0]{\mm{op}}
\newcommand{\dg}[0]{\mm{dg}}

\renewcommand{\Bar}{\mathrm{B}}

\DeclareMathOperator{\End}{End}
\DeclareMathOperator{\Conv}{Conv}
\DeclareMathOperator{\Def}{Def}
\DeclareMathOperator{\Tot}{Tot}
\DeclareMathOperator{\Tw}{Tw}

\newcommand{\base}[0]{\Bbbk} %Ricardo: I cannot compile this :(

\newcommand{\Alg}[0]{\cat{Alg}}
\newcommand{\Mod}[0]{\cat{Mod}}
\newcommand{\MC}[0]{\mm{MC}}
\newcommand{\Koszul}[0]{\cat{Koszul}}
\newcommand{\CC}[0]{\mathscr{C}}
\newcommand{\DD}[0]{\mathscr{D}}
\newcommand{\PP}[0]{\mathscr{P}}
\newcommand{\QQ}[0]{\mathscr{Q}}
\newcommand{\ee}[0]{\mathsf{e}}
\newcommand{\triv}[2]{\base_{#1}[{#2}]}
\newcommand{\trivc}[0]{\base_c}

\newcommand{\trivop}[2]{\base^{\op}_{#1}[{#2}]}
\newcommand{\FMP}[0]{\cat{FMP}}
\newcommand{\Der}[0]{\mm{Der}}
\newcommand{\Free}[0]{\mm{Free}}
\newcommand{\antishriek}{{\scriptstyle \text{\rm !`}}}

\newcommand{\wt}[0]{\mm{wt}}

\newcommand{\nonsym}{\base^\mm{ns}}
\newcommand{\Art}[0]{\cat{Art}}
\newcommand{\ArtOp}[0]{\cat{Art}_{\mm{Op}}}

%%%%%%%%  Specific operads
\newcommand{\Com}[0]{\mathsf{Com}}
\newcommand{\Perm}[0]{\mathsf{Perm}}
\newcommand{\As}[0]{\mathsf{As}}
\newcommand{\Lie}[0]{\mathsf{Lie}}
\newcommand{\preLie}[0]{\mathsf{preLie}}

\newcommand{\Pois}[0]{\mathsf{Pois}}
\newcommand{\BD}[0]{\mathsf{BD}}

% COMMENTS/CHANGES

\title{Moduli problems for operadic algebras}

\author[]{Damien Calaque\thanks{\href{mailto:damien.calaque@umontpellier.fr}{damien.calaque@umontpellier.fr}}}
\author[]{Ricardo Campos\thanks{\href{mailto:ricardo.campos@umontpellier.fr}{ricardo.campos@umontpellier.fr}}}
\author[]{Joost Nuiten \thanks{\href{mailto:joost.nuiten@math.univ-toulouse.fr}{joost.nuiten@math.univ-toulouse.fr}}}
\affil[]{IMAG, Univ. Montpellier, CNRS, Montpellier, France.}

\begin{document}

\maketitle
\begin{abstract}
A theorem of Pridham and Lurie provides an equivalence between formal moduli problems and Lie algebras in characteristic zero. We prove a generalization of this correspondence, relating formal moduli problems parametrized by algebras over a Koszul operad to algebras over its Koszul dual operad. In particular, when the Lie algebra associated to a deformation problem is induced from a pre-Lie structure it corresponds to a permutative formal moduli problem. As another example we obtain a correspondence between operadic formal moduli problems and augmented operads.
\end{abstract}

\tableofcontents

\section{Introduction}
A classical heuristic in deformation theory asserts that the infinitesimal deformations of an algebro-geometric object over a field $k$ of characteristic zero are controlled by a differential graded Lie algebra. A first instance of this can already be found in the work of Kodaira--Spencer on deformations of complex manifolds \cite{KodSpe}; its recognition as a key principle in deformation theory traces back to ideas of Deligne and Drinfeld. These ideas have been further developed in the work of various authors \cite{GM, hinichformal, Kontsevich2003, Manetti}, leading to a precise mathematical formulation of the above heuristic as an equivalence of categories between deformation problems and dg-Lie algebras \cite{Pridham, DAGX}.

More precisely, following work of Schlessinger \cite{Schlessinger}, one can describe the infinitesimal deformations of an algebro-geometric object $X$ over $k$ by a functor
$$\begin{tikzcd}
\mm{def}_X\colon \cat{Ring}^{\mm{art}}_k\arrow[r] & \cat{Set}
\end{tikzcd}$$
from the category of (commutative) Artin local $k$-algebras with residue field $k$. This functor sends each Artin local $k$-algebra $A$ to the set of deformations of $X$ over $A$.
The aforementioned works have led to two modifications of this idea. 

First, the deformations of $X$ typically have automorphisms and homotopies between them, leading to the study of deformation functors with values in \emph{spaces} or simplicial sets. Second, it has been observed that the deformation theory of an object $X$ usually comes with an additional obstruction theory, which is not encoded by the functor $\mm{def}_X$. A key idea, tracing back to Drinfeld, is to incorporate such an obstruction theory by extending $\mm{def}_X$ to the category of dg-Artin local $k$-algebras. One is therefore led to contemplate deformation functors
$$\begin{tikzcd}
\mm{Def}_X\colon \cat{CAlg}_k^\mm{art}\arrow[r] & \sS
\end{tikzcd}$$
from the $\infty$-category of (connective) dg-Artin local $k$-algebras to the $\infty$-category of spaces. Such deformation problems satisfy a variant of the Schlessinger conditions: their value on $k$ is contractible and they preserve fiber products along maps inducing a surjection on $H^0$ (see Section \ref{sec:fmps} for more details). Following Lurie \cite{DAGX}, we will refer to such functors as \emph{formal moduli problems}. The work of Lurie \cite{DAGX} and Pridham \cite{Pridham} now provides an equivalence of $\infty$-categories
$$\begin{tikzcd}
\cat{FMP}_k\arrow[r, "\sim"] & \cat{Lie}_k
\end{tikzcd}$$
between formal moduli problems and differential graded Lie algebras over $k$.

The equivalence between Lie algebras and formal moduli problems indexed by commutative algebras can be viewed as a manifestation of the \emph{Koszul duality} between the commutative operad and the Lie operad. In fact, there is a similar equivalence between associative algebras and formal moduli problems indexed by associative algebras \cite{DAGX}, which can be thought of as an incarnation of the Koszul self-duality of the associative operad. These two equivalences are related in a natural way: if a Lie algebra arises from an associative algebra by taking the commutator bracket, then the corresponding commutative formal moduli problem is the restriction of an associative formal moduli problem.

\subsection*{Statement of results}
The purpose of this paper is to generalize the above results to more general pairs of Koszul dual operads over a field of characteristic zero. More precisely, for any augmented operad $\PP$ one can define an $\infty$-category $\Art_{\PP}$ of \emph{Artin} $\PP$-algebras. A $\PP$-algebraic formal moduli problem is then given by a functor
$$\begin{tikzcd}
F\colon \Art_{\PP}\arrow[r] & \sS
\end{tikzcd}$$
satisfying a natural analogue of the Schlessinger conditions (see Section \ref{sec:fmps} for more details). We denote the $\infty$-category of such functors by $\cat{FMP}_{\PP}$. When $\PP$ is a Koszul binary quadratic operad, we prove that such $\PP$-algebraic formal moduli problems can be classified by algebras over its Koszul dual operad:
\begin{theorem}\label{thm:mainthmkoszul}
Let $k$ be a field of characteristic zero and consider:
\begin{itemize}
\item a Koszul binary quadratic operad $\mathscr{P}$ in nonpositive cohomological degrees.
\item its Koszul dual operad $\mathscr{P}^!$.
\end{itemize}
Then there is an equivalence of $\infty$-categories
$$\begin{tikzcd}
\cat{FMP}_{\mathscr{P}}\arrow[r] & \cat{Alg}_{\mathscr{P}^!}; \hspace{4pt} F\arrow[r, mapsto] & T(F)[-1].
\end{tikzcd}$$
Here $T(F)$ denotes the \emph{tangent complex} of the formal moduli problem, as defined by Lurie \cite{DAGX} (see also Definition \ref{def:tangent}).
\end{theorem}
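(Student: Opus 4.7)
The plan is to follow Lurie's deformation-theoretic strategy \cite{DAGX}, replacing the Koszul duality between $\Com$ and $\Lie$ with that between $\PP$ and $\PP^!$. I would construct a pair of functors between $\FMP_\PP$ and $\Alg_{\PP^!}$, identify one of them as a tangent complex construction, and then show they are mutually inverse.

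First I would construct a functor $\Psi: \Alg_{\PP^!} \to \FMP_\PP$ via the Koszul twisting morphism $\kappa: \PP^\antishriek \to \PP$. Given a $\PP^!$-algebra $\mf{g}$ and a small $\PP$-algebra $A$ with augmentation ideal $\mf{m}_A$, the twisting morphism endows a convolution object $\Conv(A, \mf{g})$ built from $\mf{m}_A$ and $\mf{g}$ with an $L_\infty$-algebra structure, and the smallness of $A$ supplies the nilpotence needed for Maurer--Cartan theory to make sense. Setting $\Psi(\mf{g})(A) = \MC(\Conv(A,\mf{g}))$ and verifying the Schlessinger-type conditions yields the desired functor. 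A direct computation on the trivial square-zero extensions $\triv{c}{n}$ should then give $T(\Psi(\mf{g})) \simeq \mf{g}[1]$, which identifies $F \mapsto T(F)[-1]$ as a candidate inverse to $\Psi$.

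To conclude, I would show that $\Psi$ and $T(\cdot)[-1]$ are mutually inverse. Full faithfulness of $\Psi$ reduces, via the tangent complex computation, to a Koszul duality equivalence between $\PP^!$-algebras and spectrum objects in $\Alg_{\PP}^{\mm{sm},\op}$. Essential surjectivity follows once one shows that every small $\PP$-algebra is generated from the trivial extensions $\triv{c}{n}$ by iterated pullbacks along their structure maps to $\trivc$, so that any FMP $F \in \FMP_\PP$ is determined by its restriction to these trivial extensions together with its spectrum-object structure, and can be reconstructed as $\Psi(T(F)[-1])$.

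The principal obstacle is this generation step: one needs an operadic analogue of Lurie's decomposition of Artin local rings into elementary square-zero extensions, showing that every small $\PP$-algebra admits a finite tower of trivial extensions compatible with $\Psi$. This requires a careful analysis of the augmentation-ideal filtration on small $\PP$-algebras and of how Koszul duality interacts with such filtrations, which is precisely where the Koszul hypothesis on $\PP$ plays its essential role. A secondary technical point is making the convolution $\Conv(A, \mf{g})$ precise in full operadic generality; this should follow from the Koszul twisting morphism together with the Hadamard product $\PP \otimes_H \PP^!$, but must be carried out with enough care that the resulting Maurer--Cartan space is functorial and invariant under weak equivalences of both $A$ and $\mf{g}$.
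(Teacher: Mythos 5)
Your architecture is essentially the paper's: the functor $\Psi$ you describe is the Maurer--Cartan functor $\mf{g}\mapsto\MC(\mf{g}\otimes\mf{m}_A\otimes\Omega_\bullet)$ of Theorem \ref{thm:MC functor}, the tangent-complex computation on the $\triv{c}{n}$ appears in the proof of Theorem \ref{thm:axiomatic}, and the whole scheme is the axiomatic strategy of \cite{DAGX}. However, two steps of your plan would not go through as written. First, your essential-surjectivity argument is circular: a formal moduli problem is \emph{not} determined by its restriction to the trivial extensions together with its spectrum-object structure --- that data is exactly the underlying complex of $T(F)$ and carries no trace of the $\PP^!$-algebra structure --- and the assertion that $F$ ``can be reconstructed as $\Psi(T(F)[-1])$'' is precisely the theorem. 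What is actually required (and what \cite[Theorem 1.3.12]{DAGX} delivers once its hypotheses are checked) is: (i) every FMP is a sifted colimit of formal spectra $\mm{Spf}(A)$ of small algebras; (ii) $\Psi$ is fully faithful and its image is closed under such colimits; (iii) each $\mm{Spf}(A)$ lies in the image because the unit $A\to\mf{D}'\mf{D}(A)$ is an equivalence for small $A$. None of these appears in your sketch.

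Second, your ``principal obstacle'' is misplaced. Decomposing a small algebra into a finite tower of square-zero extensions by the $\triv{c}{n}$ is essentially the definition of smallness (Definition \ref{def:smallalgebra}, Lemma \ref{lem:verysmall}) and needs no analysis of the augmentation-ideal filtration. The genuine difficulty is point (iii) above together with the statement that the bar-dual converts such a tower into a tower of free cell attachments of $\PP^!$-algebras; concretely, one must show that the completed dual $\prod_p\big(\CC(p)\otimes_{\Sigma_p}A^{\otimes p}\big)^\vee$ coincides (up to quasi-isomorphism) with its polynomial subalgebra $\bigoplus_p\big(\CC(p)\otimes_{\Sigma_p}A^{\otimes p}\big)^\vee$, which is the cofibrant cell algebra one actually controls (Proposition \ref{prop:filtrationissues}, Lemma \ref{lem:verysmalltocof}). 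This is where the binary quadratic Koszul hypothesis does its work: it places $\PP^{\antishriek}(p)$ in degrees $\leq -(p-1)$, so the product collapses to the sum for degree reasons, every small algebra is automatically nilpotent, and the biduality $A\cong A^{\vee\vee}$ for perfect $A$ gives the unit equivalence (Variant \ref{var:cobarC}). Without isolating this convergence statement, your convolution construction cannot be compared with the adjunction $(\mf{D},\mf{D}')$ and full faithfulness of $\Psi$ cannot be established.
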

This recovers the aforementioned results of Lurie and Pridham, taking $\PP$ to be the commutative operad, whose Koszul dual is the Lie operad, or the associative operad. It also applies to many other Koszul dual pairs of algebraic operads (see Section \ref{sec:examples}). 
In fact, allusions to the role of Koszul duality in such a correspondence have appeared before, notably in \cite{KharkovtoMoscow}, \cite[Lecture 15]{kontsevichlecturenotes} and in \cite{pridham2013constructing}.
For example, taking $\PP$ to be the permutative operad, whose Koszul dual is the pre-Lie operad \cite{ChaLiv}, we obtain a classification of permutative formal moduli problems in terms of    pre-Lie algebras. Such    pre-Lie algebras indeed appear naturally in the deformation theory of operadic algebras, see the work of Dotsenko, Shadrin and Vallette \cite{dotsenko2015pre} (in fact, this was the original motivation for the present paper). From the point of view of deformation theory, a Lie algebra underlies a    pre-Lie algebra structure whenever the corresponding commutative formal moduli problem is the restriction of a permutative formal moduli problem.

\medskip

Our proof of Theorem \ref{thm:mainthmkoszul} will make little use of the Koszul property of $\mathscr{P}$: the Koszul property mainly serves to guarantee that the operad $\mathscr{P}$ admits a resolution with good properties. More precisely, we will deduce Theorem \ref{thm:mainthmkoszul} from a statement about algebras over the dual of the bar construction of an augmented dg-operad. In fact, it will be convenient to work in a slightly more general setting:
\begin{enumerate}[label=(\alph*)]
\item We work with \emph{coloured} dg-operads.
\item Instead of taking dg-operads over the base field, we will consider operads which are augmented over a connective \emph{dg-algebra} or, in the coloured case, over a \emph{connective dg-category} $\base$ (here connective means that the cohomology groups are concentrated in nonpositive degrees). More precisely, we will consider coloured dg-operads $\mathscr{P}$ which fit into a retract diagram of operads
$$\begin{tikzcd}
\base\arrow[r] & \mathscr{P}\arrow[r] & \base.
\end{tikzcd}$$
Given a dg-category $\base$, we will refer to such objects as (augmented) \emph{$\base$-operads}.
\end{enumerate}

For example, one can take $\base$ to be a discrete dg-category with finitely many objects and only zero maps between them, or (Morita equivalently, cf.\ Remark \ref{rem:compact gen}) the semisimple associative algebra $\base=k^{\times n}$. In this case, deformations parametrized by Artin associative algebras \emph{relative to} $\base$ correspond to \emph{multi-pointed deformations}, as frequently considered in noncommutative geometry \cite{laudal2002noncommutativedeformations}.

The usual operadic homological algebra (as in \cite{LodayVallette2012}, for example) has an analogue for augmented $\base$-operads; Appendix \ref{sec:operads} provides all the results and definitions that we will need. In particular, every (augmented) $\base$-operad $\PP$ has a dual $\base^\op$-operad, given somewhat informally as
$$
\mf{D}(\PP)=\mf{D}_{\base}(\PP)=\mm{Ext}_{\PP}(\base, \base).
$$
More precisely, we can make the following definition:
\begin{definition}\label{def:dualoperad}
Let $\base$ be a dg-category and let $\mathscr{P}$ be a (augmented) $\base$-operad, which we assume to be cofibrant as a left $\base$-module throughout this introduction.
We define the dual operad to be the $\base$-linear dual of the bar construction of $\PP$ \emph{over} $\base$
$$ 
\mf{D}_{\base}(\mathscr{P})\coloneqq \Bar_{\base}(\mathscr{P})^\vee.
$$
The $\base^{\op}$-operad structure arises from the $\base$-cooperad structure on the bar construction. 
Up to a degree shift, this corresponds to the dual operad introduced in \cite{ginzburg1994koszul}, see Section \ref{sec:barcobar} for more details.
\end{definition}
With these definitions, our main result is then the following:
\begin{theorem}\label{thm:mainthm}
Let $\base$ be a dg-category over $\mathbb{Q}$ and let $\mathscr{P}$ be an augmented $\base$-operad. Suppose that the following conditions are satisfied:
\begin{enumerate}
\item $\base$ and $\mathscr{P}$ are both connective, i.e.\ their cohomology is concentrated in nonpositive degrees.
\item $\base$ is cohomologically bounded, i.e.\ there exists an $N\in\mathbb{N}$ such that all $H^*(\base)(c, d)$ are concentrated in degrees $[-N, 0]$.
\item\label{it:splendidness} The derived relative composition product
$$
\mathscr{P}(1)\circ^h_{\mathscr{P}^{\geq 1}} \mathscr{P}(1)
$$
is concentrated in increasingly negative cohomological degrees as the arity increases (cf.\ Definition \ref{def:splendid}).
\end{enumerate}
Then there is an equivalence of $\infty$-categories
\begin{equation}\label{diag:mainequivalence}
\begin{tikzcd}
\cat{FMP}_{\mathscr{P}}\arrow[r] & \cat{Alg}_{\mf{D}_{\base}(\mathscr{P})}; \hspace{4pt} F\arrow[r, mapsto] & T(F)
\end{tikzcd}\end{equation}
where $T(F)$ denotes the tangent complex (Definition \ref{def:tangent}). Furthermore, this equivalence is natural in $\PP$.
\end{theorem}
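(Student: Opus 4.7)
The plan is to adapt the strategy of Lurie~\cite{DAGX} and Pridham~\cite{Pridham} from the classical case to the operadic setting, using the bar-cobar formalism for $\base$-operads developed in Section~\ref{sec:barcobar}. The equivalence~\eqref{diag:mainequivalence} will be exhibited by constructing both the tangent-complex functor $T$ and a candidate inverse $\Psi$ explicitly in terms of Koszul duality, and then showing that they are mutually inverse.

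For the inverse $\Psi\colon \cat{Alg}_{\mf{D}_{\base}(\PP)}\to \FMP_{\PP}$, I would use a Maurer-Cartan type construction: for a $\mf{D}_{\base}(\PP)$-algebra $\mf{g}$ and a small $\PP$-algebra $A$ with augmentation ideal $\mf{m}_A$, set $\Psi(\mf{g})(A) = \MC(\Conv(\mf{D}_{\base}(A), \mf{g}))$, or an appropriate $L_\infty$-style variant thereof, built from the pairing induced by the bar--cobar adjunction. Condition~\ref{it:splendidness} ensures that the relevant bar construction over $\base$ is pronilpotent on small $\PP$-algebras, so the MC-space is well-defined and convergent, while the Schlessinger-type pushout axioms defining $\FMP_{\PP}$ follow from the standard behaviour of Maurer-Cartan spaces under fiber products of nilpotent algebras. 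In the opposite direction, $T\colon \FMP_{\PP}\to \cat{Alg}_{\mf{D}_{\base}(\PP)}$ is given by Definition~\ref{def:tangent}: the underlying $\base^{\op}$-module of $T(F)$ is assembled from values of $F$ on the square-zero extensions $\base\oplus \base[n]$, and the $\mf{D}_{\base}(\PP)$-algebra structure records how $F$ responds to the various $\PP$-algebra structures one can place on these extensions, using that $\mf{D}_{\base}(\PP) = \Bar_{\base}(\PP)^{\vee}$ parametrizes precisely such coherent families of operations.

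The bulk of the work, and the main obstacle, will be verifying that $T$ and $\Psi$ are mutually inverse. I would proceed by reducing to a generating class of formal moduli problems --- namely those of the form $\mm{Spec}(\base\oplus \base[n])$ --- using that every object of $\FMP_{\PP}$ is recovered as a sifted colimit of such representables and that both $T$ and $\Psi$ preserve the relevant colimits. On representables both composites can be computed directly via bar--cobar: one identifies $T(\mm{Spec}(A))$ with the Koszul dual $\mf{D}_{\base}(A)$ on the level of generators, after which the unit $\mm{id}\to T\circ\Psi$ and counit $\Psi\circ T\to \mm{id}$ are checked by induction along a Postnikov-style filtration of formal moduli problems. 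Convergence of this induction relies crucially on the connectivity and cohomological boundedness of $\base$, while condition~\ref{it:splendidness} is what keeps the bar construction of $\PP$ well-behaved at each finite stage --- intuitively, it plays the role that the Koszul property plays in Theorem~\ref{thm:mainthmkoszul}, guaranteeing that $\mf{D}_{\base}\mf{D}_{\base}(\PP)\simeq \PP$ on the relevant subcategories. Finally, naturality in $\PP$ reduces to naturality on representables, which is immediate from the functoriality of $\Bar_{\base}$ and of the convolution construction.
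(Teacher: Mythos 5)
Your overall strategy is the same as the paper's: set up the weak Koszul duality adjunction $\mf{D}\colon \Alg_{\PP}\rightleftarrows \Alg_{\mf{D}_{\base}(\PP)}^{\op}\colon \mf{D}'$, define the candidate inverse by $\mf{g}\mapsto \Map_{\mf{D}(\PP)}(\mf{D}(-),\mf{g})$ (your Maurer--Cartan functor $\Psi$ is this mapping space computed via a convolution algebra), and then invoke the Lurie-style axiomatic criterion to upgrade the adjunction to an equivalence, with the inverse identified with the tangent complex on representables. Two remarks on the formal part: the $\mf{D}(\PP)$-algebra structure on $T(F)$ cannot be constructed directly as you suggest; a priori $T(F)$ is only a $\base^{\op}$-module, and the algebra structure is obtained \emph{a posteriori} as the value of the inverse equivalence. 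Also, the generating class is not the $\mm{Spf}(\triv{c}{n})$ alone but all corepresentables $\mm{Spf}(A)$ with $A$ small; the reduction to free $\mf{D}(\PP)$-algebras is exactly what the axiomatic theorem packages, so this part of your argument is essentially a restatement of that input rather than an independent verification.

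The genuine gap is that you never verify the hypotheses of that axiomatic criterion, and you mislocate where conditions (1)--(3) enter. You assert that the Schlessinger axioms for $\Psi(\mf{g})$ ``follow from the standard behaviour of Maurer--Cartan spaces under fiber products of nilpotent algebras'' and that condition (3) makes the bar construction ``pronilpotent.'' Neither is true in this generality: what must be proved is (A) that the unit $A\to\mf{D}'\mf{D}(A)$ is an equivalence for every small $A$, (B) that $\mf{D}(\triv{c}{n})$ is free, and (C) that $\mf{D}$ carries the pullback squares defining small algebras to pushouts of $\mf{D}(\PP)$-algebras. The obstruction is that $\mf{D}(A)=\Bar_\pi(A)^\vee$ is a \emph{product} over arities, which is neither cofibrant nor compatible with colimits in general; the whole point of hypotheses (1)--(3) is to prove that for a very small nilpotent model of $A$ (whose existence is itself a nontrivial lemma) the polynomial subalgebra $\mf{D}^{\mm{poly}}(A)\subseteq\mf{D}(A)$ --- a direct sum, which \emph{is} cofibrant and \emph{does} turn the relevant pullbacks into pushouts --- is quasi-isomorphic to all of $\mf{D}(A)$. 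That comparison is a delicate degree count: one needs a quasi-free resolution of $\PP$ with generators in increasingly negative degrees (this is where splendidness is used, via the identification of $\PP(1)\circ^h_{\PP^{\geq 1}}\PP(1)$ with the indecomposables of a resolution), the bound $[-N,0]$ on $\base$, and a separate filtration to handle nullary operations. Without this, the argument fails --- and indeed the theorem is false for non-splendid operads such as $\mathsf{HyperCom}$, so no amount of formal manipulation can avoid this step. Your proposal would need to supply this entire analysis to become a proof.
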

We will denote the inverse equivalence \eqref{diag:mainequivalence} by
$$\begin{tikzcd}
\MC\colon \Alg_{\mf{D}_{\base}(\mathscr{P})}\arrow[r] & \cat{FMP}_{\mathscr{P}}
\end{tikzcd}$$
and think of it as sending a $\mf{D}(\PP)$-algebra to the `formal $\mathscr{P}$-algebraic stack of solutions to the Maurer--Cartan equation'. Some justification for this terminology is provided in Section \ref{sec:Dprime}, where we show that for various operads $\PP$, this inverse functor does indeed admit a description in terms of Maurer--Cartan elements of dg-Lie algebras, resembling the construction by Hinich \cite{hinichformal} in the commutative case.
This is a by-product of our proof of Theorem \ref{thm:mainthm}, which relies on a careful analysis of the adjoint pair
\begin{equation}\label{diag:mainadjunction}
\begin{tikzcd}
\mf{D}\colon \cat{Alg}_{\PP}\arrow[r, yshift=0.8ex] & \cat{Alg}^{\op}_{\mf{D}(\PP)}\colon \mf{D}'.\arrow[l, yshift=-0.8ex]
\end{tikzcd}\end{equation}
Here $\mf{D}$ sends a $\PP$-algebra to the $\base$-linear dual of its operadic bar construction. We point out a slight difference from the arguments of Lurie: when $\PP$ is the commutative operad, we study the behaviour of the functor $\mf{D}$ (the Harrison complex) instead of the functor $\mf{D}'$ (the Chevalley--Eilenberg complex). An adjunction between the Harrison and Chevalley--Eilenberg complex also appears in the arguments from \cite{Pridham, guan2020reviewdeformations}; here the $\infty$-category of commutative algebras is replaced by a certain model category of pro-artinian algebras (with the effect of making the Harrison complex a right adjoint detecting equivalences, in contrast to \eqref{diag:mainadjunction}).

The conditions of Theorem \ref{thm:mainthm} hold for Koszul binary quadratic operads, leading to the following proof of Theorem \ref{thm:mainthmkoszul}:
\begin{proof}[Proof of Theorem \ref{thm:mainthmkoszul} (from Theorem \ref{thm:mainthm})]
The Koszul property of $\mathscr{P}$ asserts that there are weak equivalences of operads (using curly brackets to denote degree shift)
\[\begin{tikzcd}
\Omega\mathscr{P}^{\text{!`}}\arrow[r, "\sim"] & \mathscr{P} & & \mf{D}(\mathscr{P})\arrow[r, "\sim"] & \mathscr{P}^!\{-1\}.
\end{tikzcd}\]
Since $\mathscr{P}$ is generated by binary operations in degrees $\leq 0$, the quadratic dual cooperad $\mathscr{P}^{\text{!`}}$ is generated by binary operations in degrees $\leq -1$. It follows that the generators of $\Omega\mathscr{P}^{\text{!`}}$ are concentrated in increasingly negative degrees as the arity increases. The operad $\mathscr{P}$ then satisfies the conditions of Theorem \ref{thm:mainthm} (condition \emph{(3)} follows from Corollary \ref{cor:bar of operad as derived comp}), and the sequence of equivalences
$$\begin{tikzcd}[column sep=3pc]
\cat{FMP}_{\mathscr{P}}\arrow[r, "F\mapsto T(F)"] & \cat{Alg}_{\mf{D}(\mathscr{P})} & \cat{Alg}_{\mathscr{P}^!\{-1\}}\arrow[l, "\sim"{swap}] \arrow[r, "{V\mapsto V[-1]}"] & \cat{Alg}_{\mathscr{P}^!}
\end{tikzcd}$$
provides the desired result.
\end{proof}

Suppose that $\mathscr{P}$ is an augmented $\base$-operad arising as the bar dual of a (sufficiently nice) $\base^{\op}$-operad. Theorem \ref{thm:mainthm} then gives an interpretation of the $\infty$-category $\cat{Alg}_{\mathscr{P}}$ in terms of formal moduli problems. 
One may wonder if there is a similar interpretation of the $\infty$-category of algebras over an \emph{arbitrary} augmented $\base$-operad $\mathscr{P}$. Somewhat informally, one expects a $\mathscr{P}$-algebra to correspond to some homotopy-theoretic, or geometric, analogue of a `conilpotent coalgebra over a conilpotent cooperad'. Theorem \ref{thm:mainthm} precisely provides us with a geometric way to think about this conilpotent cooperad, as a formal moduli problem
$$\begin{tikzcd}
\mm{MC}_{\mathscr{P}}\colon \ArtOp\arrow[r] & \sS
\end{tikzcd}$$ 
on the category of Artin (i.e.\ nilpotent, finite-dimensional) operads. This is discussed in more detail in Section \ref{sec:relativekoszul} and relies on the fact that the operad for nonunital symmetric operads is Koszul self-dual, \emph{relative} to the dg-category of finite sets and bijections ($k$-linearizing all sets of maps). One may informally think of the functor $\mm{MC}_{\mathscr{P}}$ as encoding a family of finite-dimensional nilpotent operads, corresponding to the family of linear duals of the finite-dimensional conilpotent sub-cooperads of the conilpotent cooperad $\Bar\PP$.

Given a formal moduli problem $X\colon \ArtOp\rt \sS$, there is a natural notion of formal moduli problem over $X$. Indeed, in a similar way as one usually defines quasi-coherent sheaves on moduli functors in algebraic geometry, one can define a formal moduli problem over $X$ to consist of the following data:
\begin{enumerate}
\item a $\QQ$-algebraic formal moduli problem $\mc{F}_x\in \cat{FMP}_{\QQ}$ for every $x\in X(\QQ)$.
\item for every map $f\colon \QQ\rt \QQ'$ and every $x\in X(\QQ)$, an equivalence
$$\begin{tikzcd}
\mc{F}_{f_*(x)}\arrow[r, "\sim"] & f_*\mc{F}_x
\end{tikzcd}$$
together with coherence data between them (see Definition \ref{def:fmpoverfmp}). Here $f_*\mc{F}_x$ denotes the restriction of $\mc{F}_x$ along the forgetful functor $\cat{Art}_{\QQ'}\rt \cat{Art}_{\QQ}$.
\end{enumerate}
Informally, these formal moduli problems can be thought of as geometric analogues of conilpotent coalgebras over conilpotent cooperads. Indeed, a formal moduli problem over $X$ describes a coherent collection of finite-dimensional nilpotent algebras over finite-dimensional nilpotent operads. This roughly corresponds to the collection of linear duals of the finite-dimensional conilpotent sub-coalgebras of a conilpotent coalgebra. 

We then have the following result:
\begin{theorem}\label{thm:metaduality}
Let $\mathscr{P}$ be a 1-coloured augmented operad. Then there is an equivalence of $\infty$-categories
$$\begin{tikzcd}
\cat{FMP}_{\mm{MC}_{\mathscr{P}}}\arrow[r, "\sim"] & \cat{Alg}_{\mathscr{P}}; \hspace{4pt} F\arrow[r, mapsto] & T(F).
\end{tikzcd}$$
\end{theorem}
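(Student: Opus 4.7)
The plan is to deduce Theorem \ref{thm:metaduality} from Theorem \ref{thm:mainthm}, applied to a larger two-coloured base-operad that simultaneously controls operads and algebras over them. Recall from Section \ref{sec:fmpsoverprooperads} that the $\Sigma$-operad $\mathcal{O}$ whose algebras are augmented symmetric operads is Koszul self-dual relative to the dg-category $\Sigma$ of finite sets and bijections; applying Theorem \ref{thm:mainthm} to it yields an equivalence $\cat{FMP}_{\mathcal{O}} \simeq \cat{Opd}_k^{\mm{aug}}$ identifying $\PP$ with $\mm{MC}_\PP$.

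To pass to the relative statement, I would introduce an augmented $\widetilde{\base}$-operad $\mathcal{O}^+$ whose algebras are pairs $(\QQ, A)$ consisting of an augmented operad $\QQ$ together with a $\QQ$-algebra $A$; the appropriate base $\widetilde{\base}$ is the $k$-linearization of the category of finite sets with at most one marked point and bijections preserving the marking, so that the unmarked colour carries the operad structure while the marked colour carries the algebra structure. The Koszul self-duality of $\mathcal{O}$ should extend to a Koszul self-duality of $\mathcal{O}^+$ relative to $\widetilde{\base}$, because the marked-colour part is controlled by a module-type piece which is itself Koszul self-dual relative to the ambient operadic data. Granted this, Theorem \ref{thm:mainthm} produces an equivalence $\cat{FMP}_{\mathcal{O}^+} \simeq \cat{Alg}_{\mathcal{O}^+}$.

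Both sides of this equivalence admit forgetful functors to the previous equivalence $\cat{FMP}_{\mathcal{O}} \simeq \cat{Opd}_k^{\mm{aug}}$, remembering only the underlying operad. The fiber of $\cat{Alg}_{\mathcal{O}^+} \to \cat{Alg}_{\mathcal{O}}$ over $\PP$ is exactly $\cat{Alg}_\PP$, and by Definition \ref{def:fmpoverfmp} the fiber of the analogous forgetful functor on the formal-moduli side over $\mm{MC}_\PP$ is precisely $\cat{FMP}_{\mm{MC}_\PP}$. Using the naturality clause of Theorem \ref{thm:mainthm} to match these fibrations and passing to fibers yields the desired equivalence, with the tangent complex as the defining functor.

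The main obstacle, I expect, is twofold: verifying that $\mathcal{O}^+$ is Koszul self-dual and splendid relative to $\widetilde{\base}$ so as to satisfy the hypotheses of Theorem \ref{thm:mainthm}; and checking that the naturality of Theorem \ref{thm:mainthm} is sufficiently robust to identify fibrations over different base dg-categories, given that the forgetful morphism $\mathcal{O}^+ \to \mathcal{O}$ involves a change of base from $\widetilde{\base}$ to $\Sigma$. Once these meta-level issues are settled, the rest of the argument is a formal unwinding of definitions.
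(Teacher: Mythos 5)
Your strategy runs into a fatal obstruction at its very first step: the two-coloured operad $\mathcal{O}^+$ governing pairs $(\QQ, A)$ of an operad together with an algebra over it is \emph{not} splendid, so Theorem \ref{thm:mainthm} cannot be applied to it. Concretely, $\mathcal{O}^+$ must contain, for every $n$, a generating action operation $\QQ(n)\otimes A^{\otimes n}\rt A$ of arity $n+1$ and cohomological degree $0$; these operations are indecomposable (only their precomposites with the $\circ_i$'s decompose), so by Proposition \ref{prop:generatorsforoperad} any quasi-free resolution of $\mathcal{O}^+$ has degree-zero generators in arbitrarily high arity, violating condition \ref{it:splendidness}. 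This is not a technicality one can engineer around: the failure of splendidness here is exactly the completion problem that separates ``algebras over $\mf{D}(\QQ)$'' from ``conilpotent coalgebras over $\Bar\QQ$'' for a general augmented $\QQ$, and Theorem \ref{thm:metaduality} exists precisely to handle operads $\PP$ to which Theorem \ref{thm:mainthm} does not apply. A second, independent gap is your identification of $\cat{FMP}_{\mm{MC}_{\PP}}$ with the fiber of a forgetful functor $\cat{FMP}_{\mathcal{O}^+}\rt\cat{FMP}_{\mathcal{O}}$: Definition \ref{def:fmpoverfmp} defines $\cat{FMP}_X$ as the value on $X$ of the colimit-preserving extension $\cat{FMP}_!$ (equivalently a limit over the category of points of $X$), which is not a priori any such fiber; and the base-change naturality you invoke (from $\widetilde{\base}$ to $k[\Sigma]$) is exactly what the paper says it does not have in general (see the discussion of ``connecting permutative and operadic deformation theories'').

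The paper's actual argument avoids both problems. It never applies Theorem \ref{thm:mainthm} to a single large operad; instead it uses that $\cat{FMP}_!$ and $X\mapsto \cat{Alg}_{\mf{D}_!(X)}$ are both sifted-colimit-preserving functors out of $\Fun(\cat{Op}^{\mm{sm}},\sS)$ (the nontrivial input being Lemma \ref{lem:algsift}, that $\PP\mapsto\cat{Alg}_{\PP}$ preserves sifted colimits), that they agree on corepresentables via $\mm{MC}$ by the already-established operadic case, and that every formal moduli problem is a sifted colimit of corepresentables by \cite[Proposition 1.5.8]{DAGX}. If you want to salvage your approach you would need, at minimum, a substitute for Theorem \ref{thm:mainthm} valid without splendidness of $\mathcal{O}^+$ --- which is essentially the statement being proved.
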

One may consider this as a geometric, or $\infty$-categorical, version of the relation between algebras over $\PP$ and conilpotent coalgebras over the conilpotent cooperad $\Bar\PP$ \cite{vallette2014homotopy}.

Finally, let us point out that Brantner and Mathew \cite{brantner2019deformation} have recently established that in positive characteristic, formal moduli problems do not correspond to dg Lie algebras but rather to \emph{partition} Lie algebras. Their result can also be interpreted as a refinement of Koszul duality following \cite[Example 1.6]{brantner2021pd}.

\subsection{Outline and how to read the paper}

Let us briefly describe the structure of the rest of the paper. 
In Section \ref{sec:fmps} we introduce the main definitions concerning formal moduli problems parametrized by algebras over operads.

\medskip

In Section \ref{sec:examples}, we will discuss various (non-)examples and special cases of our main theorem; these include many of the well-known operads. This section essentially only makes reference to the \emph{statement} of the main theorem relating operadic moduli problems with algebras over the dual operad (Theorem \ref{thm:mainthm}), or rather to a slightly more precise formulation thereof (Theorem \ref{thm:biduals}). Taking this for granted, Section \ref{sec:examples} essentially only assumes some familiarity with operadic homological algebra. For operads over a field $k$ of characteristic $0$, all operadic results we use are classical and contained for instance in \cite{ginzburg1994koszul, LodayVallette2012}. 

Some additional techniques are required in Section \ref{sec:operadicdeftheory}, where we treat deformations parametrized by operads. Since these are themselves algebras over a coloured operad, this involves a version of the usual operadic homological algebra relative to a dg-category $\base$. This is developed in Appendix \ref{sec:operads}, with a few more specific results used in Section \ref{sec:operadicdeftheory} appearing in Section \ref{sec:relativekoszul}. For readability, we have tried to put the tools from Appendix \ref{sec:operads} in the background; as a rule of thumb, the reader may think of $\base$ as a dg-algebra and suppose that all results that hold for classical $k$-operads will also hold for $\base$-operads and their algebras, as long as the corresponding modules are $\base$-cofibrant. 
In particular, the reader only interested in moduli problems for algebras over $k$-operads can safely ignore Appendix \ref{sec:operads}. 

\medskip

In Section \ref{sec:operadicfmps} we explain how to associate to an operadic formal moduli problem an algebra which requires establishing the adjoint pair \eqref{diag:mainadjunction} (see Theorem \ref{thm:DprimeisD}) and proceed to describe the general framework that that goes into our proof of Theorem \ref{thm:mainthm}.

\medskip

Section \ref{sec:cohsmall} is the technical heart of the proof (Theorem \ref{thm:biduals}); in this section, we verify the technical hypotheses that allow us to apply the axiomatic argument (Theorem \ref{thm:axiomatic}) described in Section \ref{sec:outline}. In Section \ref{sec:naturality}, we discuss the naturality of the equivalence \eqref{diag:mainequivalence} in the operad $\PP$ and apply this to deduce Theorem \ref{thm:metaduality} in Section \ref{sec:fmpsoverfmps}. Under certain conditions on the operad $\PP$, we describe this equivalence more concretely in terms of Maurer--Cartan elements in Section \ref{sec:Dprime} (see Theorem \ref{thm:MC functor}).

\medskip

Finally, Section \ref{sec:relativekoszul} contains some further remarks about Koszul duality relative to a base $\base$. In particular, we use this to spell out some leftover proofs from Section \ref{sec:operadicdeftheory}; in particular, we show that the operad for operads is, in a relative sense, Koszul self-dual.

\subsubsection*{Acknowledgements}
We are grateful to Ben Ward for discussions about the    pre-Lie structure on the totalisation of an operad and to Joan Bellier-Mill\`es, Lukas Brantner, Benjamin Hennion and Bertrand To\"en for their comments on a preliminary version of this paper. We are also grateful to the anonymous referees for multiple comments, references and suggestions for further examples, and for pointing out mistakes.

The third author thanks the Max Planck Institute for Mathematics for their hospitality.
The first and third author have received funding from the European Research Council (ERC) under the
European Union's Horizon 2020 research and innovation programme (Grant Agreement No.\ 768679).

\subsection{Conventions}\label{sec:conventions}

Throughout, we work over a field $k$ of characteristic zero\footnote{In fact, everything we do also works over an arbitrary ring, instead of a field of characteristic zero, 
	if one restricts to \emph{nonsymmetric} operads.} and all objects involved are differential graded (with differentials of degree +1), 
even if this is not said explicitly.

Given $k\in \mathbb Z$ and a graded vector space $V$ we denote by $V[k]$ its degree shift satisfying $(V[k])^d = V^{k+d}$.
\medskip

\noindent\textbf{Model categories and $\infty$-categories.} Since certain functors are only defined at the level 
of $\infty$-categories, we will need to distinguish between model categories or relative categories, and the 
$\infty$-categories obtained from them by inverting the weak equivalences. We will employ the following basic 
convention: we will denote by $\cat{C}^{\dg}$ a certain category of dg-objects, e.g.\ operads or algebras over 
them, and by $\cat{C}$ the underlying $\infty$-category. For example:
$$
\cat{Alg}_{\mathscr{P}}^{\dg}=\big\{\text{dg-algebras over }\mathscr{P}\big\} 
\qquad \qquad 
\cat{Alg}_{\mathscr{P}}=\cat{Alg}_{\mathscr{P}}^{\dg}[\text{quasi-iso}^{-1}].
$$
We will typically refer to the objects of each of these two categories as $\PP$-algebras, leaving the differential graded structure implicit (except for dg-categories, in order not to confuse these with ordinary categories or $\infty$-categories).

\medskip

\noindent \textbf{Linear algebra.} Let $\cat{A}$ be a dg-category (over our base field of characteristic zero).
Recall that a left $\cat{A}$-module is a dg-functor $\cat{A}\rt \cat{Ch}_k$ to the category of cochain complexes 
and a right $\cat{A}$-module is a functor $\cat{A}^{\op}\rt \cat{Ch}_k$, i.e.\ a left $\cat{A}^{\op}$-module. 
By default, modules are left modules. For an object $c\in\cat{A}$, the free $\cat{A}$-module at $c$ is the 
corepresentable functor 
$$
\begin{tikzcd}
\cat{A}_c\colon\cat{A}\arrow[r] & \cat{Ch}_k;\hspace{4pt} d\arrow[r, mapsto] & \cat{A}(c, d).
\end{tikzcd}
$$
An $\cat{A}$-$\cat{B}$-bimodule is a $\cat{B}^{\op}\otimes \cat{A}$-module (so that an $\cat{A}$-$k$-bimodule is just an $\cat{A}$-module). We will denote the canonical 
$\cat{A}$-bimodule by
$$
\begin{tikzcd}
\cat{A}\colon \cat{A}^{op}\otimes \cat{A}\arrow[r] & \cat{Ch}_k; \hspace{4pt} (c, d)\arrow[r, mapsto] & \cat{A}(c, d).
\end{tikzcd}
$$
Given an $\cat{A}$-$\cat{B}$-bimodule $E$ and a $\cat{B}$-$\cat{C}$-bimodule $F$, we can form the tensor product (Day convolution) $E\otimes_{\cat{B}} M$; for $a\in \cat{A}$ and $c\in \cat{C}$, one explicitly computes $(E\otimes_{\cat{B}} F)(c, a)$ as the coequalizer
$$\begin{tikzcd}
\bigoplus\limits_{b, b'\in \cat B} E(b', a)\otimes \cat{B}(b, b')\otimes M(c, b)\arrow[rr, yshift=1ex, "\text{act on }E"]\arrow[rr, yshift=-1ex, "\text{acts on }F"{swap}] & & \bigoplus\limits_{b\in \cat B} E(b, a)\otimes M(c, b)\arrow[r] & (E\otimes_{\cat{B}} F)(c, a).
\end{tikzcd}$$
In particular, when $\cat{C}=k$ this defines a functor $E\otimes_{\cat{B}}-\colon\cat{LMod}_{\cat{B}}^{\dg}\longrightarrow\cat{LMod}_{\cat{A}}^{\dg}$, and similarly for right modules. This satisfies the obvious identities, e.g.\ $\cat{A}\otimes_{\cat{A}} M=M$ and $E\otimes_{\cat{B}} \cat{B}_c=E_c=E(c, -)$.
As usual, composing two such functors coincides with tensoring bimodules: if $E$ is a 
$\cat{A}$-$\cat{B}$-bimodule, and $F$ a $\cat{B}$-$\cat{C}$-bimodule, then 
$$
(E\otimes_{\cat{B}}-)\circ(F\otimes_{\cat{C}}-)\cong (E\otimes_{\cat{B}}F)\otimes_{\cat{C}}-\,.
$$
The functor $E\otimes_{\cat{B}}-$ has a right adjoint, given by $\Hom_{\cat{A}}(E,-)$, where the left 
$\cat{B}$-module structure on $\Hom_{\cat{A}}(E,N)$ comes from the right $\cat{B}$-action on $E$. 
Similarly, the right adjoint to $-\otimes_{\cat{A}}E$ is $\Hom_{\cat{B}}(E,-)$. 

Finally, for $M$ a left $\cat{A}$-module, we will denote
$$
M^\vee\coloneqq\Hom_{\cat{A}}(M, \cat{A}).
$$
This is a right $\cat{A}$-module via the canonical right action of $\cat{A}$ on itself. 

\begin{remark}\label{rem:compact gen}
Let $\cat{A}$ be a dg-category with finitely many objects and consider its (ordinary) category of left modules. 
This category has a single compact projective generator, given by the direct sum of all free modules
$$
P=\bigoplus_{c\in\cat{A}} \cat{A}_c.
$$
Likewise, the category of right $\cat{A}$-modules has a single compact generator $Q=\bigoplus_{c\in\cat{A}} ({}_c\cat{A}).$
It follows from Morita theory that the category of left $\cat{A}$-modules is equivalent to the category of modules 
over the dg-algebra $B=\mm{End}_{\mc{A}}(P)^{\mm{op}}\cong \mm{End}_{\mc{A}^{\mm{op}}}(Q)$. 
Unraveling the definition, this algebra is given by the cochain complex
$$
B\cong \prod_{c,d}\cat{A}(c,d)
$$
and the product of two morphisms is their composition whenever they are composable, and $0$ otherwise.
The equivalence from left $\cat{A}$-modules to $B$-modules simply sends a left $\cat{A}$-module $M$ 
to $Q\otimes_{\cat{A}}M\cong \bigoplus_c M(c)$, with the obvious action maps arising from 
$\cat{A}(c, d)\otimes M(c)\rt M(d)$.
\end{remark}
The following is a special case of \cite[Proposition A.3.3.2]{HTT} (where we enrich over $\mm{Ch}_k$ with the projective model structure):
 \begin{proposition}\label{prop:kk-modules are model cat}
 	For any dg-category $\base$, the category of $\base$-modules admits a cofibrantly generated model structure for which the fibrations are pointwise surjections and weak equivalences are pointwise quasi-isomorphisms. Furthermore, this model structure is enriched over $\mm{Ch}_k$.
 \end{proposition}
\begin{remark}[Cofibrant objects]\label{rem:filtrations}
Note that this model structure arises from transfer along the free-forgetful adjunction $\prod_{c\in \base} \mm{Ch}_k\leftrightarrows \cat{LMod}^{\dg}_\base$ with right adjoint evaluating at each object $c\in\base$. Every free $\base$-module (in the image of the left adjoint) is therefore cofibrant and the generating cofibrations are the cone inclusions $\base_c[n]\rt \mm{Cone}(\base_c[n])$ for $c\in \base$. Conversely, a cofibrant object is (in particular) the retract of quasi-free $\base$-module.

It is not hard to verify that a map in $\cat{LMod}^{\dg}_{\base}$ is a cofibration if and only if it is a monomorphism whose cokernel is cofibrant. As a consequence, if $M$ is a $\base$-module equipped with an exhaustive increasing filtration whose associated graded is cofibrant, then $M$ is itself cofibrant.
\end{remark}
If $\base$ is of the form $k[G]$, where $G$ is any locally finite groupoid, then Maschke's theorem \cite[Chapter 4.2]{weibel} implies that every $k[G]$-module $M$ is the retract of the free $k[G]$-module generated by $M$ (by averaging over $G$). By the above remark, this implies that all objects are cofibrant and that the cofibrations are the monomorphisms.

\medskip

\noindent \textbf{Operads.} In this paper we make extensive use of the machinery of algebraic operads, namely bar (denoted $\Bar$) and cobar (denoted $\Omega$) constructions for both (co)operads and for (co)algebras, relative to a Koszul twisting morphism (denoted $\drt$), as developed in \cite{LodayVallette2012}.

In fact, throughout we will employ the theory of coloured operads \emph{relative} to a base dg-category, which we will denote by $\base$. More precisely, suppose that $\base$ has a set of objects $S$. By a \emph{$\base$-operad} $\PP$ we will mean an $S$-coloured (symmetric, differential graded) operad together with maps of $S$-coloured operads $\base\rt \PP\rt\base$ that compose to the identity (cf.\ Proposition \ref{prop:modelstructures} for a slightly different perspective). In particular, we will always assume that $\PP$ is \emph{augmented} over $\base$, unless explicitly stated otherwise.

An algebra over a $\base$-operad $\PP$ is simply an algebra over the underlying coloured operad. In particular, each $\PP$-algebra has an underlying left $\base$-module and the usual constructions with $\PP$-algebras, such as the free $\PP$-algebra or the bar construction, can be performed at the level of $\base$-modules as well. We refer to Appendix \ref{sec:operads} for an extensive discussion of the usual operadic homological algebra relative to a dg-category $\base$.

For any operad $\PP$ we denote by $\PP\{k\}$ its degree shift by $k$, such that $A$ is a $\PP\{k\}$ algebra if and only if $A[k]$ is a $\PP$ algebra. In particular, if $\PP$ is concentrated in degree $0$, $\PP\{1\}(n)$ is concentrated in degree $-n+1$.

All operads (resp. cooperads) are assumed to be unital and augmented (resp. counital and coaugmented) and have no other constraints in arities 0 and 1, unless otherwise explicitly written.

We will say that a $\base$-operad $\PP$ is \emph{$n$-reduced} if the map $\base\rt \PP$ is an isomorphism in arities $\leq n$ (in particular, it is trivial in arities $\neq 1$ and $\leq n$).

\begin{assumption}[Cofibrancy assumptions]\label{ass:cofibrancy}
Since we are not working over a field, various point-set level constructions involving tensor products and $\base$-linear duals are only well-behaved when applied to left $\base$-modules that are cofibrant (for the model structure of Proposition \ref{prop:kk-modules are model cat}). For this reason, we will typically (tacitly) assume throughout the text that our $\base$-operads are cofibrant as left $\base$-modules and that $\base$-cooperads are filtered-cofibrant left $\base$-modules (Definition \ref{def:cooperad}). Since our main results are formulated in homotopy-invariant terms, one can always replace $\mathscr{P}$ by a $\base$-operad for which this assumption holds.
\end{assumption}

%%%%%%%%%%%%% Section 2 starts here %%%%%%%%%%%%
\section{Moduli problems for algebras over operads}\label{sec:fmps}

In this section we introduce the main notion of a formal moduli problem for algebras over a 
(augmented) $\base$-operad $\PP$ and describe the associated tangent complex.
Recall that a classical (commutative) formal deformation functor is a functor 
$$\begin{tikzcd}
	\cat{Ring}^\mm{art}_k\arrow[r] & \cat{Set}
\end{tikzcd}$$
from the category of Artin local commutative $k$-algebras satisfying (some version of) the Schlessinger conditions. To describe the notion of a formal moduli problem for algebras over a $\base$-operad $\PP$, we will replace the category of Artin local rings by the following category of Artin $\PP$-algebras.
\begin{definition}\label{def:trivialalgebra}
	Let $\base$ be a dg-category and let $\PP$ be a $\base$-operad. A \emph{trivial algebra} is a $\PP$-algebra obtained from a $\base$-module by restriction along the augmentation map $\PP\rt \base$. We will denote by
	$$
	\triv{c}{n}\coloneqq\base(c, -)[n]
	$$
	the trivial algebra whose underlying $\base$-module is free on a generator at the object $c\in \base$, of cohomological degree $-n$. We denote its cone by $\triv{c}{n, n+1}$.
\end{definition}
\begin{definition}[{cf.\ \cite[Definition 1.1.8]{DAGX}}]\label{def:smallalgebra}
	The $\infty$-category $\Art_{\PP}$ of \emph{Artin $\PP$-algebras} is the smallest full subcategory of the $\infty$-category of $\PP$-algebras such that:
	\begin{enumerate}
		\item the trivial algebra $\triv{c}{n}$ is Artin for every object $c\in\base$ and every $n\geq 0$.
		
		\item for any Artin $\PP$-algebra $A$ and any map $A\rt \triv{c}{n}$ with $n\geq 1$, the homotopy pullback $A\times^h_{\triv{c}{n}} 0$ is also Artin.
	\end{enumerate}
\end{definition}
By definition, being Artin is a homotopy-invariant condition: any algebra quasi-isomorphic to an Artin algebra is itself Artin. As we see in Example \ref{ex:com yields artin}, in the case $\mathscr P$ is the commutative operad, we recover the usual notion of an Artin local algebra (or rather, their augmentation ideals, which is equivalent data).
\begin{example}\label{ex:squarezeroext}
	Suppose that $A$ is an Artin $\PP$-algebra and that 
	$$\begin{tikzcd}
		\triv{c}{n}\arrow[r] & B\arrow[r, two heads] & A
	\end{tikzcd}$$
	is a strict square zero extension of $A$ by the trivial $A$-module $\triv{c}{n}$, for $n\geq 0$. Then $B$ is Artin as well. Indeed, pulling back to a quasi-free resolution of $A$ if necessary, we may assume that $A$ is a quasi-free $\PP$-algebra. In this case, we can write $B=A\oplus \triv{c}{n}$ as a split square zero extension, with differential of the form
	$$
	d\big(a, v\big) = \big(da, dv+\chi(a)\big) \qquad\qquad\qquad a\in A\qquad v\in \triv{c}{n}.
	$$
	The map $\chi$ defines a $\PP$-algebra map $\chi\colon A\rt \triv{c}{n+1}$, and one can easily verify that $A'$ fits into a strict pullback square (a homotopy pullback since the vertical maps are fibrations)
	$$\begin{tikzcd}
		\hspace{-10pt}B=A\oplus \triv{c}{n}\arrow[r]\arrow[d] & \triv{c}{n, n+1}\arrow[d]\\
		A\arrow[r, "\chi"{swap}] &  \triv{c}{n+1}.
	\end{tikzcd}$$
	Since $\triv{c}{n, n+1}$ is contractible, we find that $B\simeq A\times^h_{\triv{c}{n+1}} 0$ is Artin.
\end{example}
\begin{remark}\label{rem:smallalgebras}
	In fact, the argument from Example \ref{ex:squarezeroext} can be used to give the following chain-level description of the Artin $\PP$-algebras: they form the smallest class of $\PP$-algebras that is closed under quasi-isomorphisms and (strict) square zero extensions by the trivial modules $\triv{c}{n}$ with $n\geq 0$.
\end{remark}
\begin{definition}\label{def:verysmall}
	A $\PP$-algebra $A$ is \emph{strictly Artin} if it admits a filtration 
	$$
	A=A^{(n)}\rt \dots \rt A^{(0)}=0
	$$
	with the property that each $A^{(i)}\rt A^{(i-1)}$ is a square zero extension with kernel $\triv{c_i}{p_i}$, for some $c_i\in\base$ and $p_i\geq 0$.
\end{definition}
An iterated application of Example \ref{ex:squarezeroext} shows that a strictly Artin $\PP$-algebra is Artin. Conversely, if $\PP$ is a \emph{cofibrant} $\base$-operad, then every Artin $\PP$-algebra is quasi-isomorphic to a strictly Artin $\PP$-algebra (see Lemma \ref{lem:verysmall}).

\begin{remark}\label{rem:smallisperfect}
	The $\base$-module underlying a strictly Artin $\PP$-algebra is cofibrant, and quasi-freely generated (i.e. disregarding differentials) by finitely many generators of degree $\leq 0$. In particular, it is a \emph{perfect} left $\base$-module.
	
\end{remark}

Let us remark that in favourable cases, being Artin reduces to a condition at the level of the cohomology groups of a $\mathscr{P}$-algebra:
\begin{definition}\label{def:highlyconn}
	Let $\mathscr{X}$ be a coloured symmetric sequence of chain complexes (e.g.\ a $\base$-operad). 
	We will say that $\mathscr{X}$ is \emph{connective} if for all tuples of colours,
	$$
	H^*\big(\mathscr{X}(c_1, \dots, c_p; c_0)\big)=0 \qquad \text{for all } *>0.
	$$ 
	Furthermore, $\mathscr{X}$ is \emph{eventually highly connective} if for every $n\in\mathbb{Z}$, there exists an $p(n)\in\mathbb{N}$ such that $H^*(\mathscr{X})$ vanishes in degrees $*\geq n$ in arities $\geq p(n)$.
\end{definition}

\begin{lemma}\label{lem:smallhomotopygroups}
	Suppose that $\base=k$ is a field and that $\PP$ is a connective operad. Then a $\mathscr{P}$-algebra $A$ is Artin if and only if it satisfies the following conditions:
	\begin{itemize}
		\item $H^i(A)=0$ for $i>0$ and $i\ll 0$.
		\item each $H^i(A)$ is a finite-dimensional vector space.
		\item each $H^i(A)$ is a nilpotent module over the $H^0(\PP)$-algebra $H^0(A)$, in the following sense: consider the action maps
		\begin{equation}\label{diag:actiononhomotopy}\begin{tikzcd}
				\mu(a_1, \dots, a_{q-1}, -)\colon H^i(A)\arrow[r] & H^i(A)
		\end{tikzcd}\end{equation}
		for $\mu\in H^0(\PP)(q)$ and $a_i\in H^0(A)$. Then there exists an $n$ such that any $n$-fold composition of such (possibly different) action maps is zero.
	\end{itemize}
\end{lemma}
\begin{proof}
	Consider a homotopy pullback of $\PP$-algebras of the form $B\simeq A\times^h_{k[n]} 0$. Then the map $H^*(B)\rt H^*(A)$ on cohomology is a square zero extension of $H^0(\PP)$-algebras with kernel $k[n-1]$. Using this inductively, one verifies the above conditions for every Artin $\PP$-algebra $A$.
	
	For the converse, we may assume $\PP$ is a cofibrant operad and by homotopy transfer \cite[Section 10.3]{LodayVallette2012} that $A$ is minimal, so that $H^i(A)=A_i$. Let $i\leq 0$ be the minimal number such that $A_i\neq 0$. We claim that there exists a nonzero $v\in A_i$ such that $\mu(a_1, \dots, a_p, v)=0$ for any operation $\mu$. Assuming this, we find that $\big<v\big>\rt A\rt A/\big<v\big>$ is a square zero extension by $k[i]$. Example \ref{ex:squarezeroext} then shows that $A$ is Artin if $A/\big<v\big>$ is Artin, and the result follows by induction.
	
	Since we assumed $\mathscr{P}$ to be connective, degree reasons dictate that the claim is equivalent to the following: there exists a $v\in A_i$ on which the $H^0(\mathscr{P})$-algebra $A_0$ acts trivially.
	Let $n$ be the minimal number such that any $n$-fold composition of action maps \eqref{diag:actiononhomotopy} is zero. If $n=0$ then $A_0$ acts trivially on $A_i$ and we are done. For $n\geq 1$, there exists by assumption an $(n-1)$-fold composite of action maps which is nonzero. Any nontrivial element $v$ in its image is then annihilated by all of $A_0$.
\end{proof}
\begin{example}\label{ex:com yields artin}
	Let $\PP=\Com$ be the 1-reduced commutative operad. An Artin $\Com$-algebra is exactly a nonunital cdga $\mf{m}$ with finite dimensional cohomology groups which are zero in degrees $>0$ and $\ll 0$, and with $H^0(\mf{m})$ nilpotent. These are exactly the augmentation ideals of the unital Artin dg-$k$-algebras from \cite[Proposition 1.1.11]{DAGX}.
\end{example}
With the Artin $\PP$-algebras playing the role of local Artin dg-algebras, we now define a `$\PP$-algebraic formal moduli problem' to be a functor $\Art_{\PP}\rt \sS$ satisfying the Schlessinger conditions.
\begin{definition}\label{def:formalmoduli}
	Let $\PP$ be a $\base$-operad. A \emph{formal moduli problem over $\PP$} is a functor
	$$\begin{tikzcd}
		F\colon \Art_{\PP}\arrow[r] & \sS
	\end{tikzcd}$$
	to the $\infty$-category of spaces, satisfying the following two conditions:
	\begin{enumerate}
		\item\label{it:schlessinger1} $F(0)\simeq \ast$, where $0$ is the zero algebra.
		\item\label{it:schlessinger2} $F$ sends a pullback diagram in $\Art_{\PP}$ of the form
		\begin{equation}\label{diag:squarezeroFMP}\begin{tikzcd}
					A'\arrow[d]\arrow[r] & 0\arrow[d]\\
					A\arrow[r] & \triv{c}{n}
		\end{tikzcd}\end{equation}
		to a pullback square of spaces, for every colour $c\in\base$ and $n\geq 1$.
	\end{enumerate}
	We will denote the $\infty$-category of formal moduli problems over $\PP$ by $\FMP_{\PP}$.
\end{definition}
\begin{example}\label{ex:formal spectrum}
	To every $\PP$-algebra $B$ we can associate its \textit{formal spectrum}, a formal moduli problem $\mm{Spf}(B)\colon \Art_{\PP}\rt \sS$, given by $A\longmapsto \Map_{\PP}(B,A)$.
\end{example}
If one thinks of the functor $F$ as assigning to a $\PP$-algebra $A$ the space of deformations of a certain object $\mc{X}$, then the above conditions encode the usual obstruction theory for deformations along square zero extensions. Indeed, note that the pullback square \eqref{diag:squarezeroFMP} exhibits $A'$ as a square zero extension of $A$ by the trivial $A$-module $\triv{c}{n-1}$ (cf.\ Example \ref{ex:squarezeroext}). For every deformation $\mc{X}_A\in F(A)$, one obtains an `obstruction class'
$$
\mm{ob}(\mc{X}_A) \in \pi_0F(\triv{c}{n})
$$
by applying the functor $F$ to the map $A\rt \triv{c}{n}$. This obstruction class is zero if and only if $\mc{X}_A$ lifts to a deformation over the square zero extension $A'$.

Let us recall that there is a more cohomological way of interpreting these kinds of obstruction classes, as follows. Applying condition \ref{it:schlessinger2} in the case where $A=0$, one obtains a natural sequence of equivalences
$$\begin{tikzcd}
	F\big(\trivc\big)\arrow[r, "\sim"] & \Omega F\big(\triv{c}{1}\big)\arrow[r, "\sim"] & \Omega^2 F\big(\triv{c}{2}\big)\arrow[r, "\sim"] & \dots
\end{tikzcd}$$
In other words, the sequence of spaces $F\big(\triv{c}{n}\big)_{n\geq 0}$ forms an $\Omega$-spectrum $T(F)_c$.
\begin{definition}\label{def:tangent}
	We refer to $T(F)_c$ as the \emph{tangent complex of $F$ at $c$}, and to the spectra $T(F)_{c\in\base}$ collectively as the tangent complex of $F$.
\end{definition}

In fact, the tangent complex admits a canonical $\base^{\op}$-module structure, as we will see in the next lemma.
We denote the category of spectra by $\cat{Sp}$. Recall that there is a functor $\cat{Mod}_{\mathbb{Z}} \to \cat{Sp}$ sending $X$ to the spectrum formed by (the Dold--Kan image of) its iterated connective covers $\tau^{\leq 0}X, \tau^{\leq 1} X, \dots$, each of which is the looping of the next.
\begin{lemma}\label{lem:tangentcomplexmodule}
	For any formal moduli problem $F$ over $\PP$, the tangent complex has a unique inverse image under the forgetful functor
	\begin{equation}\label{diag:moduletospectra}\begin{tikzcd}
			\cat{Mod}_{\base^{\op}}\arrow[r] & \prod_{c\in \base^{\op}} \cat{Mod}_{\mathbb{Z}}\arrow[r] & \prod_{c\in \base^{\op}} \cat{Sp}
	\end{tikzcd}\end{equation}
	with the following property: for all free $\base^{\op}$-modules generated by $c\in \base^{\op}$ in degree $n\geq 0$, there is a natural equivalence
	$$
	\Map_{\cat{Mod}_{\base^{\op}}}\big(\trivop{c}{-n}, T(F)\big)\simeq F\big(\triv{c}{n}\big).
	$$
\end{lemma}
In other words, the obstructions to lifting deformations along square zero extensions are given by classes in the cohomology of the $\base^{\op}$-module $T(F)$.
\begin{remark}\label{rem:modulestospectra}
	The first functor in \eqref{diag:moduletospectra} takes a $\base^{\op}$-module $V$ to the collection of chain complexes $V(c)$. Equivalently, one can consider these as $H\mathbb{Z}$-module spectra (via the Dold-Kan correspondence \cite{SchwedeShipley}). The second functor then forgets the $H\mathbb{Z}$-module structure. The composite functor preserves both limits and colimits, since its left adjoint preserves compact generators: it sends $(0, \dots, 0, \mathbb{S}^n, 0, \dots, 0)$, with a sphere at place $c$, to the free $\base^\op$-module $\trivop{c}{n}$.
\end{remark}
\begin{proof}
	Uniqueness follows from the fact that the free modules $\trivop{c}{-n}$ with $n\geq 0$ generate the $\infty$-category $\cat{Mod}_{\base^{\op}}$ under colimits. Existence follows either from Theorem \ref{thm:axiomatic}, or from the following argument. Let $\cat{C}^{\leq n}\subseteq \Mod_{\base^{\op}}$ denote the subcategory generated by the free $\base^{\op}$-modules $\trivop{c}{n}$ under finite limits and let $\cat{C}=\colim_{n} \cat{C}^{\leq n}$ be their union. Consider the functors
	$$\begin{tikzcd}
		X_n\colon \big(\cat{C}^{\leq n}\big)^{\op}\arrow[r] & \sS;\hspace{4pt} V\arrow[r] & \Omega^n F\big(V[-n]^\vee\big).
	\end{tikzcd}$$
	These functors are well-defined because the trivial $\PP$-algebra $V[-n]^{\vee}$ is Artin for all $V\in\cat{C}^{\leq n}$. Because $F$ is a formal moduli problem, there are natural equivalences $X_{n}\simeq X_{n+1}\big|_{\cat{C}^{\leq n}}$, so that one obtains a functor
	$$\begin{tikzcd}
		X\colon \cat{C}^{\op}\arrow[r] & \sS ;\hspace{4pt} V\arrow[r, mapsto] & {\Omega^n F\big(V[-n]^\vee\big).}
	\end{tikzcd}$$
	This functor sends finite colimits in $\cat{C}$ to limits of spaces, since $F$ is a formal moduli problem. But $\cat{C}\subseteq \Mod_{\base^{\op}}$ contains all free $\base^\op$-modules and is closed under finite colimits, so it follows that $X$ is representable by a $\base^{\op}$-module \cite[Corollary 5.3.5.4, Proposition 5.3.5.11]{HTT}. Unravelling the definitions, this is exactly the desired $\base^{\op}$-module $T(F)$.
\end{proof}

%%%%%%%%%%%%% Section 3 starts here %%%%%%%%%%%%

\section{Examples and applications}\label{sec:examples}

In this section we discuss various examples and applications of our Theorem \ref{thm:mainthmkoszul}. We start in Section \ref{sec:comm def} by recalling the usual deformation theory along artinian algebras from this perspective, with emphasis on (commutative) deformations of algebraic structures.

In the deformations theory of operads and algebras over them, one also encounters deformation problems parametrized by \emph{permutative algebras}. Koszul dually, this roughly corresponds to the fact that deformation complexes are pre-Lie algebras. 
In Section \ref{sec:permutative} we more generally treat such permutative deformation problems, their pre-Lie tangent spaces and some concrete examples, while in Section \ref{sec:operadicdeftheory} we consider deformations along operads.

Finally, in Section \ref{sec:splendid}, we will give some further examples of operads satisfying the conditions of Theorem \ref{thm:mainthm}. Of these conditions, the most important one is condition \ref{it:splendidness}, which we discuss in some detail. This last section is mostly independent of the previous ones.

\subsection{Commutative and associative deformation theory}\label{sec:comm def}
Let us start by briefly reviewing how Theorem \ref{thm:mainthmkoszul} plays out in the classical cases of the $(k$-linear) commutative operad $\Com$ and associative operad $\As$. Note that, since we are always working with augmented operads, algebras over the operads $\Com$ and $\As$ are given by \emph{nonunital} commutative and associative algebras. Such algebras are equivalent to \emph{augmented} unital commutative algebras by adding a unit and taking the augmentation ideal:
$$\begin{tikzcd}[column sep=3pc]
\Alg_{\Com}\arrow[r, "\cong"{swap}, "A\leftrightarrow A^+"] & \cat{CAlg}^{\mm{aug}}_k & \Alg_{\As}\arrow[r, "\cong"{swap}, "A\leftrightarrow A^+"]& \Alg^{\mm{aug}}_{k}.
\end{tikzcd}$$
In particular, this identifies the subcategory of nonunital (commutative) Artin dg-algebras with the subcategory of augmented Artin dg-algebras and we can identify $F\in \FMP_{\Com}$ with a formal moduli problem $F\colon \cat{CAlg}^{\mm{art}}_k\rt \sS$ in the usual sense of \cite{DAGX} (and similarly in the associative case). 

The operads $\Com$ and $\As$ are binary Koszul with Koszul dual operads given by $\Lie$ and $\As$. Consequently, there are Koszul duality functors
$$\begin{tikzcd}
\mf{D}_{\Lie}\colon \Alg_{\Com}\arrow[r] & \Alg^{\op}_{\Lie} & & \mf{D}_{\As}\colon \Alg_{\As}\arrow[r] & \Alg^{\op}_{\As}
\end{tikzcd}$$
sending a commutative (associative) algebra $A$ to the linear dual of the cofree coLie (coassociative) coalgebra on $A[1]$, with differential induced by the multiplication on $A$. These constructions preserve quasi-isomorphisms, so that they indeed induce functors of $\infty$-categories. Theorem \ref{thm:mainthmkoszul}, or more precisely Theorem \ref{thm:biduals}, then yields equivalences
$$\begin{tikzcd}[column sep=3pc]
\FMP_{\Com}\arrow[r, yshift=1ex, "{T[-1]}"] & \Alg_{\Lie}\arrow[l, yshift=-1ex, "\MC", "\sim"{swap}] & \FMP_{\As}\arrow[r, yshift=1ex, "{T[-1]}"] & \Alg_{\As} \arrow[l, yshift=-1ex, "\MC", "\sim"{swap}]
\end{tikzcd}$$
where the functor $\MC$ is defined such that for a pair $(\mf{g}, A)$ of a Lie algebra and an Artin commutative algebra (resp.\ two associative algebras)
$$
\MC_{\mf{g}}(A)=\Map(\mf{D}(A), \mf{g}).
$$
Of course, these two cases of the theorem have already been established by Pridham \cite{Pridham} and Lurie \cite{DAGX}. 
To illustrate these equivalences (and motivate what will follow), let us recall how these equivalences can be used to study deformations of algebras and modules:
\begin{example}\label{ex:defmodule}
Suppose that $B$ is a connective associative algebra and $M$ a connective $B$-module. Then one can consider the associative formal moduli problem 
$$\begin{tikzcd}
\mm{Def}_M\colon \Art_{\As}\arrow[r] & \sS; & \mm{Def}_M(A)=\Mod_{A^+\otimes B}\times_{\Mod_{k\otimes B}} \{M\}
\end{tikzcd}$$
sending a (nonunital) associative algebra $A$ to the space of all $A^+\otimes B$-modules $M'$ equipped with a $B$-linear equivalence $k\otimes_{A^+} M'\simeq M$. This formal moduli problem is classified by the derived endomorphism algebra $\mm{RHom}_B(M, M)$ \cite[Corollary 5.2.15]{DAGX}, i.e.\ the endomorphism algebra of a cofibrant resolution of $M$ over $B$.
\end{example}
\begin{example}\label{ex:defalgebra}
Suppose that $\QQ$ is a connective $k$-operad and that $R$ is a connective $k$-linear $\QQ$-algebra. Then one can consider the deformation problem
$$\begin{tikzcd}
\mm{Def}_R\colon \Art_{\Com}\arrow[r] & \sS; & \mm{Def}_R(A)=\Alg_{A^+\otimes \QQ}\times_{\Alg_{\QQ}} \{R\}
\end{tikzcd}$$
sending a (nonunital) commutative algebra $A$ to the space of all $A^+$-linear $\QQ$-algebras $R'$ (equivalently, algebras over the tensor product $A^+\otimes \QQ$) equipped with a $\QQ$-algebra equivalence $k\otimes_{A^+} R'\simeq R$. The Lie algebra classifying this formal moduli problem is given by the derived derivations of $R$, i.e.\ the derivations of a cofibrant replacement of $R$ \cite{hinich2004deformations, nuiten2019koszul}.
\end{example}
\begin{remark}
When $\QQ$ is an operad concentrated in arity $1$ (i.e.\ an algebra), Example \ref{ex:defalgebra} is simply the restriction of Example \ref{ex:defmodule} to commutative Artin algebras. This is reflected in the fact that the Lie algebra classifying commutative deformations of a $B$-module $M$ is the Lie algebra underlying the associative algebra $\mm{RHom}_B(M, M)$. We will come back to this in Section \ref{sec:permutative} and (in more detail) in Section \ref{sec:naturality}.
\end{remark}
\begin{example}\label{ex:reduceddefalgebras}
For an operad $\QQ$ and a $\QQ$-algebra $(R, \mu)$, one can also consider the commutative formal moduli problem $\ol{\mm{Def}}_R$ sending $A$ to the space of $A^+$-linear $\QQ$-algebra structures on $A^+\otimes R$ with a $\QQ$-algebra equivalence $k\otimes_{A^+} (A^+\otimes R)\simeq R$. Note that this differs from Example \ref{ex:defalgebra}: we only consider deformations of $(R, \mu)$ whose underlying complex is the trivial deformation $A^+\otimes R$ of the complex underlying $R$ (i.e.\ we do not deform the differential).

Note that a $\QQ$-algebra structure on $R\otimes A^+$ is equivalent to the datum of a $k$-linear operad map to the ($A^+$-linear, \emph{non-augmented}) endomorphism operad of $A^+\otimes R$
$$
\QQ\rt \End_{A^+}(A^+\otimes R) \simeq \End(R)\otimes A^+.
$$
Here the equivalence follows from the fact that $A^+$ is Artin, so in particular perfect as a $k$-module. Using this, it follows that the space $\ol{\mm{Def}}_R(A^+)$ is equivalent to the space of dotted lifts in the following diagram in the $\infty$-category $\cat{Op}_k$ of $k$-linear operads:
\begin{equation}\label{diag:def endomorphism operad}\begin{tikzcd}
 & \End_k(R)\otimes A^+\arrow[d]\\
\QQ\arrow[r, "\mu"{swap}]\arrow[ru, dotted] & \End_k(R).
\end{tikzcd}\end{equation}
\end{example}
The commutative formal moduli problem from Example \ref{ex:reduceddefalgebras} makes sense more generally: instead of deforming an operad map into the endomorphism operad $\End_k(R)$, one can take any map of $k$-linear operads $\phi\colon \QQ\rt \PP$. To describe the associated Lie algebra, let us suppose that $\QQ=\Omega\CC$ arises as the cobar construction of a $k$-cooperad (which can always be arranged up to quasi-isomorphism, cf.\ Proposition \ref{prop:counit bar cobar}) and recall the following construction:
\begin{construction}[Deformation complex]\label{cons:def complex twisted}
Let $\PP$ be a (not necessarily augmented) $k$-linear operad, $\CC$ a $k$-cooperad with cokernel $\ol{\CC}$ of its coaugmentation, and $\QQ=\Omega\CC$. The complex
$$
\mf{g}=\Hom(\ol{\CC}, \PP)=\prod_p \Hom\big(\ol{\CC}(p), \PP(p)\big)^{\Sigma_p}
$$
of maps of symmetric sequences $\ol{\CC}\rt \PP$ comes equipped with a binary operation $\star$, such that $[\phi, \psi]=\phi\star \psi-(\pm)^{|\phi|\cdot |\psi|} \psi\star \phi$  endows $\mf{g}$ with the structure of a dg-Lie algebra  \cite[Proposition 6.4.7]{LodayVallette2012}. Informally, $\phi\star \psi(c)$ is obtained by taking the sum of all partial cocompositions of $c$ into two elements of $\ol{\CC}$, applying $\phi$ and $\psi$ to them and then applying the composition in $\PP$ (see also Construction \ref{constr:k-twisting morphisms} and Remark \ref{rem:convolutionoperad} for more details).

An operad map $\phi\colon \QQ=\Omega\CC\rt \PP$ then corresponds to a Maurer--Cartan element in $\mf{g}$ \cite[Theorem 6.5.7]{LodayVallette2012} (or see Proposition \ref{prop:Tw adjunction}). Given such a map $\phi\colon \QQ\rt \PP$, let us write
$$
\mf{g}^{\phi}=\big(\mf{g}, d+[\phi, -]\big)
$$
for the twisting of $\mf{g}$ by the Maurer--Cartan element $\phi$. This is again a dg-Lie algebra for the original bracket $[-, -]$.
\end{construction}
Let $\phi\colon \Omega\CC=\QQ\rt \PP$ be a map from an augmented to a not necessarily augmented $k$-linear operad. The deformations of $\phi$ determine a formal moduli problem 
\begin{equation}\label{eq:def operad maps}
\mm{Def}_{\phi\colon \QQ\to \PP}(A)=\Map_{\cat{Op}}(\QQ, \PP\otimes A^+)\times_{\Map_{\cat{Op}}(\QQ, \PP)} \{\phi\}
\end{equation}
sending each $A\in \Alg^{\mm{art}}_{\Com}$ to the space of deformations of $\phi$ (as in Diagram \eqref{diag:def endomorphism operad}). This deformation problem is classified by the Lie algebra $\mf{g}^{\phi}$, as illustrated by the following two observations:
\begin{lemma}\label{lem:maurer-cartan com}
Suppose that $\mf{g}$ is a Lie algebra and that $A$ is \emph{strictly} Artin. If one chooses a fibrant simplicial resolution of $\mf{g}$, then the space $\MC_\mf{g}(A)$ can be modeled by the simplicial set of Maurer--Cartan elements
\[
\MC_{\mathfrak{g}}(A)=\MC(\mathfrak{g}_\bullet\otimes A)\,.
\]
\end{lemma}
\begin{proof}
For strictly finite-dimensional $A$, the Lie algebra $\mf{D}(A)=\mf{D}_{\Lie}(A)$ freely generated by $A^\vee[-1]$, with differential given on generators by the linear dual of the product. If $A$ is furthermore nilpotent, then $\mf{D}_{\Lie}(A)$ is cofibrant: the dual of the adic filtration on $A$ yields a filtration on $\mf{D}(A)$ where each stage is obtained from the previous one by adding generators whose differential is contained in the previous stage. The mapping space $\Map_{\Lie}(\mf{D}(A), \mf{g})$ can then be modeled by the simplicial set of maps $\mf{D}(A)\rt \mf{g}_\bullet$. Since $\mf{D}(A)$ is quasi-free on $A^\vee[-1]$, such maps are determined by degree $1$ elements in $A\otimes \mf{g}_\bullet$ and compatibility with the differential translates into the Maurer--Cartan equation \cite[Corollary 11.1.4]{LodayVallette2012}.
\end{proof}
\begin{proposition}\label{prop:comm deformations from convolution}
Let $\phi\colon \Omega\CC=\QQ\rt \PP$ be as above and let $A$ be a strictly Artin commutative dg-algebra. Then there is an equivalence $\mm{Def}_{\phi\colon \QQ\to \PP}(A)\simeq \MC_{\mf{g}^{\phi}}(A)$.
\end{proposition}
\begin{proof}
Let us start by recalling the following property of the twisting of a Lie algebra by a Maurer--Cartan element: a degree $1$ element $\phi_A\in \mf{g}^\phi\otimes A$ is Maurer--Cartan element if and only if the element $\phi\otimes 1+\phi_A$ defines a Maurer--Cartan element in $\mf{g}\otimes A^+=(\mf{g}\otimes k)\oplus (\mf{g}\otimes A)$. Since $A$ is finite-dimensional, the Lie algebra $\mf{g} \otimes A^+$ coincides with the Lie algebra from Construction \ref{cons:def complex twisted} applied to $\CC$ and $\PP\otimes A^+$. By the discussion there, we obtain bijections
$$
\MC(\mf{g}^\phi\otimes A)\cong \MC(\mf{g}\otimes A^+)\times_{\MC(\mf{g})}\{\phi\} =\Hom_{\cat{Op}^{\dg}}(\Omega\CC, \PP\otimes A^+)\times_{\Hom_{\cat{Op}^{\dg}}(\Omega\CC, \PP)}\{\phi\}
$$
to the set of maps of dg-operads $\Omega\CC\rt \PP\otimes A^+$ which reduce $\phi$ modulo $A$.

Now note that Construction \ref{cons:def complex twisted} is natural in $\PP$. A fibrant simplicial resolution $\PP_\bullet$ of the operad $\PP$ with $\PP_0=\PP$ therefore gives rise to a simplicial Lie algebra $\mf{g}^\phi_\bullet$, which forms a simplicial resolution of $\mf{g}^\phi$. Lemma \ref{lem:maurer-cartan com} and the previous argument then show that the space $\MC_{\mf{g}^\phi}(A)$ can be modeled by the simplicial set
$$
\Hom_{\cat{Op}^{\dg}}(\Omega\CC, \PP_{\bullet}\otimes A^+)\times_{\Hom_{\cat{Op}^{\dg}}(\Omega\CC, \PP_{\bullet})}\{\phi\}.
$$
Since $\Omega\CC$ is a cofibrant operad and $\PP_\bullet\otimes A^+$ is a simplicial resolution of $\PP\otimes A^+$, the simplicial set $\Hom_{\cat{Op}^{\dg}}(\Omega\CC, \PP_{\bullet}\otimes A^+)$ is a model for the space $\Map_{\cat{Op}}(\Omega\CC, \PP\otimes A^+)$. 
Furthermore, note that the map of simplicial resolutions $\PP_\bullet\otimes A^+\rt \PP_\bullet$ is a Reedy fibration between simplicial resolutions (the relative matching maps are given by the surjections $\PP_n\otimes A^+\rt M_n(\PP_\bullet)\otimes A^+\times_{M_n(\PP_\bullet)} \PP_n$). Consequently, the map of simplicial sets $\Hom_{\cat{Op}^{\dg}}(\Omega\CC, \PP_{\bullet}\otimes A^+)\rt \Hom_{\cat{Op}^{\dg}}(\Omega\CC, \PP_{\bullet})$ is a Kan fibration \cite[Theorem 16.5.2]{hirschhorn2003modelcategories}.  The above pullback is therefore a homotopy pullback, so that it indeed models the space ${\mm{Def}}_{\phi\colon \QQ\to\PP}(A)$. 
\end{proof}

\subsection{Permutative deformation theory}\label{sec:permutative}
In this section we will spell out the contents of Theorem \ref{thm:mainthmkoszul} in a bit more detail for a less classical pair of Koszul dual operads: we will consider deformation problems whose associated Lie algebra arises from a pre-Lie algebra.

\begin{definition}
A \emph{pre-Lie algebra} is a vector space $V$ equipped with a bilinear operation $\{-,-\}$ such that for every $x,y,z\in V$, 
\[
\{\{x,y\},z\}-\{x,\{y,z\}\}=\{\{x,z\},y\}-\{x,\{z,y\}\}\,.
\]
Such pre-Lie algebras are algebras over a $k$-linear operad $\preLie$.
\end{definition}
\begin{definition}[\cite{Chapo}]
A \emph{permutative algebra}, or $\Perm$-algebra, is an associative algebra $(X,\cdot)$ such that for 
every $x,y,z\in X$, 
\begin{equation}\label{eq-perm-alg}
x\cdot(y\cdot z)=x\cdot (z\cdot y)\,. 
\end{equation}
One easily sees that permutative algebras are algebras over a $k$-linear operad $\Perm$ (which in fact arises from an operad in sets).
\end{definition}
The operads $\Perm$ and $\preLie$ are both binary quadratic, and it is not hard to verify that they are each others quadratic dual. Consequently, there is a functor of $\infty$-categories
\begin{equation}\label{diag:dual prelie}\begin{tikzcd}
\mf{D}\colon \Alg_{\Perm}\arrow[r] & \Alg_{\preLie}^{\op}; \quad A\arrow[r, mapsto] & \Bar_{\mathsf{copreLie}}(A)^\vee
\end{tikzcd}\end{equation}
sending a permutative algebra $A$ to the $k$-linear dual of its bar construction, i.e.\ of the cofree pre-Lie coalgebra generated by the suspension $A[1]$, with differential determined by the permutative structure on $A$ \cite{ginzburg1994koszul, LodayVallette2012} (or see Definition \ref{def:bar-cobar algebras}). This construction preserves quasi-isomorphisms and hence descends to a functor of $\infty$-categories.

Since the operads $\Perm$ and $\preLie$ are Koszul \cite{ChaLiv}, we then have the following special case of Theorem \ref{thm:mainthmkoszul} (made slightly more precise, as in Theorem \ref{thm:biduals}):
\begin{theorem}\label{thm:main prelie}
For every pre-Lie algebra $\mf{g}$, consider the functor
$$\begin{tikzcd}
\MC_{\mf{g}}\colon \Art_{\Perm}\arrow[r] & \sS; \quad A\arrow[r, mapsto] & \Map_{\preLie}\big(\mf{D}(A), \mf{g}\big).
\end{tikzcd}$$
Here the domain is the $\infty$-category of Artin permutative algebras, i.e.\ $A$ such that $H^*(A)$ is finite-dimensional and concentrated in degrees $\leq 0$, and such that $H^0(A)$ is nilpotent (see Lemma \ref{lem:smallhomotopygroups}). This determines an equivalence of $\infty$-categories
$$\begin{tikzcd}
\MC\colon \cat{Alg}_{\preLie}\arrow[r, "\sim"] & \cat{FMP}_{\Perm}.
\end{tikzcd}$$
\end{theorem}
\begin{remark}
The operad $\Perm$ fits into a sequence of Koszul binary quadratic operads 
\[
\As\longrightarrow\Perm\longrightarrow\Com\,,
\]
(compatible with quadratic data) whose quadratic dual sequence is 
\[
\As\longleftarrow \preLie\longleftarrow\Lie\,.
\]
This dual sequence sends the Lie bracket to the commutator of the pre-Lie structure (respectively, of the associative product). The equivalence from Theorem \ref{thm:mainthmkoszul} is natural in these operads, in the sense that the equivalence of Theorem \ref{thm:main prelie} and the equivalences from Section \ref{sec:comm def} fit into a commuting diagram of $\infty$-categories  (Proposition \ref{prop:naturalityfmp})
\[
\begin{tikzcd}
\cat{Alg}_{\As} \arrow[d, "\sim"{swap}] \arrow[r]		& \cat{Alg}_{\preLie} \arrow[d, "\sim"] \arrow[r]	& \cat{Alg}_{\Lie} \arrow[d, "\sim"]\\
\cat{FMP}_{\As} \arrow[r] & \cat{FMP}_{\Perm}\arrow[r] & \cat{FMP}_{\Com}. 
\end{tikzcd}\]
Here the top horizontal maps forget algebraic structure, while the bottom horizontal maps restrict formal moduli problems along the forgetful functors $\Art_{\Com}\rt \Art_{\Perm}\rt \Art_{\As}$. This tells us in particular that a commutative formal moduli problem lifts to a permutative one 
(respectively, to an associative one) if and only if the Lie bracket on its tangent 
complex arises from a pre-Lie structure (respectively, an associative structure). 

In fact, this same remark applies to any other map $\mathscr{P}\rt \mathscr{Q}$ of Koszul binary quadratic operads, with Koszul dual map $\mathscr{Q}^!\rt \mathscr{P}^!$.
\end{remark}

Before providing several examples, let us give a more explicit description of the value of the permutative formal moduli problem $\MC_{\mf{g}}$ classified by a pre-Lie algebra $\mathfrak{g}$ on a \emph{strictly} Artin permutative algebra $A$. To this end, note that the tensor product $\mathfrak{g}\otimes A$ of a pre-Lie algebra $\mf{g}$ and a permutative algebra $A$ is a Lie algebra under
\[
[x\otimes a,y\otimes b]\coloneqq (-1)^{|a||y|}\{x,y\}\otimes a\cdot b - (-1)^{(|a|+|x|)|y|}\{y, x\}\otimes a\cdot b\,.
\]
\begin{lemma}\label{lem:maurer-cartan prelie}
Suppose that $\mf{g}$ is a pre-Lie algebra and that $A$ is strictly finite-dimensional nilpotent permutative algebra $A$. If one chooses a fibrant simplicial resolution $\mf{g}_\bullet$ of $\mf{g}$, then the space $\MC_\mf{g}(A)$ can be modeled by the simplicial set of Maurer--Cartan elements
\[
\MC_{\mathfrak{g}}(A)=\MC(\mathfrak{g}_\bullet\otimes A)\,.
\]
\end{lemma}
\begin{proof}
The proof of Lemma \ref{lem:maurer-cartan com} carries over verbatim (or see Remark \ref{rem:maurer-cartan different resolution}).
\end{proof}
A similar result applies to any Koszul dual pair of binary quadratic operads.

\subsubsection*{Example: two-parameter permutative deformations.}
Let $A$ be the quotient of the free permutative algebra on two degree zero generators $\hbar,\epsilon$ by the relations $\hbar\cdot \epsilon=0=\epsilon^2$. 
As a vector space, it admits the following basis: $\{\hbar^n,\epsilon\cdot \hbar^m|n\geq1,m\geq0\}$. 
\begin{lemma}\label{lem:permutativedefs}
For every pre-Lie algebra $\mathfrak{g}$, the Maurer--Cartan set of $\mathfrak{g}\otimes A$ consists of pairs 
$(X,Y)$ of degree one elements in $\mathfrak{g}[\hbar]=\mf{g}\otimes_k k[\hbar]$ such that 
\[
X(0)=0, \qquad\quad dX+\{X,X\}=0 \qquad\text{and}\qquad \nabla_X(Y)=0,
\]
where 
$\nabla_X=d-(-1)^{|\mathbf{-}|}\{\mathbf{-},X\}$. 
\end{lemma}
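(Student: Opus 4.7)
The plan is to invoke Theorem~\ref{thm:MC functor}, which identifies $\MC_{\mathfrak{g}}(A)$ with the Maurer--Cartan set of the dg-Lie algebra $\mathfrak{g}\otimes A$, and then to unpack the equation $d\xi+\tfrac{1}{2}[\xi,\xi]=0$ explicitly using the very small multiplication table of $A$.

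First I would use the displayed basis to write a general degree one element as $\xi=X+bY$, where $X=\sum_{n\geq 1}x_n\otimes a^n$ (so $X(0)=0$ is built into the basis) and $Y=\sum_{m\geq 0}y_m\otimes a^m$. Since $A$ is concentrated in degree $0$ and all $x_n,y_m$ have degree one, every sign in the bracket formula collapses, and the computation reduces to the combinatorics of the permutative product in $A$. That product is essentially pinned down by the defining relations $ab=b^2=0$ together with associativity: one checks $a^n\cdot ba^m=0$ (pull an $ab$ to the left), $ba^m\cdot ba^l=0$ (a $b^2$ appears after associating), and the only nonzero mixed product is $ba^m\cdot a^n=ba^{m+n}$.

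Next I would compute $[\xi,\xi]=[X,X]+2[X,bY]+[bY,bY]$ (using the graded symmetry $[X,bY]=[bY,X]$ for odd-odd elements) term by term. The half-bracket $\tfrac12[X,X]=\sum_{n,k\geq 1}\{x_n,x_k\}\otimes a^{n+k}$ is precisely the pre-Lie square $\{X,X\}$ viewed in $\mathfrak{g}[a]$; the bracket $[bY,bY]$ vanishes because every product $ba^m\cdot ba^l$ does; and the cross term collapses to $[X,bY]=\sum_{n\geq 1,\,m\geq 0}\{y_m,x_n\}\otimes ba^{m+n}$, where exactly one of the two summands in the bracket formula survives (the other being killed by $a^n\cdot ba^m=0$). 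The main delicate point is this last asymmetry: the cross bracket produces a \emph{single} pre-Lie term $\{y_m,x_n\}$ rather than the naively symmetrised combination $\{y_m,x_n\}+\{x_n,y_m\}$, and this is forced by the one-sidedness of the permutative multiplication ($a\cdot b=0$ but $b\cdot a\neq 0$).

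Finally I would read off the Maurer--Cartan equation by comparing coefficients in the basis of $A$. The $a^N$-coefficient yields $dx_N+\sum_{n+k=N}\{x_n,x_k\}=0$, which assembles into the pre-Lie Maurer--Cartan equation $dX+\{X,X\}=0$. The $ba^N$-coefficient yields $dy_N+\sum_{m+n=N,\,n\geq 1}\{y_m,x_n\}=0$, which in $\mathfrak{g}[a]$ reads $dY+\{Y,X\}=0$; unwinding the definition of $\nabla_X$ for $|Y|=1$ gives $\nabla_X(Y)=dY+\{Y,X\}$, so this is precisely the second claimed equation.
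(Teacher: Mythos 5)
Your proof is correct and follows essentially the same route as the paper's: expand a degree-one element in the basis $\{a^n, ba^m\}$, translate the Maurer--Cartan equation into coefficientwise families of equations, and reassemble them into $dX+\{X,X\}=0$ and $dY+\{Y,X\}=0$. You are merely more explicit than the paper about the multiplication table of $A$ and about why the cross term yields only the one-sided bracket $\{y_m,x_n\}$, which is a worthwhile clarification but not a different argument.
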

\begin{proof}
A degree one element $\gamma$ in $\mathfrak{g}\otimes A$ is a (finite) linear combination 
\[
\gamma=\sum_{n\geq1}X_n\otimes \hbar^n+\sum_{m\geq0}Y_m\otimes \epsilon\hbar^m\,,
\]
where the $X_n$ and $Y_m$ have degree one. 
The Maurer--Cartan equation $d\gamma+\{\gamma,\gamma\}=0$ then translates into two infinite 
families of equations: looking at the coefficient of $\hbar^n$ for each $n\geq 1$ and the coefficient of $\epsilon\hbar^m$ for each $m\geq 0$ gives
\[
dX_n+\sum_{k+l=n}\{X_k,X_l\}=0\qquad\qquad\text{and}\qquad\qquad dY_m+\sum_{k+l=m}\{Y_k,X_l\}=0\,. 
\]
Writing $X\coloneqq\sum_{n\geq1}X_n\hbar^n\in \hbar\cdot \mathfrak{g}[\hbar]$ and $Y\coloneqq\sum_{m\geq0}Y_n\hbar^n\in\mathfrak{g}[\hbar]$, these two families of equations are equivalent to the two equations $dX+\{X,X\}=0$ and $dY+\{Y,X\}=0$.
\end{proof}
\begin{remark}
Observe that this is different from what we would get by looking at the Maurer--Cartan set of $\mathfrak{g}\otimes C$, where $C$ is the nonunital \emph{commutative} algebra generated by $\hbar, \epsilon$ subject to the relation $\epsilon^2=0$: the equation $\nabla_X(Y)=0$ would have 
to be replaced by $d_X(Y)=0$, where $d_X=d+[X,-]$. 
\end{remark}
The permutative algebra $A$ introduced above is not Artin, but each finite-dimensional quotient $A_n=A/(\hbar^n)$ is. Using Lemma \ref{lem:permutativedefs}, one sees for instance that the space of Maurer--Cartan elements in $\mathfrak{g}\otimes A_2$ 
is the space of pairs of $1$-cocycles $(Y,X)$ in $\mathfrak{g}$ together with a null-homotopy of $\{Y,X\}$: 
\[
\mm{MC}_{\mathfrak{g}}(A_2)\simeq 
\mathrm{hofib}\big((\tau^{\leq1}\mathfrak{g})^{\times 2}\xrightarrow{\{\,,\,\}}\tau^{\leq2}\mathfrak{g}\big)\,.
\]

\subsubsection*{Deforming trivial morphisms of operads}
A standard source of pre-Lie algebras is given by convolution pre-Lie algebras \cite[Section 6.4]{LodayVallette2012} (see also Remark \ref{rem:convolutionoperad}). We have already seen these pre-Lie algebras implicitly in Construction \ref{cons:def complex twisted}: if $\mathscr{C}$ is a $k$-cooperad, $\ol{\CC}$ the cokernel of its coaugmentation and $\mathscr{P}$ be a $k$-linear operad (not necessarily augmented), then the \emph{convolution pre-Lie algebra} is given by
$$
\mf{g}=\prod_{p\geq 0} \Hom\big(\ol{\mathscr{C}}(p),\mathscr{P}(p)\big)^{\Sigma_p}
$$
with pre-Lie structure given by the operation $\star$ built from the partial composition of $\PP$ and partial cocomposition of $\ol{\CC}$ (see Construction \ref{cons:def complex twisted} or Construction \ref{constr:k-twisting morphisms}). Note that $\mf{g}$ arises as the totalization of a $\mathbb{Z}_{\geq 0}$-graded pre-Lie algebra $\mf{g}^\mm{gr}$, where
$$
\mathfrak{g}^{\mm{gr}}(p)=\Hom\big(\ol{\mathscr{C}}(p),\mathscr{P}(p)\big)^{\Sigma_p}
$$
and the pre-Lie operation $\star$ has weight $-1$ with respect to the $\mathbb{Z}_{\geq 0}$-grading.

To describe the permutative deformation problem classified by the pre-Lie algebra $\mf{g}$, observe that for every permutative algebra $A$, there is a \emph{nonunital} operad $\PP\otimes A$, such that:
\begin{itemize}
\item the underlying symmetric sequence is given by $(\mathscr{P}\otimes A)(n)\coloneqq\mathscr{P}(n)\otimes A$;
\item the composition operation reads as 
\[
(\psi_0\otimes a_0)\circ(\psi_1\otimes a_1,\dots,\psi_p\otimes a_p)\coloneqq\pm\big(\psi_0\circ (\psi_1,\dots,\psi_p)\big)\otimes a_0\cdot a_1\cdots a_p\,
\]
where $\pm$ is a Koszul sign. Associativity of the composition follows from the associativity of permutative algebras and the permutative axiom \eqref{eq-perm-alg}, while the equivariance/commutativity directly follows from \eqref{eq-perm-alg}. 
\end{itemize}
Now let $\ol{\Omega}\CC$ denote the augmentation ideal of the cobar construction of $\CC$ and consider the functor
\begin{equation}\label{eq:perm def of operad map}\begin{tikzcd}
\mm{Def}_{\ul{0}\colon \Omega\CC\to \PP}\colon \Art_{\Perm}\arrow[r] & \sS;\hspace{4pt} A\arrow[r, mapsto] & \Map_{\cat{Op}^\mm{nu}}\Big(\ol{\Omega}\CC, \PP\otimes A\Big)
\end{tikzcd}\end{equation}
sending every Artin permutative algebra to the space of nonunital operad maps $\ol{\Omega}\CC\rt \PP\otimes A$. Using that $\PP\otimes (-)$ sends homotopy pullbacks of permutative algebras to homotopy pullbacks of nonunital operads, one sees that $F$ is a permutative FMP. Adding units to our operads, one can think of $F$ as the FMP describing permutative deformations of the \emph{trivial} map of operads
$$\begin{tikzcd}
\ul{0}\colon \Omega\CC\arrow[r, "\epsilon"] & k\arrow[r, "1"] & \PP.
\end{tikzcd}$$
In particular, the restriction of $\Def_{\ul{0}\colon \Omega\CC\to \PP}$ to Artin commutative algebras coincides with the commutative deformation problem \eqref{eq:def operad maps} for the trivial map $\ul{0}$. Note that this map $\ul{0}$ corresponds to the zero Maurer--Cartan element in $\mf{g}$, i.e.\ the Lie algebra $\mf{g}^{\ul{0}}$ is simply the Lie algebra underlying the convolution pre-Lie algebra $\mf{g}$.

\begin{proposition}\label{prop:permutative deformations from convolution}
Let $\CC$ be a $k$-cooperad, $\PP$ a (not necessarily augmented) $k$-linear operad, and $\mf{g}$ their convolution pre-Lie algebra. For every strictly Artin permutative algebra $A$, there is an equivalence $$
\mm{Def}_{\ul{0}\colon \Omega\CC\to \PP}(A) \simeq \MC_{\mf{g}}(A).
$$
\end{proposition}
In other words, the convolution pre-Lie algebra $\mf{g}$ classifies deformations of the trivial operad map $\Omega\CC\rt \PP$. When $\PP=\End(V)$ is the endomorphism operad of a complex $V$, $\mf{g}$ therefore classifies deformations of the \emph{trivial} $\Omega\CC$-algebra structure on $V$.
\begin{proof}
The proof is analogous to that of Proposition \ref{prop:comm deformations from convolution}: a fibrant simplicial resolution $\PP_\bullet$ of $\PP$ induces a simplicial resolution $\mf{g}_\bullet$ of the pre-Lie algebra $\mf{g}$  and Lemma \ref{lem:maurer-cartan prelie} then asserts that $\MC_{\mf{g}}(A)$ can be modeled by the simplicial set of Maurer--Cartan elements $\MC(\mf{g}_\bullet\otimes A)$. Unraveling the definitions, one sees that this simplicial set coincides with the simplicial set of maps of nonunital operads $\ol{\Omega}\CC\rt \PP_\bullet\otimes A$, which in turn models the mapping space $\Map_{\cat{Op}^{\mm{nu}}}(\ol{\Omega}\CC, \PP\otimes A)=\mm{Def}_{\ul{0}\colon \Omega\CC\to \PP}(A)$.
\end{proof}

\subsection{Operadic deformation theory}\label{sec:operadicdeftheory}
In the previous section, we have seen how the convolution pre-Lie algebra
$$
\mf{g}=\prod_{p\geq 0} \Hom\big(\ol{\CC}(p), \PP(p)\big)^{\Sigma_p}
$$
classifies permutative deformations of the trivial map of operads $\Omega\CC\rt k\rt \PP$. In fact, the pre-Lie algebra $\mf{g}$ arises from an even richer algebraic structure: the sequence of mapping complexes $\Hom\big(\ol{\CC}(p), \PP(p)\big)$ forms a nonunital operad, the \emph{convolution operad} of $\CC$ and $\PP$. In this section, we will explain how this additional algebraic structure can be understood from a deformation theoretic point of view.

Let us start by considering formal moduli problems that are classified by (monochromatic) nonunital operads. Here we take a nonunital operad to be a symmetric sequence equipped with partial composition maps satisfying the usual associativity and equivariance conditions (see also Definition \ref{def:op of symops} below). In particular, the category of nonunital operads is equivalent to that of (\emph{augmented}) $k$-operads by the functors $\PP\mapsto \PP^+$ adding a unit (in arity $1$) and $\PP\mapsto \ol{\PP}$ taking the augmentation ideal. In particular, in analogy to the functor \eqref{diag:dual prelie}, we have a functor
$$\begin{tikzcd}
\mf{D}\colon \cat{Op}^\mm{nu}\arrow[r] & \cat{Op}^\mm{nu, op}; \quad \mathscr{P}\arrow[r, mapsto] & \mf{D}(\mathscr{P})=\ol{\Bar}(\mathscr{P}^+)^\vee
\end{tikzcd}$$
taking (the augmentation ideal of) the dual operad in the sense of Definition \ref{def:dualoperad}. We can then mimick the construction in Theorem \ref{thm:main prelie} to associate to a nonunital operad a formal moduli problem indexed by the $\infty$-category $\Art_{\cat{Op}}$ of \emph{Artin nonunital operads}. Theorem \ref{thm:mainthm} (or more precisely, Theorem \ref{thm:biduals}) then yields:
\begin{theorem}\label{thm:main operad}
For every nonunital operad $\mathscr{P}$, consider the functor
$$\begin{tikzcd}
\MC_{\mathscr{P}}\colon \Art_{\cat{Op}}\arrow[r] & \sS; \quad \mathscr{R}\arrow[r, mapsto] & \Map_{\cat{Op}^\mm{nu}}\big(\mf{D}(\mathscr{R}), \mathscr{P}\big). 
\end{tikzcd}$$
This establishes an equivalence of $\infty$-categories $\MC\colon \cat{Op}^\mm{nu}\rt \cat{FMP}_{\cat{Op}}$.
\end{theorem}
\begin{remark}\label{rem:idempotent complete}
Throughout this section, it will be convenient to slightly enlarge the subcategory $\ArtOp\subseteq \cat{Op}^\mm{nu}$ of Artin operads so that it is also closed under \emph{retracts}. This will not change the theory, since an $\sS$-valued diagram on retracts of Artin operads is determined uniquely by its value on Artin operads as in Definition \ref{def:smallalgebra}.
The Artin operads in this slightly broader sense then have the following simple description, as in Lemma \ref{lem:smallhomotopygroups}: they are those nonunital symmetric operads $\mathscr{R}$ such that $H^*(\mathscr{R})$ is concentrated in nonpositive degrees, of finite total dimensional (summing over degrees and arities) and $H^0(\mathscr{R})$ is a nilpotent operad. 
\end{remark}
\begin{example}\label{ex:coEnd FMP}
Let $V$ be a chain complex in degrees $\leq 0$ and consider the functor associating to each Artin nonunital operad $\mathscr{R}$ the space of $\mathscr{R}^+$-algebras $A$ together with an equivalence $k\circ_{\mathscr{R}^+} A\simeq V$ after inducing along the augmentation of $\mathscr{R}^+$. One can show that this defines an operadic formal moduli problem (using that the composition product $\circ$ is exact in the first variable). The operad classifying this is the \emph{coendomorphism operad} 
$$
\mm{coEnd}(V)(n)=\Hom(V, V^{\otimes n}).
$$ 
To exemplify this, suppose that $\mathscr{R}$ is strictly finite dimensional, nilpotent and in degrees $\leq 0$. Then there is an isomorphism $\mf{D}(\mathscr{R})=\ol{\Bar}(\mathscr{R})^\vee\cong \ol{\Omega}(\mathscr{R}^{+\vee})$ between the dual operad of $\mathscr{R}$ and the cobar construction of the cooperad $\mathscr{R}^{+\vee}$ (which is a cofibrant operad). An operad map $\phi\colon \ol{\Omega}(\mathscr{R}^{+\vee})\rt \mm{coEnd}(V)$ is uniquely determined by its restriction to the generators. This restriction in turn corresponds to a collection of equivariant maps $\delta_n\colon \mathscr{R}^\vee(n)\otimes V\rt V^{\otimes n}$, or equivalently (since $\mathscr{R}$ is finite-dimensional), to a map $\delta\colon V\rt \mathscr{R}\circ V$. Unraveling the definitions, $\phi$ is a map of operads precisely when $\mathscr{R}^+\circ V$ is a dg-$\mathscr{R}^+$-algebra whose differential is given on generators by $d+\delta$. Such an algebra determines a deformation of $V$ over $\mathscr{R}$.

Notice that this recovers Example \ref{ex:defmodule} by seeing an associative algebra as an operad in arity $1$.

\end{example}
To see how Theorem \ref{thm:main operad} fits into the framework of Theorem \ref{thm:mainthm}, let us recall how (monochromatic) operads themselves are algebras over a coloured operad.

\subsubsection*{The operad of nonunital operads}
Let us start by recalling the operad whose algebras are \emph{nonunital nonsymmetric operads}. To this end, consider the linear category $\nonsym$ having nonnegative integers as objects and morphisms
$$
\nonsym(m,n)\coloneqq\begin{cases}
0\quad\mathrm{if}\quad m\neq n \\
k\quad\mathrm{else}.
\end{cases}
$$
Note that $\nonsym$-operads are just (augmented) $\mathbb{Z}_{\geq 0}$-coloured operads. Nonunital nonsymmetric operads then arise as algebras over the following quadratic operad:

\begin{definition}[see \cite{van2003coloured}]\label{def:op of nsops}
Let $a,b$ and $c$ denote colours in $\mathbb Z_{\geq 0}$ and let $\mathscr{O}^{\mm{ns}}$ be the $\nonsym$-operad generated by $\circ_i\colon (a, b)\rt a+b-1$ for $i=1, \dots, a$, subject to the relations
		\begin{equation}\label{diag:compnonsymV2}\begin{tikzcd}[column sep=2.6pc]
		{(a, b, c)} \arrow[r, "{(a, \circ_j)}"]\arrow[d, "{(\circ_i, c)}"{swap}] & {(a, b+c-1)}\arrow[d, "{\circ_i}"]\\
					{(a+b-1, c)}\arrow[r, "{\circ_{i+j-1}}"{swap}] & a+b+c-2
		\end{tikzcd}\end{equation}
		for $1\leq i\leq a$ and $1\leq j\leq b$, 
		and
		\begin{equation}\label{diag:compnonsym2V2}\begin{tikzcd}[column sep=2.6pc]
					{(a, b, c)} \arrow[d, "{(\circ_j,c)}"{swap}]\arrow[r] & {(a,c,b)}\arrow[r, "{(\circ_i,b)}"] & {(a+c-1,b)}\arrow[d, "{\circ_{j+c-1}}"]\\
					{(a+b-1, c)}\arrow[rr, "\circ_{i}"{swap}] && a+b+c-2
		\end{tikzcd}\end{equation}
		for $1\leq i<j\leq a$. 
\end{definition}

To discuss the case of nonunital symmetric operads, let us introduce another linear category $k[\Sigma]$, having objects the nonnegative integers, and 
\[
k[\Sigma](p,q)\coloneqq\left\{\begin{array}{ll}
0 & \text{if } q\neq p \\
k[\Sigma_p] & \text{else.}\end{array}\right.
\]
Note that $k[\Sigma]\simeq k[\Sigma]^{\op}$ by taking inverse permutations. There is a quadratic $k[\Sigma]$-operad $\mathscr{O}^{\mm{sym}}$ whose algebras are nonunital symmetric operads:

\begin{definition}[{see \cite[Definition 1.7]{dehling2015symmetric}}]\label{def:op of symops}
	 Let $\mathscr{O}^{\mm{sym}}$ be the unital \textbf{non-augmented} operad generated by $\circ_i\colon (a, b)\rt a+b-1$ as in Definition \ref{def:op of nsops}
		and $\sigma\colon a\rt a$ for $\sigma\in\Sigma_a$, subject to the equations \eqref{diag:compnonsymV2} and \eqref{diag:compnonsym2V2}, 
		together with the group structure equations
		$$
		\sigma\cdot \tau = (\sigma\tau)\colon a\rt a \qquad\qquad \sigma, \tau\in\Sigma_a, 
		$$ 
		and the equations
		\begin{equation}\begin{tikzcd}
					{(a, b)}\arrow[r, "{(a, \tau)}"]\arrow[d, "\circ_i"{swap}] & {(a, b)}\arrow[d, "\circ_i"] & & {(a, b)}\arrow[r, "{(\sigma, b)}"]\arrow[d, "\circ_i"{swap}] & {(a, b)}\arrow[d, "\circ_{\sigma(i)}"]\\
					a+b-1\arrow[r, "\tau/i"{swap}] & a+b-1 & & a+b-1\arrow[r, "i/\sigma"{swap}] & a+b-1
		\end{tikzcd}\end{equation}
		where for $\tau/i$ and $i/\sigma$ are some permutations of $a+b-1$ determined from $\tau$, $\sigma$ and the number $i$.
				Finally, we impose the relation that the identity of the group $1_a\in \Sigma_a$ is identified with the operadic unit $1_a \in \mathscr{O}^{\mm{sym}}(a;a)$.

\end{definition}
\begin{remark}
Unlike the usual conventions in this manuscript, we are forced to consider $\mathscr{O}^{\mm{sym}}$ as a unital non-augmented operad. The reason for this is that there is no way to define an augmentation since the relations $\sigma \cdot \sigma^{-1}$ produce the unit. Comparing with \cite{dehling2015symmetric}, we notice that there are other problems with this presentation that heuristically come from seeing the symmetric groups as additional structure: the presentation is not quadratic (and e.g.\ a result like Proposition \ref{prop:vdL} is not expectable for the operad of symmetric operads).
\end{remark}
Instead, it is more convenient to consider $\mathscr{O}^{\mm{sym}}$ as an operad relative to $k[\Sigma]$. In this case, there is a natural augmentation $\mathscr{O}^{\mm{sym}}\rt k[\Sigma]$, so that $\mathscr{O}^{\mm{sym}}$ is a $k[\Sigma]$-operad in the sense of Section \ref{sec:operads over dg-cat}. We will show in Section \ref{sec:relativekoszul} that $\mathscr{O}^\mm{sym}$ is Koszul self-dual relative to $k[\Sigma]$, i.e.\ that $\mathscr{O}^\mm{sym}\simeq \mf{D}(\mathscr{O}^{\mm{sym}})$. Theorem \ref{thm:main operad} then arises as a special case of Theorem \ref{thm:mainthm}:
\begin{proof}[Proof of Theorem \ref{thm:main operad}]
We apply the main Theorem \ref{thm:mainthm} (or more precisely, Theorem \ref{thm:biduals}) to the $k[\Sigma]$-operad $\mathscr{O}^\mm{sym}$. To see that $\mathscr{O}^\mm{sym}$ indeed satisfies condition (3), note that $\mathscr{O}^\mm{sym}(1)=k[\Sigma]$, so that
	$$
	k[\Sigma]\circ^h_{\mathscr{O}^\mm{sym}} k[\Sigma] \simeq \Bar_{k[\Sigma]}(\mathscr{O}^\mm{sym})\simeq \big(\mathscr{O}^\mm{sym}\{- 1\}\big)^\vee.
	$$
	To see that this is concentrated in increasingly negative cohomological degrees as the arity increases, one uses the same argument as in the proof of Theorem \ref{thm:mainthmkoszul}. Finally, the functor $\mf{D}\colon \Alg_{\mathscr{O}^\mm{sym}}\rt \Alg_{\mathscr{O}^\mm{sym}}^{\op}$ appearing in Theorem \ref{thm:biduals} coincides with the functor taking dual operads, by Proposition \ref{prop:bar of operad is operadic bar}.
\end{proof}

\subsubsection*{Connecting permutative and operadic deformation theories}
Let us now spell out the relation between permutative and operadic deformation theory. First, we have seen in Section \ref{sec:permutative} that every permutative algebra $A$ defines functorially a nonunital symmetric operad $L(A)$, given in each arity by the permutative algebra $A$ and with all partial compositions given by the product of $A$. When $A$ is Artin, $L(A)$ is not quite Artin (it is concentrated in all arities, hence not finite-dimensional). However, one can show (Lemma \ref{lem:pro-artin}) that it arises as the limit of a (canonical) object $\hat{L}(A)\in \cat{Pro}(\Art_{\cat{Op}})$, given by a \emph{pro-system} of Artin operads $\dots\to L(A)_1\to L(A)_{0}$ which is eventually constant in each individual arity.

On the other hand, if $\PP$ is a nonunital operad, then $\prod_{p\geq 0} \PP(p)^{\Sigma_p}$ can be equipped with the structure of a pre-Lie algebra \cite[Proposition 5.3.17]{LodayVallette2012}. The equivalences from Theorem \ref{thm:main prelie} and \ref{thm:main operad} are intertwined by these constructions:
\begin{proposition}\label{prop:operadic to permutative}
There is a commuting square of $\infty$-categories
$$\begin{tikzcd}
\cat{Op}^{\mm{nu}}\arrow[d, "\prod_p (-)^{\Sigma_p}"{swap}]\arrow[r, "\sim"] & \cat{FMP}_{\mathscr{O}^\mm{sym}}\arrow[d, "\hat{L}^*"]\\
\Alg_{\preLie}\arrow[r, "\sim"] & \cat{FMP}_{\Perm}.
\end{tikzcd}$$
Here the right vertical functor sends an operadic formal moduli problem $F$ to the permutative formal moduli problem $\hat{L}^*F(A)= F(\hat{L}(A))\coloneqq\lim_n F(L(A)_n)$.
\end{proposition}
In other words, a pre-Lie algebra $\mf{g}$ arises as $\mf{g}=\prod_{p\geq 0} \PP(p)^{\Sigma_p}$ if and only if the corresponding permutative deformation problem lifts to an operadic deformation problem.

Before addressing the proof of Proposition \ref{prop:operadic to permutative}, let us first describe its implications to the deformation theory of operad maps.

\begin{example}[Deforming trivial morphisms of operads (continued)]
Let $\mathscr{C}$ be a $k$-cooperad and $\PP$ a $k$-linear operad. The convolution pre-Lie algebra $\mathfrak{g}=\prod_{p\geq 0}\Hom_k\big(\ol{\mathscr{C}}(p),\PP(p)\big){}^{\Sigma_p}$ then arises from the (nonunital) \emph{convolution operad}
$$
\Conv(\mathscr{C},\PP)(p)\coloneqq\Hom_k\big(\ol{\mathscr{C}}(p),\PP(p)\big),
$$
whose operad structure arises from the convolution of the (nonunital) cocomposition on $\ol{\mathscr{C}}$ and the composition on $\PP$. 

The operadic deformation problem associated to the convolution operad sends an Artin operad $\mathscr{R}$ to the space of nonunital operad maps $\ol{\Omega}(\CC)\rt \PP\otimes_{\mm{H}} \mathscr{R}$ to the Hadamard (i.e.\ aritywise) tensor product of $\PP$ and $\mathscr{R}$. To see this, note that for a strictly Artin operad $\mathscr{R}$, the following maps are in 1-1 correspondence:
\begin{enumerate}\setlength{\itemsep}{1pt}
\item nonunital operad maps $\ol{\Omega}(\CC)\rt \PP\otimes_{\mm{H}} \mathscr{R}$,
\item twisting morphisms $\ol{\CC}\drt \PP\otimes_{\mm{H}} \mathscr{R}$,
\item twisting morphisms $\mathscr{R}^\vee\drt \Conv(\CC, \PP)$,
\item nonunital operad maps $\mf{D}(\mathscr{R})\rt \Conv(\CC, \PP)$.
\end{enumerate}
Here (1) $\Longleftrightarrow$ (2) and (3) $\Longleftrightarrow$ (4) follow from the universal property of the cobar construction, together with the fact that $\mf{D}(\mathscr{R})= \ol{\Bar}(\mathscr{R}^+)^\vee\cong \ol{\Omega}(\mathscr{R}^{+\vee})$ for finite dimensional $\mathscr{R}$. The bijection (2) $\Longleftrightarrow$ (3) follows by unraveling the definition of a twisting morphism (see e.g.\ Definition \ref{constr:k-twisting morphisms}); at the level of the underlying maps of symmetric sequences, it simply sends a $\Sigma_p$-invariant map $\ol{\CC}(p)\rt \PP(p)\otimes \mathscr{R}(p)$ to the adjoint map $\mathscr{R}(p)^\vee\rt \Hom(\CC(p), \PP(p))^{\Sigma_p}$. Replacing $\PP$ by a fibrant resolution, one obtains an equivalence between spaces of operad maps as in (1) and (4).

Proposition \ref{prop:permutative deformations from convolution} now asserts that the permutative deformation functor associated with $\mathfrak{g}=\prod \Conv(\CC, \PP)(p)^{\Sigma_p}$ sends a strictly Artin permutative algebra $A$ to the mapping space
$$
\lim_n \Map_{\cat{Op}^\mm{nu}}\big(\ol{\Omega}(\mathscr{C}), \mathscr{P}\otimes_{\mm{H}} L(A)_n\big)\simeq \Map_{\cat{Op}^{\mm{nu}}}(\ol{\Omega}(\mathscr{C}), \mathscr{P}\otimes_{\mm{H}} L(A)\big)
$$
(here we use that the tower of $L(A)_n$ is eventually constant in each arity). But the Hadamard tensor product $\mathscr{P}\otimes_{\mm{H}} L(A)$ simply coincides with the levelwise tensor product of $\PP$ with the permutative algebra $A$. In other words, we precisely recover the permutative deformation problem $\mm{Def}_{\ul{0}\colon \Omega\CC\to \PP}$ \eqref{eq:perm def of operad map}.
\end{example}

In the remainder of this section, we will prove Proposition \ref{prop:operadic to permutative} by describing the relation between the operad $\mathscr{O}^\mm{sym}$ and the operad $\Perm$. This requires comparing operads defined over a different base: $\Perm$ is defined over the base field $k$ and $\mathscr{O}^\mm{sym}$ over $k[\Sigma]$. 
To do this, note that each $k$-linear symmetric sequence $\mathscr{M}$ gives rise to a $k[\Sigma]$-symmetric sequence 

	$$
L(\mathscr{M})(n_1, \dots, n_k; n_0)=\left\{\begin{array}{cl} \mathscr{M}(k) & \text{if } n_1+\dots+n_k=n_0+k-1\\
	0 & \text{otherwise}\end{array}\right.
$$
carrying a trivial $\Sigma_{n_1} \times \dots \times \Sigma_{n_0}$ action. This defines a functor 
$L\colon \cat{BiMod}^{\Sigma, \dg}_{k} \to \cat{BiMod}^{\Sigma, \dg}_{k[\Sigma]}$.

\begin{proposition}\label{prop:adjunction Perm Osym}
The functor $L$ extends to a functor 
$$L\colon \cat{Op}^{\dg}_k\rt \cat{Op}^{\dg}_{k[\Sigma]}$$
which preserves (Koszul) quadratic operads and their quadratic duals. Furthermore, at the level of algebras there is an adjoint pair
\[\begin{tikzcd}
	L\colon \cat{Alg}_{\mathscr{Q}}^{\dg}\arrow[r, yshift=0.8ex] & \cat{Alg}_{L(\mathscr{Q})}^{\dg}\colon R.\arrow[l, yshift=-0.8ex]
\end{tikzcd}\]
If $A$ is a $\QQ$-algebra, then the underlying $k[\Sigma]$-module of $L(A)$ is the constant one $L(A)(p)=A$. The right adjoint $R$ is given by $R(\cat{A})=\prod_p \cat{A}(p)^{\Sigma_p}$.
\end{proposition}

The proof of Proposition \ref{prop:adjunction Perm Osym} is not difficult, but due to some technical points we leave the details to Section \ref{sec:adjunctions for Perm and Osym} (where quadratic duality of $k[\Sigma]$-operads is discussed as well). For now, let us point out that the explicit formula of the functor $L\colon \cat{Op}^{\dg}_k\rt \cat{Op}^{\dg}_{k[\Sigma]}$ shows that it commutes with linear duality and preserves quasi-isomorphisms. Furthermore, Proposition \ref{prop:adjunction Perm Osym} implies that $\mf{D}(L(\QQ))\simeq L(\QQ)^!\cong L(\QQ^!)$ for any binary Koszul operad $\QQ$, and that Theorem \ref{thm:mainthmkoszul} applies to such operads as well (cf.\ Observation \ref{obs:relative koszul case}).

We can now express the fact that every permutative algebra gives rise to an operad (constant in every arity) in terms of a map of $k[\Sigma]$-operads.

\begin{lemma}
	There is a natural map of binary quadratic $k[\Sigma]$-operads $\mathscr{O}^\mm{sym}\rt L(\Perm)$. Koszul dually, this induces a map of binary quadratic $k[\Sigma]$-operads $L(\preLie)\rt \mathscr{O}^\mm{sym}$.
\end{lemma}
\begin{proof}
	By Proposition \ref{prop:adjunction Perm Osym}, the $k[\Sigma]$-operad $L(\Perm)$ is generated by operations $\mu\colon (a, b)\to a+b-1$ which are $\Sigma_a\times\Sigma_b\times \Sigma_{a+b-1}$-invariant, subject to the associative and permutative relation \eqref{eq-perm-alg}. At the level of quadratic data, the map $\mathscr{O}^\mm{sym}\rt L(\Perm)$ then sends each operation $\circ_i\colon (a, b)\to a+b-1$ to the operation $\mu\colon (a, b)\to a+b-1$.
\end{proof}

Notice that both adoint functors $L$ and $R$ from Proposition \ref{prop:adjunction Perm Osym} preserve quasi-isomorphisms (they actually form a Quillen pair), so that they induce an adjoint pair on $\infty$-categories.
\begin{lemma}\label{lem:pro-artin}
Let $\QQ$ be a Koszul binary quadratic operad in degree $0$ and $A\in \Art_{\QQ}$. Then the formal moduli problem
$$\begin{tikzcd}
\mm{Spf}\big(L(A)\big)=\Map_{L(\QQ)}\big(L(A), -\big)\colon \Art_{L(\QQ)}\arrow[r] & \sS.
\end{tikzcd}$$
is corepresentable by a pro-Artin $L(\QQ)$-algebra $\hat{L}(A)$ which is eventually constant in each fixed arity. 
\end{lemma}
\begin{proof}
The formal moduli problem $\mm{Spf}(L(A))$ is classified by the dual $L(\QQ^!)$-algebra $\mf{D}(L(A))$. It then suffices to verify that this $L(\QQ^!)$-algebra can be written as the colimit of a sequence $0=\mf{D}(L(A))_0\to \mf{D}(L(A))_1\to \dots$ where each $\mf{D}(L(A))_n$ is obtained from the previous one by adding a positive degree cell (cf.\ \eqref{diag:cellattachment}), so that in total we add only finitely many cells in each arity. Indeed, by Theorem \ref{thm:biduals}, this means that each $\mf{D}(L(A))_n$ is the dual of an Artin $L(\QQ)$-algebra $L(A)_n$, giving the desired pro-system\footnote{Technically, the $L(A)_n$ thus obtained are not Artin algebras, but only retracts of such. One can always enlarge the subcategory of Artin $L(\QQ^!)$-algebras to include such retracts, cf.\ Remark \ref{rem:idempotent complete}.}. 

Now $L$ is monoidal and preserves duals, so we can identify $\mf{D}(L(A))=L(\mf{D}(A))$. Since $A$ is Artin, $\mf{D}(A)$ is obtained by such a finite process of cell attachments (Theorem \ref{thm:biduals}). Concretely, this means that $\mf{D}(A)$ arises as a quasifree $\QQ^!$-algebra generated by $x_1, \dots, x_n$, where $d(x_i)$ is an expression in $x_1, \dots, x_{i-1}$. Then $\mf{D}(L(A))\simeq L(\mf{D}(A))$ is a quasifree $L(\QQ^!)$-algebra generated by $x_{1, p}, \dots, x_{n, p}$ for each arity $p$. One now obtains the desired sequence by giving the generator $x_{i, p}$ weight ${p+i\choose 2}+i$ and letting $\mf{D}(L(A))_n$ be the subalgebra on the generators of weight $\leq n$.
\end{proof}
\begin{lemma}
Let $\QQ$ be a Koszul binary quadratic operad over $k$ with Koszul dual $\QQ^!$. Then there is a commuting diagram of $\infty$-categories
\begin{equation}\label{diag:Q to LQ}\begin{tikzcd}
\Alg_{L(\QQ^!)}\arrow[r, "\sim"]\arrow[d, "R"{swap}] & \FMP_{L(\QQ)}\arrow[d, "\hat{L}^*"]\\
\Alg_{\QQ^!}\arrow[r, "\sim"] & \FMP_{\QQ}
\end{tikzcd}\end{equation}
where the right vertical functor is given by $\hat{L}^*F(A)=F(\hat{L}(A))\coloneqq\lim_n F(L(A)_n)$.
\end{lemma}
In other words, $\hat{L}^*F$ parametrizes deformations along pro-Artin $L(\QQ)$-algebras of the form $L(A)$, with $A$ a Artin $\QQ$-algebra. 
\begin{proof}
The functor $L\colon \Mod_k\rt \Mod_{k[\Sigma]}$ taking constant symmetric sequences preserves tensor products and linear duals, and hence commutes with taking the dual of the (operadic) bar construction. In other words, we obtain a commuting diagram of $\infty$-categories
$$\begin{tikzcd}
\Alg_{\QQ}^{\op}\arrow[d, "L"{swap}]\arrow[r, "\Bar(-)^\vee"] & \Alg_{\QQ^!}\arrow[r, "\sim"]\arrow[d, "L"] & \cat{FMP}_{\QQ}\arrow[d, dotted, "\hat{L}_!"]\\
\Alg^{\op}_{L(\QQ)}\arrow[r, "\Bar(-)^\vee"] & \Alg_{L(\QQ)^!}\arrow[r, "\sim"] & \cat{FMP}_{L(\QQ)}.
\end{tikzcd}$$
The composite horizontal functors have a very simple description: they send a $\QQ$-algebra $A$ to the formal moduli problem $\mm{Spf}(A)=\Map_{\QQ}(A, -)$ of Example \ref{ex:formal spectrum}. Consequently, the right vertical functor $\hat{L}_!$ (which is defined uniquely by the above diagram) sends the formal moduli problem corepresented by an Artin $\QQ$-algebra $A$ to the formal moduli problem $\mm{Spf}(L(A))$. By Lemma \ref{lem:pro-artin}, this formal moduli problem is pro-represented by $\hat{L}(A)$, i.e.\ $\mm{Spf}(L(A))=\colim_n \mm{Spf}(L(A)_n)$ with $L(A)_n$ Artin. Passing to right adjoints then yields the desired square \eqref{diag:Q to LQ}.
\end{proof}

\begin{proof}[Proof of Proposition \ref{prop:operadic to permutative}]
Compose the square \eqref{diag:Q to LQ}, with $\QQ=\Perm$ and $\QQ^!=\preLie$, with the square 
\[
\begin{tikzcd}
	\cat{Op}^{\mm{nu}} \arrow[d] \arrow[r, "\sim"] &	\cat{FMP}_{\mathscr{O}^{\mm{sym}}} \arrow[d]										\\
	\cat{Alg}_{L(\preLie)}     \arrow[r, "\sim"]
&	\cat{FMP}_{L(\Perm)}
\end{tikzcd}
\]
obtained from naturality with respect to the map of $k[\Sigma]$-operads $\mathscr{O}^\mm{sym}\rt L(\Perm)$ (Proposition \ref{prop:naturalityfmp}).
\end{proof}

\subsection{Splendid operads}\label{sec:splendid}
The main technical condition of Theorem \ref{thm:mainthm} is Condition \ref{it:splendidness}, which asserts that the operad is splendid in the following sense:
\begin{definition}\label{def:splendid}
Let $\cat{P}$ be a $\base$-operad. We will say that $\cat{P}$ is \emph{splendid} if its 0-reduced part $\mathscr{P}^{\geq 1}$ (the suboperad such that $\mathscr{P}^{\geq 1}(0)=0$ and agrees in other arities) satisfies the following condition: the derived relative composition product
$$
\mathscr{P}(1)\circ^h_{\mathscr{P}^{\geq 1}} \mathscr{P}(1)
$$
is eventually highly connective (Definition \ref{def:highlyconn}).
\end{definition}
\begin{remark}
At least for connective $\mathscr{P}$, this definition should be considered as a homotopy-invariant reformulation of the following condition: $\mathscr{P}^{\geq 1}$ admits a free resolution whose generators are in increasingly negative degrees (as the arity increases). See Section \ref{sec:freeoperads}.
\end{remark} 
An immediate natural question to ask is therefore whether a given operad is splendid. Let us start by making some general observations about the property of being splendid. First of all, let us observe that more Koszul operads are splendid than just the binary ones considered in the Introduction, so that Theorem \ref{thm:mainthmkoszul} applies to these as well:

\begin{observation}\label{obs:koszul case}
	A (non-necessarily binary) Koszul quadratic operad $T(E)/(R)$ living in nonpositive degrees generated by a symmetric sequence $E$ with generators in bounded arity (i.e. $E(n)=0$ for $n\gg 0$) is splendid. Indeed, its Koszul resolution has generators sitting in increasingly negative degrees by the same argument as in the proof of Theorem \ref{thm:mainthmkoszul}.	
\end{observation}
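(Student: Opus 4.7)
The plan is to adapt the reasoning given in the proof of Theorem \ref{thm:mainthmkoszul} to the non-necessarily binary setting. First I would invoke the Koszul hypothesis to obtain a weak equivalence $\Omega\mathscr{P}^{\text{!`}} \xrightarrow{\sim} \mathscr{P}$, and then appeal to Corollary \ref{cor:bar of operad as derived comp} to reduce splendidness, i.e.\ condition \ref{it:splendidness} of Theorem \ref{thm:mainthm}, to the statement that the generators $s^{-1}\overline{\mathscr{P}^{\text{!`}}}$ of the cobar resolution lie in cohomological degrees tending to $-\infty$ as the arity grows.

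The eventual high connectivity would then follow from two elementary observations. On the arity side, since $E$ is supported in arities at most some $K$, each unit of weight raises the arity by at most $K-1$; consequently the weight-$w$ part of $\mathscr{P}^{\text{!`}}$ is concentrated in arities at most $w(K-1)+1$, so a nontrivial arity-$n$ component forces $w\geq (n-1)/(K-1)$. On the degree side, since $E$ is concentrated in nonpositive cohomological degrees, the operadic-suspension twist involved in forming $\mathscr{P}^{\text{!`}}$ places its weight-$w$ component in degrees bounded above by a strictly negative multiple of $w$, and the further cobar shift decreases degrees by one per cogenerator. Combining the two bounds, the arity-$n$ generators of $\Omega\mathscr{P}^{\text{!`}}$ live in degrees $\leq -cn+C$ for some constants $c>0$ and $C$, which is exactly the eventual high connectivity of Definition \ref{def:highlyconn}.

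The only point requiring attention is to verify that the degree shift produced by the operadic suspension is indeed a strictly negative linear function of the weight, uniformly across all allowed arities of generators in $E$. This is the direct analogue of the observation that in the binary case $\mathscr{P}^{\text{!`}}$ is generated in degrees $\leq -1$; for generators of arity $\geq 2$ the standard operadic-suspension conventions produce an analogous strictly negative per-weight shift, which is all that is needed. No new ideas beyond the binary case are required, and the main obstacle is simply this routine bookkeeping of shift conventions.
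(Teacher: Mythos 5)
Your proposal is correct and follows exactly the route the paper intends: the paper's own justification is simply "by the same argument as in the proof of Theorem \ref{thm:mainthmkoszul}", and your two quantitative bounds (weight $w$ forces arity $\leq w(K-1)+1$, hence arity $n$ forces weight $\gtrsim n/(K-1)$; and each unit of weight in $\mathscr{P}^{\antishriek}$ contributes a degree shift of $-1$ since $E[1]$ lives in degrees $\leq -1$) are precisely the bookkeeping that makes that argument work in the non-binary, bounded-arity case. The only cosmetic quibble is that in the paper's conventions the shift forming $\mathscr{P}^{\antishriek}=\mm{Q}^{\mm{co}}(E[1],R[2])$ is a plain degree shift of the generators rather than the operadic suspension $\{-1\}$, but this does not affect your linear-in-weight degree bound or the conclusion.
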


\begin{example}\label{ex:RHT}
For a pair of Koszul dual quadratic operads $(\PP, \PP^!)$ in degree $0$, Observation \ref{obs:koszul case} is of course symmetric in $\PP$ and $\PP^!$. For example, in addition to commutative formal moduli problems being classified by Lie algebras, formal moduli problems over \emph{Artin Lie algebras} are classified by (nonunital) commutative algebras. We do not know of a good geometric interpretation of this equivalence, but let us point out the following. 

Suppose we are working over $k=\mathbb{Q}$ and consider $\mathbb{Q}$ as a nonunital commutative algebra. Then the formal moduli problem $\MC_{\mathbb{Q}}\colon \Art_{\Lie}\rt \sS$ sends an Artin Lie algebra $\mf{g}$, i.e.\ one with $H^*(\mf{g})$ finite dimensional, nilpotent and in nonpositive degrees, to the corresponding rational homotopy type. Indeed, Theorem \ref{thm:biduals} identifies $\MC_{\mathbb{Q}}(\mf{g})\simeq \Map_{\Com}(\ol{C}_{\mm{CE}}^*(\mf{g}), \mathbb{Q})$ with the spatial realization of the corresponding Sullivan model. More generally, for any unital commutative $A$ one can identify $\MC_A(\mf{g})$ with the $A$-points of the (rational) schematic homotopy type corresponding to $\mf{g}$.
\end{example}

\begin{example}[and non-example]
A quadratic operad that does not fit the constraints of the previous observation is the \emph{gravity operad} $\overline{\mathsf{Grav}}$ \cite[Theorem 4.5]{getzler1994two}. The operad $\overline{\mathsf{Grav}}$ is generated by a sequence $E$ such that $E(n)$ is 1-dimensional and concentrated in degree $-1$. Clearly such an operad cannot be splendid, as the generators of a resolution need to cover all generators of $\overline{\mathsf{Grav}}$.

In fact, there is some ambiguity in the literature regarding the degrees of these operads. We denote by $\mathsf{Grav}$ what we will also call the gravity operad, which has the same quadratic presentation but with generators $V(n)$ a 1-dimensional space concentrated in degree $2-n$ (in other words, $\mathsf{Grav}$ is obtained from $\overline{\mathsf{Grav}}$ by reversing the degrees and operadicaly shifting down by $1$).

The operad $\mathsf{Grav}$ is Koszul and its Koszul dual is the operad $\mathsf{HyperCom}$ of hypercommutative algebras \cite{getzler1995operads}, generated by one operation in arity $n$ in degree $2(n-2)$ for all $n\geq 2$.
It follows that $\mathsf{Grav}$ is splendid and from Theorem \ref{thm:mainthm} we deduce that the $\infty$-category $\cat{FMP}_{\mathsf{Grav}}$ is equivalent to the $\infty$-category of hypercommutative algebras. 

Note that one cannot exchange the roles of $\mathsf{Grav}$ and $\mathsf{HyperCom}$ in this statement: $\mathsf{HyperCom}$ is not splendid and Theorem \ref{thm:mainthm} does \emph{not} hold for hypercommutative formal moduli problems.
\end{example}
\begin{remark}\label{rem:BM}
Suppose that $\mathscr{P}$ is a monochromatic augmented operad which is 1-reduced, i.e.\ $\PP(0)=0$ and $\PP(1)=k\cdot 1$. If $\PP$ is connective, then the shifted operad $\mathscr{P}\{1\}$ satisfies the conditions of Theorem \ref{thm:mainthm}. The case where $\PP$ is in addition aritywise finite-dimensional also appears in work of Brantner--Mathew \cite[Corollary 5.59]{brantner2019deformation} (see also \cite{ching2019derivedkoszul}).
\end{remark}

Next, note that an operad typically satisfies the conditions of Theorem \ref{thm:mainthm} as soon as its cohomology does:

\begin{lemma}\label{lem:homology is splendid}
Let $\PP$ be a (coloured) connective operad over $\mathbb{Q}$. If $H^*(\PP)$ is splendid, then $\PP$ is splendid as well.
\end{lemma}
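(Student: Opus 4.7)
The plan is to deduce splendidness of $\PP$ from that of its cohomology by a spectral sequence comparison argument. By Corollary~\ref{cor:bar of operad as derived comp}, the derived composition product $\PP(1)\circ^h_{\PP^{\geq 1}}\PP(1)$ is computed by the bar construction $\Bar_\base(\PP^{\geq 1})$, and likewise for $H^*(\PP)$. It thus suffices to show that $\Bar_\base(\PP^{\geq 1})$ is eventually highly connective, given the analogous statement for $H^*(\PP)^{\geq 1}$.

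I would model $\Bar_\base(\PP^{\geq 1})$ as the totalization of its simplicial bar object, whose $p$-simplices are $\PP(1)\circ_\base (\PP^{\geq 1})^{\circ_\base p}\circ_\base \PP(1)$; Assumption~\ref{ass:cofibrancy} guarantees that these relative composition products are homotopy invariant. Filtering by bar degree produces a spectral sequence converging to $H^*\bigl(\Bar_\base(\PP^{\geq 1})\bigr)$. Since we work over a field of characteristic zero, K\"unneth commutes the functor $H^*(-)$ with the relative composition product, so the $E_1$ page is obtained by substituting $H^*(\PP)$ for $\PP$ at each simplicial level. Consequently the $E_2$ page is
\[
E_2 \;=\; H^*\bigl(\Bar_\base(H^*(\PP)^{\geq 1})\bigr),
\]
which, by the splendidness hypothesis on $H^*(\PP)$ (and Corollary~\ref{cor:bar of operad as derived comp} applied to $H^*(\PP)$), is eventually highly connective: for every $n\in\mathbb{Z}$ there exists $p(n)\in\mathbb{N}$ such that $E_2$ vanishes in total degrees $\geq n$ in all arities $\geq p(n)$.

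All differentials $d_r$ for $r\geq 2$ preserve arity, so this vanishing persists to $E_\infty$ and hence to the associated graded of $H^*\bigl(\Bar_\base(\PP^{\geq 1})\bigr)$. Connectivity of $\PP$ confines the spectral sequence to the half-plane $q\leq 0$ and, together with the bar filtration, yields the exhaustiveness and Hausdorff properties needed to upgrade vanishing on associated graded to vanishing on $H^*\bigl(\Bar_\base(\PP^{\geq 1})\bigr)$ itself, giving splendidness of $\PP$.

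The hard part will be rigorously verifying convergence of this spectral sequence in a fixed arity: since $\PP^{\geq 1}(1)$ need not be trivial, simplices of arbitrarily large bar degree can contribute in a fixed arity, so a priori the bar filtration has infinitely many nontrivial steps. I expect to handle this either by a careful bookkeeping argument using connectivity of $\PP(1)$ to bound the cohomological contribution of high-bar-degree simplices, or by first replacing $\PP$ by a quasi-isomorphic $1$-reduced model, in which the bar degree in arity $m$ is automatically bounded by $m$ and convergence is immediate.
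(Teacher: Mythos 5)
This is essentially the paper's proof: resolve $\PP(1)\circ^h_{\PP^{\geq 1}}\PP(1)$ by the normalized two-sided simplicial bar construction, filter by bar degree, and use the K\"unneth theorem (characteristic zero) to identify the $E_1$/$E_2$-page with $H^*\PP(1)\circ^h_{H^*\PP^{\geq 1}}H^*\PP(1)$, whose arity-$p$ part vanishes above a degree $f(p)\to-\infty$ by hypothesis. The convergence worry you flag at the end is already resolved by the observation you make just before it: the bicomplex is (cohomologically) concentrated in bidegrees $(r,s)$ with $r\leq 0$ (bar direction) and $s\leq 0$ (connectivity), so in each fixed total degree $n$ and arity only the finitely many bar degrees $p\leq -n$ can contribute; the filtration is therefore degreewise finite and the spectral sequence converges strongly, with no need to pass to a $1$-reduced model or to do extra bookkeeping with $\PP(1)$. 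One minor correction: Corollary~\ref{cor:bar of operad as derived comp} computes $\base\circ^h_{\PP}\base\simeq\Bar\PP$, not $\PP(1)\circ^h_{\PP^{\geq 1}}\PP(1)$; but the two-sided simplicial bar resolution you actually write down is the correct model for the latter, and it is all you use.
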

\begin{proof}
We can assume that $\PP$ is $0$-reduced and consider the simplicial resolution
$$\begin{tikzcd}
\dots\arrow[r]\arrow[r, yshift=1ex] \arrow[r, yshift=-1ex] & \PP(1)\circ\PP\circ\PP(1)\arrow[r, yshift=0.5ex]\arrow[r, yshift=-0.5ex] & \PP(1)\circ\PP(1)\arrow[r, dotted] & \PP(1)\circ^h_{\PP}\PP(1).
\end{tikzcd}$$
Taking (at each tuple of colours) the corresponding normalized cochains, we obtain a (cohomologically) $\mathbb{Z}_{\leq 0}\times\mathbb{Z}_{\leq 0}$-graded bicomplex, with an associated convergent spectral sequence
$$
E_2^{r, *}=H^r\Big(H^*\PP(1)\circ^h_{H^*\PP} H^*\PP(1)\Big)\quad\Longrightarrow\quad H^{r+*}\big(\PP(1)\circ^h_{\PP}\PP(1)\big).
$$
Here $H^*\PP(1)\circ^h_{H^*\PP} H^*\PP(1)$ is computed in the category of (nonpositively) graded symmetric sequences of complexes. If $H^*\PP$ is splendid, then $p$-ary part of the $E_1$-page is concentrated in degrees $*\leq 0$ and $r\leq f(p)\leq 0$, with $f(p)\xrightarrow{p\to \infty} -\infty$. Then the $p$-ary part of $\PP(1)\circ^h_{\PP}\PP(1)$ is also concentrated in cohomological degrees $\leq f(p)$, and we conclude that $\PP$ is splendid.
\end{proof}
\begin{example}[Variants of the little discs operads]
	Using this lemma one can show that the little $n$-discs operad $\mathbb{E}_n$ is splendid without having to use that it is quasi-isomorphic to its homology $e_n$. Indeed, $e_n$ is a binary quadratic Koszul operad \cite[Section 13.3.16]{LodayVallette2012} and is therefore splendid.
		In the next section we will show that the homology of the framed little $n$-discs operad is also splendid.
	
	Another application of Lemma \ref{lem:homology is splendid} involves the result of Hoefel and Livernet \cite{hoefel2013spectral} that the homology of the Swiss--Cheese operad ($\mathfrak{sc}^{\text{vor}}$ in loc. cit.) is a quadratic binary Koszul colored operad. This shows the Swiss--Cheese operad is  splendid, even though we do not know a simple model for its dual.

\end{example}

In the following subsections we will look at a few examples in a bit more detail.

\subsubsection*{The homology of the framed little discs operad}

Recall that the non-unital little $n$-discs operad $\mathbb{E}_n$ carries an action of $\mathrm{SO}_n$ and, following \cite{salvatore2003framed}, the \emph{framed} little $n$-discs operad arises as the associated semi-direct product $\mathbb{E}_n^\mm{fr}=\mathbb{E}_n\rtimes \mathrm{SO}_n.$
At the homological level, $\ee_n\coloneqq H_*(\mathbb E_n)$ is an operad in the  category of modules over the cocommutative Hopf algebra $H_*(\mathrm{SO}_n)$.
 Likewise, one can express $$H_*(\mathbb{E}_n^\mm{fr}) \eqqcolon \ee_n^\mm{fr}= \ee_n \rtimes H_*(\mathrm{SO}_n)$$ 
as the semi-direct product of $\ee_n$ with $H_*(\mathrm{SO}_n)$. We will show that $\ee_n^\mm{fr}$ and (hence) $\mathbb{E}_n^\mm{fr}$ are both splendid.
 
More generally, let $H$ be a cocommutative Hopf algebra and let $\PP$ be a $k$-operad in the (symmetric monoidal) category of modules over $H$. 
The semi-direct product $\PP \rtimes H$ is a $k$-operad arising from a distributive law (see \cite[8.6.1]{LodayVallette2012} or Section \ref{sec:relativekoszul}) as follows.

The underlying symmetric sequence of $\PP \rtimes H$ is $\PP\circ H$, viewing $H$ as an operad in arity $1$. The action of $H$ on $\PP$  gives a map
$$
\Delta\colon H\circ \PP\rt \PP\circ H,
$$
sending $h\otimes \psi\in H(1)\otimes \PP(p)$ to $(h^{(1)}\cdot \psi)\otimes (h^{(2)}\otimes \dots\otimes h^{(1+p)})$, using the $p$-fold coproduct of $h$. One verifies property (I) from  \cite[8.6.1]{LodayVallette2012} using the coassociativity and cocommutativity of the coproduct in $H$, while property (II) follows from the compatibility of the product and the coproduct in $H$.
We then have

$$\PP \rtimes  H\coloneqq \PP{\circ_\Delta} H.$$

\begin{lemma}\label{lem:semi-direct}
	Let $\PP$ be a $1$-reduced $k$-operad equipped with an action of a cocommutative Hopf algebra $H$ in degrees $\leq 0$. If $\PP$ is splendid as a $k$-operad, then $\PP\rtimes H$ is a splendid $k$-operad.
\end{lemma}

\begin{proof}
	The operad $\PP \rtimes  H = \PP\circ_{\Delta} H$ is given by $H$ in artiy $1$, so we have to show that $H\circ^h_{\PP\circ_{\Delta} H} H$ is eventually highly connective. To see this, we will resolve $H$ as a right $\PP\circ_{\Delta} H$-module. Let us consider the symmetric sequence $\Bar\PP\circ_{\pi} \PP\circ H$, where $\Bar\PP\circ_{\pi} \PP$ is the twisted composition product associated to the universal twisting morphism $\pi\colon \Bar\PP\drt \PP$ \cite[Section 6.5.4]{LodayVallette2012} (or see Definition \ref{def:twisted comp product}). Explicitly, $\Bar\PP\circ_{\pi} \PP\circ H$ is spanned by trees with vertices labeled by $\ol{\PP}[1]$, onto which we graft to each leaf a $2$-level tree with root vertex labeled by $\PP$ and leaf vertices labeled by $H$. The differential is given by (a) applying $d_{\PP}$ or $d_H$ to vertices, (b) contracting edges between $\ol{\PP}[1]$-labeled vertices and composing the labels, and (c) for each $\ol{\PP}[1]$-labeled vertex furthest from the root, compose with all $\PP$-labeled vertices above it.
	
	Note that $\Bar\PP\circ_{\pi} \PP\circ H$ has a manifest right $\PP\circ_{\Delta} H$-module structure which is compatible with the differential (since parts (b) and (c) of the differential only involved composition in $\PP$, without any interference of $H$). The augmentation $\Bar\PP\circ_{\pi} \PP\rt k$ is a quasi-isomorphism \cite[Lemma 6.5.9]{LodayVallette2012} (or Lemma \ref{lem:twistedcomp}), so that the induced map $\Bar\PP\circ_{\pi} \PP\circ H\rt H$ is a quasi-isomorphism as well. This is readily seen to be a map of right $\PP\circ_{\Delta} H$-modules.
	
	One can endow $\Bar\PP\circ_{\pi} \PP\circ H$ with an increasing filtration by the number of $\ol{\PP}[1]$-labeled vertices, whose associated graded is the \emph{free} right $\PP\circ_{\Delta} H$-module on $\mm{gr}(\Bar\PP)$. Using this filtration as in Lemma \ref{lem:twistedcomp}, ones sees that the functor $(\Bar\PP\circ_{\pi} \PP\circ H)\circ_{\PP\circ_{\Delta} H}(-)$ preserves quasi-isomorphisms. This implies that we can compute $H\circ_{\PP\circ_{\Delta} H}^h H$ as the (strict) relative composition product
	$$
	H\circ_{\PP\circ_{\Delta} H}^h H\simeq (\Bar\PP\circ_{\pi} \PP\circ H)\circ_{\PP\circ_{\Delta} H} H \cong \Bar\PP\circ H.
	$$
	Since $H$ is connective and $\Bar(\PP)$ is eventually highly connective (since $\PP$ was splendid), it follows that $H\circ_{\PP\circ_{\Delta} H}^h H\simeq \Bar\PP\circ H$ is eventually highly connective as well. 
\end{proof}

\begin{corollary}
	The $k$-operad $\ee_n^\mm{fr}$ is splendid. Furthermore, $\ee_n^\mm{fr}$ is naturally a $H_*(\mm{SO}_n)$-operad which is splendid.
\end{corollary}
\begin{proof}
	The first statement follows directly from Observation \ref{obs:koszul case} and the previous lemma. 
	The second statement follows from Example \ref{ex:klawbar}.
\end{proof}

\subsubsection*{The BD-operad}
For each $n\geq 0$, there is a $k[\hbar]$-linear operad $\BD_n$ which agrees with the ($0$-reduced) $\mathbb{E}_n$-operad away from $\hbar=0$ and with the ($0$-reduced) shifted Poisson operad at $\hbar=0$ \cite{CPTVV2017}:
$$
\BD_n\otimes^h_{k[\hbar]}k[\hbar^\pm]\simeq \mathbb{E}_n[\hbar^\pm],\qquad\qquad\qquad \BD_n\otimes^h_{k[\hbar]}k[\hbar]/\hbar \simeq \Pois_n.
$$
For example, the $\BD_0$-operad from \cite{costello_gwilliam_vol1,costello_gwilliam_vol2} (see also \cite{beilinson2004chiral}) 
is the $k[\hbar]$-operad generated by a commutative product and a Lie bracket of degree $1$ satisfying the Leibniz rule, and 
equipped with the differential $d(-\cdot -)=\hbar [-, -]$. 

Similarly, the operad $\BD_1$ is obtained as the Rees construction of the associative operad, equipped with the PBW-filtration 
\cite{costello_gwilliam_vol2,CPTVV2017}; explicitly, a $\BD_1$-algebra is a $k[\hbar]$-module equipped with a (nonunital) 
associative product $\ast$ and a Lie bracket $[-, -]$ satisfying 
$$
[a, b\ast c]= [a, b]\ast c + b\ast [a, c] \qquad\qquad a\ast b-b\ast a=\hbar [a, b].
$$
\begin{proposition}\label{prop:BD}
The $k[\hbar]$-operads $\BD_0$ and $\BD_1$ are Koszul self-dual 
$$
\mf{D}(\BD_0)\simeq \BD_0 \qquad\qquad \mf{D}(\BD_1)\simeq \BD_1\{-1\}
$$
(relative to $k[\hbar]$) and satisfy the conditions of Theorem \ref{thm:mainthm}, so that there are equivalences
$$\begin{tikzcd}
\cat{FMP}_{\BD_n}\arrow[r, "\sim"] & \cat{Alg}_{\BD_n}; \hspace{4pt} X\arrow[r, mapsto] & T_X[-n], \quad \text{ for }n=0,1.
\end{tikzcd}$$
\end{proposition}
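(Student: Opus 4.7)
The plan is to present each $\BD_n$ for $n\in\{0,1\}$ as an inhomogeneous quadratic $k[\hbar]$-operad, compute its bar dual via the Koszul duality theory for such operads, and then apply Theorem~\ref{thm:mainthm}.

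First, I would describe the quadratic-linear presentations. For $\BD_0$, the generators are a commutative product $m$ in degree $0$ and a symmetric bracket $l$ in degree $1$ satisfying the Poisson relations, together with the inhomogeneous relation $dm=\hbar l$ encoded in the differential. For $\BD_1$, the generators are an associative product $\ast$ and a Lie bracket $[-,-]$ in degree $0$ satisfying the Leibniz rule, together with the inhomogeneous linear relation $a\ast b - b\ast a=\hbar\,[a,b]$. In both cases the $\hbar$-weight filtration (placing $\hbar$ in filtration degree $1$) has associated graded operad $\Pois_n\otimes k[\hbar]$, which is binary quadratic and Koszul self-dual up to an operadic suspension.

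Second, I would apply the inhomogeneous Koszul duality framework (cf.~\cite{LodayVallette2012}, Chapter~7) relative to the base $k[\hbar]$, using the relative bar construction from Section~\ref{sec:barcobar}. The standard compatibility conditions $(ql_1)$ and $(ql_2)$ are immediate from the explicit presentations above, so the Koszul dual can be computed as a filtered deformation of the Koszul dual of $\Pois_n\otimes k[\hbar]$. Dualizing the (filtered) Koszul complex yields on the quadratic part $\Pois_n\{-n\}\otimes k[\hbar]$, while the inhomogeneous piece dualizes to the mirror structure exchanging the two generators with appropriate degree shifts. A direct identification then shows that the resulting operad is $\BD_n\{-n\}$: this gives $\mf{D}(\BD_0)\simeq \BD_0$ and $\mf{D}(\BD_1)\simeq \BD_1\{-1\}$.

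Third, I would verify splendidness. The $\hbar$-adic filtration on $\BD_n^{\geq 1}$ induces a convergent spectral sequence whose $E_1$-page is the derived composition $\Pois_n(1)\circ^h_{\Pois_n\otimes k[\hbar]} \Pois_n(1)$. Since $\Pois_n$ is binary quadratic Koszul and hence splendid (by the observation at the beginning of Section~\ref{sec:splendid}), this $E_1$-page is concentrated in increasingly negative degrees as arity grows, and the same property then holds for the abutment. Combined with the obvious boundedness and connectivity of $k[\hbar]$, Theorem~\ref{thm:mainthm} yields the equivalence $\cat{FMP}_{\BD_n}\simeq \cat{Alg}_{\mf{D}(\BD_n)}\simeq \cat{Alg}_{\BD_n}$, with the shift $\{-n\}$ in $\mf{D}(\BD_n)$ accounting for the shift $T_X[-n]$ that gives the $\BD_n$-algebra structure on the tangent complex. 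The main technical obstacle is the explicit identification $\mf{D}(\BD_n)\simeq \BD_n\{-n\}$: while the quadratic Koszul self-duality of $\Pois_n$ is classical, verifying that the inhomogeneous linear/differential part dualizes back to the same operad (rather than to some other $\hbar$-deformation of the Poisson operad) requires careful bookkeeping of signs and degrees within the $(ql)$-framework over the base $k[\hbar]$.
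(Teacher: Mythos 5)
Your overall architecture (quadratic presentation over $k[\hbar]$, compute the dual, check Koszulness and splendidness, apply Theorem \ref{thm:mainthm}) matches the paper's, and your spectral-sequence check of splendidness is a reasonable variant of Lemma \ref{lem:homology is splendid} (though once a binary quadratic Koszul presentation is established, splendidness is automatic, as in the Observation opening Section \ref{sec:splendid}). The substantive divergence, and the gap, is in how you set up the duality. The paper deliberately avoids the inhomogeneous $(ql)$-framework by choosing presentations that are \emph{homogeneous} quadratic over $k[\hbar]$: for $\BD_0$ the parameter $\hbar$ sits only in the differential $d\mu=\hbar\lambda$ on the generators, and for $\BD_1$ it sits in the coefficients of the weight-two relation expressing associativity of $a\ast b=a\cdot b+\hbar[a,b]$ in terms of the generators $\mu=(-\cdot-)$ and $\lambda=[-,-]$. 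Your presentation of $\BD_1$ instead takes $\ast$ and $[-,-]$ as generators subject to $a\ast b-b\ast a=\hbar[a,b]$. Unlike the associative-algebra prototype $xy-yx=[x,y]$ (quadratic equals linear), here \emph{both} sides are single binary generators, i.e.\ of weight one in the free operad on $\{\ast,[-,-]\}$; the relation is purely linear, so $R\cap E\neq 0$ and the condition $(ql_1)$ fails. The quadratic-linear machinery you invoke therefore does not apply to your presentation, and the claim that "$(ql_1)$ and $(ql_2)$ are immediate" is false as stated. To proceed you would have to change generators to eliminate the linear relation, which lands you back at the paper's homogeneous quadratic presentation.

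Beyond this, the two computations that carry the proof are deferred in your write-up. First, the identification $\BD_1^!\cong\BD_1$: the paper obtains it from an explicit inner product of signature $(1,1)$ on the generators with $\langle\mu,\lambda\rangle=1$, together with a verification that the rank-$6$ module of relations is Lagrangian in $\mm{Free}(E)(3)$; your "direct identification" is precisely the step you flag as requiring "careful bookkeeping", i.e.\ it is the content of the proposition rather than a corollary of a general framework. Second, Koszulness over the base $k[\hbar]$: the paper checks that $\BD_n^{\antishriek}\to\Bar(\BD_n)$ is a quasi-isomorphism after inverting $\hbar$ and modulo $\hbar$, using that each arity is a finite complex of free $k[\hbar]$-modules together with derived Nakayama, thereby reducing to the known Koszulness of $\As$ and $\Pois_n$. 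Your filtered-deformation idea (associated graded $\Pois_n\otimes k[\hbar]$) could in principle substitute for this, but it is only sketched, and the comparison between the dual of the filtered operad and the filtered dual is exactly where the argument would need care.
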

\begin{proof}
\textbf{Case $n=0$:} note that $\BD_0=\mm{Free}(E)/R$ is a binary quadratic operad on two generators $\mu=(-\cdot -)$ and $\lambda= [-, -]$, with differential $d\mu=\hbar\cdot \lambda$. Since the relations are the ones of the usual Poisson operad, its quadratic dual $\BD_0^!=\mm{Free}(E^\vee)/R^\perp$ is isomorphic to $\BD_0\{1\}$. To see that it is Koszul, it suffices to see that 
$$\begin{tikzcd}
\BD_0^{\antishriek}=\mm{coFree}\big(E[1], R[2]\big)\arrow[r] &  \Bar\big(\BD_0\big)
\end{tikzcd}$$
is a quasi-isomorphism of $k[\hbar]$-modules. To see this, it suffices to verify that the maps
$$\begin{tikzcd}[row sep=0pc]
\BD_0^{\antishriek}\otimes^h_{k[\hbar]} k[\hbar^{\pm}] \arrow[r] & \Bar(\BD_0)\otimes^h_{k[\hbar]} k[\hbar^{\pm}]\\ \BD_0^{\antishriek}\otimes^h_{k[\hbar]} k[\hbar]/\hbar \arrow[r] & \Bar(\BD_0)\otimes^h_{k[\hbar]} k[\hbar]/\hbar
\end{tikzcd}$$ 
are both quasi-isomorphisms (by derived Nakayama, the second condition implies that the localizations at $\hbar=0$ are quasi-isomorphic). Because extension of scalars is symmetric monoidal and all $\BD_0(p)$ and $\BD_0^{\antishriek}(p)$ are finite complexes of free $k[\hbar]$-modules (so that we do not have to derive the tensor product), the above two maps agree with the maps
$$\begin{tikzcd}[row sep=0pc]
\Big(\BD_0\otimes_{k[\hbar]} k[\hbar^{\pm}]\Big)^{\antishriek} \arrow[r] & \Bar\Big(\BD_0\otimes_{k[\hbar]} k[\hbar^{\pm}]\Big)\\ \Big(\BD_0\otimes_{k[\hbar]} k[\hbar]/\hbar\Big)^{\antishriek} \arrow[r] & \Bar\Big(\BD_0\otimes_{k[\hbar]} k[\hbar]/\hbar\Big)
\end{tikzcd}$$ 
The first map is a quasi-isomorphism between two cooperads which are both quasi-isomorphic to the trivial cooperad $k[\hbar^{\pm}]$, while the second map is a quasi-isomorphism because $\BD_0\otimes_{k[\hbar]} k[\hbar]/\hbar\cong P_0$ is a quadratic Koszul operad.\\

\textbf{Case $n=1$:} note that $\BD_1=\mm{Free}(E)/R$ is a quadratic operad on two binary generators $\mu=-\cdot -$, $\lambda=[-,-]$, on which $\Sigma_2$ acts trivially, resp.\ by the sign representation. The module of relations $R$ is generated by:
\begin{description}[leftmargin=*, labelindent={\labelwidth+1pt}, font=\normalfont]
\item[(J)] Jacobi relation $[a, [b, c]] + [b, [c, a]] + [c, [a, b]]$.
\item[(L)] Leibniz rule $[a, b\cdot c] -c\cdot [a, b]+ b\cdot [a, c]$.
\item[(A)] associativity for $a\ast b \coloneqq a\cdot b + \hbar [a, b]$, or explicitly:
$$
\Big(a\cdot (b\cdot c)- c\cdot (a\cdot b)\Big) + \hbar\Big(a\cdot [b, c] - c\cdot [a, b]+[a, b\cdot c]+[c, a\cdot b]\Big)+\hbar^2\Big([a, [b, c]]+ [c, [a, b]]\Big).
$$
\end{description}
Note that $\mm{Free}(E)(p)$ and $\BD_1(p)$ are finitely generated projective (equivalently, torsion free) $k[\hbar]$-modules for all $p$; for $\BD_1(p)$ this follows from the fact that it arises as the Rees construction of a vector space with an increasing filtration. It follows that $R$ is also finitely generated and projective. Note that $R$ has rank $6$, since its fiber at $\hbar=0$ is the vector space of relations for the Poisson operad $P_1$, which has dimension $6$.

Now consider the inner product on $E$ of signature $(1, 1)$, determined by $\big<\mu, \lambda\big>=1$. This induces an inner product on $\mm{Free}(E)(3)$ of signature $(6, 6)$, and an explicit computation shows that $R\subseteq \mm{Free}(E)(3)$ is isotropic, hence Lagrangian. For example, one has
\begin{align*}
\big<(A); (J)\big> &= \big<a\cdot(b\cdot c) ; [a, [b, c]]\big>-\big<c\cdot(a\cdot b); [c\cdot [a, b]]\big> = 1 -1 =0\\
\big<(A); (L)\big> &= \big<\hbar\, a\cdot[b, c] ; [a, b\cdot c]\big>-\big<\hbar\, [c, a\cdot b]; c\cdot [a, b]\big> = \hbar -\hbar =0
\end{align*}
and $\big<(A); (A)\big>$ is given by $2\hbar^2$ times
$$
\big<a\cdot(b\cdot c) ; [a, [b, c]]\big>-\big<c\cdot(a, b); [c, [a, b]]\big>+ \big<a\cdot [b, c]; [a\cdot (b\cdot c)]\big> -\big<c\cdot [a, b]; [c, (a\cdot b)]\big> = 0.
$$
Now consider the quadratic dual $\BD_1^!=\mm{Free}(E^\vee)/R^\perp$. Identifying $\mu^\vee\leftrightarrow \lambda$ and $\lambda^\vee\leftrightarrow \mu$ using the inner product described above and using that the inner product identifies the Lagrangian $R$ with $R^\perp$, we obtain an isomorphism $\BD_1^!\cong \BD_1$.

It remains to verify that $\BD_1$ is Koszul. This follows as in the case of $\BD_0$: we have to show that the map $\BD_1^\antishriek\rt \Bar(\BD_1)$ is a quasi-isomorphism, which can be checked at $\hbar=0$ and after inverting $\hbar$. Since each $\BD_1(p)$ is a finitely generated projective $k[\hbar]$-module and extension of scalars is symmetric monoidal, one then reduces to checking that $\BD_1\otimes_{k[\hbar]}k[\hbar^{\pm}]$ and $\BD_1\otimes_{k[\hbar]} k$ are Koszul operads. But these are just the associative and $P_1$-operads.
\end{proof}
\begin{remark}
Proposition \ref{prop:BD} also applies to the operads $\BD_n$ with $n\geq 2$,  which are defined as the Rees construction of the $\mathbb{E}_n$-operads, endowed with their Postnikov filtration. Indeed, by the (rational) formality of the $\mathbb{E}_n$-operad \cite{willwacher2018little}, this filtration splits and one can identify $\BD_n\simeq {e}_n[\hbar]$. Probably one can also deduce Proposition \ref{prop:BD} directly from the self-duality of the $\mathbb{E}_n$-operad \cite{Fresse2011}.
\end{remark}

\subsubsection*{$G$-equivariant algebras}
Suppose that $\base$ is a connective symmetric monoidal dg-category. Recall that this symmetric monoidal structure can be encoded by a non-augmented $\base$-operad $\base^{\otimes}$, defined by $\base^{\otimes}(c_1, \dots, c_n; c_0)=\base(c_1\otimes\dots\otimes c_n; c_0)$. Note that each $\base^{\otimes}(c_1, \dots, c_n; -)$ is a free left $\base$-module (on $c_1\otimes\dots\otimes c_n$). Consequently, as a symmetric $\base$-bimodule $\base^{\otimes}$ is isomorphic to its $\base$-linear dual $(\base^{\otimes})^\vee$ and comes with a cocomposition $\base^{\otimes}\rt \base^{\otimes}\circ_\base \base^{\otimes}$ dual to the composition of $\base^{\otimes}$.
	
	Now let $\PP$ be a $k$-operad and consider the $\base$-operad $\PP\otimes\base^{\otimes}$ given by the exterior Hadamard tensor product \eqref{eq:exterior hadamard}.  Unraveling the definitions, one sees that a $\PP\otimes\base^{\otimes}$-algebra is a $\PP$-algebra in the symmetric monoidal category $\cat{LMod}_{\base}^{\dg}$ of $\base$-modules as in Section \ref{sec:conventions} (with $\otimes$ given by Day convolution). The fact that $\base^{\otimes}(c_1, \dots, c_n; -)$ is a free left $\base$-module implies that there is an isomorphism
	$$\begin{tikzcd}
		\Bar_\base(\PP\otimes\base^{\otimes})\arrow[r] & (\Bar\PP)\otimes\base^{\otimes}
	\end{tikzcd}$$ 
	between the bar construction of $\PP\otimes\base^{\otimes}$ relative to $\base$ and the exterior Hadamard tensor product of $\base^{\otimes}$ with the bar construction of $\PP$ over $k$. This is a map of $\base$-cooperads if one gives $\Bar\PP\otimes\base^{\otimes}$ the cooperad structure coming from the one on $\base^{\otimes}\cong (\base^{\otimes})^\vee$ and $\Bar\PP$.
	
	In particular, if $\PP$ is a finite type binary Koszul operad, then $\PP\otimes\base^{\otimes}$ is splendid and its dual operad (relative to $\base$) is $\PP^!\{-1\}\otimes\base^{\otimes}$.
\begin{example}\label{ex:G-reps}
	Suppose that $G$ is a reductive algebraic group over $k$ and let $\cat{Rep}_G=\cat{QCoh}(\mm{B}G)$ denote the symmetric monoidal $\infty$-category of $G$-representations. It follows from \cite[Corollary 3.22]{ben-zvi2010centers} that $\cat{Rep}_G$ is compactly generated by the finite dimensional $G$-representations (concentrated in degree $0$). Let $\base=\cat{Rep}^{\mm{fd}}_G$ be the symmetric monoidal dg-category of these representations and note that $\base$ is simply a category enriched over vector spaces (in degree $0$): there are no higher Ext-groups since $G$ is reductive. The symmetric monoidal model category $\cat{LMod}^{\dg}_{\base}$ then presents the symmetric monoidal $\infty$-category $\cat{Rep}_G$.
	
	Let us now apply the previous discussion to the operad $\PP=\Lie$. Then Theorem \ref{thm:mainthm} provides an equivalence between the $\infty$-category of formal moduli problems $\Art_{\Lie\otimes\base^{\otimes}}\rt \sS$ indexed by Artin Lie algebras carrying a $G$-representation, and that of nonunital commutative algebras in $\cat{Rep}_G$. This correspondence has been considered extensively in \cite{pridham2007fundamentalgroup, pridham2008proalgebraic} in the study of pro-algebraic homotopy types.
\end{example}

\subsubsection*{Operads with only nullary operations}
The following baby-example might also be useful to illustrate what happens for operads with nullary operations, when the category $\base$ has nontrivial (endo)morphisms. 
Let $\base$ be a connective dg-algebra and let $V$ be a connective left $\base$-module. 
There is a $\base$-operad $\mathscr{P}$ whose algebras are left $\base$-modules $W$ with 
a $\base$-linear map $V\rt W$: $\mathscr{P}(0)=V$, $\mathscr{P}(1)=\base$, and 
$\mathscr{P}(n)=0$ for every $n\geq2$. The dual operad (relative to $\base$) is the 
$\base^{\op}$-operad $\mf{D}_{\base}(\mathscr{P})$ whose algebras are left $\base^{\op}$-modules 
$W$ endowed with an $\base^{\op}$-linear map $V^\vee[-1]=\Hom_{\base}(V[1], \base)\rt W$.

In this case, the Koszul duality functor 
$\mf{D}\colon \cat{Alg}_{\mathscr{P}} \rt \cat{Alg}_{\mf{D}_{\base}(\mathscr{P})}^{\op}$ 
can be identified with the functor
\begin{equation}\label{diag:koszuldualitybaby}
\begin{tikzcd}[row sep=0pc]
V\big/\cat{LMod}_{\base} \arrow[r] & \Big(V^\vee[-1]\big/\cat{RMod}_{\base}\Big)^{\op} \\
\big(V\rt W\big)\arrow[r, mapsto] & \mm{fib}(W^\vee\rt V^\vee).
\end{tikzcd}
\end{equation}
The category of \emph{Artin} $\mathscr{P}$-algebras can be identified with the category 
of $V\rt W$, where $W$ is a finitely presented $\base$-module, with generators in nonpositive degrees. 
Using this, one sees that
$$
\cat{FMP}_{\mathscr{P}}\simeq \mm{Ind}\big(\cat{C}_1^\op\big)\qquad\qquad \cat{C}_1=\big\{V\rt W: \text{ perfect }W\big\}\subseteq V\big/\cat{LMod}_{\base}.
$$ 
Similarly, the category of right $\base$-modules under $V^\vee$ is compactly generated, so that there is an equivalence
$$
\cat{Alg}_{\mf{D}_{\base}(\mathscr{P})}\simeq \mm{Ind}\big(\cat{C}_2\big)\qquad\qquad \cat{C}_2=\big\{V^\vee\rt W: \text{ perfect cofiber}\big\}\subseteq V^\vee\big/\cat{RMod}_{\base}.
$$
Theorem \ref{thm:mainthm} then reduces to the assertion that the functor \eqref{diag:koszuldualitybaby} establishes a contravariant equivalence between $\cat{C}_1$ and $\cat{C}_2$.

\section{From FMPs to algebras}\label{sec:operadicfmps}
In this section we introduce the main ingredients that will be used to relate formal moduli problems of algebras over an operad $\mathscr{P}$ to algebras over its dual operad $\mf{D}(\PP)$ as in Definition \ref{def:dualoperad}. 
In particular, we describe an adjoint pair of $\infty$-categories
$$\begin{tikzcd}
\mf{D}\colon \cat{Alg}_{\PP}\arrow[r, yshift=0.8ex] & \cat{Alg}_{\mf{D}(\PP)}^\op\colon \mf{D}'\arrow[l, yshift=-0.8ex]
\end{tikzcd}$$
sending an algebra to its (bar) dual algebra (see Section \ref{sec:weakkoszul}). This adjunction is an example of a \emph{weak Koszul duality context} in the sense of \cite{calaque2018formal} and will be the main actor in the proof of our main theorem (Theorem \ref{thm:mainthm}). Indeed, the axiomatic framework developed in \cite{DAGX,calaque2018formal} provides explicit conditions under which this adjoint pair induces an equivalence between $\mf{D}(\PP)$-algebras and formal moduli problems over $\PP$. We will recall these conditions in Section \ref{sec:outline} (see Theorem \ref{thm:axiomatic}).

We follow Assumption \ref{ass:cofibrancy}: all $\base$-(co)operads are assumed to be (co)augmented and (filtered) cofibrant as left $\base$-modules.

\subsection{Duality for algebras over operads}\label{sec:weakkoszul}
Let $\base$ be a dg-category and let $\phi\colon \mathscr{C}\drt \mathscr{P}$ be a twisting morphism from a $\base$-cooperad to a $\base$-operad (see Construction \ref{constr:k-twisting morphisms}). Recall our convention that $\CC$ (resp.\ $\PP$) is always assumed to be filtered-cofibrant (resp.\ cofibrant) as a left $\base$-module, see Assumption \ref{ass:cofibrancy}. 

Recall that (Proposition \ref{prop:cobardjointtobar}) the twisting morphism $\phi$ gives rise to an adjoint pair
$$\begin{tikzcd}
\Omega_\phi\colon \cat{CoAlg}_{\mathscr{C}}^{\dg} \arrow[r, yshift=0.8ex] & \cat{Alg}_{\mathscr{P}}^{\dg}\colon \Bar_\phi.\arrow[l, yshift=-0.8ex]
\end{tikzcd}$$
Taking the linear dual of the bar construction, we obtain a functor
$$\begin{tikzcd}[column sep=3pc]
\cat{Alg}_{\mathscr{P}}^{\dg}\arrow[r, "\Bar_\phi"] & \cat{CoAlg}_{\mathscr{C}}^{\dg}\arrow[r, "(-)^\vee"] & \cat{Alg}_{\mathscr{C}^\vee}^{\dg, \op}
\end{tikzcd}$$
with values in algebras over the dual $\base^{\op}$-operad $\mathscr{C}^\vee$ (cf.\ Proposition \ref{prop:dual of cooperad is operad}). By Lemma \ref{lem:algbarpreswe}, this functor preserves quasi-isomorphisms between algebras which are cofibrant as left $\base$-modules. Consequently, it induces a functor of $\infty$-categories
$$\begin{tikzcd}
\mf{D}_\phi\colon \cat{Alg}_{\mathscr{P}}\arrow[r] & \cat{Alg}_{\mathscr{C}^\vee}^{\op}.
\end{tikzcd}$$

If $\phi$ is weakly Koszul (Definition \ref{def:koszultwisting}) we can identify the $\infty$-category of algebras over $\mathscr{C}^\vee$ with algebras over the dual operad $\mf{D}(\mathscr{P}) =  \Bar(\mathscr P)^\vee$.
\begin{lemma}\label{lem:adjoint}
Suppose that $\phi\colon\CC\drt \PP$ is weakly Koszul. Then the following assertions hold:
\begin{enumerate}
\item\label{it:Disderiv} For any $\PP$-algebra $A$, there is a natural equivalence of $\base^\op$-modules 
$$
\mf{D}_\phi(A)\simeq \mathbb{R}\Der_{\PP}(A, \base)
$$
to the (derived) $\base^\op$-module of $\PP$-algebra derivations of $A$ with coefficients in the trivial $A$-module $\base$.
\item The functor $\mf{D}_\phi$ preserves all colimits, so it is the left adjoint in an adjoint pair
\begin{equation}\label{diag:basicadjunction}
\begin{tikzcd}
\mf{D}_\phi\colon \cat{Alg}_{\PP}\arrow[r, yshift=0.8ex] & \cat{Alg}_{\CC^\vee}^\op\colon \mf{D}_\phi'.\arrow[l, yshift=-0.8ex]
\end{tikzcd}\end{equation}
\end{enumerate}
\end{lemma}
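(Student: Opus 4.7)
The plan is to establish part (1) directly via the bar-cobar resolution furnished by the weakly Koszul hypothesis, and then derive part (2) formally from part (1). Since $\phi$ is weakly Koszul, the counit of the bar-cobar adjunction yields a cofibrant resolution $\Omega_\phi\Bar_\phi(A)\xrightarrow{\;\sim\;}A$ (cf.\ Section \ref{sec:operads}), so I would compute the derived derivations at the point-set level:
\[
\mathbb{R}\Der_{\PP}(A,\base)\;\simeq\;\Der_{\PP}\bigl(\Omega_\phi\Bar_\phi(A),\base\bigr).
\]
The key observation is that, as a graded $\PP$-algebra, $\Omega_\phi(C)$ is quasi-freely generated by (the non-counital part of) $C$, so that a $\PP$-derivation from it into a target $M$ is determined by a $\base$-linear map from the generators to $M$, compatible with the differential. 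When $M=\base$ is trivial, all $\PP$-operations of arity $\geq 2$ act as zero on $M$, so the twisting piece of the cobar differential contributes nothing, and the remaining compatibility just says that the map respects the internal differential on $C$. Hence $\Der_{\PP}(\Omega_\phi(C),\base)\cong\Hom_{\base}(C,\base)=C^\vee$ as $\base^\op$-modules, naturally in $C$; specialising to $C=\Bar_\phi(A)$ yields part (1).

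For part (2), I observe that when $M$ is a trivial $\PP$-algebra, being a $\PP$-derivation and being a $\PP$-algebra map amount to the same condition on a map $f\colon A\to M$: that $f$ be $\base$-linear and annihilate operations of arity $\geq 2$. Hence
\[
\mathbb{R}\Der_{\PP}\bigl(A,\triv{c}{n}\bigr)\;\simeq\;\Map_{\cat{Alg}_{\PP}}\bigl(A,\triv{c}{n}\bigr),
\]
exhibiting the underlying $\base^\op$-module of $\mf{D}_\phi(A)$ as corepresented by the trivial algebras $\triv{c}{n}$. Corepresentable functors send colimits to limits, so the composite of $\mf{D}_\phi$ with the forgetful functor $\cat{Alg}_{\mf{D}(\PP)}^{\op}\to\cat{Mod}_{\base^\op}^{\op}$ preserves colimits. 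Because the forgetful functor $\cat{Alg}_{\mf{D}(\PP)}\to\cat{Mod}_{\base^\op}$ creates limits (as for any operadic forgetful functor), the same functor on opposite categories creates colimits; therefore $\mf{D}_\phi$ itself preserves colimits. Both $\infty$-categories in question being presentable, the adjoint functor theorem then produces the right adjoint $\mf{D}'_\phi$.

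The main technical hurdle will be in part (1): one must verify carefully that the twisting piece of the cobar differential really does contribute nothing to derivations into a trivial target, which requires handling the signs and shifts of the relative bar-cobar formalism, and one must also check that the identification $\Der_{\PP}(\Omega_\phi(C),\base)\cong C^\vee$ is compatible with the natural $\mf{D}(\PP)=\CC^\vee$-algebra structure coming from the cooperadic structure on $C$ (so that the resulting equivalence lifts from $\base^\op$-modules to $\mf{D}(\PP)$-algebras, which is then used tacitly in part (2)). Once this is in place, part (2) is purely formal.
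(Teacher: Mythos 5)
Your overall strategy is the same as the paper's (compute $\mathbb{R}\Der_{\PP}(A,\base)$ on the bar--cobar resolution, identify derivations out of a quasi-free algebra into a trivial module with the dual of the generators, then get part (2) formally), but there is one genuine gap at the very first step of part (1). You claim that because $\phi$ is \emph{weakly} Koszul, the counit $\Omega_\phi\Bar_\phi(A)\rt A$ is a cofibrant resolution. That is not what the paper's toolkit gives you: Lemma \ref{lem:algbarresol} establishes that the counit is a quasi-isomorphism when $\phi$ is \emph{Koszul} (i.e.\ $\Omega\CC\rt\PP$ is a quasi-isomorphism) or when $\phi$ is the universal twisting morphism out of the bar construction. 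Being weakly Koszul only says that $\CC\rt\Bar\PP$ is a quasi-isomorphism, and since the cobar construction does not preserve arbitrary quasi-isomorphisms of cooperads, this does not let you conclude that $\Omega\CC\rt\Omega\Bar\PP\rt\PP$, nor the counit on algebras, is a quasi-isomorphism. The distinction between the two notions is precisely why the paper states the lemma under the weaker hypothesis and does not argue the way you do.

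The paper's fix, which you should incorporate, is to first reduce to the universal twisting morphism $\pi\colon\Bar\PP\drt\PP$: since $\CC\rt\Bar\PP$ is a quasi-isomorphism between (filtered-)cofibrant left $\base$-modules, the composite $A\mapsto\Bar_\phi(A)^\vee\mapsto(\text{underlying }\base^{\op}\text{-module})$ is naturally quasi-isomorphic to the one built from $\pi$; and $\pi$ \emph{is} Koszul by Proposition \ref{prop:counit bar cobar}, so Lemma \ref{lem:algbarresol} applies to it. After that reduction, your computation $\Der_{\PP}\bigl(\Omega_\pi(C),\base\bigr)\cong C^\vee$ is exactly the paper's identification via the indecomposables functor $I$ (with $I(\Omega_\pi(C))=C$), and your part (2) — corepresentability of the underlying module of $\mf{D}_\phi$ by trivial algebras, detection of limits by the operadic forgetful functor, and the adjoint functor theorem — matches the paper's argument. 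Two minor points: $\Omega_\phi(C)$ is free on all of $C$ in this paper's conventions, not on a coaugmentation coideal; and for part (2) you do not actually need the equivalence of part (1) to lift to $\mf{D}(\PP)$-algebras, only that the underlying-module functor preserves colimits, so the "compatibility with the $\CC^\vee$-algebra structure" you flag is not needed here.
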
\label{lem:weak koszul duality context}
In the terminology of \cite{calaque2018formal}, the adjoint pair \eqref{diag:basicadjunction} is an example of a \emph{weak Koszul duality context} (this is essentially the assertion of Corollary \ref{cor:adjointisderivations}).
\begin{proof}
The first assertion implies the second: indeed, the functor $\mf{D}_\phi$ preserves colimits (and hence admits a right adjoint by the adjoint functor theorem \cite[Corollary 5.5.2.9]{HTT}) if and only if the composite
\begin{equation}\label{diag:dualandforget}\begin{tikzcd}[column sep=2.8pc]
\cat{Alg}_{\mathscr{P}}^{\dg}\arrow[r, "\Bar_\phi"] & \cat{CoAlg}_{\mathscr{C}}^{\dg}\arrow[r, "(-)^\vee"] & \cat{Alg}_{\cat{C}^\vee}^{\dg, \op}\arrow[r, "\mm{forget}"] & \cat{Mod}^{\dg, \op}_{\base^{\op}}
\end{tikzcd}\end{equation}
preserves homotopy colimits. The functor $\mathbb{R}\Der(-, \base)$ taking derived modules of derivations clearly has this property.

Since $\cat{C}\rt \Bar\PP$ is a quasi-isomorphism between cofibrant left $\base$-modules, the functor \eqref{diag:dualandforget} is naturally equivalent to the functor associated to the universal twisting morphism $\phi^\mm{uni}\colon \Bar\PP\rt \mathscr{P}$. It will therefore suffice to prove assertion \ref{it:Disderiv} for $\phi=\phi^\mm{uni}$. In this case, consider the Quillen pair
$$\begin{tikzcd}
I\colon \cat{Alg}_{\mathscr{P}}^{\dg}\arrow[r, yshift=0.8ex] & \cat{Mod}_{\base}^{\dg}\colon \mm{triv}\arrow[l, yshift=-0.8ex]
\end{tikzcd}$$ 
where the right adjoint takes the trivial $\PP$-algebra (using the augmentation $\mathscr{P}\rt \base$) and $I$ sends a $\PP$-algebra to its module of indecomposables. Unraveling the definitions, one sees that there is an isomorphism of $\base$-modules
$$
\Bar_{\phi}(A)^\vee\cong I\big(\Omega_\phi \Bar_{\phi}(A)\big)^\vee\cong\Der_{\PP}\big(\Omega_\phi \Bar_{\phi}(A), \base\big)
$$
where we have used that $I(B)^\vee=\Der_{\PP}(B, \base)$. By Lemma \ref{lem:algbarresol}, the map $\Omega_\phi \Bar_\phi(A)\rt A$ is a quasi-isomorphism whenever $A$ is cofibrant as a $\base$-module, i.e.\ it provides a functorial cofibrant replacement of $A$. It follows that $\mathfrak D_\phi$ computes indeed the derived functor of derivations with coefficients in the trivial $A$-module $\base$.
\end{proof}
Let us note that the adjoint pair \eqref{diag:basicadjunction} depends naturally on $\phi$, in the following sense (we will come back to this in Section \ref{sec:naturality}):
\begin{lemma}\label{lem:naturality}
Consider a commuting square
\begin{equation}\label{diag:mapoftwists}\begin{tikzcd}
\CC\arrow[d, "g"{swap}]\arrow[r, dashrightarrow, "\phi"] & \PP\arrow[d, "f"]\\
\DD\arrow[r, dashrightarrow, "\psi"{swap}] & \QQ.
\end{tikzcd}\end{equation}
where $g$ is a map of $\base$-cooperads, $f$ is a map of $\base$-operads and $\phi$ and $\psi$ are weakly Koszul twisting morphisms. Then there is a natural transformation of $\DD^\vee$-algebras 
$$\begin{tikzcd}
\mu\colon \Bar_{\psi}(f_!A)^\vee\arrow[r] & g^*\Bar_\phi(A)^\vee.
\end{tikzcd}$$
When $A$ is a cofibrant $\PP$-algebra, this map is a weak equivalence.
\end{lemma}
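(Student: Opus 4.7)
The plan is to construct $\mu$ at the point-set level and then verify the weak equivalence under the cofibrancy hypothesis by reducing to the identification of both sides with derived modules of derivations.

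For the construction, I would first produce a map of $\DD$-coalgebras
\[
g_*\Bar_\phi(A)\longrightarrow \Bar_\psi(f_! A)
\]
and then dualize. On underlying $\base$-modules, the source is $\CC\circ A$ and the target is $\DD\circ f_!A$; the map is the tensor product of the cooperad morphism $g\colon \CC\to \DD$ with the unit $\eta_A\colon A\to f^*f_! A$ of the $(f_!,f^*)$-adjunction. Compatibility with the $\DD$-coalgebra structures is immediate from $g$ being a morphism of cooperads (and from the way $g_*\Bar_\phi(A)$ inherits its $\DD$-coaction). Compatibility with the twisted differentials is precisely the translation of the commutativity of \eqref{diag:mapoftwists}: one differential summand on each side comes from the internal differentials of $\CC$, $\DD$, and $A$, $f_!A$, while the other comes from $\phi$ (respectively $\psi$) applied through the $\PP$-action (resp.\ $\QQ$-action). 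The equality $\psi\circ g=f\circ \phi$, together with the fact that the $\QQ$-action on $f_!A$ extends the $\PP$-action on $A$ via $f$, matches these pieces. Taking $\base$-linear duals produces the map $\mu\colon \Bar_\psi(f_!A)^\vee \to g^*\Bar_\phi(A)^\vee$ of $\DD^\vee$-algebras, and naturality in $A$ is manifest.

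For the weak equivalence, I would invoke Lemma \ref{lem:adjoint}\ref{it:Disderiv}. When $A$ is a cofibrant $\PP$-algebra, $\Bar_\phi(A)^\vee$ computes $\mathbb{R}\Der_\PP(A,\base)$, and since $f_!$ preserves cofibrant objects, $\Bar_\psi(f_!A)^\vee$ computes $\mathbb{R}\Der_\QQ(f_!A,\base)$. The compatibility of augmentations $\PP\to \base$ and $\QQ\to \base$ with $f$ means that $f^*\base=\base$ as $\PP$-algebras, so the $(f_!,f^*)$-adjunction yields a canonical isomorphism $\Der_\QQ(f_!A,\base)\cong\Der_\PP(A,\base)$. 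Unwinding the construction of $\mu$, one sees that after passing to derivations it becomes exactly this adjunction isomorphism; hence $\mu$ is a quasi-isomorphism of underlying $\base^\op$-modules, which is enough since quasi-isomorphisms of $\DD^\vee$-algebras are detected on underlying modules.

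The main technical obstacle is the verification that $\mu$ is a chain map, i.e.\ the compatibility with twisted differentials in the first step. This is a direct but somewhat notation-heavy unpacking of the definitions of bar constructions relative to $\base$ from Section \ref{sec:barcobar}, requiring careful bookkeeping of signs and the precise form of twisting for morphisms of cooperad-operad pairs. The identification of the induced map on derivations with the $(f_!,f^*)$-adjunction isomorphism is then essentially automatic from the definitions.
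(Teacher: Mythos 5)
Your proposal is correct and follows essentially the same route as the paper: the map $\mu$ is defined as the dual of the coalgebra map $g_*\Bar_\phi(A)\to\Bar_\psi(f_!A)$ determined on cogenerators by $\CC(A)\to A\to f_!A$, and the weak equivalence is deduced by identifying both sides with derived modules of derivations via Lemma \ref{lem:adjoint} and matching $\mu$ with the adjunction isomorphism $\Der_{\QQ}(f_!A,\base)\cong\Der_{\PP}(A,\base)$. No gaps.
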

In other words, a diagram like \eqref{diag:mapoftwists} induces a square of $\infty$-categories
\begin{equation}\label{diag:hocomm}\begin{tikzcd}[column sep=3.2pc]
\cat{Alg}_{\PP}\arrow[d, "f_!"{swap}]\arrow[r, "\mf{D}_\phi"] & \cat{Alg}_{\CC^\vee}^{\op}\arrow[d, "g^*"]\arrow[ld, Rightarrow, "\sim", shorten=2ex]\\
\cat{Alg}_{\QQ}\arrow[r, "\mf{D}_{\psi}"{swap}] & \cat{Alg}_{\DD^\vee}^{\op}
\end{tikzcd}\end{equation}
commuting up to a natural equivalence $\mu$. In particular, $\mf{D}_\phi$ is a homotopy invariant of the map $\phi$, in the following sense: if $f$ (and, by Lemma \ref{lem:operadicbarpreservesqisos}, also $g$) is a quasi-isomorphism, then the vertical functors in \eqref{diag:hocomm} are equivalences by Corollary \ref{cor:quasiiso gives equivalence} which intertwine $\mf{D}_\phi$ and $\mf{D}_{\psi}$.
\begin{proof}
We define $\mu$ to be the dual of a natural map of $\DD$-coalgebras
$$\begin{tikzcd}
g^*\Bar_\phi(A)\arrow[r] & \Bar_{\psi}(f_!A).
\end{tikzcd}$$
Without differentials, this map is given by the map $\CC(A)\rt \DD(f_!A)$, defined on cogenerators by $\CC(A)\rt A\rt f_!A$. This map of $\DD$-coalgebras indeed preserves the bar differential. To see that it is a weak equivalence when $A$ is cofibrant, we can work at the level of the underlying $\base$-modules. In that case, we have a weak equivalence
$$\begin{tikzcd}
\Der(A, \base)\arrow[r, "\sim"]  & \Der\big(\Omega_\phi \Bar_\phi(A), \base\big)\cong \Bar_\phi(A)^\vee
\end{tikzcd}$$
from the complex of $\PP$-algebra derivations of $A$ (see Lemma \ref{lem:adjoint}). We obtain a commuting square of chain complexes
$$\begin{tikzcd}
\Der_{\QQ}(f_!(A), \base)\arrow[d, "\sim"{swap}] \arrow[r] & \Der_{\PP}(A, \base)\arrow[d, "\sim"]\\
\Bar_{\psi}(f_!A)^\vee \arrow[r, "\mu"{swap}] & \Bar_\phi(A)^\vee
\end{tikzcd}$$
The top horizontal map is an isomorphism, so the result follows.
\end{proof}
\begin{corollary}\label{cor:adjointisderivations}
For any weakly Koszul twisting morphism $\phi\colon \CC\drt\PP$ and any $\base$-module $V$, there is a natural equivalence of $\CC^\vee$-algebras
$$\begin{tikzcd}
\mf{D}_\phi\big(\PP(V)\big)\arrow[r] & \mm{triv}\big(V^\vee\big).
\end{tikzcd}$$
Consequently, for any algebra $\mf{g}$ over the $\base^{\op}$-operad $\mf{D}(\PP)=(\Bar\PP)^\vee$, the \emph{underlying $\base$-module} of $\mf{D}'_\phi(\mf{g})$ is given by the derived functor of derivations
$$
\mf{D}'_\phi(\mf{g})\simeq \mathbb{R}\mm{Der}_{\mf{D}(\PP)}(\mf{g}, \base^{\op}).
$$
\end{corollary}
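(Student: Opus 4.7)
I would prove the first claim by producing an explicit natural map of $\CC^\vee$-algebras $\mf{D}_\phi(\PP(V))\to\mm{triv}(V^\vee)$ and showing it is an equivalence on underlying $\base^{\op}$-modules. The map comes from the $(\Omega_\phi,\Bar_\phi)$-adjunction: since the trivial $\CC$-coalgebra $\mm{triv}_{\CC}(V)$ has trivial cocomposition, the twisting part of the cobar differential vanishes, so $\Omega_\phi\,\mm{triv}_{\CC}(V)=\PP(V)$ (the free $\PP$-algebra with only the internal differential). The unit of the adjunction applied to $\mm{triv}_{\CC}(V)$ therefore yields a natural map of $\CC$-coalgebras $\eta_V\colon\mm{triv}_{\CC}(V)\to\Bar_\phi\PP(V)$, whose $\base$-linear dual is the desired map $\eta_V^\vee\colon\mf{D}_\phi(\PP(V))\to\mm{triv}(V^\vee)$. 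Explicitly, $\eta_V$ is the inclusion $V=\base\circ V\hookrightarrow\CC\circ\PP(V)=\Bar_\phi\PP(V)$ arising from the coaugmentation of $\CC$ and the unit of $\PP(V)$.

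To verify $\eta_V^\vee$ is a quasi-isomorphism, I would invoke Lemma \ref{lem:adjoint}\ref{it:Disderiv}, which identifies the underlying $\base^{\op}$-module of $\mf{D}_\phi\PP(V)$ with $\mathbb{R}\Der_\PP(\PP(V),\base)\simeq V^\vee$ via restriction of a derivation to the generating module. The composite $V^\vee\simeq\mf{D}_\phi\PP(V)\xrightarrow{\eta_V^\vee} V^\vee$ is then the identity, since both identifications are controlled by the same canonical inclusion $V\hookrightarrow\Bar_\phi\PP(V)$. Hence $\eta_V^\vee$ is an equivalence of $\CC^\vee$-algebras.

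For the consequence about $\mf{D}'_\phi(\mf{g})$, I would combine this identification with the $\mf{D}_\phi\dashv\mf{D}'_\phi$ adjunction and the universal property of trivial algebras (namely, that $\mm{triv}\colon\Mod_{\base^{\op}}\to\Alg_{\mf{D}(\PP)}$ is right adjoint to the derived indecomposables $\mathbb{L}I$). For any perfect $\base$-module $V$, there is a chain of natural equivalences
\begin{align*}
\Map_{\base}\big(V,\mf{D}'_\phi(\mf{g})\big)
&\simeq \Map_{\Alg_{\PP}}\big(\PP(V),\mf{D}'_\phi(\mf{g})\big)\simeq \Map_{\Alg_{\mf{D}(\PP)}}\big(\mf{g},\mf{D}_\phi\PP(V)\big)\\
&\simeq \Map_{\Alg_{\mf{D}(\PP)}}\big(\mf{g},\mm{triv}(V^\vee)\big)\simeq \Map_{\base}\big(V,\mathbb{R}\Der_{\mf{D}(\PP)}(\mf{g},\base^{\op})\big),
\end{align*}
where the steps use, respectively: the free--forget adjunction; the $\mf{D}_\phi\dashv\mf{D}'_\phi$ adjunction (remembering the opposite on the target); part one; and finally the duality $\Map_{\base^{\op}}(\mathbb{L}I(\mf{g}),V^\vee)\simeq\Map_{\base}(V,\mathbb{L}I(\mf{g})^\vee)$ together with the identification $\mathbb{L}I(\mf{g})^\vee\simeq\mathbb{R}\Der_{\mf{D}(\PP)}(\mf{g},\base^{\op})$. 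Applying the enriched Yoneda lemma to the generators $V=\triv{c}{n}$ of $\Mod_{\base}$ then yields $\mf{D}'_\phi(\mf{g})\simeq\mathbb{R}\Der_{\mf{D}(\PP)}(\mf{g},\base^{\op})$ as $\base$-modules. The main delicate point throughout is the compatibility check in step one, which amounts to a routine but careful unravelling of the various adjunction units, coaugmentations, and canonical inclusions.
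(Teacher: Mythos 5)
Your proof is correct and is essentially the paper's own argument unpacked: the map $\eta_V^\vee$ you build from the adjunction unit is exactly the specialization of Lemma \ref{lem:naturality} to the square with $f\colon\base\to\PP$ the operad unit and $g$ the coaugmentation (and its verification via Lemma \ref{lem:adjoint}\ref{it:Disderiv} is the same computation as in that lemma's proof). Your mapping-space chain for the second assertion is precisely what the paper means by ``passing to right adjoints'' in the commuting square of left adjoints.
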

\begin{proof}
The first assertion is a special case of Lemma \ref{lem:naturality} and the second assertion follows by passing to right adjoints.
\end{proof}
\begin{definition}\label{def:Pbar}
Let $\mathscr{P}$ be a $\base$-operad and let $\mf{D}(\PP)= (\Bar\PP)^\vee$ be its dual operad. We will denote by
\begin{equation}\label{diag:coreadjunction}\begin{tikzcd}
\mf{D}\colon \cat{Alg}_{\mathscr{P}}\arrow[r, yshift=0.8ex] & \cat{Alg}_{\mf{D}(\mathscr{P})}^{\op}\arrow[l, yshift=-0.8ex]\colon \mf{D}'
\end{tikzcd}\end{equation}
the adjunction associated to the universal twisting morphism $\pi\colon \Bar\PP\drt \PP$. By the discussion after Lemma \ref{lem:naturality}, we are allowed to model $\mf{D}(\PP)$ and $\PP$ using any quasi-isomorphic twisting morphism $\phi\colon \CC\drt \PP'$.
\end{definition}
We conclude by giving an explicit description of the right adjoint $\mf{D}'$. 
\begin{theorem}\label{thm:DprimeisD}
For any $\base$-operad $\PP$, there exists a natural map 
$$\begin{tikzcd}
\eta\colon \PP\arrow[r] & \mf{D}(\mf{D}(\PP))
\end{tikzcd}$$
in the $\infty$-category of $\base$-operads, and the right adjoint functor $\mf{D}_\phi'\colon \Alg^{\op}_{\mf{D}(\PP)}\rt \Alg_{\PP}$ is naturally equivalent to the functor
$$\begin{tikzcd}
\Alg^{\op}_{\mf{D}(\PP)}\arrow[r, "\mf{D}"] & \Alg_{\mf{D}(\mf{D}(\PP))}\arrow[r, "\eta^*"] & \Alg_{\PP}.
\end{tikzcd}$$
\end{theorem}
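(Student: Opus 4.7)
The plan is to first construct the natural map $\eta$ at the level of the $\infty$-category of $\base$-operads, and then identify $\mf{D}'$ with $\eta^* \circ \mf{D}$ by the uniqueness of adjoints.

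\textbf{Construction of $\eta$.} The universal twisting morphism $\pi_{\mf{D}(\PP)}\colon \Bar(\mf{D}(\PP)) \drt \mf{D}(\PP)$ dualizes, using Assumption \ref{ass:cofibrancy}, to a $\base$-twisting morphism $\mf{D}(\PP)^\vee \drt \Bar(\mf{D}(\PP))^\vee = \mf{D}(\mf{D}(\PP))$. By the universal property of the cobar construction (Proposition \ref{prop:cobardjointtobar}), this corresponds to a map of $\base$-operads $\Omega(\mf{D}(\PP)^\vee) \to \mf{D}(\mf{D}(\PP))$. Since the natural double-dual map $\Bar\PP \to (\Bar\PP)^{\vee\vee} = \mf{D}(\PP)^\vee$ is a quasi-isomorphism of $\base$-cooperads under the cofibrancy assumption, the zigzag
$$\PP \xleftarrow{\sim} \Omega(\Bar\PP) \xrightarrow{\sim} \Omega(\mf{D}(\PP)^\vee) \to \mf{D}(\mf{D}(\PP)),$$
whose leftmost arrow is the bar-cobar counit, defines $\eta$ in the $\infty$-category of $\base$-operads after inverting the first quasi-isomorphism. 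Naturality in $\PP$ is clear from the construction.

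\textbf{Identifying $\mf{D}'$ with $\eta^* \circ \mf{D}$.} By uniqueness of adjoints, it is enough to verify that $\eta^* \circ \mf{D}$ is right adjoint to $\mf{D}\colon \cat{Alg}_\PP \to \cat{Alg}_{\mf{D}(\PP)}^{\op}$, equivalently, to construct a natural equivalence
$$\Map_{\cat{Alg}_\PP}\big(A, \eta^* \mf{D}(\mf{g})\big) \simeq \Map_{\cat{Alg}_{\mf{D}(\PP)}}\big(\mf{g}, \mf{D}(A)\big).$$
By Lemma \ref{lem:adjoint}(1) and Corollary \ref{cor:adjointisderivations}, the underlying $\base$-modules of $\mf{D}(A)$ and $\mf{D}(\mf{g})$ are the derived derivation complexes $\mathbb{R}\Der_\PP(A, \base)$ and $\mathbb{R}\Der_{\mf{D}(\PP)}(\mf{g}, \base^{\op})$, respectively. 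I would reduce to the case $A = \PP(V)$ free (possible because both sides, as functors of $A$, send colimits of $\PP$-algebras to limits of spaces), where Corollary \ref{cor:adjointisderivations} gives $\mf{D}(\PP(V)) \simeq \Triv(V^\vee)$; both mapping spaces then identify with $\Map_\base\big(V, \mathbb{R}\Der_{\mf{D}(\PP)}(\mf{g}, \base^{\op})\big)$, and the construction of $\eta$ is precisely what is needed to make the induced $\PP$-algebra structures on either side match.

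\textbf{Main obstacle.} The hard part is the coherence verification: showing that the $\PP$-algebra structure on $\eta^* \mf{D}(\mf{g})$ obtained by pulling back the $\mf{D}(\mf{D}(\PP))$-action through the zigzag defining $\eta$ genuinely agrees with the $\PP$-algebra structure uniquely characterized by the adjunction $\mf{D} \dashv \mf{D}'$. I expect to address this by working strictly with cofibrant models throughout the bar-cobar/dualization process and exploiting the naturality afforded by Lemma \ref{lem:naturality} to reduce the compatibility check to the universal twisting morphisms $\pi_\PP$ and $\pi_{\mf{D}(\PP)}$.
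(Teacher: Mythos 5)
Your overall route is the same as the paper's (build $\eta$ by a zig-zag through $\Omega\Bar\PP$ and then identify $\mf{D}'_\phi$ with $\eta^*\circ\mf{D}$), but the construction of $\eta$ as written contains a genuine gap. You dualize the universal twisting morphism of $\mf{D}(\PP)$ to get a twisting morphism out of $\mf{D}(\PP)^\vee$, and you invoke the double-dual map $\Bar\PP\rt(\Bar\PP)^{\vee\vee}=\mf{D}(\PP)^\vee$ as a quasi-isomorphism of cooperads. Neither step is available for an arbitrary $\base$-operad $\PP$, which is the generality the theorem claims. First, $\mf{D}(\PP)^\vee$ is the linear dual of an \emph{operad}, and dualizing an operad does not produce a (conilpotent) cooperad without finiteness hypotheses: one needs $(\mf{D}(\PP)\circ\mf{D}(\PP))^\vee\cong\mf{D}(\PP)^\vee\circ\mf{D}(\PP)^\vee$ to define the cocomposition, and conilpotence to feed it into $\Omega$ and into Proposition \ref{prop:cobardjointtobar}. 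Second, $\Bar\PP$ is in general not reflexive (e.g.\ when $\PP$ has nullary or unary operations, or infinite-dimensional arity components), so $\Bar\PP\rt(\Bar\PP)^{\vee\vee}$ is not a quasi-isomorphism. The paper avoids both problems with Construction \ref{con:twocooperads}: the transpose bijection $\Hom(\DD,\CC^\vee)\cong\Hom(\CC,\DD^\vee)$ between twisting morphisms produces a map $\CC\drt\DD^\vee$ directly from a map $\DD\drt\CC^\vee$ via the hom-tensor adjunction, never passing through biduals; taking $\DD=\Bar(\QQ)$ for a cofibrant replacement $\QQ\rt\CC^\vee$ ensures the target $\DD^\vee=\Bar(\QQ)^\vee=\mf{D}(\mf{D}(\PP))$ is genuinely an operad (dual of a cooperad), and $\eta\colon\Omega\CC\rt\Bar(\QQ)^\vee$ is only required to be a map, not an equivalence.

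The second half of your plan also stops short of a proof. Reducing to free algebras $A=\PP(V)$ gives objectwise equivalences of mapping spaces, but to identify two right adjoints you must first exhibit a natural transformation between them (or a natural equivalence of mapping spaces in both variables), and your proposal does not say where this map comes from; you correctly flag this as the main obstacle but the suggested remedy (naturality of Lemma \ref{lem:naturality}) does not by itself produce the map. The paper's solution is again Construction \ref{con:twocooperads}: the chain-level map $\Omega_{\psi}(X)\rt\bigl(\Bar_{\psi^\top}(X^\vee)\bigr)^\vee$, applied to $X=\Bar_{\epsilon\phi^\dagger}(\mf{g})$ together with the bar-cobar resolution of $\mf{g}$, yields the comparison map $\eta^*\mf{D}_{\phi^\dagger}(\epsilon^*\mf{g})\rt\mf{D}'_\phi(\mf{g})$ by adjunction, after which one checks on underlying $\base$-modules (both sides being $\mathbb{R}\Der_{\CC^\vee}(\mf{g},\base^{\op})$) that the induced endomorphism $\alpha\mapsto f_\alpha$ of $\Bar_{\epsilon\phi^\dagger}(\mf{g})^\vee$ is an isomorphism. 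Without this explicit construction your argument does not close.
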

To define the map $\eta$, which will arise from a zig-zag of maps at the chain level, let us make the following observation:
\begin{construction}\label{con:twocooperads}
Let $\CC$ be a $\base$-cooperad and $\DD$ a $\base^{\op}$-cooperad. Then there is an isomorphism of convolution Lie algebras (cf.\ Remark \ref{rem:convolutionoperad})$$
\Hom_{\cat{BiMod}_{\base^{\op}}^\Sigma}\big(\DD, \CC^\vee\big) \cong \Hom_{\cat{BiMod}_{\base}^\Sigma}\big(\CC, \DD^\vee\big)
$$
sending a linear map $\psi\colon \DD\rt \CC^\vee$ to its adjoint $\psi^\top\colon \CC\rt \DD^\vee$. In particular, this restricts to a bijection between twisting morphisms. For a twisting morphism $\psi$ and a $\DD$-coalgebra $X$, there is a natural map of $\CC^\vee$-algebras
\begin{equation}\label{diag:cobartobarbidual}\begin{tikzcd}
\Omega_{\psi}(X)\arrow[r] & \big(\Bar_{\psi^\top}(X^\vee)\big)^\vee
\end{tikzcd}\end{equation}
given on generators by the obvious inclusion $X\rt X^{\vee\vee}\rt \big(\DD\circ X^\vee\big)^\vee$.
\end{construction}
Let us now fix a $\base$-cooperad $\CC$ which is filtered-cofibrant as a left $\base$-module and let $\epsilon\colon \QQ\rto{\sim} \CC^\vee$ denote a replacement of $\CC^\vee$ by a $\base^{\op}$-operad which is cofibrant as a left $\base^\op$-module. Consider the canonical twisting morphisms
$$\begin{tikzcd}
\phi\colon\CC\arrow[r, dashrightarrow] & \Omega\CC & & \phi^\dagger\colon \Bar\QQ\arrow[r, dashrightarrow] & \QQ.
\end{tikzcd}$$
Applying Construction \ref{con:twocooperads} to the case where $\DD=\Bar\QQ$, the twisting morphism $\epsilon\circ \phi^\dagger\colon\Bar\QQ\drt \CC^\vee$ has an adjoint twisting morphism $(\epsilon\circ \phi^\dagger)^\top\colon \CC\drt \Bar(\QQ)^\vee$. We can write $(\epsilon\circ \phi^\dagger)^\top=\eta\circ \phi$, where
$$\begin{tikzcd}
\eta\colon \Omega\CC\arrow[r] & \Bar(\QQ)^\vee
\end{tikzcd}$$
is the corresponding map of $\base$-operads out of the cobar construction. In this setting, we have the following identification of the right adjoint $\mf{D}'_\phi$:
\begin{proposition}\label{prop:DprimeisD}
In the above situation, the right adjoint $\mf{D}_\phi'\colon \Alg^{\op}_{\CC^\vee}\rt \Alg_{\Omega\CC}$ is naturally equivalent to the functor
$$\begin{tikzcd}
\Alg^{\op}_{\CC^\vee}\arrow[r, "\epsilon^*", "\sim"{swap}] & \Alg^{\op}_{\QQ} \arrow[r, "\mf{D}_{\phi^\dagger}"] & \Alg_{\Bar(\QQ)^\vee}\arrow[r, "\eta^*"] & \Alg_{\Omega\CC}.
\end{tikzcd}$$
\end{proposition}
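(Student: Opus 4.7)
By Yoneda and the adjunction $\mf{D}_\phi\dashv\mf{D}'_\phi$, it suffices to exhibit, for every cofibrant $\Omega\CC$-algebra $A$ and every cofibrant $\CC^\vee$-algebra $\mf{g}$, a natural equivalence
$$\Map_{\Alg_{\CC^\vee}}\bigl(\mf{g},\,\Bar_\phi(A)^\vee\bigr)\;\simeq\;\Map_{\Alg_{\Omega\CC}}\bigl(A,\;\eta^*\,\Bar_{\phi^\dagger}(\epsilon^*\mf{g})^\vee\bigr).$$
The plan is to rewrite both sides as mapping spaces of $\base$-coalgebras via bar--cobar adjunctions, and then to identify them using the transpose isomorphism of Construction \ref{con:twocooperads}.

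For the right-hand side, I would use the identity $\Omega_{\eta\phi}=\eta_!\circ\Omega_\phi$ together with the cofibrant resolution $A\simeq\Omega_\phi\Bar_\phi A$ and the adjunction $\Omega_{\eta\phi}\dashv\Bar_{\eta\phi}$ for the twisting morphism $\eta\phi\colon\CC\drt\Bar(\QQ)^\vee$; this identifies the right-hand side with $\Map_{\CC\text{-coalg}}\bigl(\Bar_\phi A,\,\Bar_{\eta\phi}(\Bar_{\phi^\dagger}(\epsilon^*\mf{g})^\vee)\bigr)$. For the left-hand side, the twisting morphism $\epsilon\circ\phi^\dagger\colon\Bar\QQ\drt\CC^\vee$ is weakly Koszul (since $\phi^\dagger$ is universal and $\epsilon$ is a quasi-isomorphism of cofibrant objects), so $\mf{g}\simeq\Omega_{\epsilon\phi^\dagger}\Bar_{\phi^\dagger}(\epsilon^*\mf{g})$, and the corresponding bar--cobar adjunction identifies the left-hand side with $\Map_{\Bar\QQ\text{-coalg}}\bigl(\Bar_{\phi^\dagger}(\epsilon^*\mf{g}),\,\Bar_{\phi^\dagger}(\epsilon^*\Bar_\phi(A)^\vee)\bigr)$.

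The key step is then to match these two coalgebra mapping spaces. By Construction \ref{con:twocooperads}, the twisting morphisms $\eta\phi$ and $\epsilon\phi^\dagger$ are transposes of one another; the underlying isomorphism of convolution Lie algebras $\Hom(\CC,\Bar(\QQ)^\vee)\cong\Hom(\Bar\QQ,\CC^\vee)$ extends naturally to an isomorphism at the algebra/coalgebra level, and both mapping spaces identify with the space of Maurer--Cartan pairings $\Bar_\phi(A)\otimes_\base\Bar_{\phi^\dagger}(\epsilon^*\mf{g})\to\base$ in a common convolution Lie algebra. The main obstacle is the bookkeeping needed to verify that the transposition from Construction \ref{con:twocooperads} intertwines the twisted bar differentials and the two forms of the Maurer--Cartan equation, after which naturality in $A$ and $\mf{g}$ is automatic. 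Alternatively, one could obtain the comparison map directly by applying Lemma \ref{lem:naturality} to the square with top row $\phi$, bottom row $\eta\phi$, left column the identity and right column $\eta$, which yields $\mf{D}_\phi(A)\simeq\mf{D}_{\eta\phi}(\eta_!A)$; then one recognizes $\mf{D}'_{\eta\phi}$ as the composite $\epsilon^*_!\circ\mf{D}_{\phi^\dagger}\circ(\text{restriction})$ via the Koszul property of $\epsilon\circ\phi^\dagger$.
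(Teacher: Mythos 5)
Your overall strategy is genuinely close to the paper's: both proofs pivot on Construction \ref{con:twocooperads} and the fact that $\eta\phi$ and $\epsilon\phi^\dagger$ are transposes, and both reduce the comparison to bar--cobar resolutions of $A$ and $\mf{g}$. The difference is in how the two sides are matched, and this is where your proposal has a real gap that is more than ``bookkeeping''. At the level of $0$-simplices your identification does work: currying gives a natural bijection $\Hom_{\base}\big(\Bar_\phi A, (\Bar_{\epsilon\phi^\dagger}\mf{g})^\vee\big)\cong\Hom_{\base^{\op}}\big(\Bar_{\epsilon\phi^\dagger}\mf{g},\Bar_\phi(A)^\vee\big)$ intertwining the two Maurer--Cartan equations. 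But to conclude that $\mf{D}'_\phi$ and $\eta^*\mf{D}_{\phi^\dagger}\epsilon^*$ agree as functors of $\infty$-categories you need an equivalence of \emph{derived} mapping spaces, and the two simplicial enrichments do not match under transposition: the $n$-simplices on one side live in $\Hom\big(\Bar_\phi A,(\Bar_{\epsilon\phi^\dagger}\mf{g})^\vee\otimes\Omega_n\big)$ and on the other in $\Hom\big(\Bar_{\epsilon\phi^\dagger}\mf{g},\Bar_\phi(A)^\vee\otimes\Omega_n\big)$, and since $\Omega_n$ is infinite-dimensional and neither coalgebra is dualizable, $(-)^\vee\otimes\Omega_n$ and $((-)\otimes\Omega_n)^\vee$ differ, so the two sets of $\Omega_n$-valued pairings need not coincide. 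Relatedly, the comparison secretly passes through the non-invertible map $X\rt X^{\vee\vee}$ for $X=\Bar_{\epsilon\phi^\dagger}\mf{g}$. This is exactly why the paper does \emph{not} argue by a symmetric identification of MC spaces: it uses Construction \ref{con:twocooperads} only to produce a one-directional chain-level map $\Omega_{\epsilon\phi^\dagger}\big(\Bar_{\epsilon\phi^\dagger}\mf{g}\big)\rt\big(\Bar_{\eta\phi}(\Bar_{\epsilon\phi^\dagger}\mf{g})^\vee\big)^\vee$, turns it into a natural transformation in the $\infty$-category via the resolution $\Omega_{\epsilon\phi^\dagger}\Bar_{\epsilon\phi^\dagger}\mf{g}\rto{\sim}\mf{g}$, and then checks it is an equivalence on underlying $\base$-modules by identifying both sides with $\Bar_{\epsilon\phi^\dagger}(\mf{g})^\vee\simeq\mathbb{R}\Der_{\CC^\vee}(\mf{g},\base^{\op})$ and verifying that the induced endomorphism $\alpha\mapsto f_\alpha$ is an isomorphism. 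Your argument would need an analogous final verification (or a careful zigzag of weak equivalences of simplicial mapping spaces) to close the gap.

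Your ``alternative'' route also has a problem: applying Lemma \ref{lem:naturality} to the square with rows $\phi$ and $\eta\phi$ requires $\eta\phi\colon\CC\drt\Bar(\QQ)^\vee$ to be weakly Koszul, i.e.\ that $\CC\rt\Bar\big(\Bar(\QQ)^\vee\big)$ is a quasi-isomorphism. Since $\eta$ is a completion map and not a quasi-isomorphism in general, this is not established anywhere in the paper and is not obvious. Moreover, even granting it, you would still be left with identifying $\mf{D}'_{\eta\phi}$ with $\mf{D}_{\phi^\dagger}\circ(\text{restriction})$, which is essentially the original statement one level up, so this route does not obviously terminate.
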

\begin{remark}
When $\CC$ is finite dimensional  the map $\eta\colon \Omega(\CC)\rt \Bar(\CC^\vee)^\vee$ is an isomorphism and we can simply identify $\mf{D}'_\phi$ with $\mf{D}_{\phi^\dagger}$. 
\end{remark}
\begin{proof}
Our first goal will be to define a natural map of $\Omega\CC$-algebras
\begin{equation}\label{diag:compD}\begin{tikzcd}
\eta^*\mf{D}_{\phi^\dagger}(\epsilon^*\mf{g})\arrow[r]& \mf{D}_\phi'(\mf{g})
\end{tikzcd}\end{equation}
for every $\CC^\vee$-algebra $\mf{g}$. By adjunction, it suffices to provide a natural map
\begin{equation}\label{diag:adjcompD}\begin{tikzcd}
\mf{g} \arrow[r] & \mf{D}_\phi\big(\eta^*\mf{D}_{\phi^\dagger}(\epsilon^*\mf{g})\big)
\end{tikzcd}\end{equation}
in the $\infty$-category of $\CC^\vee$-algebras. To do this, note that $\Bar_\phi(\eta^*-)\cong \Bar_{\eta\phi}(-)$ and $\Bar_{\phi^\dagger}(\epsilon^*-)=\Bar_{\epsilon\phi^\dagger}(-)$ both preserve objects that are cofibrant as left modules. Consequently, we can compute
$$
\mf{D}_\phi\Big(\eta^*\mf{D}_{\phi^\dagger}(\mf{g})\Big) \cong \Big(\Bar_{\eta\phi}\big(\Bar_{\epsilon\phi^{\dagger}}\mf{g}\big)^\vee\Big)^\vee
$$
whenever $\mf{g}$ is cofibrant as a left $\base$-module. Now apply Construction \ref{con:twocooperads} to the case where $\DD=\Bar(\QQ)$ and to the twisting morphisms
$$\begin{tikzcd}
\psi=\epsilon\phi^\dagger\colon \DD=\Bar(\QQ)\arrow[r, dashrightarrow] & \CC^\vee & & \psi^\top=\eta\phi\colon \CC\arrow[r, dashrightarrow] & \DD^\vee.
\end{tikzcd}$$
For the $\DD$-coalgebra $X=\Bar_{\epsilon\phi^{\dagger}}\mf{g}$, the map \eqref{diag:cobartobarbidual} then gives a natural map of $\CC^\vee$-algebras
\begin{equation}\label{diag:adjcompchain}\begin{tikzcd}
\Omega_{\epsilon\phi^\dagger}\big(\Bar_{\epsilon\phi^\dagger}(\mf{g})\big)\arrow[r] & \Big(\Bar_{\eta\phi}\big(\Bar_{\epsilon\phi^{\dagger}}\mf{g}\big)^\vee\Big)^\vee.
\end{tikzcd}\end{equation}
The domain is the usual bar-cobar construction of $\mf{g}$, which comes with a natural quasi-isomorphism $\Omega_{\epsilon\phi^\dagger}\big(\Bar_{\epsilon\phi^\dagger}(\mf{g})\big)\rt \mf{g}$ when $\mf{g}$ is cofibrant as a $\base^{\op}$-module (Lemma \ref{lem:algbarresol}). At the level of $\infty$-categories, we therefore obtain the desired map \eqref{diag:adjcompD} and the adjoint comparison map \eqref{diag:compD}.

We now have to check that the comparison map \eqref{diag:compD} is an equivalence, for which it suffices to see that the underlying map of $\base$-modules is an equivalence. 
Note that Lemma \ref{lem:adjoint} and Corollary \ref{cor:adjointisderivations} produce natural equivalences of $\base$-modules
$$
\mf{D}_{\epsilon\phi^\dagger}(\mf{g}) \simeq \mathbb{R}\Der_{\CC^\vee}(\mf{g}, \base^{\op})\qquad\quad\qquad \mf{D}_\phi'(\mf{g})\simeq \mathbb{R}\Der_{\CC^\vee}(\mf{g}, \base^{\op}).
$$
Under these equivalences, the comparison map \eqref{diag:compD} corresponds to a natural endomorphism of 
$$
\mathbb{R}\Der_{\CC^\vee}\big(\mf{g}, \base^{\op}\big)\simeq \Der_{\CC^\vee}\big(\Omega_{\epsilon\phi^\dagger}\Bar_{\epsilon\phi^\dagger}(\mf{g}), \base^{\op}\big)\cong \Bar_{\epsilon\phi^\dagger}(\mf{g})^\vee.
$$
Unravelling the definitions, this endomorphism can be described as follows: an element $\alpha\in \Bar_{\epsilon\phi^\dagger}(\mf{g})^\vee$ is sent to the $\base^{\op}$-linear map
$$\begin{tikzcd}[column sep=1.9pc]
f_\alpha\colon \Bar_{\epsilon\phi^\dagger}\mf{g}\arrow[r] & \Omega_{\epsilon\phi^\dagger}\Bar_{\epsilon\phi^\dagger}\mf{g}\arrow[r, "\eqref{diag:adjcompchain}"] & \Big(\Bar_{\eta\phi}\big(\Bar_{\epsilon\phi^{\dagger}}\mf{g}\big)^\vee\Big)^\vee\arrow[r] & \Bar_{\epsilon\phi^{\dagger}}(\mf{g})^{\vee\vee}\arrow[r, "\mm{ev}_\alpha"] & \base^\op.
\end{tikzcd}$$
Here the first map is the inclusion of the generators and the third map is the projection, dual to the inclusion of $\Bar_{\epsilon\phi^{\dagger}}(\mf{g})^\vee$ into its bar construction (as the primitive elements). The last map evaluates an element of the bidual at $\alpha$. One easily sees that the assignment $\alpha\longmapsto f_\alpha$ is an isomorphism, so that \eqref{diag:compD} is indeed an equivalence.
\end{proof}
\begin{proof}[Proof (of Theorem \ref{thm:DprimeisD})]
Suppose that $\PP$ is cofibrant as a left $\base$-module and consider the situation of Proposition \ref{prop:DprimeisD} in the case where $\CC=\Bar\PP$. Then
$$
\CC^\vee\simeq \QQ\simeq \mf{D}(\PP) \qquad \text{and} \qquad \Bar(\QQ)^\vee\simeq \mf{D}(\mf{D}(\PP))
$$
so that the natural zig-zag $\PP\lto{\sim} \Omega\Bar\PP\rto{\eta} \Bar(\QQ)^\vee$ defines a natural map in the $\infty$-category of $\base$-operads $\eta\colon \PP\rt \mf{D}(\mf{D}(\PP))$.
Theorem \ref{thm:DprimeisD} then follows from Proposition \ref{prop:DprimeisD}.
\end{proof}

\subsection{Axiomatic argument}\label{sec:outline}
We will now describe the strategy of the proof of our main result, Theorem \ref{thm:mainthm}. Our strategy follows the axiomatic frameworks developed in \cite{DAGX,calaque2018formal}. More precisely, let us consider the adjunction
$$\begin{tikzcd}
\mf{D}\colon \cat{Alg}_{\mathscr{P}}\arrow[r, yshift=0.8ex] & \cat{Alg}_{\mf{D}(\mathscr{P})}^{\op}\arrow[l, yshift=-0.8ex]\colon \mf{D}'.
\end{tikzcd}$$
This adjunction is essentially never an equivalence, because it involves taking duals: both $\mf{D}$ and $\mf{D}'$ send an algebra to its module of derivations with coefficients in $\base$ (Lemma \ref{lem:adjoint} and Corollary \ref{cor:adjointisderivations}). Instead, one can try to refine the above adjunction to an equivalence between $\mf{D}(\PP)$-algebras and formal moduli problems over $\PP$, using the following construction: every $\mf{D}(\PP)$-algebra $\mf{g}$ defines a functor
$$\begin{tikzcd}
\Art_{\PP}\arrow[r] & \sS; \hspace{4pt} A\arrow[r, mapsto] & \Map_{\mf{D}(\PP)}\big(\mf{D}(A), \mf{g}\big).
\end{tikzcd}$$
Under suitable conditions on the functor $\mf{D}$, this functor will satisfy the axioms of a formal moduli problem (Definition \ref{def:formalmoduli}). Furthermore, the results leading to \cite[Theorem 1.3.12]{DAGX} provide general conditions on $\mf{D}$ under which this construction becomes an equivalence. In the current situation, we can summarize these results as follows:
\begin{theorem}\label{thm:axiomatic}
Let $\PP$ be a $\base$-operad. Then there is an equivalence of $\infty$-categories
$$\begin{tikzcd}
\MC\colon \cat{Alg}_{\mf{D}(\mathscr{P})}\arrow[r, "\sim"] & \cat{FMP}_{\mathscr{P}}; \hspace{4pt} \mf{g}\arrow[r, mapsto] & \Map_{\mf{D}(\mathscr{P})}(\mf{D}(-), \mf{g}).
\end{tikzcd}$$
if the following conditions are satisfied:
\begin{enumerate}[label=(\Alph*)]
\item\label{it:axiomA} For every Artin $\PP$-algebra $A$, the unit map $A\rt \mf{D}'\mf{D}(A)$ is an equivalence.
\item\label{it:axiomB} For every trivial algebra $\triv{c}{n}$ generated by a single element of degree $n\geq 0$, the $\mf{D}(\PP)$-algebra $\mf{D}\big(\triv{c}{n}\big)$ is freely generated by $\triv{c}{n}^\vee$.
\item\label{it:axiomC} The functor $\mf{D}$ sends every pullback square of Artin algebras
$$\begin{tikzcd}
A'\arrow[r]\arrow[d] & 0\arrow[d]\\
A\arrow[r] & \triv{c}{n}
\end{tikzcd}$$
with $n\geq 1$ to a pushout square of $\mf{D}(\PP)$-algebras.
\end{enumerate}
In this case, the inverse of the functor $\MC$ sends a formal moduli problem $F$ to its tangent complex $T(F)$, endowed with some $\mf{D}(\PP)$-algebra structure.
\end{theorem}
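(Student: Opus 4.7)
The plan is to deduce this from the axiomatic machinery of \cite{DAGX, calaque2018formal}, which turns any sufficiently well-behaved adjoint pair into an equivalence between ``Koszul dual algebras'' and formal moduli problems once three concrete hypotheses are verified. By Lemma \ref{lem:adjoint} and Corollary \ref{cor:adjointisderivations}, the adjunction $\mf{D} \dashv \mf{D}'$ of Definition \ref{def:Pbar}, together with the trivial algebras $\{\triv{c}{n}\}_{c \in \base,\, n \geq 0}$ as generators on the $\PP$-side, already defines a weak Koszul duality context in the sense of loc.\ cit.; the axioms (A), (B), (C) are the precise translations of the hypotheses of the axiomatic equivalence theorem, so the strategy is to make the three verifications below.

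First I would check that $\MC(\mf{g}) = \Map_{\mf{D}(\PP)}(\mf{D}(-), \mf{g})$ lies in $\FMP_\PP$. Pointedness $\MC(\mf{g})(0) \simeq *$ holds because $\mf{D}(0)$ is initial in $\Alg_{\mf{D}(\PP)}$. The Schlessinger pullback condition is the direct translation of (C) through $\Map_{\mf{D}(\PP)}(-, \mf{g})$, which carries pushouts of $\mf{D}(\PP)$-algebras to pullbacks of spaces.

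Next I would identify the tangent complex by invoking axiom (B):
$$
\MC(\mf{g})(\triv{c}{n}) \simeq \Map_{\Alg_{\mf{D}(\PP)}}\!\big(\Free_{\mf{D}(\PP)}(\trivop{c}{-n}),\ \mf{g}\big) \simeq \Map_{\Mod_{\base^{\op}}}(\trivop{c}{-n}, \mf{g}).
$$
By Lemma \ref{lem:tangentcomplexmodule}, the underlying $\base^{\op}$-module of $T(\MC(\mf{g}))$ is thereby identified with that of $\mf{g}$. This gives the tangent-complex formula and motivates the candidate inverse $F \mapsto T(F)$, once one has enhanced $T(F)$ to a $\mf{D}(\PP)$-algebra.

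Finally, I would prove that $\MC$ is an equivalence. The key input is that every small $\PP$-algebra is quasi-isomorphic to a very small one (Remark \ref{rem:smallalgebras}), hence obtained by iterated square-zero extensions by trivial algebras. Condition (C) then presents $\mf{D}(A)$ as an iterated pushout of free $\mf{D}(\PP)$-algebras on shifted duals $\trivop{c}{-n}$, while axiom (A) makes $\mf{D}$ fully faithful on the opposite of small $\PP$-algebras through the chain $\Map_\PP(A,B) \simeq \Map_\PP(A, \mf{D}'\mf{D}(B)) \simeq \Map_{\Alg_{\mf{D}(\PP)}}(\mf{D}(B), \mf{D}(A))$. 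Dually, $\Alg_{\mf{D}(\PP)}$ is generated under colimits by the free algebras on $\trivop{c}{-n}$, and each formal moduli problem is determined by its restriction to very small algebras via iterated Schlessinger pullbacks; matching these two presentations yields fully faithfulness of $\MC$. Essential surjectivity then follows by verifying $\MC(T(F)) \simeq F$ on trivial algebras, where it reduces to Step 2. The main obstacle is endowing $T(F)$ with a coherent $\mf{D}(\PP)$-algebra structure and upgrading the $\base^{\op}$-module identification of Step 2 to an equivalence of $\mf{D}(\PP)$-algebras; this is precisely what the axiomatic framework of \cite{calaque2018formal} is designed to do, via careful bookkeeping of the iterated square-zero extensions and their images under $\mf{D}$.
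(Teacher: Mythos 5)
Your proposal follows essentially the same route as the paper: use (C) to show $\MC(\mf{g})$ is a formal moduli problem, use (B) together with Lemma \ref{lem:tangentcomplexmodule} to identify $T(\MC(\mf{g}))\simeq\mf{g}$, and use (A)--(C) to show $\mf{D}$ restricts to an equivalence between small $\PP$-algebras and the ``good'' $\mf{D}(\PP)$-algebras, deferring the remaining coherence to the axiomatic theorem of \cite{DAGX,calaque2018formal}. The paper is only slightly more explicit on the last invocation, verifying in addition that $T\circ\MC$ preserves sifted colimits (the condition of \cite[Definition 1.3.5]{DAGX}), which your sketch leaves implicit in the appeal to the framework.
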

\begin{remark}
The notation $\MC$ is supposed to be suggestive: when $\PP$ satisfies suitable finite-dimensionality conditions, the formal moduli problem $\MC_\mf{g}$ can indeed be described concretely in terms of Maurer--Cartan simplicial sets of Lie algebras. We will discuss this in more detail in Section \ref{sec:Dprime}. 
\end{remark}
The technical part of the proof of our main result (Theorem \ref{thm:mainthm}) will consist of verifying the above conditions for a suitable class of operads. This will be done in Section \ref{sec:cohsmall}. In the remainder of this section, we will describe how Theorem \ref{thm:axiomatic} follows from the results of \cite{DAGX, calaque2018formal}.
\begin{proof}
Condition \ref{it:axiomC} guarantees that for every $\mf{D}(\PP)$-algebra $\mf{g}$, the functor
$$\begin{tikzcd}
\MC_\mf{g}\colon \Art_{\PP}\arrow[r] & \sS; \hspace{4pt} A\arrow[r, mapsto] & \Map_{\mf{D}(\PP)}\big(\mf{D}(A), \mf{g}\big)
\end{tikzcd}$$
does indeed define a formal moduli problem. Consequently, we obtain a well-defined functor $\MC\colon\Alg_{\mf{D}(\PP)}\rt\FMP_{\PP}$. By condition \ref{it:axiomB}, we have that
\begin{align*}
\MC_\mf{g}(\triv{c}{n}) &= \Map_{\mf{D}(\PP)}\big(\mf{D}(\triv{c}{n}),\mf{g}\big)\\
&\simeq \Map_{\mf{D}(\PP)}\big(\mm{Free}(\triv{c}{n}^\vee), \mf{g}\big) \simeq \Map_{\base^{\op}}\big(\trivop{c}{-n}, \mf{g}\big).
\end{align*}
It then follows from Lemma \ref{lem:tangentcomplexmodule} that the tangent complex of the formal moduli problem $\MC_\mf{g}$ is given by
\begin{equation}\label{eq:T=g}
T\big(\MC_\mf{g}\big)\simeq \mf{g}.
\end{equation}
In particular, if $\MC$ admits an inverse, then this inverse will necessarily send a formal moduli $F$ to $T(F)$, endowed with a $\mf{D}(\PP)$-algebra structure. To see that $\MC$ indeed does admit an inverse, let us recall the following terminology \cite[Definition 2.15]{calaque2018formal}. The class of \emph{good} $\mf{D}(\PP)$-algebras is the smallest class of algebras such that:
\begin{enumerate}
\item\label{it:good1} It contains the free algebras $\Free\big(\trivop{c}{n}\big)$ for $n\leq 0$.
\item\label{it:good2} For any pushout square
\begin{equation}\label{diag:cellattachment}\begin{tikzcd}
\Free\big(\trivop{c}{n}\big)\arrow[r]\arrow[d] & \mf{g}\arrow[d]\\
0\arrow[r] & \mf{h}
\end{tikzcd}\end{equation}
where $\mf{g}$ is good and $n\leq -1$, $\mf{h}$ is good as well.
\end{enumerate}
By condition \ref{it:axiomA}, the functor $\mf{D}$ restricts to a fully faithful embedding of the $\infty$-category of Artin $\PP$-algebras into $\Alg_{\mf{D}}^{\op}$. By conditions \ref{it:axiomB} and \ref{it:axiomC}, the essential image of this embedding is (the opposite of) a full subcategory of $\Alg_{\mf{D}(\PP)}$ which satisfies conditions \ref{it:good1} and \ref{it:good2}. In particular, it contains the good $\mf{D}(\PP)$-algebras. But then the image of the good $\mf{D}(\PP)$-algebras under $\mf{D}'$ is a full subcategory of the Artin $\PP$-algebras that satisfies the conditions of Definition \ref{def:smallalgebra}. Since the Artin algebras were the smallest subcategory with these properties, we conclude that $\mf{D}$ and $\mf{D}'$ induce an equivalence
\begin{equation}\label{diag:equivsmallgood}\begin{tikzcd}
\mf{D}\colon \Art_{\PP}\arrow[r, yshift=1ex, "\sim"{below}] & \big(\Alg_{\mf{D}(\PP)}^{\mm{good}}\big)^{\op}\colon \mf{D}'\arrow[l, yshift=-1ex].
\end{tikzcd}\end{equation}
It will now follow from \cite[Theorem 1.3.12]{DAGX} that the functor $\MC$ is an equivalence. Indeed, the conditions of \cite[Definition 1.3.1]{DAGX} hold precisely because $\mf{D}$ restricts to the equivalence \eqref{diag:equivsmallgood}. The remaining condition \cite[Definition 1.3.9]{DAGX}  asserts that the functor
$$\begin{tikzcd}
\Alg_{\mf{D}(\PP)}\arrow[r, "\MC"] & \FMP_{\PP}\arrow[r, "T"] & \prod_{c} \cat{Sp}
\end{tikzcd}$$
preserves sifted colimits. But it follows from \eqref{eq:T=g} that this functor is naturally equivalent to the composite 
$$\begin{tikzcd}[column sep=3pc]
\Alg_{\mf{D}(\PP)}\arrow[r, "\mm{forget}"] & \cat{Mod}_{\base^\op}\arrow[r] & \prod_{c} \cat{Sp}.
\end{tikzcd}$$
Forgetting the structure of an algebra over an operad always preserves sifted homotopy colimits \cite[Appendix A]{HarpazNuitenPrasma} and the second functor preserves all colimits (see Remark \ref{rem:modulestospectra}).
\end{proof}

\section{Cohomology of Artin algebras}\label{sec:cohsmall}
Let $\PP$ be a $\base$-operad and consider the adjoint pair whose left adjoint sends a $\PP$-algebra to its dual $\mf{D}(\PP)$-algebra
$$\begin{tikzcd}
\mf{D}\colon \Alg_{\PP}\arrow[r, yshift=0.8ex] & \Alg_{\mf{D}(\PP)}^{\op}\colon \mf{D}'.\arrow[l, yshift=-0.8ex]
\end{tikzcd}$$
The purpose of this section is to show that under certain conditions on the operad $\PP$, the functor $\mf{D}$ is well-behaved when restricted to the class of \emph{Artin $\PP$-algebras}. In particular, it satisfies the assumptions of Theorem \ref{thm:axiomatic}, so that the above adjunction can be refined to an equivalence between $\mf{D}(\PP)$-algebras and formal moduli problems over $\PP$. More precisely, will prove the following:
\begin{theorem}[Theorem {\ref{thm:mainthm}}]\label{thm:biduals}
Let $\base$ be a dg-category and $\mathscr{P}$ an (augmented) $\base$-operad. Assume that the following conditions hold:
\begin{enumerate}
\item $\base$ and $\mathscr{P}$ are both connective.
\item $\base$ is cohomologically bounded, i.e.\ there exists an $n\in\mathbb{N}$ such that all $H^*(\base)(c, d)$ are concentrated in degrees $[-n, 0]$.
\item $\mathscr{P}$ is splendid.
\end{enumerate}
Then the following assertions hold:
\begin{enumerate}[label=(\Alph*)]
\item\label{it:checkA} For any Artin $\PP$-algebra (Definition \ref{def:smallalgebra}), the unit map $A\rt \mf{D}'\mf{D}(A)$ is an equivalence.
\item\label{it:checkB} $\mf{D}\big(\triv{c}{n}\big)$ is freely generated by $\triv{c}{n}^\vee$, for all $c$ and $n\geq 0$.
\item\label{it:checkC} The functor $\mf{D}$ sends every pullback square of Artin algebras
\begin{equation}\label{diag:squarezero}\begin{tikzcd}
A'\arrow[r]\arrow[d] & 0\arrow[d]\\
A\arrow[r] & \triv{c}{n}
\end{tikzcd}\end{equation}
with $n\geq 1$ to a pushout square of $\mf{D}(\PP)$-algebras.
\end{enumerate}
In particular, Theorem \ref{thm:axiomatic} applies and there is an equivalence
$$
\FMP_{\PP}\simeq \Alg_{\mf{D}(\PP)}.
$$
\end{theorem}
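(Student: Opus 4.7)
The goal is to verify the three conditions \ref{it:checkA}, \ref{it:checkB}, \ref{it:checkC} of Theorem \ref{thm:axiomatic}, from which the equivalence $\FMP_{\PP}\simeq\Alg_{\mf{D}(\PP)}$ follows immediately. I would tackle the conditions in the order \ref{it:checkB}, \ref{it:checkC}, \ref{it:checkA}, since the later assertions crucially make use of the earlier ones.

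For \ref{it:checkB}, one proceeds by direct chain-level computation. For a trivial algebra $A=\triv{c}{n}$, all higher operations of $\PP$ act by zero, so the bar construction $\Bar_\pi(A)$ has underlying graded object $\Bar(\PP)\circ\triv{c}{n}[1]$ with only the internal differential inherited from $\Bar(\PP)$. Since $\triv{c}{n}$ is finitely generated and free as a left $\base$-module, linear duality commutes with the composition product, yielding $\mf{D}(\triv{c}{n})\simeq\Free_{\mf{D}(\PP)}(\triv{c}{n}^\vee)$ as a $\mf{D}(\PP)$-algebra; here the splendidness of $\PP$ is what ensures that $\Bar(\PP)$ has good enough finiteness for the dual free algebra to be meaningful arity by arity.

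For \ref{it:checkC}, the plan is to analyze the pullback \eqref{diag:squarezero} at the chain level. Example \ref{ex:squarezeroext} exhibits $A'$ as a split square-zero extension of $A$ by $\triv{c}{n-1}$, classified by a derivation $\chi\colon A\rt\triv{c}{n}$. This data produces a short exact sequence of perfect $\base$-modules (Remark \ref{rem:smallisperfect}) which, after tensoring with $\Bar(\PP)\circ(-)$ and installing the bar differentials, realizes a cofiber sequence of $\Bar(\PP)$-coalgebras $\Bar_\pi(\triv{c}{n})\rt\Bar_\pi(A)\rt\Bar_\pi(A')$. Dualizing this cofiber sequence, using that all underlying modules are perfect, produces the required pushout of $\mf{D}(\PP)$-algebras, and splendidness again ensures exactness is preserved in each cohomological degree after dualizing.

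Condition \ref{it:checkA} is proved by induction along the filtration of small algebras by square-zero extensions (Remark \ref{rem:smallalgebras}). The inductive step is formal: given $A'\simeq A\times^h_{\triv{c}{n}}0$ with \ref{it:checkA} known for $A$, $0$, and $\triv{c}{n}$, applying \ref{it:checkC} turns the pullback into a pushout after $\mf{D}$, and the right adjoint $\mf{D}'$ then sends this pushout back to the pullback $A'$. The base cases are $A=0$ (easy, as $\mf{D}(0)\simeq\base^{\op}$ and $\mf{D}'(\base^{\op})\simeq 0$) and $A=\triv{c}{n}$, the latter being the main obstacle. For this case, Theorem \ref{thm:DprimeisD} computes $\mf{D}'\mf{D}(\triv{c}{n})$ by applying $\mf{D}$ twice and restricting along $\eta\colon\PP\rt\mf{D}(\mf{D}(\PP))$. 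Applying \ref{it:checkB} once for $\PP$ and then again for the auxiliary operad $\mf{D}(\PP)$ identifies the underlying $\base$-module of $\mf{D}'\mf{D}(\triv{c}{n})$ as the iterated linear dual $\triv{c}{n}^{\vee\vee}$, which must be shown to coincide with $\triv{c}{n}$ on the nose. This is where all three hypotheses of the theorem intervene essentially: splendidness of $\PP$ controls the arity-wise growth of $\Bar(\PP)$ so that its dual $\mf{D}(\PP)$ is itself a well-defined $\base^{\op}$-operad to which \ref{it:checkB} can in turn be applied, while connectivity of $\PP$ and $\base$ together with cohomological boundedness of $\base$ ensure that the double-dualization spectral sequence converges exactly to the perfect module $\triv{c}{n}$. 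Making this arity-by-arity convergence analysis rigorous is the technical heart of the section.
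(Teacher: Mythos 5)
Your overall architecture (verify the three axioms of Theorem \ref{thm:axiomatic} by chain-level analysis of the bar construction, exploiting perfectness of small algebras) matches the paper, and your route to \ref{it:checkA} --- induction along square-zero extensions with the trivial-algebra base case handled via Theorem \ref{thm:DprimeisD} and double dualization --- is genuinely different from the paper's, which instead resolves $\mf{D}(A)$ cofibrantly and identifies the unit directly with $A\rt A^{\vee\vee}$ via derived derivations. Your inductive step is sound (the pushout of \ref{it:checkC} is a limit in $\Alg_{\mf{D}(\PP)}^{\op}$, which the right adjoint $\mf{D}'$ preserves), and the base case could be closed using Corollary \ref{cor:adjointisderivations} rather than a convergence analysis.

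However, there is a genuine gap at the technical heart, in both \ref{it:checkB} and \ref{it:checkC}. The dual of the bar construction is a \emph{product} over arities, $\mf{D}(A)\cong\prod_p\big(\Bar\PP(p)\otimes_{\Sigma_p\ltimes\base^{\otimes p}}A^{\otimes p}\big)^\vee$, whereas the free $\mf{D}(\PP)$-algebra (and the algebra appearing in the pushout of \ref{it:checkC}) is the corresponding \emph{direct sum}. The entire difficulty of the theorem is showing that the inclusion of the sum into the product is a quasi-isomorphism (the paper's Proposition \ref{prop:filtrationissues}, that $\mf{D}^{\mm{poly}}_\pi(A)\rt\mf{D}_\pi(A)$ is a quasi-isomorphism). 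Your proposal asserts this under the heading ``duality commutes with the composition product'' / ``splendidness ensures exactness is preserved after dualizing'' without proof; but it requires (i) first replacing $A$ by a chain-level model that is very small \emph{and nilpotent} --- otherwise the sum is not even closed under the bar differential --- which is itself a nontrivial statement (Lemma \ref{lem:verysmall}), and (ii) the degree estimates coming from a quasi-free resolution of $\PP$ with generators in increasingly negative degrees, plus a separate filtration to handle nullary operations. None of this appears in your outline. Secondly, in \ref{it:checkC} the claimed ``cofiber sequence of coalgebras $\Bar_\pi(\triv{c}{n})\rt\Bar_\pi(A)\rt\Bar_\pi(A')$'' is not the right structure: what one actually gets from the classifying map $\chi\colon A\rt\triv{c}{n}$ is a \emph{pullback} of coalgebras against the map of cofree coalgebras $\CC(\triv{c}{n-1,n})\rt\CC(\triv{c}{n})$, whose dual (on polynomial subalgebras) is a pushout along a \emph{generating cofibration} of $\mf{D}(\PP)$-algebras. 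A mere cofiber sequence of underlying modules would not identify $\mf{D}(A')$ as the specific pushout required by axiom \ref{it:checkC}.
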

\begin{assumption}\label{ass:Abounded}
We will assume, as usual, that $\PP$ is cofibrant as a left $\base$-module. Because $\base$ is assumed to be connective and cohomologically bounded, we will furthermore make the following chain-level assumption throughout this section: we will assume that $\base$ is a dg-category such that every $\base(c, d)$ is concentrated in degrees $[-N, 0]$, for some fixed $N$.
\end{assumption}

\subsection{Polynomial subalgebras}
Let us start with the following general observation. Let $\phi\colon \mathscr{C}\drt \mathscr{P}$ be a weakly Koszul twisting morphism from an $\base$-cooperad to a $\base$-operad. Then $\mf{D}_\phi(A)=\Bar_\phi(A)^\vee$ is given by
\begin{align*}
\mf{D}_\phi(A) &= \Big(\bigoplus_{p\geq 0} \mathscr{C}(p)\otimes_{\Sigma_{p}\ltimes \base^{\otimes p}} A^{\otimes p}\Big)^\vee\\
&\cong \prod_{p\geq 0} \Big(\mathscr{C}(p)\otimes_{\Sigma_{p}\ltimes \base^{\otimes p}} A^{\otimes p}\Big)^\vee.
\end{align*}
Consider the graded $\CC^\vee$-subalgebra
\begin{equation}\label{eq:poly}
\mf{D}_\phi^{\mm{poly}}(A) \coloneqq \bigoplus_{p\geq 0} \Big(\mathscr{C}(p)\otimes_{\Sigma_{p}\ltimes \base^{\otimes p}} A^{\otimes p}\Big)^\vee\subseteq \mf{D}_\phi(A).
\end{equation}
Note that this is not necessarily closed under the differential, but it will be if $A$ satisfies the following condition:
\begin{definition}\label{def:nilpotent}
A $\mathscr{P}$-algebra $A$ is \emph{nilpotent} if $A$ is annihilated by all operations of arity $\geq p$, for some $p$.
\end{definition}
\begin{remark}
When $\mathscr{P}$ is \emph{concentrated in arity $\geq 2$}, then a nilpotent algebra is annihilated by any composition of $\geq p$ operations in $\mathscr{P}$, for some $p$. Conversely, if $\mathscr{P}$ is generated by operations in finitely many arities and $A$ is annihilated by any composition of $\geq p$ operations, then $A$ is nilpotent.
\end{remark}
\begin{remark}
The algebra $\mf{D}_\phi^{\mm{poly}}(A)$ is not homotopy invariant: it depends on the point-set choices for $A$ and the twisting morphism $\phi$. Note that an algebra that is quasi-isomorphic to a nilpotent algebra need not be nilpotent itself.
\end{remark}
To prove Theorem \ref{thm:biduals}, it will be much more convenient to work with $\mf{D}^\mm{poly}(A)$ instead of $\mf{D}(A)$. Indeed, the following result shows that $\mf{D}^\mm{poly}(A)$ is typically much better behaved than $\mf{D}(A)$:
\begin{lemma}\label{lem:verysmalltocof}
Let $\phi\colon \mathscr{C}\drt \mathscr{P}$ be a Koszul twisting morphism. If $A$ is a strictly Artin, nilpotent $\mathscr{P}$-algebra, then $\mf{D}^{\mm{poly}}_\phi(A)$ is a cofibrant $\mathscr{C}^\vee$-algebra.
\end{lemma}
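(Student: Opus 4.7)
The proof is by induction on the length $n$ of a filtration $A = A^{(n)} \to A^{(n-1)} \to \cdots \to A^{(0)} = 0$ witnessing that $A$ is very small. A preliminary observation is that every very small algebra is automatically nilpotent in the sense of Definition \ref{def:nilpotent}: each $A^{(i)} \to A^{(i-1)}$ is a square-zero extension by $\triv{c_i}{p_i}$, so any operation of arity at least two applied to a tuple containing a factor from the kernel vanishes, and a straightforward induction on $i$ then shows that some fixed arity of $\mathscr{P}$-operations annihilates $A^{(i)}$ entirely. Hence $\mf{D}^{\mm{poly}}_\phi(A^{(i)})$ is closed under the bar differential and forms a genuine $\mathscr{C}^\vee$-subalgebra of $\mf{D}_\phi(A^{(i)})$ throughout the induction. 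The base case $A^{(0)} = 0$ gives $\mf{D}^{\mm{poly}}_\phi(0) = \mathscr{C}(0)^\vee \cong \Free_{\mathscr{C}^\vee}(0)$, the initial $\mathscr{C}^\vee$-algebra, which is trivially cofibrant.

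For the inductive step, the plan is to use the pullback square from Example \ref{ex:squarezeroext}:
$$
\xymatrix{
A^{(i)} \ar[r] \ar[d] & \triv{c_i}{p_i, p_i+1} \ar[d]^-\pi \\
A^{(i-1)} \ar[r]_-\chi & \triv{c_i}{p_i+1}.
}
$$
The heart of the argument is the claim that $\mf{D}^{\mm{poly}}_\phi$, applied contravariantly, sends this diagram to a pushout square of $\mathscr{C}^\vee$-algebras. For any trivial $\mathscr{P}$-algebra the twisting component of the bar differential vanishes, so $\mf{D}^{\mm{poly}}_\phi(\triv{c}{n})$ is naturally identified with the free $\mathscr{C}^\vee$-algebra $\Free_{\mathscr{C}^\vee}(\trivop{c}{-n})$, and similarly $\mf{D}^{\mm{poly}}_\phi(\triv{c_i}{p_i,p_i+1})$ is free on the dual of the contractible cone. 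The dual $\pi^\vee$ is the inclusion of $\trivop{c_i}{-p_i-1}$ into the acyclic cone $\triv{c_i}{p_i,p_i+1}^\vee$; this is a generating acyclic cofibration of $\base^\op$-modules, and the induced map of free $\mathscr{C}^\vee$-algebras is in turn a generating acyclic cofibration in the transferred model structure. Granting the pushout claim, $\mf{D}^{\mm{poly}}_\phi(A^{(i-1)}) \to \mf{D}^{\mm{poly}}_\phi(A^{(i)})$ is a cell attachment along this generating cofibration, so it is itself a cofibration, and the cofibrancy of $\mf{D}^{\mm{poly}}_\phi(A^{(i)})$ follows from the inductive hypothesis.

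The main obstacle is therefore verifying the pushout claim, which the plan is to establish directly at the chain level. A $\base$-module splitting $A^{(i)} \cong A^{(i-1)} \oplus \triv{c_i}{p_i}$ (available since the kernel is cofibrant) induces a decomposition of $\Bar_\phi(A^{(i)})$ indexed by the number of kernel factors in each tensor power. The triviality of the square-zero ideal forces the twisted bar differential $d_\phi$ to vanish on every contribution in which a $\mathscr{P}$-operation of arity $\geq 2$ would touch a kernel factor, so the only mechanism that alters the number of kernel factors in a summand is the degree-one piece of the obstruction cocycle $\chi$. Dualizing this decomposition and matching it term-by-term against the explicit pushout of $\Free_{\mathscr{C}^\vee}(\triv{c_i}{p_i,p_i+1}^\vee) \leftarrow \Free_{\mathscr{C}^\vee}(\trivop{c_i}{-p_i-1}) \to \mf{D}^{\mm{poly}}_\phi(A^{(i-1)})$ should yield the identification. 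Two features justify restricting to the polynomial subalgebra: the kernel $\triv{c_i}{p_i}$ is a perfect $\base$-module (Remark \ref{rem:smallisperfect}), so linear duality commutes with the tensor powers and the direct sum that appear in the decomposition, and the nilpotency established in the first paragraph is precisely what makes the polynomial stratum closed under the bar differential.
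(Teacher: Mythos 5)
Your overall strategy is the same as the paper's: induct on the very-small filtration, observe that $\mf{D}^{\mm{poly}}_\phi$ turns the square-zero extension $A^{(i)}\rt A^{(i-1)}$ (presented as a pullback against $\triv{c_i}{p_i,p_i+1}\rt\triv{c_i}{p_i+1}$) into a pushout along the free map $\CC^\vee(\trivop{c_i}{-p_i-1})\rt\CC^\vee(\trivop{c_i}{-p_i,-p_i-1})$, and conclude that each stage is a cell attachment. The paper verifies the pushout exactly as you propose: without differentials $\mf{D}^{\mm{poly}}_\phi(A^{(i)})$ is the free graded algebra on $A^{(i-1)\vee}\oplus\triv{c_i}{p_i}^\vee$, so the square is a pushout of graded algebras and hence of dg-algebras.

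There is, however, one genuinely false step: your ``preliminary observation'' that every very small algebra is automatically nilpotent in the sense of Definition \ref{def:nilpotent}. The square-zero/trivial-module conditions only kill operations having at least one input \emph{in the kernel}; an operation $\mu$ of arbitrarily large arity whose inputs all lift elements of $A^{(i-1)}$ may still land nontrivially in the kernel $\triv{c_i}{p_i}$, and a single high-arity operation need not factor as a composite of lower-arity ones, so your ``straightforward induction on $i$'' does not go through. This is precisely why the paper states nilpotency as a separate hypothesis here, and why Lemma \ref{lem:verysmall}\ref{it:verysmall2} has to work (using splendidness and Proposition \ref{prop:filtrationissues}) to arrange that the classifying cocycle $\chi$ is polynomial before it can conclude that a small algebra has a \emph{nilpotent} very small model. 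Fortunately your proof does not actually need the false claim: what you need is that each intermediate quotient $A^{(i)}$ is nilpotent so that $\mf{D}^{\mm{poly}}_\phi(A^{(i)})$ is closed under the bar differential, and this follows immediately from the hypothesis that $A$ is nilpotent, since any operation of arity $\geq p$ annihilating $A$ annihilates every quotient $\PP$-algebra of $A$. With that one-sentence repair the argument stands. A minor slip: the map $\trivop{c_i}{-p_i-1}\rt\triv{c_i}{p_i,p_i+1}^\vee$ (a generator into its cone) is a generating \emph{cofibration} of $\base^{\op}$-modules, not a generating acyclic cofibration --- its source is not acyclic and the map is not a quasi-isomorphism (if it were, each stage of your induction would be a quasi-isomorphism, which is absurd); your subsequent use of it only as a cofibration is what is correct.
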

\begin{proof}
Let us start with the following general observation: if $A\rt B$ is a square zero extension of $\PP$-algebras by $\triv{c}{n}$, with $n\geq 0$, then their bar constructions fit into a pullback square of $\mathscr{C}$-coalgebras
\begin{equation}\label{diag:squarezerocoalg}\begin{tikzcd}
\Bar_\phi(A)\arrow[d]\arrow[r] & \mathscr{C}(\triv{c}{n, n+1})\arrow[d]\\
\Bar_\phi(B)\arrow[r] & \mathscr{C}(\triv{c}{n+1}).
\end{tikzcd}\end{equation}
Indeed, this follows from writing $A\cong B\oplus \triv{c}{n}$ as $\base$-modules (without differential), so that $\Bar_\phi(A)$ is obtained from $\Bar_\phi(B)$ by adding cogenerators from $\triv{c}{n}$. Assuming that $A$ and (hence) $B$ are nilpotent, we can take duals and restrict to `polynomial' subalgebras to obtain a square
\begin{equation}\label{diag:pushoutpointset}\begin{tikzcd}
\mathscr{C}^\vee\big(\trivop{c}{-n-1}\big)\arrow[d]\arrow[r] & \mf{D}_\phi^{\mm{poly}}(B)\arrow[d]\\
\mathscr{C}^\vee\big(\trivop{c}{-n, -n-1}\big)\arrow[r] & \mf{D}_\phi^{\mm{poly}}(A).
\end{tikzcd}\end{equation}
Without differentials, this square is a pushout square of $\CC^\vee$-algebras, so the same is true with differentials. Since the left vertical map is a (generating) cofibration of $\CC^\vee$-algebras, it follows that $\mf{D}_\phi^\mm{poly}(A)$ is cofibrant as soon as $\mf{D}_\phi^\mm{poly}(B)$ is.

Now suppose that $A$ is strictly Artin and nilpotent. By definition, $A$ fits into a sequence $A=A^{(n)}\rt \dots \rt A^{(0)}=0$ of square zero extensions by various $\triv{c_i}{p_i}$. Proceeding by induction, it follows that $\mf{D}_\phi^{\mm{poly}}(A)$ is cofibrant.
\end{proof}
To reduce statements about $\mf{D}(A)$ to statements about the more tractable algebra $\mf{D}^\mm{poly}(A)$, we will use of the following result:
\begin{proposition}\label{prop:filtrationissues}
Suppose that $\base$ is as in Assumption \ref{ass:Abounded} and that $\mathscr{P}$ is a splendid \ $\base$-operad, concentrated in nonpositive degrees. Let $\pi\colon \Bar\PP\drt \mathscr{P}$ be the universal Koszul twisting morphism and let $A$ be a $\mathscr{P}$-algebra which is strictly Artin and nilpotent. Then the map of $\Bar\PP^\vee$-algebras
$$\begin{tikzcd}
\mf{D}_\pi^\mm{poly}(A)\arrow[r] & \mf{D}_\pi(A)
\end{tikzcd}$$
is a quasi-isomorphism.
\end{proposition}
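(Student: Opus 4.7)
The plan is to show that the inclusion $\mf{D}^{\mm{poly}}_\pi(A) \hookrightarrow \mf{D}_\pi(A)$ of a direct sum into a product is a quasi-isomorphism by filtering both sides by arity in the bar construction and comparing the resulting spectral sequences; the key point will be that only finitely many arities contribute to the cohomology in each fixed total degree.

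First I would establish cohomological degree bounds on the summands $X_p^\vee := (\Bar\PP(p)\otimes_{\Sigma_p\ltimes \base^{\otimes p}} A^{\otimes p})^\vee$. Splendidness of $\PP$, together with Corollary \ref{cor:bar of operad as derived comp} identifying the bar construction with the derived composition product, shows that $H^*(\Bar\PP(p))$ is concentrated in cohomological degrees $\leq -g(p)$ for some function $g(p)\to\infty$. Very-smallness of $A$ implies that $A$ is cofibrant as a left $\base$-module and (since generators sit in nonpositive degrees and $\base$ is connective) concentrated in cohomological degrees $\leq 0$, so the same holds for $A^{\otimes p}$. Working in characteristic zero, $\Sigma_p$-coinvariants are exact, and the realization spectral sequence of the derived tensor over the connective dg-category $\base^{\otimes p}$ preserves the upper bound on cohomology, so $H^*(X_p)$ is concentrated in degrees $\leq -g(p)$. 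Dualizing via $\Hom_\base(-,\base)$ and using that $\base$ is cohomologically bounded in $[-N,0]$ (so that $\Ext_\base^i(-,\base)$ vanishes for $i>N$), an Ext-spectral sequence yields $H^*(X_p^\vee)$ concentrated in cohomological degrees $\geq g(p)-N$. Hence for each fixed $d\in\mathbb{Z}$, only finitely many arities $p$ can contribute in degree $d$.

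Second, I would filter both $\mf{D}_\pi(A) = \prod_p X_p^\vee$ and $\mf{D}_\pi^{\mm{poly}}(A) = \bigoplus_p X_p^\vee$ by arity, with $F^p$ the arity-$\geq p$ product (respectively direct sum). Since the bar differential on $\Bar_\pi(A)$ strictly decreases arity, its dual strictly increases arity and preserves the filtration; here nilpotence of $A$ is crucial so that this differential, which is a priori an infinite sum of contributions indexed by arities of operations, actually lands in $\mf{D}_\pi^{\mm{poly}}(A)$, as observed before Lemma \ref{lem:verysmalltocof}. Both associated gradeds are $X_p^\vee$ equipped only with the internal differential, yielding identical $E_1$-pages. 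By Step 1, in each total cohomological degree only finitely many columns of $E_1$ are nonzero, so both spectral sequences converge strongly (the product filtration is complete, the direct sum filtration is exhaustive, and degreewise finiteness upgrades weak to strong convergence on both sides). The filtration-preserving inclusion then induces the identity on $E_1$, hence an isomorphism on $E_\infty$, and therefore a quasi-isomorphism on totalizations.

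The main obstacle will be Step 1, namely carefully tracking how the degree bound $g(p)$ coming from splendidness propagates, up to a bounded shift, through two successive operations: the derived relative tensor product $\otimes_{\Sigma_p\ltimes\base^{\otimes p}}$, whose ground ring is neither a field nor commutative, and the $\base$-linear dualization $\Hom_\base(-,\base)$, whose behaviour on cohomology is controlled by the global dimension of $\base$. The cofibrancy of $\PP$ as a left $\base$-module and the very-small structure of $A$ ensure the derived tensor product is well-behaved, while the cohomological boundedness of $\base$ ensures the Ext-spectral sequence for the dual collapses after finitely many pages. Once the composite bound $H^d(X_p^\vee)=0$ for $d<g(p)-N$ is verified, Step 2 is a formal spectral-sequence comparison.
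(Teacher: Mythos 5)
Your overall strategy --- reduce everything to the claim that $H^*$ of the arity-$p$ summand of $\mf{D}_\pi(A)$ lives in degrees $\geq g(p)-N$ with $g(p)\to\infty$, so that the inclusion of the sum into the product becomes an isomorphism on the associated graded of an arity filtration --- is in spirit the same finiteness mechanism the paper exploits. However, there are two genuine gaps, and they sit exactly where the paper's proof does its real work. First, your Step 1 asserts that splendidness together with Corollary \ref{cor:bar of operad as derived comp} gives $H^*(\Bar\PP(p))$ concentrated in degrees $\leq -g(p)$. This does not follow: splendidness is a condition on $\mathscr{P}(1)\circ^h_{\mathscr{P}^{\geq 1}}\mathscr{P}(1)$, computed over the \emph{$0$-reduced part} and relative to $\mathscr{P}(1)$, whereas $\Bar\PP\simeq \base\circ^h_{\PP}\base$ is computed over the full operad (nullary operations included) and relative to $\base$. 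Bridging these requires (i) a base-change argument along $\base\to\mathscr{P}(1)$ (the paper instead passes through quasi-free resolutions of the truncation tower, Propositions \ref{prop:generatorsforoperad} and \ref{prop:genfortruncations}, and works with the small model $(\mb{1}\oplus V[1])\circ_\base A$ rather than the full bar construction), and (ii) a separate treatment of $\mathscr{P}(0)$, since splendidness imposes no condition whatsoever on the nullary part. Note that at the chain level $\Bar\PP(p)\supseteq \ol{\PP}(p)[1]$ sits only in degrees $\leq -1$, so your bound is genuinely a statement about cohomology that needs the full strength of the argument; as written, the hard part of the proposition has been black-boxed into one sentence.

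Second, your Step 2 claims the bar differential on $\Bar_\pi(A)$ strictly decreases arity, so that its dual preserves the arity filtration. This fails precisely when $\mathscr{P}(0)\neq 0$: splitting off a nullary operation via $\Delta_{(1)}$ sends the arity-$p$ component to the arity-$(p+1)$ component, so the differential on $\prod_p X_p^\vee$ has components moving in \emph{both} directions and no arity filtration is preserved. (Even for $0$-reduced $\PP$ the differential only weakly decreases arity, because of unary operations, so the associated graded carries the unary twist and not just the internal differential --- a minor point, but it affects your identification of the $E_1$-page.) This is exactly why the paper introduces the auxiliary complete filtration by the number of nullary operations (Construction \ref{con:removingnull} and Lemma \ref{lem:filtrationnullary}) before running any arity-type argument, reducing to the $0$-reduced algebra $A\oplus\mathscr{P}(0)[1]$. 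For $0$-reduced operads, and granting the cohomological bound on $\Bar\PP(p)$, your sum-versus-product spectral sequence comparison is sound and arguably cleaner than the paper's chain-level analysis; but both gaps must be filled for the proposition as stated.
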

This proposition forms the technical heart of our proof of Theorem \ref{thm:biduals} (and hence Theorem \ref{thm:mainthm}). In particular, its proof is somewhat involved and is proven in increasing levels of generality, using some of the results of Appendix \ref{sec:operads}. We will therefore postpone the proof to Section \ref{sec:proofoftechnicalprop} and instead discuss how it can be used to prove Theorem \ref{thm:biduals}. 

As a first application of Proposition \ref{prop:filtrationissues}, we find that every Artin $\PP$-algebra can be modelled by a nilpotent algebra. More precisely, we have the following:
\begin{lemma}\label{lem:verysmall}
Consider a retract diagram of $\base$-operads $\PP\rto{\sim} \Omega\CC\rto{\sim} \PP$, where $\CC$ is filtered-cofibrant as a left $\base$-module. Then the following assertions hold:
\begin{enumerate}
\item\label{it:verysmall1} Every Artin $\PP$-algebra is quasi-isomorphic to a strictly Artin $\PP$-algebra (Definition \ref{def:verysmall}).
\item\label{it:verysmall2} Suppose that the (Koszul) twisting morphism $\phi\colon \CC\drt\PP$ associated to $\Omega\CC\rt\PP$ has the following property: for every $A$ which is strictly Artin and nilpotent, the map $\mf{D}_\phi^\mm{poly}(A)\rt \mf{D}_\phi(A)$ is a quasi-isomorphism. Then every Artin $\PP$-algebra is quasi-isomorphic to a strictly Artin $\PP$-algebra which is furthermore nilpotent.
\end{enumerate}
\end{lemma}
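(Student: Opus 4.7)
The plan is to prove both parts by induction on the construction of small $\PP$-algebras as the smallest subcategory of $\Alg_{\PP}$ containing the trivial algebras $\triv{c}{n}$ with $n \geq 0$ and closed under homotopy pullbacks $X \times^h_{\triv{c}{n}} 0$ with $n \geq 1$. At the base, $\triv{c}{n}$ is itself very small via the one-step filtration $0 \hookrightarrow \triv{c}{n}$, and is nilpotent since its $\PP$-action factors through the augmentation $\PP \to \base$.

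The inductive step amounts to the following: given $X$ small with a chain-level model $\tilde X \xrightarrow{\sim} X$ that is very small (and, in (2), also nilpotent), and given a classifying map $\alpha\colon X \to \triv{c}{n}$ in the $\infty$-category with $n \geq 1$, one must construct an explicit chain-level model $\tilde B$ for $X \times^h_{\triv{c}{n}} 0$ which is again very small (and, in (2), nilpotent). The crucial technical step is to represent $\alpha$ by a \emph{strict} chain-level $\PP$-algebra map $\tilde\alpha\colon \tilde X \to \triv{c}{n}$; given such a $\tilde\alpha$, the split square-zero extension $\tilde B := \tilde X \oplus \triv{c}{n-1}$ with differential $d(x, v) = (dx, dv + \tilde\alpha(x))$, with $\triv{c}{n-1}$ embedded as a trivial $\PP$-ideal, realises the homotopy fiber by Example~\ref{ex:squarezeroext} and extends the very-small filtration of $\tilde X$ by one step. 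For (1), I would obtain the chain-level lift of $\alpha$ by passing through the retract $\PP \xrightarrow{\sim} \Omega\CC \xrightarrow{\sim} \PP$: an $\Omega\CC$-algebra structure on $\tilde X$ is a coderivation on $\CC(\tilde X)$, and in this presentation homotopy classes of maps into a trivial algebra are directly representable chain-level as linear cocycles compatible with the coderivation, which transport back to a strict $\PP$-algebra map $\tilde\alpha$.

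For (2), the same construction automatically produces a split extension (since $\tilde\alpha$ is a strict map into a trivial algebra, $\triv{c}{n-1}$ does not mix with $\tilde X$ under the $\PP$-action on $\tilde B$), and such split extensions preserve nilpotency: if $\mu \in \PP(q)$ annihilates $\tilde X$, then on $\tilde B$ any input from $\triv{c}{n-1}$ forces $\mu = 0$ and all inputs from $\tilde X$ reduce to $\mu$ applied to $\tilde X$. Starting from the nilpotent algebra $0$, the entire tower therefore stays nilpotent. The polynomial comparison hypothesis enters at the lifting step of the induction: it ensures that $\mathbb{R}\Der_{\PP}(\tilde X, \base) \simeq \mf{D}_\phi(\tilde X)$ is realised by the point-set polynomial subobject $\mf{D}^\mm{poly}_\phi(\tilde X)$ whenever $\tilde X$ is very small and nilpotent, so the homotopy class of every classifying map still admits a strict chain-level representative in the nilpotent setting.

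The main obstacle in both parts is the chain-level lifting of classifying maps out of a non-cofibrant $\PP$-algebra at each inductive step. In (1) this is supplied by the $\Omega\CC$-algebra reformulation afforded by the retract hypothesis, while in (2) it is precisely the polynomial comparison hypothesis which makes the lift compatible with nilpotency; all remaining verifications (the filtration, the split nature of the extension, and the preservation of nilpotency) are straightforward bookkeeping.
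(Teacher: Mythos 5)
There is a genuine gap at the heart of your inductive step: the claim that the classifying map $\alpha\colon X\to \triv{c}{n}$ can be represented by a \emph{strict} $\PP$-algebra map $\tilde\alpha\colon \tilde X\to \triv{c}{n}$ out of the very small model, and that the homotopy fiber is then the direct sum $\tilde X\oplus\triv{c}{n-1}$ with only the differential twisted. Since $\tilde X$ is not cofibrant as a $\PP$-algebra, the homotopy classes of maps $\tilde X\to\triv{c}{n}$ are computed by maps out of the bar--cobar resolution, i.e.\ by degree $-n$ cocycles in $\mf{D}_\phi(\tilde X)$ (equivalently, $\infty$-morphisms $\tilde X\rightsquigarrow\triv{c}{n}$ with higher components $\chi(\mu,-)\colon\tilde X^{\otimes p}\to\triv{c}{n}$ for all $p$); only the classes lying in the image of the linear part are realized by strict maps, and this image is a proper subset in general. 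For instance, for $\PP=\Com$ and $\tilde X=(t)/(t^3)$ the only strict map to $k[1]$ is zero, yet $\pi_0\Map_{\PP}(\tilde X,k[1])\cong\Hom\big(H^{-1}(\mathbb{L}I(\tilde X)),k\big)\neq 0$, so there are homotopy pullbacks of small algebras that your recipe cannot produce. Relatedly, your assertion in the $\Omega\CC$-reformulation that a cocycle ``compatible with the coderivation'' transports back to a strict $\PP$-algebra map is exactly the false step: such a cocycle is a map of $\CC$-coalgebras $\CC(\tilde X)\to\CC(\triv{c}{n})$, whose linear component alone is not a chain map of $\PP$-algebras. The paper's proof avoids this by forming the pullback at the level of $\CC$-coalgebras, recognizing the result as $\Bar_\psi(A')$ for a square-zero extension $A'$ of $r^*\tilde X$ \emph{as an $\Omega\CC$-algebra} whose structure maps incorporate all the higher components $\chi(\mu,-)$, and only then restricting along $i\colon\PP\to\Omega\CC$; the resulting extension is not ``split'' in your sense, since the new operations genuinely mix $\tilde X$ into $\triv{c}{n-1}$.

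This also means the role you assign to the hypothesis in part (2) is off. The polynomial comparison is not needed to strictify classifying maps; it is needed so that, after modifying $\chi$ by a coboundary, one may assume $\chi$ lies in $\mf{D}^\mm{poly}_\phi(\tilde X)$, i.e.\ that only finitely many of the higher components $\chi(\mu,-)$ are nonzero. Without this, the square-zero extension $A'$ carries nontrivial generating operations in arbitrarily high arity and there is no reason for it to be nilpotent, even though $\tilde X$ is. Your nilpotency bookkeeping is correct only for the split extension you wrote down, which, as above, is not the right object.
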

Every cofibrant $\base$-operad $\PP$ fits into a retract diagram $\PP\rto{\sim} \Omega\Bar\PP\rto{\sim} \PP$. Consequently, part \ref{it:verysmall1} asserts that Artin algebras over cofibrant operads are quasi-isomorphic to strictly Artin algebras.
\begin{proof}
The Artin $\PP$-algebras form the smallest class of $\PP$-algebras that is closed under homotopy pullbacks along the maps of trivial algebras $0\rt \triv{c}{n+1}$ (with $n\geq 0$). It therefore suffices to show the following: let $A$ be a strictly Artin $\cat{P}$-algebra, let $\Omega_\phi\Bar_\phi(A)\rt A$ be its bar-cobar resolution (using Lemma \ref{lem:algbarresol}, since strictly Artin algebras are $\base$-cofibrant by Remark \ref{rem:smallisperfect}) and consider any (homotopy) pullback diagram of the form
\begin{equation}\label{diag:pullbacksquarezero}\begin{tikzcd}
Y\arrow[d]\arrow[r]\arrow[d] & \triv{c}{n, n+1}\arrow[d]\\
\Omega_\phi\Bar_\phi(A)\arrow[r, "\chi"] & \triv{c}{n+1}.
\end{tikzcd}\end{equation}
Then the map $Y\rt \Omega_\phi\Bar_\phi(A)$ is naturally quasi-isomorphic to a square zero extension $B\rt A$ with kernel $\triv{c}{n}$. For part \ref{it:verysmall2}, we must furthermore show that $B$ can be taken nilpotent, assuming $A$ is nilpotent.

We will only prove this assertion for part \ref{it:verysmall2}; the argument for part \ref{it:verysmall1} is similar but easier. Let us denote by $i\colon \PP\rt \Omega\CC$ and $r\colon \Omega\CC\rt\PP$ the inclusion and retraction, and let $\psi\colon \CC\drt \Omega\CC$ denote the universal twisting morphism. There are natural maps
$$\begin{tikzcd}
B_\phi(A)\arrow[r, "\cong"] & B_\psi(r^*A) & & i^*\Omega_\psi(C)\arrow[r] & \Omega_\phi(C)
\end{tikzcd}$$
for a $\PP$-algebra $A$ and a $\CC$-coalgebra $C$. The first map is an isomorphism and the second map is obtained by applying $i^*$ to the natural map $\Omega_\psi(C)\rt r^*\Omega_\phi(C)$. Now observe that there are bijections
$$
\chi\colon \Omega_\phi\Bar_\phi(A)\rt \triv{c}{n+1} \qquad \Longleftrightarrow \qquad \chi\colon \Bar_\phi(A)\rt \mathscr{C}(\triv{c}{n+1})
$$
where $\CC\big(\triv{c}{n+1}\big)$ is the cofree $\CC$-coalgebra on a single generator of degree $n+1$ at place $c$. A map $\chi\colon \Omega_\phi\Bar_\phi(A)\rt \triv{c}{n+1}$ therefore corresponds to a degree $-(n+1)$ cycle $\chi\in \mf{D}_\phi(A)(c)$. Homologous cycles correspond to homotopic maps, and hence give rise to weakly equivalent homotopy pullbacks $Y$. We may therefore change $\chi$ by a coboundary and assume that it is contained in the image of the quasi-isomorphism
$$\begin{tikzcd}
\mf{D}_\phi^\mm{poly}(A)\arrow[r] & \mf{D}_\phi(A).
\end{tikzcd}$$
Now consider the pullback square of $\cat{C}$-coalgebras
\begin{equation}\label{diag:pullbackcoalgebras}\begin{tikzcd}
C'\arrow[d]\arrow[r] & \cat{C}\big(\triv{c}{n, n+1}\big)\arrow[d]\\
\Bar_\phi(A)\arrow[r, "\chi"{swap}] & \cat{C}\big(\triv{c}{n+1}\big).
\end{tikzcd}\end{equation}
Unravelling the definitions, one sees that the map $C'\rt \Bar_\phi(A)$ is isomorphic to a map of the form $\Bar_\psi(A')\rt \Bar_\psi(r^*A)$, where $A'\rt r^*A$ is a square zero extension of $\Omega\CC$-algebras with kernel $\triv{c}{n}$. In particular, $A'$ is a strictly Artin $\Omega\CC$-algebra. 

To see that $A'$ is a nilpotent $\Omega\CC$-algebra, we use that $A'\cong A\oplus \triv{c}{n}$ is a square zero extension of $r^*A$ by a trivial $r^*A$-module. Each generator $\mu\in \mc{C}[-1]\subseteq \Omega\mc{C}$ acts on $A'$ by
$$\begin{tikzcd}[column sep=2.1pc]
\mu_{A'}\colon \big(A\oplus \triv{c}{n}\big)^{\otimes p}\arrow[r, two heads] & A^{\otimes p}\arrow[rr, "{(\mu_A, \chi(\mu, -))}"] & & A\oplus \triv{c}{n}.
\end{tikzcd}$$
Here $\chi(\mu, -)$ denotes the composite
$$\begin{tikzcd}[column sep=3pc]
A^{\otimes p}\arrow[r, "\mu\otimes \mm{id}"] & {\CC(p)[-1]\otimes A^{\otimes p}\subseteq B_\phi(A)}\arrow[r, "{\chi[-1]}"] & \triv{c}{n}.
\end{tikzcd}$$
By our assumption that $\chi$ lies in the image of $\mf{D}^\mm{poly}(A)$, the generating operations $\chi(\mu, -)$ vanish when the arity of $\mu$ is high enough. Furthermore, the composition of at least two such generating operations maps $A$ to $A$ and vanishes on $\triv{c}{n}$. Because $A$ was assumed to be a nilpotent $\PP$-algebra, it follows that such composite operations also vanish if their arity is high enough. We conclude that $A'$ is a nilpotent $\Omega\CC$-algebra.

Now, applying functor $i^*\Omega_\psi$ to \eqref{diag:pullbackcoalgebras} and using that there is a natural map $i^*\Omega_\psi\rt \Omega_\phi$, we obtain a diagram of $\PP$-algebras
\begin{equation}\label{diag:threesquares}\begin{tikzcd}
i^*A'\arrow[d] & i^*\Omega_\psi\Bar_\psi(A')\arrow[d]\ar[l, "\sim"{above}]\arrow[r] & \Omega_\phi\Bar_\phi(\triv{c}{n+1, n})\arrow[r, "\sim"]\arrow[d] & \triv{c}{n+1, n}\arrow[d]\\
A & \Omega_\phi\Bar_\phi(A)\arrow[l, "\sim"{above}]\arrow[r] & \Omega_\phi\Bar_\phi(\triv{c}{n+1})\arrow[r, "\sim"] & \triv{c}{n+1}.
\end{tikzcd}\end{equation}
Taking $B=i^*A'$, we obtain a nilpotent square zero extension of $A$. The above diagram shows that it is related to the pullback $Y$ of \eqref{diag:pullbacksquarezero} by a zig-zag
$$\begin{tikzcd}
B=i^*A' & i^*\Omega_\psi\Bar_\psi(A')\arrow[l, "\sim"{above}] \arrow[r] & Y.
\end{tikzcd}$$
It remains to verify that the right map is a quasi-isomorphism, for which we can work at the level of the underlying complexes. But forgetting $\PP$-algebra structures, there are natural sections $i^*A'\rt i^* \Omega_\psi\Bar_\psi(A')$ and $A\rt \Omega_\phi\Bar_\phi(A)$ that make the composition of the three squares a (homotopy) pullback square of chain complexes. Consequently, we find maps of complexes
$$\begin{tikzcd}
B=A\times^h_{\triv{c}{n+1}}0\arrow[r] & \Omega_\psi\Bar_\psi(A')\arrow[r] & \Omega_\phi\Bar_\phi(A)\times_{\triv{c}{n+1}}^h 0=Y.
\end{tikzcd}$$
The first map and the composite map are quasi-isomorphisms, so that $i^*\Omega_\psi\Bar_\psi(A')\rt Y$ is a quasi-isomorphism, as desired.
\end{proof}
\begin{corollary}\label{cor:verysmallpseudonilmodels}
Suppose that $\base$ is as in Assumption \ref{ass:Abounded} and that $\mathscr{P}$ is a splendid cofibrant $\base$-operad, concentrated in nonpositive cohomological degrees. Then every Artin $\PP$-algebra is quasi-isomorphic to a strictly Artin $\PP$-algebra which is nilpotent.
\end{corollary}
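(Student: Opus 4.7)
The plan is to deduce the corollary by combining Lemma \ref{lem:verysmall}(\ref{it:verysmall2}) with Proposition \ref{prop:filtrationissues}, which together assemble into a direct proof. All the heavy lifting has already been done: Lemma \ref{lem:verysmall}(\ref{it:verysmall2}) reduces the problem to checking a single hypothesis about the universal twisting morphism, and Proposition \ref{prop:filtrationissues} verifies exactly this hypothesis under the standing assumptions. So the proof is essentially a citation-assembly, and the main task is simply to match up the hypotheses.

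More precisely, since $\PP$ is cofibrant as a $\base$-operad, the standard bar-cobar construction provides a retract diagram $\PP \xrightarrow{\sim} \Omega\Bar\PP \xrightarrow{\sim} \PP$ of $\base$-operads, with the associated twisting morphism being the universal Koszul twisting morphism $\pi\colon \Bar\PP \drt \PP$. The cooperad $\CC = \Bar\PP$ is filtered-cofibrant as a left $\base$-module by Assumption \ref{ass:cofibrancy}, so we are in the setup of Lemma \ref{lem:verysmall}.

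The hypothesis of Lemma \ref{lem:verysmall}(\ref{it:verysmall2}) requires that for every very small nilpotent $\PP$-algebra $A$, the comparison map $\mf{D}_\pi^{\mm{poly}}(A) \to \mf{D}_\pi(A)$ is a quasi-isomorphism. Now, Proposition \ref{prop:filtrationissues} delivers precisely this statement, under exactly the hypotheses we are assuming: $\base$ is connective and cohomologically bounded (Assumption \ref{ass:Abounded}), $\PP$ is splendid, and $\PP$ lives in nonpositive cohomological degrees. The only (mild) obstacle is to make sure the hypotheses match up verbatim, but this is straightforward given that the statement of Proposition \ref{prop:filtrationissues} is engineered for this purpose.

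Applying Lemma \ref{lem:verysmall}(\ref{it:verysmall2}) with $\CC = \Bar\PP$ and $\phi = \pi$ then yields the conclusion: every small $\PP$-algebra is quasi-isomorphic to a very small $\PP$-algebra that is furthermore nilpotent. Since no new ideas are introduced beyond invoking the two previous results, the proof is essentially a one-liner; the only real content is the verification of Proposition \ref{prop:filtrationissues}, which is postponed to Section \ref{sec:proofoftechnicalprop}.
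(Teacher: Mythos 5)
Your proof is correct and is exactly the paper's argument: the corollary is obtained by applying Lemma \ref{lem:verysmall}\ref{it:verysmall2} to the retract diagram $\PP\rt\Omega\Bar\PP\rt\PP$ (which exists by cofibrancy of $\PP$), with the hypothesis on $\mf{D}_\pi^{\mm{poly}}(A)\rt\mf{D}_\pi(A)$ supplied by Proposition \ref{prop:filtrationissues}.
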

\begin{proof}
Apply part \ref{it:verysmall2} of Lemma \ref{lem:verysmall} to the retract diagram $\PP\rt \Omega \Bar\PP\rt \PP$, where first map exists since $\PP$ is assumed cofibrant.
\end{proof}

\subsection{Proof of Theorem \ref{thm:biduals}}
In this section, we will prove Theorem \ref{thm:biduals}, and hence Theorem \ref{thm:mainthm}, using Proposition \ref{prop:filtrationissues} (whose proof will be taken up in Section \ref{sec:proofoftechnicalprop}). Since the statement of Theorem \ref{thm:biduals} only depends on the quasi-isomorphism classes of $\base$ and $\PP$, we are allowed to make the following assumptions throughout this section: we will assume that $\base$ is bounded, as in Assumption \ref{ass:Abounded}, and that $\PP$ is a cofibrant $\base$-operad which is concentrated in nonpositive cohomological degrees. We denote by
$$\begin{tikzcd}
\pi\colon\Bar\PP\arrow[r, dashrightarrow] & \PP
\end{tikzcd}$$
the universal Koszul twisting morphism and will model $\mf{D}\colon \Alg_{\PP}\rt \Alg_{\mf{D}(\PP)}^{\op}$ by $\mf{D}_\pi$.

\begin{proof}[Proof of Theorem \ref{thm:biduals} \ref{it:checkA}]
Suppose that $A$ is an Artin $\PP$-algebra. By Corollary \ref{cor:verysmallpseudonilmodels}, we can assume that $A$ is strictly Artin and nilpotent. To verify that the unit map 
$$\begin{tikzcd}
A\arrow[r] & \mf{D}_\pi'\mf{D}_\pi(A)
\end{tikzcd}$$
is an equivalence, it suffices to work at the level of the underlying $\base$-modules. By Corollary \ref{cor:adjointisderivations}, the functor $\mf{D}'_\pi$ is given at the level of $\base$-modules by the derived functor of $B\mapsto \mm{Der}(B, \base)$. By Lemma \ref{lem:verysmalltocof} and Proposition \ref{prop:filtrationissues}, a cofibrant resolution of $\mf{D}_\pi(A)$ is given by the polynomial subalgebra $\mf{D}_\pi^\mm{poly}(A)$. It therefore suffices to verify that the natural map
$$\begin{tikzcd}
A\arrow[r] & \mm{Der}\big(\mf{D}_\pi^\mm{poly}(A), \base\big)
\end{tikzcd}$$
is a quasi-isomorphism. Since $\mf{D}_\pi^\mm{poly}(A)$ is the free graded algebra on $A^\vee$, one can identify the underlying map of graded $\base$-modules with the canonical map
$$\begin{tikzcd}
A\arrow[r] & A^{\vee\vee}.
\end{tikzcd}$$
This is an isomorphism since $A$ is a finitely generated quasi-free $\base$-module (Remark \ref{rem:smallisperfect}).
\end{proof}
For part \ref{it:checkB} of Theorem \ref{thm:biduals}, let us make the following more general observation:
\begin{proposition}\label{prop:functorialitysmall}
Let $\base$ be a bounded connective dg-category and $f\colon \PP\rt \QQ$ a map of augmented $\base$-operads which are connective and splendid. Let $\mf{D}(f)\colon \mf{D}(\QQ)\rt \mf{D}(\PP)$ be the induced map on bar dual operads. For every $\QQ$-algebra $A$, there is a natural map of $\mf{D}(\PP)$-algebras
$$\begin{tikzcd}
(\mf{D}f)_!\big(\mf{D}(A)\big)\arrow[r] & \mf{D}\big(f^*(A)\big).
\end{tikzcd}$$
This map is an equivalence whenever $A$ is a Artin $\QQ$-algebra.
\end{proposition}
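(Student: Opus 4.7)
The plan is to obtain the natural map by combining Lemma \ref{lem:naturality} with the counit of the adjunction $f_! \dashv f^*$. Applying Lemma \ref{lem:naturality} to the commuting square with rows being the universal twisting morphisms $\pi$ and vertical maps $\Bar(f)\colon \Bar\PP\rt\Bar\QQ$ and $f\colon \PP\rt\QQ$ yields, for every $\PP$-algebra $B$, a natural equivalence $\mf{D}(f_!B)\simeq (\mf{D}f)^*\mf{D}(B)$ of $\mf{D}(\QQ)$-algebras. Setting $B=f^*A$ and precomposing with $\mf{D}$ applied to the counit $f_!f^*A\rt A$ gives a map of $\mf{D}(\QQ)$-algebras $\mf{D}(A)\rt (\mf{D}f)^*\mf{D}(f^*A)$, whose $(\mf{D}f)_!\dashv (\mf{D}f)^*$-adjoint is the desired map $(\mf{D}f)_!\mf{D}(A)\rt \mf{D}(f^*A)$.

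\textbf{Reduction to trivial algebras and square-zero extensions.} For the equivalence statement I would appeal to Remark \ref{rem:smallalgebras}: the small $\QQ$-algebras form the smallest class closed under quasi-isomorphisms and square-zero extensions by $\triv{c}{n}$ with $n\geq 0$. Since $f$ preserves augmentations, $f^*$ sends the trivial $\QQ$-algebra $\triv{c}{n}$ to the corresponding trivial $\PP$-algebra, and as a right adjoint it preserves pullbacks, hence restricts to a functor $\Alg_\QQ^{\mm{sm}}\rt \Alg_\PP^{\mm{sm}}$. This puts us in a position to do induction on the small algebra filtration.

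\textbf{Base case and induction.} For $A=\triv{c}{n}$, Theorem \ref{thm:biduals}\ref{it:checkB} applied to $\QQ$ and to $\PP$ presents both $(\mf{D}f)_!\mf{D}(\triv{c}{n})$ and $\mf{D}(f^*\triv{c}{n})$ as the free $\mf{D}(\PP)$-algebra on $\triv{c}{n}^\vee$, and the comparison map can be identified with the canonical isomorphism between these free presentations. For the inductive step, given a pullback $A'\simeq A\times^h_{\triv{c}{n+1}} 0$ in $\Alg_\QQ^{\mm{sm}}$, Theorem \ref{thm:biduals}\ref{it:checkC} for $\QQ$ presents $\mf{D}(A')$ as a pushout of $\mf{D}(\QQ)$-algebras, and applying $(\mf{D}f)_!$ (a left adjoint, hence preserving colimits) yields such a presentation for $(\mf{D}f)_!\mf{D}(A')$. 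Dually, since $f^*$ preserves pullbacks and lands in small $\PP$-algebras, Theorem \ref{thm:biduals}\ref{it:checkC} for $\PP$ gives an analogous pushout presentation of $\mf{D}(f^*A')$. Naturality makes the comparison compatible with these two presentations, so the induction on the construction of small $\QQ$-algebras closes.

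\textbf{Main obstacle.} The main subtlety lies in confirming that the comparison map genuinely agrees with the canonical identification of free algebras in the base case. This reduces to unwinding the construction of the natural transformation of Lemma \ref{lem:naturality} at the level of cogenerators of the bar construction, and checking that it intertwines the generator inclusions $\triv{c}{n}^\vee\rt \mf{D}(\triv{c}{n})$ arising from each operad. Once this explicit identification is in place, the rest of the argument is formal.
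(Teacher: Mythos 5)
Your construction of the comparison map is correct and amounts to the same map as the paper's: the paper writes down the chain-level coalgebra map $\Bar(f)^*\Bar_\phi(f^*A)\rt \Bar_\psi(A)$ and dualizes, whereas you obtain the adjoint map from Lemma \ref{lem:naturality} together with the counit $f_!f^*A\rt A$; these agree. Your inductive step is also sound in principle: $f^*$ preserves the relevant pullbacks, Theorem \ref{thm:biduals}\ref{it:checkC} (whose proof in the paper depends only on Proposition \ref{prop:filtrationissues} and Lemma \ref{lem:verysmalltocof}, not on the present proposition) gives the two pushout presentations, and a natural equivalence on the three corners of a pushout induces one on the pushout.

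The genuine gap is in your base case: it is circular. Theorem \ref{thm:biduals}\ref{it:checkB} is \emph{proved in the paper as a special case of the proposition you are trying to establish} (namely the case where $f$ is the augmentation $\PP\rt\base$), so you cannot invoke it here. This is not a cosmetic issue of ordering: the assertion that $\mf{D}(\triv{c}{n})$ is the \emph{free} $\mf{D}(\PP)$-algebra on $\triv{c}{n}^\vee$ --- rather than the completed free algebra $\prod_p\big(\Bar\PP(p)\otimes_{\Sigma_p\ltimes\base^{\otimes p}}\triv{c}{n}^{\otimes p}\big)^\vee$ --- is exactly the nontrivial analytic content supplied by the splendidness hypothesis via Proposition \ref{prop:filtrationissues}, and it is false for general operads. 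Your plan can be repaired by replacing the appeal to \ref{it:checkB} with a direct appeal to Proposition \ref{prop:filtrationissues} and Lemma \ref{lem:verysmalltocof}: for a trivial (hence very small and nilpotent) algebra, $\mf{D}^{\mm{poly}}(\triv{c}{n})$ is literally the free algebra on $\triv{c}{n}^\vee$ and is a cofibrant model for $\mf{D}(\triv{c}{n})$. Note that the paper's own proof avoids the induction entirely: after reducing to $A$ very small and nilpotent (Corollary \ref{cor:verysmallpseudonilmodels}), it replaces both sides by their polynomial models and observes that the comparison map becomes, without differentials, the isomorphism $\mf{D}(\PP)\circ_{\mf{D}(\QQ)}\big(\mf{D}(\QQ)\circ_{\base^{\op}}A^\vee\big)\cong\mf{D}(\PP)\circ_{\base^{\op}}A^\vee$ --- which also disposes of the identification of free presentations that you flag as your ``main obstacle.''
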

\begin{proof}
We can assume that $\PP$ and $\QQ$ are cofibrant $\base$-operads and consider the map between twisting morphisms 
$$\begin{tikzcd}
\Bar\PP\arrow[d, "\Bar f"{swap}]\arrow[r, dashrightarrow, "\phi"] & \PP\arrow[d, "f"]\\
\Bar\QQ\arrow[r, dashrightarrow, "\psi"{swap}] & \QQ.
\end{tikzcd}$$
Let $\Bar(f)^*$ denote the forgetful functor from $\Bar\PP$-coalgebras to $\Bar\QQ$-coalgebras. Then there is a natural map of $\Bar\QQ$-coalgebras for every $\QQ$-algebra $A$
$$\begin{tikzcd}
B(f)^*\Bar_\phi(f^*(A))\arrow[r] & \Bar_{\psi}(A).
\end{tikzcd}$$
Without differentials, this is given by the obvious map $\Bar\PP(A)\rt \Bar\QQ(A)$ into the cofree $\Bar\QQ$-coalgebra on $A$. Taking duals gives a map of $\mf{D}\QQ$-algebras 
$$\begin{tikzcd}
\mf{D}_\psi(A)\arrow[r] & \mf{D}(f)^*\mf{D}_\phi\big(f^*(A)\big).
\end{tikzcd}$$
The desired natural map of algebras over $\mf{D}(\PP)$ is then obtained by adjunction, i.e.\ by (derived) inducing up along $\mf{D}\QQ\rt \mf{D}\PP$.

Now suppose that $A$ is a Artin $\QQ$-algebra. By Corollary \ref{cor:verysmallpseudonilmodels}, we may assume that $A$ is strictly Artin and nilpotent. By Proposition \ref{prop:filtrationissues} and Lemma \ref{lem:verysmalltocof}, there are cofibrant resolutions 
$$\begin{tikzcd}
\mf{D}_{\psi}^\mm{poly}(A)\arrow[r, "\sim"] & \mf{D}_{\psi}(A) & \mf{D}_\phi^\mm{poly}(f^*(A))\arrow[r, "\sim"] & \mf{D}_\phi(f^*(A)).
\end{tikzcd}$$
In particular, we obtain a commuting square
$$\begin{tikzcd}
\mf{D}(f)_!\mf{D}_{\psi}^\mm{poly}(A)\arrow[d, "\sim"{swap}]\ar[r] & \mf{D}_\phi^\mm{poly}(f^*(A))\arrow[d, "\sim"]\\
\mf{D}(f)_!\mf{D}_{\psi}(A)\arrow[r] & \mf{D}_\phi(f^*(A))
\end{tikzcd}$$
where in the second row, $\mf{D}(f)_!$ (implicitly) denotes the derived functor. Unravelling the definitions, the top horizontal map is given without differentials by the natural map
$$\begin{tikzcd}
\mf{D}(\PP)\circ_{\mf{D}(\QQ)} \Big(\mf{D}(\QQ)\circ_{\base^{\op}} A^\vee\Big)\arrow[r] & \mf{D}(\PP)\circ_{\base^{\op}} A^\vee.
\end{tikzcd}$$
This map is an isomorphism, so the result follows.
\end{proof}
\begin{proof}[Proof of Theorem \ref{thm:biduals} \ref{it:checkB}]
This is the special case of Proposition \ref{prop:functorialitysmall} where the map $\mathscr{P}\rt \mathscr{P}'=\base$ is the augmentation map.
\end{proof}
\begin{proof}[Proof of Theorem \ref{thm:biduals} \ref{it:checkC}]
Let $A$ be an Artin $\PP$-algebra and consider a pullback square \eqref{diag:squarezero} in the $\infty$-category of $\mathscr{P}$-algebras. Inspecting the proof of Lemma \ref{lem:verysmall} (cf.\ Diagram \eqref{diag:threesquares}), one can present such a square in the $\infty$-category of $\PP$-algebras by a strict diagram of $\PP$-algebras of the form
$$\begin{tikzcd}
B\arrow[d, two heads, "p"{swap}] & \tilde{B}\arrow[r]\ar[l, "\sim"{above}]\arrow[d, two heads] & \triv{c}{n, n+1}\arrow[d, two heads]\\
A & \tilde{A}\arrow[l, "\sim"{below}]\arrow[r] & \triv{c}{n+1},
\end{tikzcd}$$
where $p\colon B\rt A$ is a square zero extension of strictly Artin, nilpotent $\PP$-algebras. By a standard model categorical argument, one can in fact assume that the surjective map $\tilde{B}\rt \tilde{A}$ is given by $\Omega\Bar(p)\colon \Omega_\phi\Bar_\phi(B)\rt \Omega_\phi\Bar_{\phi}(A)$, and that the left two quasi-isomorphisms are the canonical maps from the bar-cobar resolution. 

Now apply the bar construction $\Bar_\phi$ to the above diagram. Then the left two weak equivalences admit canonical sections. Using these canonical sections, one obtains a composite square of $\Bar\PP$-coalgebras of the form \eqref{diag:squarezerocoalg}, which is cartesian. After dualizing, one obtains a square of the form
$$\begin{tikzcd}
\mf{D}_\phi\big(\triv{c}{n+1}\big)\arrow[r]\arrow[d] & \mf{D}_\phi(A)\arrow[d]\\
\mf{D}_\phi\big(\triv{c}{n, n+1}\big)\arrow[r] & \mf{D}_\phi(B).
\end{tikzcd}$$
We have to show that this square is a homotopy pushout square of $\mf{D}(\PP)$-algebras. Since all $\PP$-algebras involved in this square are strictly Artin and conilpotent, Proposition \ref{prop:filtrationissues} implies the above square is naturally equivalent to the square \eqref{diag:pushoutpointset} of polynomial subalgebras. But then the proof of Lemma \ref{lem:verysmalltocof} shows that this square is a (homotopy) pushout square of $\mf{D}(\PP)$-algebras (cf.\ Diagram \eqref{diag:pushoutpointset}).
\end{proof}
We conclude that the functor $\mf{D}\colon \Alg_{\PP}\rt \Alg_{\mf{D}(\PP)}^{\op}$ satisfies the conditions of Theorem \ref{thm:axiomatic}. In particular, this says that the functor
$$\begin{tikzcd}
\MC\colon \cat{Alg}_{\mf{D}(\mathscr{P})}\arrow[r, "\sim"] & \cat{FMP}_{\mathscr{P}}; \hspace{4pt}\mf
{g}\arrow[r, mapsto] & \Map_{\mf{D}(\mathscr{P})}(\mf{D}(-), \mf{g}).
\end{tikzcd}$$
is an equivalence of $\infty$-categories, with inverse sending a formal moduli problem $F$ to $T(F)$. This proves Theorem \ref{thm:mainthm}. 

\begin{variant}\label{var:cobarC}
Let $\CC$ be a $\base$-cooperad which is filtered-cofibrant as a $\base$-module and let $\iota\colon \CC\rt \Omega\CC=\PP$ be the universal twisting morphism. Inspecting the above proof, one sees that the conclusions of Theorem \ref{thm:biduals} remain valid as long as $\mf{D}_\iota^{\mm{poly}}(A) \rt \mf{D}_\iota(A)$ is a quasi-isomorphism for every $A$ that is strictly Artin and nilpotent. Consequently, Theorem \ref{thm:mainthm} then holds for the operad $\PP=\Omega\CC$. 

As an important example of this situation, let us record the following. Suppose that $\base$ is a dg-category such that all $\base(c,d)$ are concentrated in some fixed interval $[a, b]$, and suppose that $\CC$ is a $\base$-cooperad with the following property: $\CC(p)$ is concentrated in degrees $\leq f(p)$, with 
$$\begin{tikzcd}[column sep=3pc]
f(p)\arrow[r, "p\rightarrow \infty"] & -\infty.
\end{tikzcd}$$
Note that when $A$ is strictly Artin, there is an $n$ such that any $n$-fold composition of generating operations acts trivially on $A$. Since $A$ is concentrated in finitely many degrees (Remark \ref{rem:smallisperfect}), this means that such $A$ is automatically nilpotent (Definition \ref{def:nilpotent}). Furthermore, the map $\mf{D}_\iota^\mm{poly}(A)\rt \mf{D}_\iota(A)$ is then an \emph{isomorphism} for degree reasons (cf.\ the proof of Lemma \ref{lem:filtrationissuestrunc}). The above proof and Theorem \ref{thm:axiomatic} then imply that there is equivalence of $\infty$-categories
$$\begin{tikzcd}
\Alg_{\CC^\vee}= \Alg_{\mf{D}(\PP)}\arrow[r, "\sim"] & \FMP_{\PP}.
\end{tikzcd}$$
Note that $\CC$ may have contributions from positive degrees, as long as it is \emph{eventually} concentrated in sufficiently negative degrees. In particular, this hold when $\CC$ concentrated in finitely many arities.
\end{variant}

\subsection{Proof of Proposition \ref{prop:filtrationissues}}\label{sec:proofoftechnicalprop}
This section is devoted to the proof of Proposition \ref{prop:filtrationissues}. Throughout, we assume that $\base$ is as in Assumption \ref{ass:Abounded}, i.e.\ concentrated in cohomological degrees $[-N, 0]$, and that $\PP$ is a $\base$-operad in nonpositive degrees. We will prove Proposition \ref{prop:filtrationissues} in increasing levels of generality, starting with the following special case:
\begin{lemma}\label{lem:filtrationissuestrunc}
Suppose that $\mathscr{P}=\mathscr{P}^{\leq p}$ is nonpositively graded and concentrated in arities $\leq p$, and let $\pi\colon \Bar\PP\rt \mathscr{P}$ be the universal twisting morphism. If $A$ is a nonpositively graded $\PP$-algebra, then the map of $\Bar\PP^\vee$-algebras
$$\begin{tikzcd}
\mf{D}_\pi^\mm{poly}(A)\arrow[r] & \mf{D}_\pi(A)
\end{tikzcd}$$
is an isomorphism.
\end{lemma}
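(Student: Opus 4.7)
Unwinding the definitions, both $\mf{D}_\pi^\mm{poly}(A)$ and $\mf{D}_\pi(A)$ decompose over arity as
$$
\bigoplus_{q\geq 0} W_q^\vee \ \hookrightarrow\ \prod_{q\geq 0} W_q^\vee, \qquad W_q := \Bar\PP(q)\otimes_{\Sigma_q\ltimes \base^{\otimes q}} A^{\otimes q}.
$$
My plan is to show this inclusion is an isomorphism of graded $\base$-modules; compatibility with the differentials is then automatic (the subalgebra $\mf{D}_\pi^\mm{poly}(A)$ is then \emph{defined} to equal the ambient complex). This reduces to checking that in each fixed cohomological degree $n$, only finitely many of the $W_q^\vee$ contribute.

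The key input is a degree estimate for $\Bar\PP(q)$. Using the tree description $\Bar\PP = T^c(\ol{\PP}[1])$ of the underlying graded cooperad, an arity $q$ element is supported on trees with $q$ leaves whose $m$ internal vertices (labeled by $\ol{\PP}$, of arities $k_i \leq p$) satisfy the edge-counting identity $q + m - 1 = \sum_i k_i \leq mp$. This forces $m \geq (q-1)/(p-1)$. Since $\PP$ is concentrated in nonpositive degrees, each internal vertex label lives in degrees $\leq 0$ in $\ol{\PP}$ and hence in degrees $\leq -1$ in $\ol{\PP}[1]$; thus $\Bar\PP(q)$ is concentrated in cohomological degrees $\leq -\lceil (q-1)/(p-1)\rceil$, which tends to $-\infty$.

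Since $A$ is nonpositively graded and $W_q$ is a quotient of $\Bar\PP(q)\otimes A^{\otimes q}$, the same upper bound on degrees holds for $W_q$. Combined with Assumption \ref{ass:Abounded} that each $\base(c,d)$ sits in degrees $[-N,0]$, a nonzero degree $n$ element of $W_q^\vee = \Hom_\base(W_q,\base)$ requires some $k \leq -\lceil(q-1)/(p-1)\rceil$ with $k+n \in [-N,0]$; this forces
$$
\lceil (q-1)/(p-1)\rceil \ \leq\ n + N,
$$
so only finitely many arities $q$ contribute in degree $n$. Hence in each degree the direct sum equals the product, proving the lemma. The only substantive step is the tree-counting estimate in paragraph two; everything else is bookkeeping, and the boundedness of $\base$ is what converts the finite direct sum into the full product.
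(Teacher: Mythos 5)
Your proof is correct and follows essentially the same route as the paper's: both decompose $\mf{D}_\pi(A)$ arity-wise, bound the top degree of $\Bar\PP(q)$ linearly below in $q$ by counting internal vertices of trees with vertices of arity $\leq p$, and then use the nonpositivity of $A$ together with the boundedness of $\base$ (Assumption \ref{ass:Abounded}) to conclude that only finitely many arities contribute in each cohomological degree, so the direct sum coincides with the product. Your vertex count $m\geq (q-1)/(p-1)$ is marginally sharper than the paper's $-q/p$ bound, but this makes no difference to the argument.
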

\begin{proof}
Since $\mathscr{P}$ is concentrated in arities $\leq p$, its bar construction is generated by operations in arities $\leq p$ and degrees $\leq -1$. This means that the arity $q$ part of $\Bar\PP$ is concentrated in degrees $\leq -q/p$. Consequently, each term
$$
\Bar\mathscr{P}(q)\otimes_{\Sigma_q\ltimes \base^{\otimes q}} A^{\otimes q}
$$
is concentrated in degrees $\leq -q/p$. Since $\base$ is concentrated in degrees $[-N, 0]$, the $\base$-linear dual is concentrated in degrees $\geq q/p-N$ in arity $q$. 
Consequently, in each degree there are only finitely many arities that contribute to $\mf{D}(A)$, i.e.\ the map
$$\begin{tikzcd}
\bigoplus\limits_{q\geq 0} \Big(\Bar\mathscr{P}(q)\otimes_{\Sigma_q\ltimes \base^{\otimes q}} A^{\otimes q}\Big)^\vee\arrow[r] & \prod\limits_{q\geq 0} \Big(\Bar\mathscr{P}(q)\otimes_{\Sigma_q\ltimes \base^{\otimes q}} A^{\otimes q}\Big)^\vee
\end{tikzcd}$$
is an isomorphism in each individual degree.
\end{proof}
Let us next consider the case of a \emph{0-reduced} $\base$-operad $\mathscr{P}$, i.e.\ $\mathscr{P}(0)=0$. Then the tower of quotients
$$\begin{tikzcd}
\mathscr{P}\arrow[r] & \dots \arrow[r] & \mathscr{P}^{\leq p}\arrow[r] & \mathscr{P}^{\leq p-1}\arrow[r] & \dots\ar[r] & \mathscr{P}^{\leq 1}
\end{tikzcd}$$
is a tower of operads. By definition, every nilpotent $\mathscr{P}$-algebra $A$ can be considered as a $\mathscr{P}^{\leq p_0}$-algebra, for some $p_0$.
\begin{lemma}\label{lem:dpolyascolimit}
Let $\mathscr{P}$ be a 0-reduced $\base$-operad, concentrated in nonpositive degrees, and let 
$$\begin{tikzcd}
\pi\colon \Bar\PP\arrow[r, dashrightarrow] & \mathscr{P} & \text{and} & \pi^{\leq p}\colon \Bar(\mathscr{P}^{\leq p})\arrow[r, dashrightarrow] & \mathscr{P}^{\leq p}
\end{tikzcd}$$
denote the universal twisting morphisms. For each $\mathscr{P}^{\leq p_0}$-algebra $A$ in nonpositive degrees there is a natural square of chain complexes
$$\begin{tikzcd}
\colim_{p\geq p_0}\mf{D}^{\mm{poly}}_{\pi^{\leq p}}(A)\arrow[r, "\cong"] \arrow[d, "\cong"{swap}] & \mf{D}^{\mm{poly}}_\pi(A)\arrow[d]\\
\colim_{p\geq p_0}\mf{D}_{\pi^{\leq p}}(A)\arrow[r] & \mf{D}_\pi(A).
\end{tikzcd}$$
in which the two marked arrows are isomorphisms.
\end{lemma}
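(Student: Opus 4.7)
The plan is to build the square by dualizing the natural surjections of bar complexes induced by the operadic truncations $\mathscr{P}\twoheadrightarrow\mathscr{P}^{\leq p}$, and then to verify the two isomorphisms separately: the top one by elementary arity bookkeeping, and the left one by applying Lemma \ref{lem:filtrationissuestrunc} term-by-term before passing to the colimit.

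First I would define the four arrows. For each $p\geq p_0$, the quotient $\mathscr{P}\twoheadrightarrow\mathscr{P}^{\leq p}$ of augmented $\base$-operads is compatible with the universal twisting morphisms, and applying Bar covariantly gives a surjection of $\base$-cooperads $\Bar\PP\twoheadrightarrow\Bar(\mathscr{P}^{\leq p})$ that in each arity kills trees carrying a vertex of arity $>p$. Since $A$ is a $\mathscr{P}^{\leq p_0}$-algebra, restriction equips it with $\mathscr{P}^{\leq p}$- and $\mathscr{P}$-algebra structures, and one obtains a compatible surjection of chain complexes $\Bar_\pi(A)\twoheadrightarrow\Bar_{\pi^{\leq p}}(A)$. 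Dualizing gives an injection $\mf{D}_{\pi^{\leq p}}(A)\hookrightarrow\mf{D}_\pi(A)$ which restricts to the polynomial subalgebras; the latter is closed under the differential because $A$, being annihilated by operations of arity $>p_0$, is nilpotent in the sense of Definition \ref{def:nilpotent}. Taking the filtered colimit in $p$ produces the whole square, which commutes by naturality.

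For the top arrow I would observe that $\mf{D}^\mm{poly}_\pi(A)=\bigoplus_q(\Bar\PP(q)\otimes_{\Sigma_q\ltimes\base^{\otimes q}}A^{\otimes q})^\vee$ is the filtered union of its arity-$\leq p$ subcomplexes, and that under the injection just constructed, $\mf{D}^\mm{poly}_{\pi^{\leq p}}(A)$ is identified with exactly that subcomplex. The colimit as $p\to\infty$ then recovers $\mf{D}^\mm{poly}_\pi(A)$. For the left arrow, each $\mathscr{P}^{\leq p}$ is nonpositively graded and concentrated in arities $\leq p$, so Lemma \ref{lem:filtrationissuestrunc} applies to give $\mf{D}^\mm{poly}_{\pi^{\leq p}}(A)\cong\mf{D}_{\pi^{\leq p}}(A)$ for every $p$; a filtered colimit of isomorphisms is an isomorphism.

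The main obstacle---such as it is---is the verification that the projection $\Bar\PP\to\Bar(\mathscr{P}^{\leq p})$ intertwines the twisting differentials attached to $\pi$ and $\pi^{\leq p}$, so that the induced maps are genuinely chain maps. This is a direct consequence of the commutativity $(\mathscr{P}\twoheadrightarrow\mathscr{P}^{\leq p})\circ\pi=\pi^{\leq p}\circ(\Bar\PP\twoheadrightarrow\Bar\mathscr{P}^{\leq p})$ built into the definition of the universal twisting morphisms. Once that is granted, everything else is bookkeeping at the level of underlying graded $\base$-modules.
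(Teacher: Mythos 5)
Your construction of the square, your treatment of the left vertical arrow (apply Lemma \ref{lem:filtrationissuestrunc} for each $p$ and pass to the colimit), and your remark about the polynomial subalgebras being closed under the differential for nilpotent $A$ all match the paper's proof. The one place where your argument goes wrong is the justification of the top horizontal isomorphism. You claim that, writing $\mf{D}^{\mm{poly}}_\pi(A)=\bigoplus_q\big(\Bar\PP(q)\otimes_{\Sigma_q\ltimes\base^{\otimes q}}A^{\otimes q}\big)^\vee$, the subalgebra $\mf{D}^{\mm{poly}}_{\pi^{\leq p}}(A)$ is identified with the ``arity-$\leq p$ subcomplex'', i.e.\ the summands with $q\leq p$. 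This is false on two counts. First, $\mf{D}^{\mm{poly}}_{\pi^{\leq p}}(A)=\bigoplus_q\big(\Bar(\PP^{\leq p})(q)\otimes A^{\otimes q}\big)^\vee$ has nonzero contributions in \emph{every} polynomial degree $q$, not only $q\leq p$: for instance $\Bar(\PP^{\leq p})(2p)$ contains trees built from two $p$-ary vertices and a binary one. What is true is only that the inclusion $\mf{D}^{\mm{poly}}_{\pi^{\leq p}}(A)\hookrightarrow\mf{D}^{\mm{poly}}_{\pi}(A)$ is an \emph{equality in the summands with} $q\leq p$. Second, the span of the summands with $q\leq p$ is not a subcomplex at all: the dual of the twisting part of the bar differential raises $q$, so $\bigoplus_{q\leq p}$ is a quotient complex, not a subcomplex, of $\mf{D}^{\mm{poly}}_\pi(A)$.

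The correct argument --- and the one the paper gives --- works summand by summand in $q$ rather than by truncating at $q\leq p$: since $\PP$ is $0$-reduced, a tree with $q$ leaves has no vertex of arity $>q$, so the surjections $\Bar\PP(q)\rt\Bar(\PP^{\leq p})(q)$ are isomorphisms for all $p\geq q$, i.e.\ the tower is stationary in each fixed arity $q$. Tensoring with $A^{\otimes q}$ and dualizing, each summand of $\colim_p\mf{D}^{\mm{poly}}_{\pi^{\leq p}}(A)$ stabilizes at its value in $\mf{D}^{\mm{poly}}_\pi(A)$, and since colimits commute with the direct sum over $q$ the top map is an isomorphism. This is a one-sentence repair, but as written your identification is incorrect and the $0$-reducedness hypothesis --- which is exactly what makes the stabilization work --- never enters your argument, so the step as stated does not go through.
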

\begin{proof}
Recall that for every map of twisting morphisms $\phi\rt \phi'$, there is a natural map of chain complexes $\mf{D}_{\phi'}(A)\rt \mf{D}_{\phi}(A)$. When $A$ is nilpotent, this restricts to polynomial subalgebras. This gives the desired square. The vertical arrow is an isomorphism by Lemma \ref{lem:filtrationissuestrunc} and the horizontal arrow is given without differentials by the map
$$\begin{tikzcd}
\bigoplus\limits_{q\geq 0} \colim_{p\geq p_0} \Big(\Bar(\mathscr{P}^{\leq p})(q)\otimes_{\Sigma_q\ltimes \base^{\otimes q}} A^{\otimes q}\Big)^\vee\arrow[r] & \bigoplus\limits_{q\geq 0} \Big(\Bar\mathscr{P}(q)\otimes_{\Sigma_q\ltimes \base^{\otimes q}} A^{\otimes q}\Big)^\vee.
\end{tikzcd}$$
This map is an isomorphism. Indeed, the tower
$$\begin{tikzcd}
\Bar\mathscr{P}(q)\arrow[r] & \dots\arrow[r] & B(\mathscr{P}^{\leq p+1})(q)\arrow[r] & B(\mathscr{P}^{\leq p})(q)\arrow[r] & \dots
\end{tikzcd}$$
becomes stationary as soon as $p\geq q$ , so that the sequence obtained by tensoring with $A^{\otimes q}$ and taking $\base$-linear duals becomes stationary for $p\geq q$ as well.
\end{proof}
\begin{corollary}\label{cor:polyhoinv}
Let $\mathscr{P}$ be a 0-reduced $\base$-operad in nonnegative degrees and let $A$ be a $\mathscr{P}^{\leq p_0}$-algebra in nonnegative degrees, for some $p_0$. Let $\pi\colon \Bar\PP\drt \mathscr{P}$ be the universal twisting morphism. Then the map of complexes
$$\begin{tikzcd}
\mf{D}^\mm{poly}_\pi(A)\arrow[r] & \mf{D}_\pi(A)
\end{tikzcd}$$
can be identified with the natural map
$$\begin{tikzcd}
\hocolim_{p\geq p_0}\mf{D}^{\leq p}(A)\arrow[r] & \mf{D}(A)
\end{tikzcd}$$
where $\mf{D}^{\leq p}\colon \cat{Alg}_{\mathscr{P}^{\leq p}}\rt \cat{Alg}_{\mf{D}(\mathscr{P}^{\leq p})}$ and $\mf{D}\colon \cat{Alg}_{\mathscr{P}}\rt \cat{Alg}_{\mf{D}(\mathscr{P})}$.
\end{corollary}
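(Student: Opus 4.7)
The plan is to deduce this corollary from Lemma \ref{lem:dpolyascolimit} essentially formally. That lemma, together with the isomorphism $\mf{D}^\mm{poly}_{\pi^{\leq p}}(A)\cong \mf{D}_{\pi^{\leq p}}(A)$ of Lemma \ref{lem:filtrationissuestrunc}, already gives a natural identification $\mf{D}^\mm{poly}_\pi(A)\cong \colim_{p\geq p_0} \mf{D}_{\pi^{\leq p}}(A)$ intertwining the map $\mf{D}^\mm{poly}_\pi(A)\to \mf{D}_\pi(A)$ with the assembly map of this strict colimit. Two things then remain: to recognize $\mf{D}_{\pi^{\leq p}}(A)$ as a model for the $\infty$-categorical value $\mf{D}^{\leq p}(A)$, and to identify the strict sequential colimit with the homotopy colimit.

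The first task is easy. By Lemma \ref{lem:algbarpreswe}, the point-set functor $\mf{D}_{\pi^{\leq p}}$ preserves quasi-isomorphisms between $\mathscr{P}^{\leq p}$-algebras that are cofibrant as left $\base$-modules, so it computes the derived functor $\mf{D}^{\leq p}$; similarly $\mf{D}_\pi$ computes $\mf{D}$. For the second task, I would work at the level of the underlying $\base^{\op}$-modules: since the forgetful functor from algebras over a dg-operad to $\base^{\op}$-modules preserves sifted (in particular filtered) colimits, the homotopy colimit is correctly computed by the filtered colimit of underlying complexes, and a sequential colimit of chain complexes along degreewise monomorphisms computes the homotopy colimit.

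The only step requiring real verification is therefore that each transition map $\mf{D}_{\pi^{\leq p}}(A)\hookrightarrow \mf{D}_{\pi^{\leq p+1}}(A)$ is a degreewise monomorphism; this is the main (though still minor) bookkeeping point. It follows from the tree description of the bar construction: the surjection $\Bar(\mathscr{P}^{\leq p+1})(q)\twoheadrightarrow \Bar(\mathscr{P}^{\leq p})(q)$ splits as $\base$-bimodules, its kernel being the span of decorated trees with at least one vertex of arity exactly $p+1$. Taking $\base$-linear duals and tensoring with $A^{\otimes q}$ over $\Sigma_q\ltimes \base^{\otimes q}$ preserves this split inclusion, so each transition map is indeed a split monomorphism in each degree. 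Stringing together these identifications delivers the corollary.
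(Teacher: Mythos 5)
Your argument is correct and matches the paper's intent: the corollary is stated without proof precisely because it is the formal combination of Lemma \ref{lem:dpolyascolimit}, Lemma \ref{lem:filtrationissuestrunc}, and the identification of $\mf{D}_{\pi^{\leq p}}$ with the derived functor $\mf{D}^{\leq p}$ via Lemma \ref{lem:algbarpreswe}. Your verification that the transition maps are split degreewise monomorphisms is fine but not needed: filtered colimits of chain complexes are exact and hence always compute homotopy colimits (alternatively, the proof of Lemma \ref{lem:dpolyascolimit} already notes the tower is stationary in each arity for $p\geq q$).
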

In particular, Corollary \ref{cor:polyhoinv} furnishes a homotopy-invariant characterization of the map $\mf{D}^\mm{poly}_\pi(A)\rt \mf{D}_\pi(A)$,  as long as we take all our operads and algebras to be nonpositively graded: it no longer depends on the specific point-set models for the twisting morphism $\pi$ or $A$ (as long as these models are nonpositively graded).
\begin{proposition}\label{prop:filtrationissues0-reduced}
Let $\base$ be a dg-category in degrees $[-N, 0]$ and let $\mathscr{P}$ be a splendid, 0-reduced $\base$-operad, concentrated in degrees $\leq 0$. Let $A$ be a $\mathscr{P}^{\leq p_0}$-algebra which is freely generated as a $\base$-module by generators of degrees $\leq 0$, with \emph{finitely many} generators of degree $0$. Then the map
$$\begin{tikzcd}
\hocolim_{p\geq p_0}\mf{D}^{\leq p}(A)\arrow[r] & \mf{D}(A)
\end{tikzcd}$$
is an equivalence. 
\end{proposition}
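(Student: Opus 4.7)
The strategy is to reduce the claim to a cohomological degree-counting argument exploiting the splendid condition.

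First, each transition $\mf{D}^{\leq p}(A) \to \mf{D}^{\leq p+1}(A)$ is a chain-level monomorphism, obtained by $\base$-linearly dualizing the arity-wise surjection $\Bar(\mathscr{P}^{\leq p+1}) \twoheadrightarrow \Bar(\mathscr{P}^{\leq p})$ (functoriality of $\Bar$ applied to the quotient of operads $\mathscr{P}^{\leq p+1} \twoheadrightarrow \mathscr{P}^{\leq p}$). Consequently $\hocolim_p \mf{D}^{\leq p}(A) \simeq \colim_p \mf{D}^{\leq p}(A)$, realized as the ascending union $\bigcup_{p \geq p_0} \mf{D}^{\leq p}(A) \subseteq \mf{D}(A)$, and it suffices to show that this inclusion of subcomplexes is a quasi-isomorphism.

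The main step uses the splendid hypothesis to bound cohomology arity-by-arity. By Corollary~\ref{cor:bar of operad as derived comp}, $H^*\Bar\mathscr{P}(q)$ is concentrated in degrees $\leq g(q)$ with $g(q) \to -\infty$ as $q \to \infty$. Since $A$ is $\base$-cofibrant with generators in degrees $\leq 0$, the derived relative tensor $\Bar\mathscr{P}(q) \otimes^{h}_{\Sigma_q \ltimes \base^{\otimes q}} A^{\otimes q}$ has cohomology in the same range. Because each $\base(c,d)$ sits in $[-N,0]$, for any complex $M$ with cohomology in degrees $\leq g$ one has $\mathbb{R}\Hom_\base(M,\base)$ cohomologically concentrated in degrees $\geq -g-N$ (by replacing $M$ with its truncation $\tau^{\leq g}M$ and computing chain-wise). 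Applied arity by arity, this yields for each cohomological degree $d$ an integer $Q(d)$ such that the arity-$q$ contribution to $H^d\mf{D}(A)$ vanishes whenever $q \geq Q(d)$.

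To conclude, I exploit the arity filtration $F_p\Bar_\pi A := \bigoplus_{q \leq p}\Bar\mathscr{P}(q)\otimes_{\Sigma_q \ltimes \base^{\otimes q}} A^{\otimes q}$ on the bar construction of $A$. This is a filtration by subcomplexes because the bar differential either preserves arity (internal $\mathscr{P}$-differential and tree composition) or decreases it (via the $\mathscr{P}$-action on $A$, which involves only operations of arity $\leq p_0$ since $A$ is a $\mathscr{P}^{\leq p_0}$-algebra). Dualizing yields a spectral sequence computing $H^*\mf{D}(A)$ whose $E_1$-column in arity $q$ is $\mm{Ext}^*_\base(\Bar\mathscr{P}(q)\otimes A^{\otimes q},\base)$, concentrated in degrees $\geq -g(q)-N$. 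In each fixed total degree $d$, only finitely many columns $q < Q(d)$ can contribute, so the spectral sequence converges strongly. The same analysis applied to $\mf{D}^{\leq p}(A)$ produces a compatible spectral sequence, and the comparison map is the identity on columns $q \leq p$ (since $\Bar\mathscr{P}^{\leq p}(q) = \Bar\mathscr{P}(q)$ in that range). Hence for $p \geq Q(d)$, both spectral sequences agree on all columns contributing to degree $d$, yielding $H^d\mf{D}^{\leq p}(A) \cong H^d\mf{D}(A)$; passing to $\colim_p$ gives the desired equivalence.

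The main obstacle is justifying the strong convergence of the arity spectral sequence and its naturality with respect to the tower in $p$. This rests essentially on the nilpotency of $A$ (encoded by its being a $\mathscr{P}^{\leq p_0}$-algebra), which ensures that the bar differential on $\Bar_\pi A$ couples only finitely many arities at a time, so that the arity filtration $F_p$ is exhaustive and compatible with the natural maps $\Bar_{\pi^{\leq p}}(A) \to \Bar_\pi(A)$ in a controlled way.
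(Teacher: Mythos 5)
Your reduction to showing that the inclusion of the union $\bigcup_p \mf{D}^{\leq p}(A)\subseteq \mf{D}(A)$ is a quasi-isomorphism is fine, but the degree-counting that follows has a genuine gap: all of your bounds are on the \emph{cohomology} of the arity columns, whereas the argument needs bounds at the \emph{chain} level. The map $\colim_p\mf{D}^{\leq p}(A)\rt \mf{D}(A)$ is exactly the completion map for the decreasing arity filtration on the colimit, and that filtration is not complete (each $\mf{D}^{\leq p}(A)$ is a full product over arities, but their union is not). A map of filtered complexes that induces an isomorphism on associated gradeds — which is all your $E_1$-comparison gives — need not be a quasi-isomorphism when the source filtration is incomplete, and cohomological connectivity of the columns does not repair this: take columns $C_q=(k\cdot a_q\rt k\cdot b_q)$, each acyclic, with connecting differential $da_q=b_q+b_{q+1}$; then $\prod_q C_q$ is acyclic while $\bigoplus_q C_q$ is not. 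Your claim that "the spectral sequence converges strongly" for the colimit is precisely what fails here. Moreover, the intermediate assertion $H^d\mf{D}^{\leq p}(A)\cong H^d\mf{D}(A)$ for $p\geq Q(d)$ is false: the columns $q>p$ of $\mf{D}^{\leq p}(A)$ are governed by $\Bar(\PP^{\leq p})(q)$, not $\Bar\PP(q)$, and these carry extra cohomology in low degrees for all $p$. Concretely, for $\PP=\Com$ and $A=k$ a trivial algebra, $\Bar(\Com^{\leq p})(q)$ has nonzero $H^{-2}$ for all $p<q\leq 2p-1$ (two-vertex trees whose contraction lands in the killed arities), so $H^2\mf{D}^{\leq p}(k)\neq 0=H^2\mf{D}(k)$ for \emph{every} $p$; these classes only die further up the tower, and your argument supplies no mechanism for that.

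A telling symptom is that you never use the hypothesis that $A$ has only \emph{finitely many} generators in degree $0$. The paper's proof needs it: it first replaces the tower $\PP^{\leq p}$ by quasi-free resolutions $\mm{Free}(V^{(p)})$ with generators in increasingly negative degrees (Proposition \ref{prop:genfortruncations}), so that $\mf{D}(A)$ is computed by the \emph{small} chain-level model $\big((\mb{1}\oplus V[1])\circ_\base A\big)^\vee$ rather than by the full bar construction; then it splits $A=A_0\oplus\ol{A}$ and uses that $A_0$ is finitely generated in degree $0$ and $\ol{A}$ sits in degrees $\leq -1$ to show that the resulting product over bi-arities $(q,r)$ is, in each fixed degree, a \emph{finite direct sum at the level of chains}. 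That chain-level finiteness is what makes the product a direct sum and the sequence of direct sums stationary in each degree, hence a (homotopy) colimit sequence. To repair your argument you would need to import exactly this step; the cohomological estimate on $\Bar\PP(q)$ alone cannot do it.
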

\begin{proof}
We can work at the level of chain complexes. Since $\mf{D}$ and $\mf{D}^{\leq p}$ are homotopy invariant, we may resolve the tower $\mathscr{P}\rt \dots \rt \mathscr{P}^{\leq p}\rt \dots $ by a tower of cofibrant $\base$-operads
$$\begin{tikzcd}
\QQ\arrow[r] & \dots \arrow[r] & \QQ^{(p)}\arrow[r] & \QQ^{(p-1)}\arrow[r] & \dots
\end{tikzcd}$$
with the properties described in Proposition \ref{prop:genfortruncations}. In particular, each $\QQ^{(p)}$ is a quasi-free $\base$-operad generated by a nonnegatively graded, cofibrant $\base$-symmetric sequence $V^{(p)}$.

For each the quasi-free $\base$-operad $\QQ=\mm{Free}(V)$, the right $\QQ$-module $\base$ admits a cofibrant resolution of the form
$$
\mathscr{K}=\mm{Cone}\Big(V\circ_{\base} \QQ\rt \QQ\Big).
$$
By Remark \ref{rem:bar of algebra as derived comp}, the underlying complex of $\mf{D}(A)$ can be identified with
$$
\Big(\mathscr{K}\circ_{\QQ} A\Big)^\vee \cong \Big(\big(\mb{1}\oplus V[1]\big)\circ_{\base} A\Big)^\vee
$$
with some differential. Let us now decompose $A=A_0\oplus \ol{A}$, where $A_0$ is the $\base$-module generated by the (finitely many) degree $0$ generators and $\ol{A}$ is generated by elements of degree $<0$. By Proposition \ref{prop:genfortruncations}, $V^{(p)}$ is concentrated in increasingly negative degrees as its arity increases. Using that $A_0$ is free on finitely many generators, one then sees that
$$
\Big(\mathscr{K}\circ_{\QQ^{(p)}} A\Big)^\vee \cong \prod_{q, r\geq 0} M(q, r)
$$
with some differential, where
$$
M(q, r)\coloneqq \Big((\mb{1}\oplus V[1]\big)(q+r)\otimes_{\Sigma_q\ltimes \base^{\otimes q}} \ol{A}^{\otimes q}\Big)^{\vee}\otimes_{\Sigma_r\ltimes \base^{\otimes r}} (A_0^\vee)^{\otimes r}.
$$
Since $\ol{A}$ is concentrated in degrees $\leq -1$ and $\base$ is concentrated in degrees $[-N, 0]$, $M(q, r)$ is concentrated in degrees $\geq q-N$. Consequently, in each fixed cohomological degree there are only contributions of the $M(q, r)$ for finitely many $q$. 

Similarly, $V$ is concentrated in increasingly negative degrees as the arity increases. Consequently, in each fixed cohomological degree there are only contributions of the $M(q, r)$ for finitely many $r$. It follows that the above product over $q$ and $r$ is isomorphic to a direct sum, so that
\begin{equation}\label{eq:doublesum}
\mf{D}(A)\simeq \Big(\mathscr{K}\circ_{\QQ^{(p)}} A\Big)^\vee \cong \bigoplus_{q, r\geq 0} M(q, r).
\end{equation}
The same analysis applies to each of the graded-free operads $\tilde{\mathscr{P}}^{(p)}=\mm{Free}(V^{(p)})$ with $p\geq p_0$. Consequently, one finds that the sequence of chain complexes $\dots\rt \mf{D}^{\leq p}(A)\rt \mf{D}^{\leq p+1}(A)\rt \dots\rt \mf{D}(A)$ is quasi-isomorphic to the sequence of sums
\begin{equation}\label{eq:doublesumindexed}\begin{tikzcd}[column sep=1pc]
\dots\arrow[r] & \bigoplus\limits_{q, r\geq 0} M^{(p)}(q, r) \arrow[r] & \bigoplus\limits_{q, r\geq 0} M^{(p+1)}(q, r)\arrow[r] & \dots\ar[r] & \bigoplus\limits_{q, r\geq 0} M(q, r)
\end{tikzcd}\end{equation}
endowed with some differential. We claim that this sequence is a colimit sequence of complexes. This means that it is also a homotopy colimit, which proves the proposition.

To see that \eqref{eq:doublesumindexed} is a colimit sequence, it suffices to prove that for every fixed $r$, the sequence of graded vector spaces
$$
\bigoplus_{q\geq 0} \Big((\mb{1}\oplus V^{(p)}[1]\big)(q+r)\otimes_{\Sigma_q\ltimes \base^{\otimes q}} \ol{A}^{\otimes q}\Big)^{\vee}
$$
is a colimit sequence. We claim that this sequence becomes stationary in every fixed cohomological degree. Indeed, since the $q$-th summand is concentrated in degrees $\geq q$ (since $\ol{A}$ is generated by elements of degree $\leq -1$), only finitely many summands contribute to each individual degree. It therefore suffices to verify that for each $q$ and $r$, the sequence of graded vector spaces
$$
\Big((\mb{1}\oplus V^{(p)}[1]\big)(q+r)\otimes_{\Sigma_q\ltimes \base^{\otimes q}} \ol{A}^{\otimes q}\Big)^{\vee}
$$
becomes stationary as $p\rightarrow \infty$. But this follows from the construction of Proposition \ref{prop:genfortruncations}, which guaranteed that $V^{(p)}(q+r)$ is constant for $p\geq q+r$.
\end{proof}
To deduce Proposition \ref{prop:filtrationissues} from Proposition \ref{prop:filtrationissues0-reduced}, we now only have deal with the extra operations in arity zero that obstruct the use of the tower of quotients $\mathscr{P}\rt \mathscr{P}^{\leq p}$. This is done by a filtration argument:
\begin{construction}\label{con:removingnull}
Let $\mathscr{P}$ be a $\base$-operad which is cofibrant as a left $\base$-module and let $\pi\colon \Bar\PP\rt \mathscr{P}$ be the universal twisting morphism. For any $\mathscr{P}$-algebra $A$ which is cofibrant as a $\base$-module, we can filter the bar construction $\Bar_\pi(A)=\Bar\mathscr{P}\circ_{\base} A$ by word length in the nullary operations of $\mathscr{P}$. This is an increasing filtration by left $\base$-modules which preserves the bar differential.

The associated graded can be described as follows: let $\mathscr{P}^{\geq 1}$ denote part of $\mathscr{P}$ in nonzero arity and let $\pi^{\geq 1}\colon B(\mathscr{P}^{\geq 1})\rt \mathscr{P}^{\geq 1}$ be the universal twisting morphism. Then we can identify
$$
\mm{gr}\big(\Bar_\pi(A)\big) = B_{\pi^{\geq 1}}\big(A\oplus \mathscr{P}(0)[1]\big)
$$
where $A\oplus \mathscr{P}(0)[1]$ is the product of $A$, considered as a $\mathscr{P}^{\geq 1}$-algebra by restriction, and the trivial algebra $\mathscr{P}(0)[1]$. Since all pieces are cofibrant as $\base$-modules, dualizing yields a complete Hausdorff filtration on $\mf{D}_\pi(A)$ whose associated graded is
$$
\mm{gr}\big(\mf{D}_\pi(A)\big) = \mf{D}_{\pi^{\geq 1}}\big(A\oplus \mathscr{P}(0)[1]\big)
$$
\end{construction}
\begin{lemma}\label{lem:filtrationnullary}
Suppose that $\mathscr{P}$ is a $\base$-operad in nonpositive degrees and let $A$ be a $\mathscr{P}$-algebra which is strictly Artin and nilpotent. Then the complete Hausdorff filtration on $\mf{D}_\pi(A)$ from Construction \ref{con:removingnull} restricts to a complete Hausdorff filtration on $\mf{D}^{\mm{poly}}_\pi(A)$. Furthermore, the map $\mf{D}^{\mm{poly}}_\pi(A)\rt \mf{D}_\pi(A)$ induces the obvious map
$$\begin{tikzcd}
\mf{D}_{\pi^{\geq 1}}^{\mm{poly}}\big(A\oplus \mathscr{P}(0)[1]\big)\arrow[r] & \mf{D}_{\pi^{\geq 1}}\big(A\oplus \mathscr{P}(0)[1]\big)
\end{tikzcd}$$
at the level of the associated graded.
\end{lemma}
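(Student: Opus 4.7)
The plan is to exploit the explicit tree description of $\Bar_\pi(A)=\Bar\mathscr{P}\circ_{\base} A$, whose elements are sums of labelled trees with internal nodes in $\mathscr{P}^+[1]$ and leaves in $A$. The increasing filtration $F_{\bullet}\Bar_\pi(A)$ by nullary word length is well-defined at this level, and without differentials $\Bar_\pi(A)$ splits as a direct sum $\bigoplus_n G_n$ with $G_n$ the piece having exactly $n$ nullary internal nodes. I will first verify that both components of the bar differential on $\Bar_\pi(A)$ respect the filtration: the internal $\Bar\mathscr{P}$-differential uses partial compositions in $\mathscr{P}$, which combine two operations into a new one and can never create nullary operations from non-nullary ones; the twisting differential $d_\pi$ extracts a single generator $\mu\in\mathscr{P}^+[1]$ from the tree and applies $\pi(\mu)$ to the appropriate $A$-leaves, which removes one nullary node if $\mu$ is nullary and otherwise preserves the count. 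Taking $\base$-linear duals then yields the decreasing filtration $F^{\bullet}\mf{D}_\pi(A)$, and intersecting with $\mf{D}^{\mm{poly}}_\pi(A)=\bigoplus_p V_p$ gives the desired filtration $F^{\bullet}\mf{D}^{\mm{poly}}_\pi(A)$, automatically preserved by the differential once we have verified that $\mf{D}^{\mm{poly}}_\pi(A)$ is itself a subcomplex (which uses nilpotence of $A$ to ensure only finitely many twisting components contribute to the image of any fixed arity).

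Next I will establish completeness and Hausdorffness, which is the main technical point. Hausdorffness is automatic from the inclusion $\mf{D}^{\mm{poly}}_\pi(A)\subseteq \mf{D}_\pi(A)$. For completeness, I will show that in each cohomological degree the filtration is finite. Since $\mathscr{P}$ is nonpositively graded, every generator in $\mathscr{P}^+[1]$ has degree $\leq -1$; combined with $\base$ concentrated in $[-N,0]$ and $A$ nonpositively graded (each trivial module $\triv{c_i}{p_i}$ in Definition \ref{def:verysmall} has $p_i\geq 0$, hence lies in degrees $\leq -p_i\leq 0$), a tree in $\Bar_\pi(A)$ of cohomological degree $-D$ involves at most $D$ internal nodes, in particular at most $D$ nullary ones. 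A functional $f\in\mf{D}_\pi(A)^D$ is determined by its values on $\Bar_\pi(A)$ in degrees $[-N-D,-D]$, all of which lie in $F_{N+D}$, so $F^{N+D+1}\mf{D}_\pi(A)^D=0$ and the same vanishing holds after intersecting with $\mf{D}^{\mm{poly}}_\pi(A)$. Because the restricted filtration is then finite in each cohomological degree, it is trivially complete and Hausdorff.

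Finally, I will identify the associated graded. Killing the differential components that destroy nullary nodes turns a tree in $\Bar_\pi(A)$ with $n$ nullary internal nodes and $p$ $A$-leaves into a tree in $\Bar\mathscr{P}^{\geq 1}$ of arity $p+n$, with $p$ leaves in $A$ and $n$ leaves in $\mathscr{P}(0)[1]$; this is precisely the identification $\mm{gr}\Bar_\pi(A)\cong\Bar_{\pi^{\geq 1}}(A\oplus\mathscr{P}(0)[1])$ recorded in Construction \ref{con:removingnull}. Dualization commutes with the associated graded here because, at the level of graded $\base$-modules, $\Bar_\pi(A)$ splits as $\bigoplus_n G_n$ so that $\mm{gr}^n F^{\bullet}\mf{D}_\pi(A)\cong G_n^\vee$. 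Under this identification the polynomial subalgebra $\mf{D}^{\mm{poly}}_\pi(A)=\bigoplus_p V_p$, graded by the number of $A$-leaves, corresponds on the target side to the sum over the number of $A$-leaves; summing further over the number of $\mathscr{P}(0)[1]$-leaves (the associated graded index $n$) recovers exactly $\mf{D}^{\mm{poly}}_{\pi^{\geq 1}}(A\oplus\mathscr{P}(0)[1])$, graded by total arity $q=p+n$. The main technical obstacle is the degree bound of the second paragraph, which requires careful bookkeeping and crucially uses that $A$ is very small (and hence nonpositively graded), that $\mathscr{P}$ sits in nonpositive degrees, and that $\base$ is cohomologically bounded as in Assumption \ref{ass:Abounded}.
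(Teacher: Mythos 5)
Your proof is correct and follows essentially the same route as the paper's: the decisive point in both is the degree bound showing that the part of $\Bar_\pi(A)$ with $q$ nullary vertices sits in cohomological degrees $\leq -q$ (so its dual sits in degrees $\geq q-N$), whence in each fixed cohomological degree only finitely many filtration steps of $\mf{D}_\pi(A)$ and $\mf{D}^{\mm{poly}}_\pi(A)$ are nonzero, giving completeness and Hausdorffness, and the identification of the associated graded then follows from the bigraded splitting by the number of nullary vertices and the number of $A$-leaves. The paper packages the same bound as the statement that the natural map $\bigoplus_r\prod_q N(q,r)\rt\prod_q\bigoplus_r N(q,r)$ is an isomorphism in each degree, but the content is identical to your finiteness-in-each-degree argument.
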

\begin{proof}
Let us first check that the induced filtration on $\mf{D}^{\mm{poly}}_\pi(A)$ is complete Hausdorff. By Construction \ref{con:removingnull}, we can write
$$
\Bar_\pi(A)\cong \bigoplus_{q, r\geq 0} \Big(\Bar(\mathscr{P}^{\geq 1})(q+r)\otimes_{\Sigma_q\ltimes\base^{\otimes q}} \mathscr{P}(0)^{\otimes q}\Big)[q]\otimes_{\Sigma_r\ltimes \base^{\otimes r}} A^{\otimes r}
$$
as left $\base$-modules, with some differential. The filtration is indexed by $q$. Since $A$ is finitely generated quasi-free over $\base$, the $\base$-linear dual of each of summand is given by
$$
N(q, r)\coloneqq \Big(\Bar(\mathscr{P}^{\geq 1})(q+r)\otimes_{\Sigma_q\ltimes \base^{\otimes q}} \mathscr{P}(0)^{\otimes q}\Big)^\vee[-q]\otimes_{\Sigma_r\ltimes(\base^{\op})^{\otimes r}} (A^\vee)^{\otimes r}
$$
and we have that 
$$
\mf{D}_\pi(A)\cong \prod_{q, r\geq 0} N(q, r) \qquad \text{and} \qquad \mf{D}^\mm{poly}_\pi(A) = \bigoplus_r \prod_{q\geq 0} N(q, r).
$$
Since $\base$ is concentrated in degrees $[-N, 0]$ and both $A$ and $\mathscr{P}$ are concentrated in nonpositive degrees, we have that $N(q, r)$ is concentrated in degrees $\geq q-N$, for all values of $r$. Consequently, the natural map
$$\begin{tikzcd}
\mf{D}^\mm{poly}_{\pi}(A) = \bigoplus_r \prod_{q\geq 0} N(q, r)\arrow[r] & \prod_{q\geq 0} \bigoplus_r N(q, r)
\end{tikzcd}$$
is an isomorphism in each cohomological degree. Now note that $\mf{D}^\mm{poly}_{\pi}(A)\cong \prod_{q\geq 0}\bigoplus_r N(q, r)$ is manifestly complete Hausdorff with respect to the filtration by $q$. Furthermore, we see that, without differential, there is an inclusion
$$
\mm{gr}\Big(\mf{D}^\mm{poly}_\pi\Big)\subseteq \mm{gr}\Big(\mf{D}_\pi\Big)
$$
given in degree $q$ by the obvious inclusion $\bigoplus_r N(q, r)\rt \prod_r N(q, r)$. Since $\mm{gr}\big(\mf{D}_\pi(A)\big)\cong \mf{D}_{\pi^{\geq 1}}\big(A\oplus \mathscr{P}(0)[1]\big)$, the second part of the lemma then follows by unravelling the definitions.
\end{proof}
\begin{proof}[Proof (of Proposition \ref{prop:filtrationissues})]
Suppose that $\base$ is concentrated in degrees $[-N, 0]$, that $\mathscr{P}$ is concentrated in nonpositive degrees and that $A$ is nilpotent and strictly Artin. In particular, $A$ is quasi-free and finitely generated over $\base$ (Remark \ref{rem:smallisperfect}). To see that $\mf{D}^\mm{poly}_\pi(A)\rt \mf{D}_\pi(A)$ is a quasi-isomorphism, we can work at the level of the underlying $\base$-modules.

Endow both $\mf{D}^\mm{poly}_\pi(A)$ and $\mf{D}_\pi(A)$ with the filtration by the number of nullary operations from $\PP$, as in Lemma \ref{lem:filtrationnullary}. Since these filtrations are complete and Hausdorff, it suffices to show that the induced map on the associated graded
$$\begin{tikzcd}
\mf{D}_{\pi^{\geq 1}}^{\mm{poly}}\big(A\oplus \mathscr{P}(0)[1]\big)\arrow[r] & \mf{D}_{\pi^{\geq 1}}\big(A\oplus \mathscr{P}(0)[1]\big)
\end{tikzcd}$$
is a quasi-isomorphism, where $\pi^{\geq 1}\colon B(\mathscr{P}^{\geq 1})\drt \mathscr{P}^{\geq 1}$ is the universal twisting morphism. Note that the $\mathscr{P}^{\geq 1}$-algebra $A\oplus \mathscr{P}(0)[1]$ satisfies the conditions of Proposition \ref{prop:filtrationissues0-reduced}: it is quasi-free over $\base$ and it has finitely many generators in degree $0$, all coming from $A$. The result then follows from Corollary \ref{cor:polyhoinv} and Proposition \ref{prop:filtrationissues0-reduced}.
\end{proof}

\section{Change of operads}\label{sec:naturality}
In this section we describe the functoriality of the equivalence
$$\begin{tikzcd}
\mm{MC}\colon\Alg_{\mf{D}(\PP)}\arrow[r] & \FMP_{\PP}
\end{tikzcd}$$
in the operad $\PP$ and use it to give a modular interpretation of the category of $\QQ$-algebras for \emph{any} $k$-operad $\QQ$ (Theorem \ref{thm:meta2}). We will start by considering the functoriality of the adjoint pair $(\mf{D}_\phi, \mf{D}'_\phi)$ in the twisting morphism $\phi$.

\subsection{Naturality of weak Koszul duality}\label{sec:naturalityofkd}
To study the dependence of the adjoint pair $\big(\mf{D}_\phi, \mf{D}'_\phi\big)$ on the twisting morphism $\phi\colon \cat{C}\drt \mathscr{P}$, let us consider the following category of twisting morphisms:
\begin{definition}\label{def:koszulcat}
Let $\Koszul$ denote the category whose
\begin{itemize}
\item objects are weakly Koszul twisting morphisms $\phi\colon \mathscr{C}\drt \mathscr{P}$ from a $\base$-cooperad $\mathscr{C}$ to a $\base$-operad $\mathscr{P}$. When considered as a left $\base$-module, $\CC$ is filtered-cofibrant and $\PP$ is cofibrant by assumption.
\item morphisms consist of a $\base$-operad map $f\colon \mathscr{P}\rt \mathscr{Q}$ and a $\base$-cooperad map $g\colon \mathscr{C}\rt \mathscr{D}$, fitting into a commuting square
\begin{equation}\label{diag:koszulintertwiner}\begin{tikzcd}
\mathscr{C}\arrow[d, "g"{swap}]\arrow[r, dashrightarrow, "\phi"] & \mathscr{P}\arrow[d, "f"]\\
\DD\arrow[r, dashrightarrow, "\psi"{swap}"] & \QQ.
\end{tikzcd}\end{equation}
\end{itemize}
A map between such twisting morphisms is a weak equivalence if $f$ (and hence also $g$) is a quasi-isomorphism.
\end{definition}
\begin{remark}\label{rem:koszul=operads}
There is an obvious projection map $\pi\colon \Koszul\rt \cat{Op}^{\dg}_{\base}$ sending a twisting morphism to its codomain. This projection admits a section $\sigma\colon \cat{Op}^{\dg}_{\base}\rt \Koszul$ sending $\PP$ to the universal twisting morphism $\Bar\PP\drt \PP$. In addition to the isomorphism $\pi\sigma\cong \mm{id}$, there is a natural weak equivalence $\mm{id}\rt \sigma\pi$: every weakly Koszul twisting morphism $\phi\colon \CC\drt\PP$ admits a natural weak equivalence to the universal one $\Bar\PP\drt \PP$. It follows that $\pi$ and $\sigma$ induce an equivalence of $\infty$-categories after inverting the weak equivalences.
\end{remark}
Consider the following functors with values in the $\infty$-category of $\infty$-categories and left adjoint functors between them
\begin{equation}\label{diag:algalgdual}\begin{tikzcd}
\Alg\colon \Koszul\ar[r] & \cat{Cat^L_\infty} & & \Alg^\mm{dual}\colon \Koszul\ar[r] & \cat{Cat^L_\infty}.
\end{tikzcd}\end{equation}
These two functors send a map \eqref{diag:koszulintertwiner} to the left adjoint functors
$$\begin{tikzcd}
f_!\colon \Alg_{\PP}\arrow[r] & \Alg_{\QQ} & (g^\vee)^*\colon \Alg_{\CC^\vee}^{\op}\arrow[r] & \Alg_{\DD^\vee}^{\op}.
\end{tikzcd}$$
We then have the following homotopy coherent upgrade of Lemma \ref{lem:naturality}:
\begin{proposition}\label{prop:naturality}
There is a natural transformation of functors
\begin{equation}\label{diag:naturalD}\begin{tikzcd}[column sep=2.3pc]
\Koszul\arrow[r, bend left, "\Alg"{above}]\arrow[r, phantom,  ""{name=s}, ""{name=t, below}]\arrow[r, bend right, "\Alg^\mm{dual}"{below}] & \cat{Cat^L_\infty}\arrow[Rightarrow, from=s, to=t, start anchor={[yshift=2ex]}, end anchor={[yshift=-2ex]}, "\mf{D}"]
\end{tikzcd}\end{equation}
whose value at a weakly Koszul twisting morphism $\phi\colon\CC\drt\PP$ is given by
$$\begin{tikzcd}
\cat{Alg}_{\mathscr{P}}\arrow[r, "\mf{D}_\phi"] &\cat{Alg}_{\CC^\vee}^\op.
\end{tikzcd}$$
\end{proposition}
Note that the functors $\Alg$ and $\Alg^\mm{dual}$ send weak equivalences between twisting morphisms to equivalences of $\infty$-categories (Corollary \ref{cor:quasiiso gives equivalence}), and hence descend to functors on the $\infty$-categorical localizations. By Remark \ref{rem:koszul=operads}, we therefore obtain the following:
\begin{corollary}\label{cor:naturality}
Let $\cat{Op}_{\base}$ be the $\infty$-category of (augmented) $\base$-operads. Then there is a natural transformation of functors
$$\begin{tikzcd}[column sep=2.3pc]
\cat{Op}_{\base}\arrow[r, bend left, "\Alg"{above}]\arrow[r, phantom,  ""{name=s}, ""{name=t, below}]\arrow[r, bend right, "\Alg^\mm{dual}"{below}] & \cat{Cat^L_\infty}\arrow[Rightarrow, from=s, to=t, start anchor={[yshift=2ex]}, end anchor={[yshift=-2ex]}, "\mf{D}"]
\end{tikzcd}$$
given on objects by 
$\begin{tikzcd}
\cat{Alg}(\mathscr{P})\coloneqq \cat{Alg}_{\mathscr{P}}\arrow[r, "\mf{D}"] &\cat{Alg}_{\mf{D}(\mathscr{P})}^\op\eqqcolon\cat{Alg}^\mm{dual}(\mathscr{P}).
\end{tikzcd}$
\end{corollary}
Recall from Lemma \ref{lem:naturality} that a single map of twisting morphisms induces a square of $\infty$-categories commuting up to natural equivalence. For this reason, it will be more convenient to establish Proposition \ref{prop:naturality} in terms of fibrations.
\begin{construction}\label{con:algebrasandoperads}
Let $\cat{Alg}^{\dg}$ denote the category whose
\begin{itemize}\setlength{\itemsep}{3pt}
\item objects are tuples $(\phi \colon \CC\drt\mathscr{P}, A)$ consisting of a weakly Koszul twisting morphism, together with a cofibrant $\PP$-algebra $A$.
\item morphisms $(\phi \colon \CC\drt\mathscr{P}, A)\rt (\psi \colon \DD\drt\QQ, B)$ consist of a map \eqref{diag:koszulintertwiner} and a map of $\PP$-algebras $A\rt f^*B$.
\end{itemize}
Similarly, let $\cat{Alg}^{\mm{dual}, \dg}$ denote the category whose
\begin{itemize}\setlength{\itemsep}{3pt}
\item objects are tuples $(\phi \colon \CC\drt\mathscr{P}, \mf{g})$ consisting of a weakly Koszul twisting morphism, together with a $\CC^\vee$-algebra $\mf{g}$.
\item morphisms $(\phi \colon \CC\drt\mathscr{P}, \mf{g})\rt (\psi \colon \DD\drt\QQ, \mf{h})$ consist of a map \eqref{diag:koszulintertwiner} and a map of $\PP$-algebras $(g^\vee)^*(\mf{g})\rt\mf{h}$.
\end{itemize}
There are obvious projections
$$\begin{tikzcd}
\cat{Alg}^{\dg}\arrow[r] & \Koszul & \cat{Alg}^{\mm{dual}, \dg}\arrow[r] & \Koszul,
\end{tikzcd}$$
whose fibers over $\phi\colon\CC\drt\PP$ are given by the categories of cofibrant $\PP$-algebras and of $\CC^\vee$-algebras, respectively. We will say that a map in $\cat{Alg}^{\dg}$ is a \emph{fiberwise weak equivalence} if it is a quasi-isomorphism of algebras that covers the identity in the base category $\Koszul$.

Note that both projections are cocartesian fibrations: given a map \eqref{diag:koszulintertwiner} in the base category $\Koszul$, the induced functors between the fibers are given by $f_!$ and $(g^\vee)^*$. By  construction, these change-of-fiber functors preserve fiberwise weak equivalences, so that inverting the fiberwise weak equivalences yields cocartesian fibrations \cite[Proposition 2.1.4]{hinichlocalization}
$$\begin{tikzcd}
\cat{Alg}\arrow[r] & \Koszul & & \cat{Alg}^{\mm{dual}}\arrow[r] & \Koszul.
\end{tikzcd}$$
These are exactly the cocartesian fibrations classified by the functors $\Alg$ and $\Alg^{\mm{dual}}$ from \eqref{diag:algalgdual}. Since these functors take values in $\infty$-categories and left adjoint functors between them, the projections are cartesian fibrations as well \cite[Corollary 5.2.2.5]{HTT}.
\end{construction}
\begin{proof}[Proof (of Proposition \ref{prop:naturality})]
Consider the commuting triangle
$$\begin{tikzcd}
\Alg^{\dg}\arrow[rr, "\Bar^\vee"]\arrow[rd] & & \Alg^{\mm{dual}, \dg}\arrow[ld]\\
& \Koszul &
\end{tikzcd}$$
where the vertical functors are the projections and the top horizontal functor is given by
$$
\Big(\phi\colon \cat{C}\drt \mathscr{P}, A\Big)\longmapsto \Big(\phi\colon \cat{C}\drt \mathscr{P}, B_\phi(A)^\vee\Big).
$$
This functor sends fiberwise weak equivalences in $\Alg^{\dg}$ to fiberwise weak equivalences in $\Alg^{\mm{dual}, \dg}$ by Lemma \ref{lem:algbarpreswe}. Consequently, it descends to a functor between the $\infty$-categorical localizations, which we will denote by
\begin{equation}\label{diag:Dbetweenfibrations}\begin{tikzcd}
\Alg\arrow[rr, "\mf{D}"]\ar[rd] & & \Alg^{\mm{dual}}\ar[ld]\\
& \Koszul. &
\end{tikzcd}\end{equation}
When restricted to the fiber over a weakly Koszul twisting morphism $\phi$, this functor is given by $\mf{D}_\phi$ and admits a right adjoint $\mf{D}'_\phi$ by Lemma \ref{lem:adjoint}. Furthermore, the functor $\mf{D}\colon \cat{Alg}\rt \cat{Alg}^{\mm{dual}}$ preserves cocartesian edges. Indeed, unraveling the definitions, this is exactly the assertion of Lemma \ref{lem:naturality}. It follows from \cite[Proposition 7.3.2.6]{LurieHA} (and its dual) that the functor $\mf{D}$ has a right adjoint which commutes with the projections, preserves cartesian edges and is given fiberwise by $\mf{D}'_\phi$. Under straightening, this means precisely that $\mf{D}$ determines a natural transformation of the form \eqref{diag:naturalD}.
\end{proof}

\subsection{Naturality of the main theorem}
We will now use Corollary \ref{cor:naturality} to show that the equivalence between formal moduli problems and algebras of Theorem \ref{thm:mainthm} depends functorially on the operad:
\begin{proposition}\label{prop:naturalityfmp}
Let $\base$ be a bounded connective dg-category and let $\cat{Op}_{\base}^{+}$ denote the $\infty$-category of splendid connective $\base$-operads. There is a natural equivalence of functors
$$\begin{tikzcd}[column sep=3.1pc]
\cat{Op}_{\base}^+\arrow[r, bend left, "\Alg_{\mf{D}}"{above}]\arrow[r, phantom,  ""{name=s, pos=0.4}, ""{name=t, below, pos=0.4}]\arrow[r, bend right, "\FMP"{below}] & \cat{Pr^R}\arrow[Rightarrow, from=s, to=t, start anchor={[yshift=1.8ex]}, end anchor={[yshift=-1.8ex]}, "\MC"]
\end{tikzcd}$$
with values in the $\infty$-category of locally presentable $\infty$-categories and right adjoint functors. The value of this natural equivalence at a map $f\colon \mathscr{P}\rt \QQ$ is given by the commuting square of right adjoints
\begin{equation}\label{diag:naturalityright}
\begin{tikzcd}
\cat{Alg}_{\mf{D}(\PP)}\arrow[r, "\mm{MC}", "\simeq"{swap}]\arrow[d, "\mf{D}(f)^*"{swap}] & \cat{FMP}_{\PP}\arrow[d, "(f^*)^*"]\\
\cat{Alg}_{\mf{D}(\QQ)}\arrow[r, "\simeq"{swap}, "\mm{MC}"] & \cat{FMP}_{\QQ}.
\end{tikzcd}\end{equation}
where the right vertical functor restricts a formal moduli problem along the forgetful functor $f^*\colon \cat{Art}_{\mathscr{Q}}\rt \cat{Art}_{\mathscr{P}}$.
\end{proposition}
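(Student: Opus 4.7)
The plan is to upgrade the pointwise equivalence $\MC_\PP$ of Theorem~\ref{thm:mainthm} to a coherent natural equivalence, assembling the ingredients via the fibration-theoretic machinery of Section~\ref{sec:naturalityofkd}.

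First I will set up the two functors $\cat{Op}_\base^+ \to \cat{Pr^R}$. For $f\colon \PP \to \QQ$, the restriction $f^*\colon \Alg_\QQ \to \Alg_\PP$ is a right adjoint, hence preserves all limits, and it sends each trivial $\QQ$-algebra to the trivial $\PP$-algebra on the same $\base$-module (since the augmentation of $\QQ$ factors through $\PP$). Consequently $f^*$ restricts to a functor $\Alg_\QQ^{\mm{sm}} \to \Alg_\PP^{\mm{sm}}$ and preserves the Schlessinger pullback squares of Definition~\ref{def:formalmoduli}, so that $(f^*)^*\colon \FMP_\PP \to \FMP_\QQ$ is well defined and promotes $\FMP_{(-)}$ to a functor $\cat{Op}_\base^+ \to \cat{Pr^R}$. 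Dually, $\PP \mapsto \mf{D}(\PP)$ is a contravariant functor to augmented $\base^\op$-operads, and restriction of algebras along $\mf{D}(f)$ assembles into $\Alg_{\mf{D}(-)}\colon \cat{Op}_\base^+ \to \cat{Pr^R}$, $f \mapsto \mf{D}(f)^*$.

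Next I will build $\MC$ as a natural transformation. By Corollary~\ref{cor:naturality}, $\mf{D}$ is already a natural transformation $\Alg \Rightarrow \Alg^{\mm{dual}}$ in $\cat{Cat}^L_\infty$, which I will model (as in the proof of Proposition~\ref{prop:naturality}) by a functor of cocartesian fibrations $\mf{D}\colon \Alg \to \Alg^{\mm{dual}}$ over $\cat{Op}_\base^+$. For each $\PP$, Theorem~\ref{thm:biduals} identifies the fiberwise restriction of $\mf{D}$ as an equivalence $\Alg_\PP^{\mm{sm}} \simeq (\Alg_{\mf{D}(\PP)}^{\mm{good}})^\op$. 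The compatibility of these subcategories with change of operads is Proposition~\ref{prop:functorialitysmall}: the natural map $\mf{D}(f)_!\mf{D}(A) \to \mf{D}(f^*A)$ is an equivalence for small $A$, so $f^*$ on small algebras intertwines with $\mf{D}(f)_!$ on good algebras under the equivalence induced by $\mf{D}$. Currying the mapping-space pairing along this equivalence of sub-fibrations, and extending by colimits along the Ind-completion of the good objects, yields the desired natural transformation
\[
\MC\colon \Alg_{\mf{D}(-)} \Longrightarrow \FMP_{(-)}, \qquad \mf{g} \longmapsto \Map_{\mf{D}(\PP)}\big(\mf{D}(-),\mf{g}\big),
\]
whose value at each $\PP$ is exactly the functor of Theorem~\ref{thm:axiomatic}. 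Since $\MC_\PP$ is an equivalence for every $\PP \in \cat{Op}_\base^+$ by Theorem~\ref{thm:mainthm}, this is a pointwise, hence global, equivalence in $\Fun(\cat{Op}_\base^+, \cat{Pr^R})$.

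The main obstacle is precisely the second step: producing $\MC$ as a coherent natural transformation rather than a collection of compatible pointwise equivalences. The crucial technical input is Proposition~\ref{prop:functorialitysmall}; with it in hand, the standard fibration-theoretic package for passing between cocartesian fibrations and their adjoint right fibrations (in the style of \cite[Proposition 7.3.2.6]{LurieHA}, already used in the proof of Proposition~\ref{prop:naturality}) assembles all higher coherences automatically.
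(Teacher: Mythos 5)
Your proposal is correct and follows essentially the same route as the paper: the paper's proof likewise models $\mf{D}$ as a map of (cartesian) fibrations over the category of splendid connective operads, restricts to small algebras, invokes Proposition \ref{prop:functorialitysmall} to see that $\mf{D}$ preserves cartesian edges (i.e.\ intertwines $f^*$ with $\mf{D}(f)_!$), and then uses the universal property of presheaf categories — your "currying the mapping-space pairing" — to produce the coherent transformation $\mf{g}\mapsto\Map_{\mf{D}(\PP)}(\mf{D}(-),\mf{g})$ landing in $\cat{FMP}_{\PP}$, which is a pointwise (hence global) equivalence by the main theorem. The only cosmetic difference is that the paper phrases the colimit-extension step via \cite[Theorem 5.1.5.6]{HTT} rather than via Ind-completion of the good objects, which amounts to the same thing.
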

\begin{lemma}\label{lem:naturalitysmall}
Let $\base$ be a bounded connective dg-category and let $\cat{Op}_{\base}^{+}$ denote the $\infty$-category of splendid connective $\base$-operads. Then there is a natural transformation of functors
\begin{equation}\label{diag:naturalitysmall}
\begin{tikzcd}[column sep=2.8pc]
\big(\cat{Op}_{\base}^{+}\big)^{\op}\arrow[r, bend left, "\Art"{above}]\arrow[r, phantom,  ""{name=s, pos=0.4}, ""{name=t, below, pos=0.4}]\arrow[r, bend right, "\Alg^{\mm{dual}}"{below}] & \cat{Cat}_\infty\arrow[Rightarrow, from=s, to=t, start anchor={[yshift=1.8ex]}, end anchor={[yshift=-1.8ex]}, "\mf{D}"]
\end{tikzcd}\end{equation}
whose value on a map $f\colon \PP\rt \QQ$ is given by
$$\begin{tikzcd}
\Art_{\QQ}\arrow[d, "f^*"{swap}]\arrow[r, "\mf{D}"] & \cat{Alg}_{\mf{D}(\QQ)}^\op\arrow[d, "\mf{D}(f)_!"]\\
\Art_{\PP}\arrow[r, "\mf{D}"{swap}] & \cat{Alg}_{\mf{D}(\PP)}^\op.
\end{tikzcd}$$
\end{lemma}
\begin{proof}
Recall the commuting triangle \eqref{diag:Dbetweenfibrations}, where the vertical projections are cartesian and cocartesian fibrations and the top horizontal functor $\mf{D}$ sends $(\phi\colon \CC\drt\PP, A)$ to $(\phi\colon \CC\drt\PP, \mf{D}_\phi(A))$. Let us consider the following subcategories of the $\infty$-categories appearing in that triangle:
\begin{itemize}
\item Let $\Koszul^+\subseteq \Koszul$ denote the subcategory of universal twisting morphisms $\Bar\PP\drt \PP$ where $\PP$ is a splendid connective $\base$-operad, with maps between those given by tuples $f\colon \PP\rt \QQ$ and $\Bar(f)\colon \Bar\PP\rt \Bar\QQ$. By Remark \ref{rem:koszul=operads}, inverting the weak equivalences in $\Koszul^+$ yields the $\infty$-category $\cat{Op}_\base^+$.

\item Let $\Alg^{+, \mm{dual}}\eqqcolon \Alg^\mm{dual}\times_{\Koszul} \Koszul^+$ be the restriction of $\Alg^\mm{dual}$ to the category $\Koszul^+$.

\item Let $\Alg^{+, \mm{art}}\subseteq \Alg\times_{\Koszul} \Koszul^+$ be the full subcategory of tuples $(\phi\colon \Bar\PP\drt \PP, A)$ with $A$ an Artin $\PP$-algebra.
\end{itemize}
The functors appearing in \eqref{diag:Dbetweenfibrations} then restrict to
$$\begin{tikzcd}
\cat{Alg}^{+, \mm{art}}\arrow[rr, "\mf{D}"] \arrow[rd] & & \cat{Alg}^{+, \mm{dual}}\arrow[ld]\\
 & \Koszul^+.
\end{tikzcd}$$
Note that the projection $\cat{Alg}^{+, \mm{dual}}\rt \Koszul^+$ is (the restriction of) a cocartesian and cartesian fibration. Since the restriction of an Artin algebra along a map of operads $\mathscr{P}\rt \QQ$ is again Artin, the projection $\Alg^{+, \mm{art}}\rt \Koszul^+$ is a cartesian fibration as well. Recall that $\mf{D}$ sends a tuple $\big(\phi\colon \CC\drt\PP, A\big)$ to $\big(\phi\colon \CC\drt\PP, \mf{D}_\phi(A)\big)$. In particular, Proposition \ref{prop:functorialitysmall} shows that it preserves cartesian edges, so that we obtain a natural transformation
$$\begin{tikzcd}[column sep=2.8pc]
\Koszul^{+, \op}\arrow[r, bend left=25, "\Art"{above}]\arrow[r, phantom,  ""{name=s, pos=0.4}, ""{name=t, below, pos=0.4}]\arrow[r, bend right=25, "\Alg^{\mm{dual}}"{below}] & \cat{Cat}_\infty.\arrow[Rightarrow, from=s, to=t, start anchor={[yshift=1.8ex]}, end anchor={[yshift=-1.8ex]}, "\mf{D}"]
\end{tikzcd}$$
The domain of this natural transformation is given by $\PP\longmapsto \Art_{\PP}$ and the codomain is given by $\PP\longmapsto \Alg^{\op}_{\mf{D}(\PP)}$. Since these functors send quasi-isomorphisms to equivalences of $\infty$-categories, the natural transformation descends to the localization of $\Koszul^+$ at the quasi-isomorphisms, yielding the desired natural transformation \eqref{diag:naturalitysmall}.
\end{proof}
\begin{proof}
Consider the natural transformation \eqref{diag:naturalitysmall} from Lemma \ref{lem:naturalitysmall}. Taking the opposites of its values, one obtains a functor with values in $\infty$-categories sending a map $f\colon \PP\rt \QQ$ to
$$\begin{tikzcd}
\Art_{\QQ}^{\op}\arrow[d, "f^*"{swap}]\arrow[r, "\mf{D}"] & \cat{Alg}_{\mf{D}(\QQ)}\arrow[d, "\mf{D}(f)_!"]\\
\Art_{\PP}^{\op}\arrow[r, "\mf{D}"{swap}] &\cat{Alg}_{\mf{D}(\PP)}.
\end{tikzcd}$$
By the universal property of presheaf categories \cite[Theorem 5.1.5.6]{HTT}, one obtains a natural transformation of functors $\cat{Op}_\base^+\rt \cat{Pr^R}$ with values in presentable $\infty$-categories and right adjoint functors, whose value on $f$ is given by
$$\begin{tikzcd}
\cat{Alg}_{\mf{D}(\PP)}\arrow[r, hookrightarrow, "\mf{D}^*"]\arrow[d, "\mf{D}(f)^*"{swap}] & \Fun\big(\Art_{\PP}, \sS\big)\arrow[d, "(f^*)^*"]\\
\cat{Alg}_{\mf{D}(\QQ)}\arrow[r, hookrightarrow, "\mf{D}^*"{swap}] & \Fun\big(\Art_{\QQ}, \sS\big).
\end{tikzcd}$$
Here the functor $(f^*)^*$ restricts a (co)presheaf along $f^*$ and $\mf{D}^*$ sends a $\mf{D}(\PP)$-algebra $\mf{g}$ to the functor $A\longmapsto \Map_{\mf{D}(\PP)}\big(\mf{D}(A), \mf{g}\big)$. By part \ref{it:checkC} of Theorem \ref{thm:biduals}, the natural transformation $\mf{D}^*$ takes values in diagram of full subcategories $\mathscr{P}\longmapsto \cat{FMP}_{\mathscr{P}}$. The result then follows from the fact that
$$\begin{tikzcd}
\MC\colon \Alg_{\mf{D}(\PP)}\arrow[r] & \FMP_{\PP}\arrow[r, hookrightarrow] & \Fun\big(\Art_{\PP}, \sS\big)
\end{tikzcd}$$
agrees with $\mf{D}^*$ by definition.
\end{proof}

\subsection{FMPs from algebras over operads}\label{sec:fmpsoverfmps}
Since every Artin (augmented symmetric) $k$-operad $\mathscr{Q}$ is in particular concentrated in finitely many arities, it is splendid. Proposition \ref{prop:naturalityfmp} and passing to opposite categories therefore shows that there is a triangle
$$\begin{tikzcd}[row sep=2.2pc, column sep=4pc]
\ArtOp^{\op}\arrow[r, "\mf{D}"]\ar[rd, "\FMP"{swap}, ""{name=t}] & \cat{Op}\arrow[d, "\cat{Alg}"]\arrow[Rightarrow, to=t, shorten=1ex, "\MC"]\\
& \cat{Pr^L}
\end{tikzcd}$$
commuting up to the natural equivalence $\mm{MC}\colon \cat{Alg}_{\mf{D}(\mathscr{Q})}\rt \cat{FMP}_{\mathscr{Q}}$. Since $\cat{Pr^L}$ admits all colimits, this induces a commuting diagram
$$\begin{tikzcd}[row sep=2.2pc, column sep=4pc]
\Fun(\ArtOp, \sS)\arrow[r, "\mf{D}_!"]\ar[rd, "\FMP_!"{swap}, ""{name=t}] & \cat{Op}\arrow[d, "\cat{Alg}"]\arrow[Rightarrow, to=t, shorten=1ex, "\MC"]\\
& \cat{Pr^L}
\end{tikzcd}$$
where $\mf{D}_!$ and $\cat{FMP}_!$ are the unique colimit-preserving extensions of $\mf{D}$ and $\cat{FMP}$. The right adjoint to $\mf{D}_!$ sends an operad $\PP$ to the functor $\Map_{\cat{Op}}(\mf{D}(-), \PP)$. In other words, this right adjoint is precisely the functor $\MC$, composed with the inclusion $\cat{FMP}_{\cat{Op}}\hookrightarrow \Fun(\ArtOp, \sS)$. This implies that for a formal moduli problem $F$, 
$$
\mf{D}_!(F)\simeq \MC^{-1}(F)=T(F)
$$
is given by the tangent complex (by Theorem \ref{thm:axiomatic} the inverse of $\MC$ is the tangent complex). 
\begin{definition}\label{def:fmpoverfmp}
Let $X\colon \ArtOp\rt \sS$ be a functor. We define $\cat{FMP}_X$ to be the value of $\cat{FMP}_!$ on $X$. One can identify $\cat{FMP}_X$ with the limit of the diagram
$$\begin{tikzcd}
\big(\ArtOp^{\op}/X\big)^{\op}\arrow[r] & \cat{Cat}_\infty
\end{tikzcd}$$
sending
$$\begin{tikzcd}[row sep=1.6pc]
\big(\mathscr{Q}, x\in X(\mathscr{Q})\big)\arrow[d, "\substack{f\colon \mathscr{Q}\rt \mathscr{Q}'\\ f(x)\rto{\sim} x'}"{swap}]\arrow[r, mapsto] & \cat{FMP}_{\mathscr{Q}}\arrow[d, "F\mapsto F\circ f^*"]\\
\big(\mathscr{Q}', x'\in X(\mathscr{Q}')\big)\arrow[r, mapsto] & \cat{FMP}_{\mathscr{Q}'}.
\end{tikzcd}$$
\end{definition}
\begin{theorem}[Theorem \ref{thm:metaduality}]\label{thm:meta2}
For any formal moduli problem $X\colon \ArtOp\rt \sS$, there is an equivalence of $\infty$-categories
$$\begin{tikzcd}
\mm{MC}\colon \cat{Alg}_{T(X)}\arrow[r] & \cat{FMP}_X.
\end{tikzcd}$$
\end{theorem}
\begin{lemma}\label{lem:algsift}
The functor $\cat{Alg}\colon \cat{Op}\rt \cat{Pr^L}; \mathscr{P}\longmapsto \cat{Alg}_{\mathscr{P}}$ preserves sifted colimits.
\end{lemma}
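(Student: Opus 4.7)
Given a sifted diagram $\mathscr{P}_\bullet \colon I \to \cat{Op}$ with colimit $\mathscr{P} = \colim_\alpha \mathscr{P}_\alpha$, I wish to show that the canonical map $\colim_\alpha \cat{Alg}_{\mathscr{P}_\alpha} \to \cat{Alg}_\mathscr{P}$ in $\cat{Pr^L}$ is an equivalence. The equivalence $(\cat{Pr^L})^{\op} \simeq \cat{Pr^R}$ together with the fact that $\cat{Pr^R} \hookrightarrow \cat{Cat}_\infty$ creates limits allows one to compute this colimit as the limit in $\cat{Cat}_\infty$ of the right adjoint diagram, i.e.\ the restriction functors $f_{\alpha\beta}^* \colon \cat{Alg}_{\mathscr{P}_\beta} \to \cat{Alg}_{\mathscr{P}_\alpha}$. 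It therefore suffices to show that the induced comparison functor $\cat{Alg}_\mathscr{P} \to \lim_\alpha \cat{Alg}_{\mathscr{P}_\alpha}$ is an equivalence of $\infty$-categories.

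The plan is to prove this by monadic descent over $\cat{Mod}_\base$. On the left, the forgetful $U_\mathscr{P} \colon \cat{Alg}_\mathscr{P} \to \cat{Mod}_\base$ is conservative, admits the free functor $M \mapsto \mathscr{P} \circ M$ as a left adjoint, and preserves sifted colimits (Appendix A of \cite{HarpazNuitenPrasma}); Barr--Beck--Lurie identifies it as monadic with monad $T_\mathscr{P}(M) = \mathscr{P} \circ M$. On the right, the common forgetful $U \colon \lim_\alpha \cat{Alg}_{\mathscr{P}_\alpha} \to \cat{Mod}_\base$ (through which each $U_\alpha$ factors) is conservative (since every $U_\alpha$ is), preserves sifted colimits (computed pointwise in a limit of $\infty$-categories), and preserves all limits; in the presentable setting it therefore admits a left adjoint, and another application of Barr--Beck--Lurie makes $\lim_\alpha \cat{Alg}_{\mathscr{P}_\alpha}$ monadic over $\cat{Mod}_\base$ as well.

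It remains to compare the two monads. Since the composition product $\circ$ is linear, i.e.\ preserves all colimits in its first argument, one has
$$T_\mathscr{P}(M) = \bigl(\colim_\alpha \mathscr{P}_\alpha\bigr) \circ M \simeq \colim_\alpha \bigl(\mathscr{P}_\alpha \circ M\bigr),$$
so $T_\mathscr{P}$ is the sifted colimit in $\Fun(\cat{Mod}_\base, \cat{Mod}_\base)$ of the monads $T_{\mathscr{P}_\alpha}$. The comparison functor sends the free $\mathscr{P}$-algebra $\mathscr{P} \circ M$ to the compatible family $(f_\alpha^*(\mathscr{P} \circ M))_\alpha$ on $M$, which one identifies as the free compatible family: by the universal property of the sifted colimit $\mathscr{P} = \colim_\alpha \mathscr{P}_\alpha$ and of the limit of $\infty$-categories, maps out of this family into a compatible family $(A_\alpha)$ correspond to maps of the common underlying $\base$-module $M \to U(A_\bullet)$. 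Hence the induced map of monads is an equivalence, and the comparison is an equivalence of monadic categories by Barr--Beck--Lurie.

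\textbf{Main obstacle.} The delicate point is verifying that the free functor into the limit category sends $M$ to the compatible family with underlying module $\mathscr{P} \circ M$. This uses siftedness of $I$ in an essential way: it guarantees that compatible $\mathscr{P}_\alpha$-algebra structures on a common module $A$ assemble, via the universal property of $\mathscr{P} = \colim_\alpha \mathscr{P}_\alpha$, into a bona fide $\mathscr{P}$-algebra structure rather than merely a cocone of algebra structures. This is also the place where one uses that the composition product $\mathscr{P}_\alpha \circ -$ itself preserves sifted colimits on $\cat{Mod}_\base$, which in turn follows from the fact that sifted colimits commute with finite products (hence with the iterated tensor powers entering the operadic composition).
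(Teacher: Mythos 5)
Your overall architecture is sound and genuinely different in packaging from the paper's proof: you reduce to showing the comparison functor $\cat{Alg}_{\mathscr{P}}\rt\lim_\alpha\cat{Alg}_{\mathscr{P}_\alpha}$ is an equivalence and attack this by monadic descent over $\cat{Mod}_\base$, whereas the paper works with the cartesian/cocartesian fibration $\cat{Alg}\rt\cat{Op}$, identifies both sides with categories of cartesian sections via \cite[Proposition 3.3.3.1]{HTT}, and characterizes which relative colimit cones have cartesian edges. Both routes rest on the same engine (the forgetful functor to underlying modules detects sifted colimits). The problem is that your proof has a gap at exactly the decisive step: the claim that $\big(f_\alpha^*(\mathscr{P}\circ M)\big)_\alpha$ is the free object of $\lim_\alpha\cat{Alg}_{\mathscr{P}_\alpha}$ on $M$. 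The justification offered --- ``by the universal property of the sifted colimit $\mathscr{P}=\colim_\alpha\mathscr{P}_\alpha$ and of the limit of $\infty$-categories'' --- does not apply as stated. The mapping space in the limit is $\lim_\alpha\Map_{\cat{Alg}_{\mathscr{P}_\alpha}}\big(f_\alpha^*(\mathscr{P}\circ M),A_\alpha\big)$, and the source $f_\alpha^*(\mathscr{P}\circ M)$ is \emph{not} a free $\mathscr{P}_\alpha$-algebra, so no individual term simplifies to $\Map_{\base}(M,U A_\alpha)$; meanwhile the universal property of the colimit in $\cat{Op}$ governs maps of operads out of $\mathscr{P}$, not these algebra mapping spaces. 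As written, the asserted corepresentability is essentially a restatement of the lemma.

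The gap is fillable, but it needs a real input. One option is to upgrade your (correct) observation that $T_{\mathscr{P}}\simeq\colim_\alpha T_{\mathscr{P}_\alpha}$ as endofunctors to a sifted colimit in the monoidal $\infty$-category of sifted-colimit-preserving endofunctors of $\cat{Mod}_\base$ (this works because composition of such endofunctors preserves sifted colimits in each variable, so the forgetful functor from monads to endofunctors preserves sifted colimits), and then invoke the general fact that $T\mapsto\cat{Mod}_T(\cat{Mod}_\base)$ carries colimits of monads to limits of $\infty$-categories; this replaces, rather than follows from, your appeal to universal properties. Along the way you also need that the underlying symmetric sequence of a sifted colimit of operads is the sifted colimit of underlying symmetric sequences (same citation as for algebras), and that the ``common forgetful'' $U$ out of the limit is well defined --- the family of underlying modules of an object of $\lim_\alpha\cat{Alg}_{\mathscr{P}_\alpha}$ is a priori only an essentially constant diagram, and identifying it with a single module uses the weak contractibility of a sifted index category. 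The paper's proof handles all of this in one stroke by observing that $\Phi=(\Phi^1,\Phi^2)\colon\cat{Alg}\rt\cat{Op}\times\cat{Mod}_k$ detects sifted colimits and that cartesian edges are exactly those with essentially constant image under $\Phi^2$.
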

\begin{proof}
The functor $\cat{Alg}$ is classified by a cartesian and cocartesian fibration $\cat{Alg}\rt \cat{Op}$; in fact, this is just the functor obtained by localizing the functor $\cat{Alg}^{\dg}\rt \cat{Op}^{\dg}$ of Construction \ref{con:algebrasandoperads}. Note that $\cat{Alg}$ is itself the category of algebras over a coloured operad (namely, the operad for operads with an algebra over them), and hence admits all limits and colimits.

We claim that the lemma follows from the following assertion: consider a cone diagram $F\colon\cat{K}^{\rhd}\rt \cat{Alg}$ such that
\begin{itemize}
\item the full subcategory $\cat{K}\subseteq \cat{K}^{\rhd}$ is a sifted $\infty$-category.
\item the composite $\cat{K}^{\rhd}\rt \cat{Alg}\rt \cat{Op}$ is a colimit diagram of operads.
\item for each arrow in the subcategory $\cat{K}\subseteq \cat{K}^{\rhd}$, its image in $\cat{Alg}$ is a cartesian arrow.
\end{itemize}
Then the diagram $F\colon \cat{K}^{\rhd}\rt \cat{Alg}$ is a colimit diagram if and only if for every arrow in $\cat{K}^{\rhd}$, its image in $\cat{Alg}$ is cartesian. 

Indeed, let $K^{\rhd}\rt \cat{Op}$ be a colimit diagram and consider the functor
$$\begin{tikzcd}
\Map^\flat_{\cat{K}^\rhd}\big((\cat{K}^{\rhd})^{\sharp}, \cat{Alg}^\natural\big)\arrow[r] & \Map^\flat_{\cat{K}}\big(\cat{K}^{\sharp}, \cat{Alg}^\natural\big)
\end{tikzcd}$$
which restricts a lift $\cat{K}^{\rhd}\rt \cat{Alg}$ with values in cartesian edges to the full subcategory $\cat{K}\subseteq \cat{K}^{\rhd}$. By the assertion, this functor is an equivalence with inverse given by left Kan extension (cf.\ \cite[Proposition 4.3.2.15]{HTT}) and the lemma then follows from \cite[Proposition 3.3.3.1]{HTT}.

To verify the assertion, note that the forgetful functor 
$$\begin{tikzcd}
\Phi=(\Phi^1, \Phi^2)\colon \cat{Alg}\arrow[r] & \cat{Op}\times\cat{Mod}_k; \hspace{4pt} \big(\mathscr{P}, A\in\cat{Alg}_{\mathscr{P}}\big)\arrow[r, mapsto] & \big(\mathscr{P}, A\big)
\end{tikzcd}$$
arises from restriction along a map of operads, and hence detects sifted colimits \cite[Corollary 3.2.3.2]{LurieHA}. Furthermore, an arrow in $\cat{Alg}$ is cartesian if and only if its image under $\Phi^2$ is essentially constant.

We now have a diagram $F\colon K^{\rhd}\rt \cat{Alg}$ such that $\Phi^1(F)$ is a colimit and diagram and $\Phi^2(F\big|\cat{K})$ is essentially constant. Then $F$ is a colimit diagram iff $\Phi^2(F)$ is a colimit diagram, which is equivalent to $\Phi^2(F)$ being essentially constant, i.e.\ $F$ sends every arrow to a cartesian arrow.
\end{proof}
\begin{proof}[Proof (of Theorem \ref{thm:meta2})]
By Lemma \ref{lem:algsift}, we have sifted colimit-preserving functors that send a diagram $X\colon \ArtOp\rt \sS$ to $\cat{FMP}_X$ and $\cat{Alg}_{\mf{D}_!(X)}$. Since $\mm{MC}$ defines a natural equivalence between them on corepresentables, the same is true for all $X\colon \ArtOp\rt \sS$ that can be written as sifted colimits of corepresentables. In particular, this holds when $X$ is an FMP \cite[Proposition 1.5.8]{DAGX}. The result then follows from the fact that $T(X)=\mf{D}_!(X)$ when $X$ is an FMP.
\end{proof}

\section{Maurer--Cartan equation}\label{sec:Dprime}

In the previous sections we have discussed how -- for a suitable augmented $\base$-operad $\PP$ -- every algebra $\mf{g}$ over the dual operad $\mf{D}(\PP)$ determines a formal moduli problem
$$\begin{tikzcd}
\MC_\mf{g}\colon \Art_{\mathscr{P}}\arrow[r] & \sS.
\end{tikzcd}$$
The formal moduli problem has been defined in $\infty$-categorical terms by the formula
$$
\MC_\mf{g}(A)=\Map_{\mf{D}(\PP)}\big(\mf{D}(A), \mf{g}\big).
$$
The purpose of this section is to give a more explicit chain-level description of this functor in terms of Maurer--Cartan elements of (nilpotent) $L_\infty$-algebras (see Theorem \ref{thm:MC functor}). In particular, in the classical case where $\PP=\Com$ and $\mf{g}$ is a Lie algebra, we recover the usual formula (see e.g.\ \cite{hinichformal})
$$
\MC_\mf{g}(A) = \MC\big(A\otimes \mf{g}\otimes \Omega_\bullet\big)
$$
describing the formal moduli problem classified by $\mf{g}$ in terms of simplicial sets of Maurer--Cartan elements (see Example \ref{ex:comyieldsMC}).

We will start by recalling some models for $\infty$-categories of algebras by simplicially enriched categories. Under certain finiteness conditions on the $\base$-operad $\PP$ (see Assumption \ref{ass:mc}), we can then present the $\infty$-functor $\mf{D}$ on Artin $\PP$-algebras by a simplicially enriched functor that sends a strictly Artin $\PP$-algebra to the \emph{cobar} construction of its linear dual. The results of Section \ref{sec:barcobaralgebras} then allow us to describe $\MC_\mf{g}$ in terms of Maurer--Cartan elements.

\subsection{Simplicial categories of algebras}
Recall that for any $\base$-operad $\PP$, the $\infty$-category of $\PP$-algebras is defined to be the $\infty$-category obtained from the model category of $\PP$-algebras by localizing at the quasi-isomorphisms. Such localizations can be modelled by simplicially enriched categories, using the simplicial localization of Dwyer and Kan, and often its mapping spaces can be computed using fibrant resolutions \cite{DwyerKanFunction}:
\begin{definition}\label{def:simpcatofalgs}
Given a $\base$-operad $\PP$, the \emph{naive simplicial category of $\PP$-algebras} $\scat{Alg}_{\PP}$ is the following simplicially enriched category:
\begin{itemize}
\item objects are $\PP$-algebras.
\item for two $\PP$-algebras $A$ and $B$, the simplicial set $\Map_{\PP}^{\Delta}(A, B)$ of maps between them has $n$-simplices given by maps of $\PP$-algebras
$$\begin{tikzcd}
A\arrow[r] & B\otimes\Omega_n
\end{tikzcd}$$
where $\Omega_n=\Omega[\Delta^n]$ denotes the cdga of differential forms on the $n$-simplex. Equivalently, these are maps of $\PP\otimes \Omega_n$-algebras $A\otimes\Omega_n\rt B\otimes\Omega_n$.
\end{itemize}
Furthermore, let $\scat{Alg}_{\PP}^\circ\subseteq \scat{Alg}_{\PP}$ denote the full simplicial subcategory on the cofibrant $\PP$-algebras.
\end{definition}
Let $\Alg_{\PP}^\dg$ denote the (ordinary) category of $\PP$-algebras and let $\Alg_{\PP}^{\dg, \circ}\subseteq \Alg_{\PP}^{\dg}$ denote the subcategory of cofibrant $\PP$-algebras. We then have a commuting square of simplicial categories
$$\begin{tikzcd}
\Alg_{\PP}^{\dg, \circ}\arrow[r, "i"]\arrow[d] & \Alg_{\PP}^{\dg}\arrow[d]\\
\scat{Alg}_{\PP}^\circ\arrow[r] & \scat{Alg}_\PP
\end{tikzcd}$$
where the vertical functors simply include the vertices of the mapping spaces.

After taking simplicial localizations at the quasi-isomorphisms, each of the above functors yields a weak equivalence of simplicial categories. Indeed, taking cofibrant replacements produces a functor $Q\colon \Alg_{\PP}^{\dg}\rt \Alg_{\PP}^{\dg, \circ}$ such that $Q\circ i$ and $i\circ Q$ are naturally quasi-isomorphic to the identity. It follows that $i$ induces a weak equivalence after simplicial localization at the quasi-isomorphisms, and similarly for the inclusion $\scat{Alg}_{\PP}^{\circ}\rt \scat{Alg}_{\PP}$. The right vertical functor induces an equivalence after localizations because for every $\PP$-algebra $A$, the simplicial presheaf
$$\begin{tikzcd}
\Map_{\PP}^{\Delta}(-, A)\colon \Alg_{\PP}^{\dg}\arrow[r] & \cat{sSet}
\end{tikzcd}$$
is representable by the simplicial diagram of $\PP$-algebras $A\otimes \Omega[\Delta^\bullet]$, all of which are quasi-isomorphic (see e.g. \cite{DwyerKanEquivalences} or \cite[Corollary 2.9]{nuiten2016localizing}). Now note that every quasi-isomorphism in $\scat{Alg}_{\PP}^\circ$ is already a homotopy equivalence \cite[Lemma 4.8.4]{hinich1997homological}. Consequently, $\scat{Alg}_{\PP}^\circ$ is weakly equivalent to its simplicial localization and we obtain the following:
\begin{lemma}\label{lem:simpcatofalgebras}
If $\PP$ is a $\base$-operad, then the $\infty$-category of $\PP$-algebras can be modelled by the simplicial category $\scat{Alg}_\PP^{\circ}$.
\end{lemma}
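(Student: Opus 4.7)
The plan is to establish the lemma by showing that every functor in the commuting square
$$\xymatrix{
\Alg_{\PP}^{\dg,\circ}\ar[r]^-i\ar[d] & \Alg_{\PP}^{\dg}\ar[d]\\
\scat{Alg}_{\PP}^{\circ}\ar[r] & \scat{Alg}_{\PP}
}$$
becomes a Dwyer--Kan equivalence after simplicial localization at the quasi-isomorphisms. Since the top-right corner by definition presents the $\infty$-category $\cat{Alg}_{\PP}$, it then suffices to argue separately that $\scat{Alg}_{\PP}^{\circ}$ requires no further simplicial localization, in which case the bottom-left corner already realizes that $\infty$-category.

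First I would handle the two horizontal inclusions of cofibrant subcategories. A functorial cofibrant replacement $Q$ provides a functor in the other direction such that both composites $Q\circ i$ and $i\circ Q$ are connected to the identity by zig-zags of natural quasi-isomorphisms. After inverting quasi-isomorphisms such natural transformations become simplicial homotopies, giving the desired equivalences for both horizontal arrows.

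Next I would deal with the right vertical arrow $\Alg_{\PP}^{\dg}\to \scat{Alg}_{\PP}$. This is a nontrivial point because $\scat{Alg}_{\PP}$ already carries the enriched mapping spaces $\Map_{\PP}^{\Delta}(A,B)=\Hom_{\PP}(A,B\otimes \Omega_{\bullet})$, and one must check that these already agree (up to weak equivalence) with the Dwyer--Kan mapping spaces. The standard criterion for this is that the simplicial presheaf $\Map_{\PP}^{\Delta}(-,A)$ be representable by a simplicial diagram consisting of quasi-isomorphic $\PP$-algebras; the diagram $A\otimes \Omega[\Delta^{\bullet}]$ does the job, since all of its simplicial levels are quasi-isomorphic to $A$ via the unit morphisms $\Omega[\Delta^{n}]\to k$.

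Finally, to conclude it remains to observe that $\scat{Alg}_{\PP}^{\circ}$ is already (equivalent to) its own simplicial localization at the quasi-isomorphisms. This is Hinich's observation \cite[Lemma 4.8.4]{hinich1997homological}: every quasi-isomorphism between cofibrant $\PP$-algebras is a simplicial homotopy equivalence with respect to the Sullivan-type cylinder object $A\otimes \Omega[\Delta^{1}]$, so that quasi-isomorphisms are already invertible up to simplicial homotopy. The main subtlety in the whole argument is the representability step, which relies on being able to model $\Map_{\PP}^{\Delta}(-,A)$ by a homotopically constant cosimplicial object of $\Alg_{\PP}^{\dg}$; this requires that $A\otimes \Omega_{\bullet}$ be well-defined in the operadic setting over a dg-category $\base$, which is straightforward provided we assume the standing cofibrancy hypotheses on $\base$-operads.
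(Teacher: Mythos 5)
Your proposal is correct and follows essentially the same route as the paper: the same commuting square of (simplicial) categories, the cofibrant-replacement argument for the horizontal arrows, the representability of $\Map_{\PP}^{\Delta}(-,A)$ by the homotopically constant diagram $A\otimes\Omega[\Delta^\bullet]$ for the right vertical arrow, and Hinich's observation that quasi-isomorphisms between cofibrant algebras are already simplicial homotopy equivalences. No gaps.
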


Recall that given a Koszul twisting morphism $\phi\colon\CC\drt\PP$, there is a more general notion of $\infty$-morphisms of $\PP$-algebras, given by maps between the respective bar constructions. This recovers the classical examples of $A_\infty$- or $L_\infty$-morphisms.
The $\infty$-categorical localization can also be described using $\infty$-morphisms:
\begin{definition}
Given a Koszul twisting morphism $\phi\colon \CC\drt\PP$ (Definition \ref{def:koszultwisting}), we define $\scat{Alg}_{\mathscr{P}}^{\infty}$ to be the following simplicially enriched category:
\begin{itemize}
\item the objects of $\scat{Alg}_{\PP}^\infty$ are $\PP$-algebras which are cofibrant as $\base$-modules.
\item for two such $\PP$-algebras $A$ and $B$, the simplicial set $\Map_{\PP}^{\infty}(A, B)$ of maps between them has $n$-simplices given by $\infty$-morphisms
$$\begin{tikzcd}
A\arrow[r, rightsquigarrow] & B\otimes \Omega_n.
\end{tikzcd}$$
Equivalently, an $n$-simplex is a map of $\CC\otimes\Omega_n$-coalgebras 
\begin{equation}\label{diag:coalgformofoomaorphism}\begin{tikzcd}
\Bar_\phi(A)\otimes \Omega_n\ar[r] & \Bar_\phi(B)\otimes \Omega_n.
\end{tikzcd}\end{equation}
\end{itemize}
\end{definition}
\begin{lemma}\label{lem:simpcatofalgebras2}
If $\phi\colon \CC\drt\PP$ is a Koszul morphism over $\base$, then the $\infty$-category of $\PP$-algebras can be modelled by the simplicial category $\scat{Alg}_\PP^{\infty}$.
\end{lemma}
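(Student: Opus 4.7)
The plan is to compare $\scat{Alg}_\PP^\infty$ with the model $\scat{Alg}_\PP^\circ$ from Lemma \ref{lem:simpcatofalgebras} via a simplicial functor
\[
\iota\colon \scat{Alg}_\PP^\circ \longrightarrow \scat{Alg}_\PP^\infty
\]
which on objects is the inclusion (noting that under Assumption \ref{ass:cofibrancy} any cofibrant $\PP$-algebra is cofibrant as a left $\base$-module) and which sends a strict morphism $A\to B\otimes\Omega_n$ to its underlying $\infty$-morphism by applying $\Bar_\phi$, yielding a map of $\CC\otimes\Omega_n$-coalgebras $\Bar_\phi(A)\otimes\Omega_n\to\Bar_\phi(B)\otimes\Omega_n$ via the natural identification $\Bar_\phi(B\otimes\Omega_n)\cong \Bar_\phi(B)\otimes\Omega_n$.

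The next step is to show that $\iota$ is a Dwyer--Kan equivalence. On mapping spaces, the induced map
\[
\iota\colon \Map^\Delta_\PP(A,B) \longrightarrow \Map^\infty_\PP(A,B)
\]
can be identified, via the bar-cobar adjunction, with the canonical comparison $\Map^\Delta_\PP(A,B)\to \Map^\Delta_\PP(\Omega_\phi\Bar_\phi(A), B)$ given by precomposition with the counit $\epsilon_A\colon \Omega_\phi\Bar_\phi(A)\to A$; the latter is a quasi-isomorphism between cofibrant $\PP$-algebras by Lemma \ref{lem:algbarresol}, and since every $\PP$-algebra is fibrant in the projective model structure, this precomposition is a weak equivalence of simplicial sets. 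For essential surjectivity on homotopy categories, note that every $A\in \scat{Alg}_\PP^\infty$ is quasi-isomorphic to its bar-cobar resolution $\Omega_\phi\Bar_\phi(A)$, which is a cofibrant $\PP$-algebra and hence an object of $\scat{Alg}_\PP^\circ$.

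Combining these facts with Lemma \ref{lem:simpcatofalgebras}, it then follows that the homotopy coherent nerve of $\scat{Alg}_\PP^\infty$ models the $\infty$-category of $\PP$-algebras. The main technical input is the identification of the action of $\iota$ on mapping spaces with precomposition by $\epsilon_A$, which is a routine computation using the bar-cobar adjunction associated to the twisting morphism $\phi\otimes \Omega_n\colon \CC\otimes\Omega_n\drt \PP\otimes\Omega_n$ together with the canonical isomorphisms $\Bar_{\phi\otimes \Omega_n}(X\otimes\Omega_n)\cong\Bar_\phi(X)\otimes\Omega_n$ and $\Omega_{\phi\otimes \Omega_n}(Y\otimes\Omega_n)\cong\Omega_\phi(Y)\otimes\Omega_n$; one must further verify that this identification is natural in the simplicial direction, so that $\iota$ genuinely defines a simplicial functor.
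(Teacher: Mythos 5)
Your argument is correct in substance but organized differently from the paper's. The paper constructs \emph{two} simplicial functors, $j\colon \scat{Alg}_{\PP}^{\circ}\rt \scat{Alg}_{\PP}^{\infty}$ (inclusion of strict morphisms) and $\Omega_\phi\Bar_\phi\colon \scat{Alg}_{\PP}^{\infty}\rt \scat{Alg}_{\PP}^{\circ}$ (sending an $\infty$-morphism to the induced strict morphism between bar--cobar resolutions), and observes that the natural maps $\Omega_\phi\Bar_\phi(A)\rt A$ and $A\leadsto \Omega_\phi\Bar_\phi(A)$ exhibit these as a homotopy equivalence of simplicial categories. You instead work with the single functor $\iota=j$ and check directly that it is a Dwyer--Kan equivalence, the key point being the adjunction-theoretic identification $\Map^\infty_{\PP}(A,B)\cong \Map^\Delta_{\PP}(\Omega_\phi\Bar_\phi(A),B)$ under which $\iota$ becomes precomposition with the counit $\epsilon_A$. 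That identification is valid (it is the $\Omega_n$-linear bar--cobar adjunction together with $\Omega_{\phi\otimes\Omega_n}(Y\otimes\Omega_n)\cong\Omega_\phi(Y)\otimes\Omega_n$, and it is natural in $n$), and since $\epsilon_A$ is a quasi-isomorphism between cofibrant algebras while every target is fibrant, the map on mapping spaces is a weak equivalence. What your route buys is that the mapping-space comparison reduces to standard model-categorical facts; what the paper's route buys is that essential surjectivity comes for free from the explicit inverse functor.

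The one step you should tighten is essential surjectivity. Saying that $A$ is \emph{quasi-isomorphic} to $\Omega_\phi\Bar_\phi(A)$ is not yet the statement that the two are isomorphic in $\pi_0\big(\scat{Alg}_{\PP}^{\infty}\big)$: the counit gives a morphism $\Omega_\phi\Bar_\phi(A)\rt A$ whose composite with the canonical $\infty$-morphism $A\leadsto\Omega_\phi\Bar_\phi(A)$ is the identity of $\Bar_\phi(A)$ by the triangle identity, but one must still check that the other composite is homotopic to the identity (equivalently, that an $\infty$-quasi-isomorphism is invertible up to homotopy). This is standard and is precisely what the paper's two natural homotopy equivalences supply, but as written your proof asserts it without justification.
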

\begin{proof}
Including the strict morphisms into the $\infty$-morphisms and sending an $\infty$-morphism $A\rightsquigarrow B$ to the strict morphism $\Omega_\phi\Bar_\phi(A)\rt \Omega_\phi\Bar_\phi(B)$ induces simplicially enriched functors
$$\begin{tikzcd}
j\colon \scat{Alg}_{\PP}^{\circ}\arrow[r] & \scat{Alg}_{\PP}^{\infty} & & \Omega_\phi\Bar_\phi\colon \scat{Alg}_{\PP}^{\infty}\arrow[r] & \scat{Alg}_{\PP}^\circ.
\end{tikzcd}$$
The natural homotopy equivalences $\Omega_\phi\Bar_\phi(A)\rt A$ (Lemma \ref{lem:algbarresol}) and $A\rightsquigarrow \Omega_\phi\Bar_\phi(A)$ show that $j$ and $\Omega$ define a homotopy equivalence of simplicial categories.
\end{proof}

\subsection{Simplicial categories of Artin algebras}
We will now specialize to the case where $\PP=\Omega\CC$ arises as the cobar construction of a $\base$-cooperad satisfying suitable finiteness hypotheses:
\begin{assumption}\label{ass:mc}
For the remainder of this section, let us fix a dg-category $\base$ and a $\base$-cooperad $\CC$ which is filtered-cofibrant as a left $\base$-module, and let $\PP=\Omega\CC$ denote its cobar construction. We will assume that $\base$ is concentrated in degrees $[0, N]$, for some $N$, and that $\CC$ satisfies the following conditions:
\begin{enumerate}
\item\label{it:mc1} for all colours $c_1, \dots, c_p\in S$, the left $\base$-module $\CC(c_1, \dots, c_p; -)$ is quasi-projective and finitely generated.
\item\label{it:mc2} each $\CC(p)$ is concentrated in degrees $\leq f(p)$, where $f(p)$ tends to $-\infty$ as the arity $p$ tends to $\infty$.
\end{enumerate}
\end{assumption}
\begin{example}
Let $\CC=\mm{coFree}(E, R)$ be a quadratic cooperad over a field $k$, where $E$ is finite dimensional and in cohomological degrees $\leq 1$. Then $\CC$ satisfies the conditions of Assumption \ref{ass:mc}. In particular, this applies to the quadratic dual cooperads of classical quadratic operads such as $\Com, \As, \Lie$ and $\Perm$, as well as $\mathscr{O}^\mm{sym}$ (Definition \ref{def:op of symops}).
\end{example}
Let us record the following immediate consequences of these assumptions:
\begin{remark}\label{rem:Casinvariant}
The $\base$-operad $\PP=\Omega\CC$ satisfies the conditions of Variant \ref{var:cobarC}. In particular, for degree reasons every Artin $\PP$-algebra $A$ is automatically nilpotent and the inclusion $\mf{D}^\mm{poly}_\phi(A)\subseteq \mf{D}_\phi(A)$ is the \emph{identity}. Theorem \ref{thm:axiomatic} then provides an equivalence of $\infty$-categories between formal moduli problems over $\PP$ and $\CC^\vee$-algebras.
\end{remark}
\begin{remark}\label{rem:Discobar}
There is a canonical map of $\base$-operads $\PP=\Omega\CC\rt \Bar(\CC^\vee)^\vee$. Because each $\CC(p)$ is finitely generated over $\base$, this map identifies $\Bar(\CC^\vee)^\vee$ with the completion $\PP^\wedge$ of $\PP$ at its augmentation ideal. 

Note that a strictly Artin (hence nilpotent) $\PP$-algebra $A$ has a canonical $\PP^\wedge$-algebra structure. Since such $A$ is perfect over $\base$, its linear dual $A^\vee$ has the canonical structure of a $\Bar(\CC^\vee)$-coalgebra. Unravelling the definitions, one then obtains a natural isomorphism of $\CC^\vee$-algebras
$$\begin{tikzcd}
\Omega_{\phi^\dagger}(A^\vee)\arrow[r, "\cong"] & \mf{D}_\phi^\mm{poly}(A)= \mf{D}_\phi(A)
\end{tikzcd}$$
where $\phi^\dagger\colon \Bar(\CC^\vee)\drt \CC^\vee$ is the canonical twisting morphism.
\end{remark}
\begin{definition}[Simplicial category of Artin $\mathscr{P}$-algebras]\label{def:simpcatofsmallalgs}
For $\PP=\Omega\CC$ as in Assumption \ref{ass:mc}, let us define
$$
\scat{Art}_\mathscr{P}^{\infty}\subseteq \scat{Alg}_{\PP}^{\infty}
$$
to be the full simplicial subcategory on the $\PP$-algebras that are strictly Artin (Definition \ref{def:verysmall}).
\end{definition}
\begin{lemma}\label{lem:simpsmall}
In the situation of Assumption \ref{ass:mc}, the $\infty$-category of Artin $\PP$-algebras can be presented by the simplicial category $\scat{Art}_\mathscr{P}^{\infty}$.
\end{lemma}
\begin{proof}
The simplicial category $\scat{Art}_\mathscr{P}^{\infty}$ presents a full subcategory of the $\infty$-category of $\PP$-algebras by Lemma \ref{lem:simpcatofalgebras2}. It presents the subcategory of Artin $\PP$-algebras by Lemma \ref{lem:verysmall}, which applies by Variant \ref{var:cobarC}.
\end{proof}
Assumption \ref{ass:mc} now allows us to give a very simple description of the functor $\mf{D}_\phi$ on the $\infty$-category of Artin $\PP$-algebras:
\begin{lemma}\label{lem:simpD}
In the situation of Assumption \ref{ass:mc}, there is a (strictly) fully faithful functor of simplicial categories
\begin{equation}\label{diag:simpD}\begin{tikzcd}
\mb{D}_\phi \colon \scat{Art}_\mathscr{P}^{\infty}\arrow[r] & \scat{Alg}^\circ_{\CC^\vee};\hspace{4pt} A\arrow[r, mapsto] & \mb{D}_\phi(A)\coloneqq\Omega_{\phi^\dagger}(A^\vee).
\end{tikzcd}\end{equation}
This simplicial functor presents the functor of $\infty$-categories $\mf{D}_\phi\colon \Art_{\PP}\rt \Alg_{\CC^\vee}$ from Section \ref{sec:weakkoszul}.
\end{lemma}
\begin{proof}
Let us start by defining the functor \eqref{diag:simpD} more precisely. By Remark \ref{rem:Discobar}, we have that $\mf{D}_\phi(A)=\mf{D}_\phi^\mm{poly}(A)\cong \Omega_{\phi^\dagger}(A^\vee)$ is cofibrant whenever $A$ is strictly Artin (and hence nilpotent, cf.\ Remark \ref{rem:Casinvariant}). Let us now define the functor $\mb{D}_\phi$ on simplicial sets of morphisms
$$\begin{tikzcd}
\mb{D}_\phi\colon \Map^\infty_{\PP}(A, B)\arrow[r] & \Map^\Delta_{\CC^\vee}\big(\mb{D}_\phi(B), \mb{D}_\phi(A)\big).
\end{tikzcd}$$ 
To this end, recall that an $n$-simplex in $\scat{Art}_\mathscr{P}^{\infty}$ is given by a map of $\CC\otimes\Omega_n$-coalgebras
\begin{equation}\label{diag:coalgformofoomorphisms}\begin{tikzcd}
\Bar_\phi(A)\otimes\Omega_n \arrow[r] & \Bar_\phi(B)\otimes \Omega_n.
\end{tikzcd}\end{equation}
Because $\CC$ satisfies the conditions \ref{it:mc1} and \ref{it:mc2} of Assumption \ref{ass:mc} and because $A$ is perfect over $\base$, we have that $\Bar_\phi(A)\otimes\Omega_n=\CC(A)\otimes\Omega_n$ is quasi-projective and finitely generated as a left $\base\otimes\Omega_n$-module. The $\base\otimes \Omega_n$-linear dual of $\Bar_\phi(A)\otimes\Omega_n$ is then given by
$$
\Hom_{\base\otimes \Omega_n}\big(\Bar_\phi(A), \base\otimes\Omega_n\big) \cong \mb{D}_\phi(A)\otimes \Omega_n.
$$
On (higher) morphisms, we can therefore simply define $\mb{D}_\phi$ to send \eqref{diag:coalgformofoomorphisms} to its $\base\otimes \Omega_n$-linear dual. The resulting map of simplicial sets is an isomorphism, with inverse taking the $\base^{\op}\otimes\Omega_n$-linear dual of a map of $\CC^\vee\otimes\Omega_n$-algebras. We therefore obtain the desired fully faithful functor \eqref{diag:simpD}.

To see that this enriched functor indeed presents the $\infty$-functor $\mf{D}_\phi$ defined in Section \ref{sec:weakkoszul}, consider the following commuting diagram:
$$\begin{tikzcd}
\scat{Art}_\mathscr{P}^{\infty}\arrow[d, "\mb{D}_\phi"{swap}]\arrow[r] & \scat{Alg}_{\PP}^\infty & \Alg_{\PP}^{\dg, \circ}\arrow[l, "\sim"{above}]\arrow[d, "\mf{D}_\phi"]\\
\scat{Alg}_{\CC^\vee}^{\circ, \op}\arrow[r, "\sim"] & \scat{Alg}_{\CC^\vee}^{\op} & \Alg_{\PP}^{\dg}.\arrow[l, "\sim"{above}]
\end{tikzcd}$$
By Lemma \ref{lem:simpsmall}, $\scat{Art}_\mathscr{P}^{\infty}\rt \scat{Alg}^{\infty}_{\PP}$ models the inclusion of the Artin $\PP$-algebras in the $\infty$-category of all $\PP$-algebras. The assertion then follows by noting that the marked arrows become equivalences after localizing at the quasi-isomorphisms. 
\end{proof}

\subsection{Formal moduli problems from the Maurer--Cartan equation}
We will now describe the equivalence provided by Theorem \ref{thm:axiomatic}
$$\begin{tikzcd}
\MC\colon \Alg_{\CC^\vee}\arrow[r, "\sim"] & \FMP_{\PP}
\end{tikzcd}$$
more concretely in terms of simplicial sets of Maurer--Cartan elements, at least for 1-reduced cooperads $\CC$ satisfying the finiteness hypotheses of Assumption \ref{ass:mc}. Let us start with the following observation:
\begin{lemma}\label{lem:tensorislie}
Fix a dg-category $\base$ and a \emph{1-reduced} $\base$-cooperad $\CC$ as in Assumption \ref{ass:mc}. For any twisting morphism $\phi\colon \CC\drt\PP$, there exists a functor
$$\begin{tikzcd}
\Alg^{\dg}_{\CC^\vee}\otimes\Alg^{\dg}_{\PP}\arrow[r] & \Alg_{L_\infty\{-1\}}^{\dg}; \hspace{4pt} (\mf{g}, A)\arrow[r, mapsto] & \mf{g}\otimes_\base A
\end{tikzcd}$$
to the category of shifted $L_\infty$-algebras. When $A$ or $\mf{g}$ is nilpotent, $\mf{g}\otimes_\base A$ is a nilpotent shifted $L_\infty$-algebra.
\end{lemma}
\begin{construction}[Hadamard tensor product of $\base$-operads]\label{con:hadamard}
Let $\base$ be a dg-category with a set of objects $S$. Given a $\base^{\op}$-operad $\PP$ and a $\base$-operad $\QQ$, we can construct a (monochromatic) operad $\PP\otimes_{\mm{H}} \QQ$ over the base field $k$, their (internal) \emph{Hadamard tensor product}\footnote{This construction differs from the (simpler) \emph{exterior} Hadamard tensor product \eqref{eq:exterior hadamard}.}, as follows. For two tuples of objects $\ul{c}=(c_1, \dots, c_p)$ and $\ul{d}=(d_1, \dots, d_p)$ in $\base$, consider the tensor product
$$
\PP(\ul{c})\otimes_{\base} \QQ(\ul{d}) \coloneqq \PP(\ul{c}; -)\otimes_{\base} \QQ(\ul{d}; -) = \Big(\bigoplus_{c_0} \PP(\ul{c}; c_0)\otimes \QQ(\ul{d}; c_0)\Big)\Big/\sim.
$$
Explicitly, the tensor product over $\base$ (see Section \ref{sec:conventions}) is computed as the quotient by relations 
$$
\big(\ul{c}\xrightarrow{\phi} c_0\xrightarrow{\ol{\lambda}} d_0\big)\otimes\big(\ul{d}\xrightarrow{\psi} d_0\big)\sim \big(\ul{c}\xrightarrow{\phi}c_0\big)\otimes\big(\ul{d}\xrightarrow{\psi} d_0\xrightarrow{\lambda} c_0\big)
$$
where $\phi$ and $\psi$ are operations in $\PP$ and $\QQ$, $\lambda$ is an arrow in $\base$ and $\ol{\lambda}$ denotes the corresponding arrow in $\base^{\op}$.
We then define
$$
\big(\PP\otimes_\mm{H} \QQ\big)(p)\coloneqq\Hom_{\base^{\otimes p}\otimes (\base^{\op})^{\otimes p}}\big(\base^{\otimes p}, \PP\otimes_\base \QQ\big)\subseteq \prod_{\ul{c}=(c_1, \dots, c_p)} \PP(\ul{c})\otimes_{\base} \QQ(\ul{c}).
$$
Explicitly, its elements are $S^{\times p}$-tuples of the form
\begin{equation}\label{eq:operationhadamard}
\Big(\sum_{c_0} \ul{c}\xrightarrow{\phi_{\ul{c}}} c_0\otimes \ul{c}\xrightarrow{\psi_{\ul{c}}} c_0\Big)_{\ul{c}\in S^{\times p}}
\end{equation}
such that for every tuple of maps $\lambda_i\colon d_i\rt c_i$ in $\base$
$$
\sum_{c_0}\big(\ul{c}\xrightarrow{\ol{\lambda}_1, \dots,\ol{\lambda}_p}\ul{d}\xrightarrow{\phi_{\ul{d}}} c_0\big)\otimes \ul{d}\xrightarrow{\psi_{\ul{d}}} c_0 =\sum_{c_0} \ul{c}\xrightarrow{\phi_{\ul{c}}} c_0\otimes \big(\ul{d}\xrightarrow{\lambda_1, \dots, \lambda_p}\ul{c}\xrightarrow{\psi_{\ul{c}}}c_0\big)
$$
in the complex $\PP(\ul{c})\otimes_\base \QQ(\ul{d})$. These equations guarantee that $\PP\otimes_{\mm{H}}\QQ$ carries a well-defined operad structure determined by
$$
\sum_{c_0}\big(\ul{c}\xrightarrow{\phi}c_0\otimes \ul{c}\xrightarrow{\psi} c_0\big)\circ_i \sum_{d_0}\big(\ul{d}\xrightarrow{\alpha}d_0\otimes \ul{d}\xrightarrow{\beta} d_0\big) = \sum_{c_0}\sum_{d_0=c_i}\xrightarrow{\phi\circ_i \alpha}\otimes \xrightarrow{\psi\circ_i \beta}.
$$
The operad $\PP\otimes_{\mm{H}} \QQ$ is constructed in order for the following to hold: if $A$ is a $\PP$-algebra and $B$ is a $\QQ$-algebra, then the chain complex $A\otimes_\base B$ is a $\PP\otimes_\mm{H}\QQ$-algebra. Indeed, given $p$ elements in $A\otimes_\base B$ of the form $\sum_{c_i} a_{c_i}\otimes b_{c_i}$ with $a_{c_i}\in A(c_i)$ and $b\in B(c_i)$, the operation \eqref{eq:operationhadamard} sends it to
$$
\sum_{c_0}\sum_{\ul{c}=(c_1, \dots, c_p)} \phi_{\ul{c}}(a_{c_1}, \dots, a_{c_p})\otimes \psi_{\ul{c}}(b_{c_1}, \dots, b_{c_p}).
$$
\end{construction}
\begin{proof}[Proof (of Lemma \ref{lem:tensorislie})]
Note that for any $\base$-cooperad $\CC$ and any $\base$-operad $\PP$, there is a natural inclusion of operads over the ground field $k$
$$\begin{tikzcd}
\CC^\vee\otimes_{\mm{H}}\PP\hspace{2pt}\arrow[r, hookrightarrow] & \Conv(\CC, \PP).
\end{tikzcd}$$
Here $\CC^\vee\otimes_{\mm{H}} \PP$ is the Hadamard tensor product (Construction \ref{con:hadamard}) and $\Conv(\CC, \PP)$ is the convolution operad (Remark \ref{rem:convolutionoperad}). Given an element in $\CC^\vee\otimes_{\mm{H}}\PP$ of the form \eqref{eq:operationhadamard}, with $\phi\in \CC^\vee$ and $\psi\in \PP$, the corresponding map $\CC(p)\rt \PP(p)$ is given by
$$
\CC\ni\big(\ul{c}\xrightarrow{\alpha} c\big)\hspace{3pt} \longmapsto \hspace{3pt} \sum_{c_0}\Big(\ul{c}\xrightarrow{\psi_{\ul{c}}} c_0\xrightarrow{\langle\phi_{\ul{c}}, \alpha\rangle} c\Big).
$$
The arrow $\big<\phi_{\ul{c}}, \alpha\big>$ is the natural value of $\phi_{\ul{c}}\in\CC^\vee$ on $\alpha\in\CC$, cf.\ Equation \eqref{eq:dual sequence}.

When $\CC$ is 1-reduced, the twisting morphism $\phi\colon \CC\drt\PP$ determines a map of operads $L_\infty\{-1\}\rt \Conv(\CC, \PP)$ (Remark \ref{rem:convolutionoperad}). When $\CC$ satisfies the finiteness conditions of Assumption \ref{ass:mc}, this maps factors as
$$\begin{tikzcd}
L_\infty\{-1\}\arrow[r] & \CC^\vee\otimes_\mm{H}\PP\hspace{2pt} \arrow[r, hookrightarrow] & \Conv(\CC, \PP).
\end{tikzcd}$$
Indeed, for every $\ul{c}=(c_1, \dots, c_p)$, Assumption \ref{ass:mc} allows us to pick a finite basis $e_{\ul{c},\alpha}\in\CC(\ul{c}; c_\alpha)$ for the left $\base$-module $\CC(\ul{c};-)$. Unravelling the definitions, one then sees that the generating $p$-ary operation of $L_\infty\{-1\}$ will be sent to the element 
$$
\big(\sum_\alpha e^\vee_{\ul{c},\alpha}\otimes \phi(e_{\ul{c}, \alpha})\big)_{\ul{c}=(c_1, \dots, c_p)} \in \CC^\vee\otimes_\mm{H}\PP.
$$
By Construction \ref{con:hadamard}, $\mf{g}\otimes_{\base} A$ is a $\CC^\vee\otimes_\mm{H}\PP$-algebra and hence an $L_\infty\{-1\}$-algebra by restriction. Furthermore, if $\mf{g}$ is nilpotent, then there are only finitely many composites of $e^\vee_{\ul{c}, \alpha}$ that act nontrivially on $\mf{g}$. Consequently, only finitely many composites of the generating operations in $L_{\infty}\{-1\}$ act nontrivially on $\mf{g}\otimes_\base A$. It follows that $\mf{g}\otimes_\base A$ is a nilpotent $L_\infty\{-1\}$-algebra, and similarly if $A$ is nilpotent.
\end{proof}
Using the shifted $L_\infty$-structure from Lemma \ref{lem:tensorislie}, we can now describe the formal moduli problem associated to a $\CC^\vee$-algebra more precisely as follows:
\begin{theorem}\label{thm:MC functor}
Consider a dg-category $\base$ and a 1-reduced $\base$-cooperad $\CC$ satisfying the conditions from Assumption \ref{ass:mc}, and denote $\PP=\Omega\CC$. For every $\CC^\vee$-algebra $\mf{g}$, there is a simplicially enriched functor
$$\begin{tikzcd}
\MC_\mf{g}\colon \scat{Art}_{\PP}^{\infty}\arrow[r] & \scat{sSet};\hspace{4pt} A\arrow[r, mapsto] & \MC\big(\mf{g}\otimes_\base A\otimes\Omega_\bullet\big)
\end{tikzcd}$$
where $\mf{g}\otimes_\base A$ carries the $L_\infty\{-1\}$-algebra structure from Lemma \ref{lem:tensorislie}. This determines a simplicially enriched functor
$$\begin{tikzcd}
\MC\colon \scat{Alg}_{\CC^\vee}^\Delta\arrow[r] & \Fun\big(\scat{Art}_\mathscr{P}^{\infty}, \scat{sSet}\big)
\end{tikzcd}$$
sending quasi-isomorphisms to pointwise homotopy equivalences. The associated functor between $\infty$-categories presents the fully faithful functor of Theorem \ref{thm:axiomatic}
\begin{equation}\label{diag:mcoofunctor}\begin{tikzcd}
\MC\colon \Alg_{\mf{D}(\PP)} \arrow[r, hookrightarrow] & \Fun\big(\Art_{\PP}, \sS\big),
\end{tikzcd}\end{equation}
whose essential image is the $\infty$-category of formal moduli problems over $\PP$.
\end{theorem}
\begin{example}\label{ex:comyieldsMC}
Let $\CC=\mm{Lie}^\vee\{1\}$ be the shifted coLie cooperad, so that $\Omega\CC=C_\infty$ is a resolution of the commutative operad. Suppose that $\mf{g}$ is a Lie algebra and that $A$ is a \emph{strict} unital Artin dg-algebra, i.e.\ an augmented unital cdga whose augmentation ideal $\mf{m}_A$ is (strictly) finite dimensional and nilpotent. Theorem \ref{thm:MC functor} then shows that
$$
\mm{MC}_\mf{g}(A)=\mm{MC}\big(\mf{m}_A\otimes \mf{g}\otimes\Omega_\bullet\big).
$$
In other words, the value on $A$ of the formal moduli problem associated to $\mf{g}$ by the equivalence of Lurie \cite[Theorem 2.0.2]{DAGX} coincides with the value of the deformation functor considered e.g.\ by Hinich \cite{hinichformal} and Pridham \cite{Pridham}.
In addition, Theorem \ref{thm:MC functor} shows that the \emph{full} FMP associated to $\mf{g}$ can be described similarly, by allowing $A$ to be a strictly Artin \emph{$C_\infty$-algebra}, in which case $\mf{m}_A\otimes\mf{g}$ is an $L_\infty$-algebra.
\end{example}
\begin{proof}
Consider the simplicially enriched functor $\scat{Alg}_{\CC^\vee}\rt \Fun(\scat{Art}_\mathscr{P}^{\infty}, \scat{sSet})$ sending $\mf{g}$ to the enriched functor
$$
A\longmapsto \Map^\Delta_{\CC^\vee}\big(\scat{D}_{\phi}(A), \mf{g}\big).
$$
By Lemma \ref{lem:simpD}, this enriched functor presents the functor of $\infty$-categories \eqref{diag:mcoofunctor} after inverting the weak equivalences. It therefore suffices to identify $\Map^\Delta_{\CC^\vee}\big(\scat{D}_{\phi}(A), \mf{g}\big)$ with a simplicial set of Maurer--Cartan elements. But now recall that
$$
\scat{D}_{\phi}(A)=\Omega_{\phi^\dagger}(A^\vee)
$$
is the cobar construction of the linear dual of $A$, which is a $B(\CC^\vee)$-coalgebra. By the universal property of the cobar construction (Proposition \ref{prop:Tw adjunction}), we then have that
$$
\Map^\Delta_{\CC^\vee}\big(\scat{D}_{\phi}(A), \mf{g}\big) \cong \MC\big(\Hom_{\base^{\op}}(A^\vee, \mf{g}\otimes\Omega_\bullet)\big)
$$
is given by the simplicial set of Maurer--Cartan elements in the convolution $L_\infty\{-1\}$-algebra $\Hom_{\base^{\op}}(A^\vee, \mf{g}\otimes\Omega_\bullet)$ (Remark \ref{rem:convolutionalgebra}). The result now follows from the fact that the maps
$$\begin{tikzcd}
A\otimes_\base\mf{g}\arrow[r] & A^{\vee\vee}\otimes_{\base} \mf{g}\arrow[r] & \Hom_{\base^{\op}}(A^\vee, \mf{g})
\end{tikzcd}$$
are isomorphisms of $L_\infty\{-1\}$-algebras, since $A$ is quasi-free, finitely generated over $\base$ (Remark \ref{rem:smallisperfect}).
\end{proof}
\begin{remark}\label{rem:maurer-cartan different resolution}
Suppose we are in the setting of Theorem \ref{thm:MC functor} and fix a strictly Artin $\PP$-algebra $A$. The above proof shows that the space
$$
\MC_\mf{g}(A)\simeq \Map_{\CC^\vee}(\mf{D}(A), \mf{g}) \simeq \Map_{\CC^\vee}\big(\Omega_{\phi^\dagger}(A^\vee), \mf{g}\big)
$$
can be presented by the simplicial set
$$
\MC\big(\Hom_{\base^{\op}}(A^\vee, \mf{g}_\bullet)\big)\cong \MC\big(A\otimes_{\base} \mf{g}_\bullet\big)
$$
for \emph{any choice} of fibrant simplicial resolution $\mf{g}_\bullet$ of the $\CC^\vee$-algebra $\mf{g}$. The additional feature of the particular choice $\mf{g}_\bullet=\mf{g}\otimes\Omega_\bullet$ is that one obtains a \emph{simplicially enriched} functor in $A$.
\end{remark}
\begin{remark}
Let $\CC$ be a $\base$-cooperad as in Assumption \ref{ass:mc} with contributions in arity $0$ or $1$. Let us denote by $\Omega_{nc}$ the cobar construction of non-coaugmented cooperads, i.e.\ $\Omega_{nc}(\CC)$ is by definition $\Omega(k\oplus \CC)$.
 A twisting morphism $\phi\colon\CC\drt \PP$ then determines an operad map 
$$\begin{tikzcd}
\Omega_{nc}(\cat{coCom}^u)\arrow[r] & \CC^\vee\otimes_{\mm{H}}\PP.
\end{tikzcd}$$
from the non-coaugmented cobar construction on the counital cocommutative cooperad. This operad is freely generated by symmetric operations $l_p$ of degree $1$, with $p\geq 0$; the operation $l_1$ differs from the differential. 

If $A$ is a strictly Artin $\Omega\CC$-algebra, then maps of $\CC^\vee$-algebras $\Omega(A^\vee)\rt \mf{g}$ correspond bijectively to \emph{Maurer--Cartan elements} of the nilpotent $\Omega_{nc}(\cat{coCom}^u)$-algebra $A\otimes_\base \mf{g}$, i.e.\ degree $0$ elements satisfying 
$$
dx+\sum_{p\geq 0}\frac{1}{p!}l_p(x, \dots, x)=0.
$$
One can then repeat the proof of Theorem \ref{thm:MC functor} to show the following: the formal moduli problem associated to a $\CC^\vee$-algebra $\mf{g}$ is represented by the simplicially enriched functor $A\longmapsto \mm{MC}\big(A\otimes_\base \mf{g}\otimes\Omega_\bullet\big)$ on strictly Artin $\Omega\CC$-algebras.
\end{remark}

\section{Relative Koszul duality}\label{sec:relativekoszul}
In this final section, we describe a somewhat simplified case of \emph{quadratic duality} in the setting of $\base$-operads. This was already mentioned in our discussion of operadic deformation problems (Section \ref{sec:operadicdeftheory}) and we will conclude this section by providing the leftover proofs of Theorem \ref{thm:main operad} and Proposition \ref{prop:adjunction Perm Osym} appearing there.

\subsection{Distributive laws and quadratic duality}
Recall that a $k$-operad $\mathscr P$ is said to be quadratic if it admits a presentation of the form $\mathscr P = \mm{Free}_{\cat{Op}}(V) / (R)$, where $V$ is a symmetric sequence of graded (but non-dg) $k$-vector spaces and 
$$
R\subseteq V \circ_{(1)} V = \mm{Free}^{(2)}_{\cat{Op}}(V)
$$
is contained in the subspace spanned by $V$-labeled trees with two vertices \cite[Section 7.1]{LodayVallette2012}. In this section, let us fix a dg-category $\base$ with set of objects $S$ and consider the following generalization of this:
\begin{definition}
A $\base$-operad $\PP$ is called \emph{quadratic} if it admits a quadratic presentation
$$
\PP=Q_{\base}(V, R)\coloneqq \mm{Free}_{\cat{Op}_{\base}}(V) / (R)
$$
where $V$ is a symmetric $\base$-bimodule which is free (not quasi-free) as a left $\base$-module and $R\subseteq V \circ_{(1)} V$.
In this case, its \emph{Koszul dual cooperad} $\mathscr P^\antishriek=Q_{\base}^\mm{co}(V[1], R[2])$ is the conilpotent quadratic $\base$-cooperad cogenerated by $V[1]$ with corelations $R[2]$. We denote its \emph{Koszul dual operad} (relative to $\base$) by $\mathscr P^! \coloneqq (\mathscr P^{\antishriek}\{-1\})^\vee$.
\end{definition}
There is a canonical $\base$-twisting morphism $\mathscr P^\antishriek \drt \mathscr P$ arising from the identity $V\to V[1]$ on (co)generators. Indeed, such a canonical twisting morphism $Q^\mm{co}_S(V[1], R[1])\drt Q_S(V, R)$ exists at the level of $S$-coloured operads \cite[Section 3]{van2003coloured}, and descends to the quotients obtained by taking tensor products relative to $\base$.

\begin{definition}
A quadratic operad $\PP=Q_\base(V, R)$ is said to be a \emph{weakly Koszul operad} (relative to $\base$) if the induced map $\mathscr P^\antishriek \rt \Bar \mathscr P$ is a quasi-isomorphism. In that case, the operad $\mathscr P^!$ is quasi-isomorphic to $\mf{D}_\base(\mathscr P)\{1\}$.
\end{definition}
\begin{observation}\label{obs:relative koszul case}
Suppose that $\base$ is concentrated in degrees $\leq 0$ and that $\PP=Q(V, R)$ is Koszul relative to $\base$ with $V$ concentrated in degrees $\geq 0$ and finitely many arities. Then $\PP$ is a splendid operad and $\FMP_{\PP}\simeq \Alg_{\PP^!}$ (as in Observation \ref{obs:koszul case}).
\end{observation}
A main source of quadratic $\base$-operads arises from distributive laws. If $V$ is an $S$-coloured symmetric sequence, let us say that a \emph{$\base$-law} on $V$ is a map of $S$-coloured symmetric sequences
$$\begin{tikzcd}
\Lambda\colon V\circ \base\arrow[r] & \base\circ V
\end{tikzcd}$$
such that the following diagrams commute
$$\begin{tikzcd}
	V\circ \base \circ \base\arrow[r, "\Lambda\circ 1"]\arrow[d, "V\circ \mu"{swap}] & \base\circ V\circ\base\arrow[r, "1\circ\Lambda"] & \base\circ\base\circ V\arrow[d, "\mu\circ V"] & V\circ k \arrow[r]\arrow[d, "\cong"{swap}] &   V \circ \base \arrow[d, "\Lambda"]\\
	V\circ \base\arrow[rr, "\Lambda"{swap}] & & \base\circ V & k\circ V \arrow[r]& \base \circ V.
\end{tikzcd}$$
Let $\Lambda\colon V\circ \base\rt \base\circ V$ be a $\base$-law on a symmetric sequence. Then $\base\circ V$ has the natural structure of a symmetric $\base$-bimodule (free as a left $\base$-module) via the maps
$$\begin{tikzcd}
\base\circ (\base\circ V)\arrow[r, "\mu\circ 1"] & \base\circ V & & (\base\circ V)\circ \base\arrow[r, "1\circ \Lambda"] & \base\circ \base\circ V\arrow[r, "\mu\circ 1"] & V.
\end{tikzcd}$$
Any symmetric $\base$-bimodule that is free as a left $\base$-module arises in this way. 
If $V=\mathscr{P}$ is an $S$-coloured operad (augmented, as always), then $\base\circ \mathscr{P}$ inherits a $\base$-operad structure via
$$\begin{tikzcd}
(\base\circ\mathscr{P}) \circ_\base (\base\circ\mathscr{P}) \cong \base\circ(\mathscr{P} \circ\mathscr{P}) \arrow[r] &  \base\circ\mathscr{P}
\end{tikzcd}$$
as long as the $\base$-law is a \emph{distributive law} in the sense of \cite[Section 8.6]{LodayVallette2012}, i.e.\ it is also \textit{right distributive}:
$$\begin{tikzcd}[column sep=1.9pc] %augmented case
	\mathscr{P}\circ\mathscr{P}\circ \base \circ \base\arrow[r, "\Lambda\circ 1"]\arrow[d, "\mu\circ 1"{swap}] & \mathscr{P}\circ \base\circ\mathscr{P}\arrow[r, "1\circ\Lambda"] & \base\circ\mathscr{P}\circ\mathscr{P}\arrow[d, "1\circ\mu"] & k\circ \base \arrow[r]\arrow[d, "\cong"{swap}] & \mathscr P \circ \base \arrow[r]\arrow[d, "\Lambda"] & k\circ \base\arrow[d, "\cong"]\\
	\mathscr{P}\circ\base\arrow[rr, "\Lambda"{swap}] & & \base\circ \mathscr{P} &  \base \circ k \arrow[r] & \base\circ \mathscr{P} \arrow[r] & \base \circ k.
\end{tikzcd}$$
Similarly, if $V=\mathscr{C}$ is an $S$-coloured cooperad, then
$$\begin{tikzcd}[column sep=2.3pc]
\base\circ \mathscr{C}\arrow[r, "\base\circ \Delta"] & \base\circ \mathscr{C}\circ\mathscr{C}\cong (\base\circ \mathscr{C})\circ_{\base}(\base\circ \mathscr{C})
\end{tikzcd}$$
endows $\base\circ \mathscr{C}$ with the natural structure of a $\base$-cooperad as long as the $\base$-law is \emph{codistributive}:
$$\begin{tikzcd}[column sep=1.9pc]
	\mathscr{C}\circ\base\arrow[rr, "\Lambda"]\arrow[d] & & \base\circ \mathscr{C}\arrow[d] & k\circ \base \arrow[r]\arrow[d, "\cong"{swap}] & \mathscr C \circ \base \arrow[r]\arrow[d, "\Lambda"] & k\circ \base\arrow[d, "\cong"]\\
	\mathscr{C}\circ\mathscr{C}\circ \base \circ \base\arrow[r, "\Lambda\circ 1"{swap}] & \mathscr{C}\circ \base\circ\mathscr{C}\arrow[r, "1\circ\Lambda"{swap}] & \base\circ\mathscr{C}\circ\mathscr{C} &  \base \circ k \arrow[r] & \base\circ \mathscr{C} \arrow[r] & \base \circ k.
\end{tikzcd}$$
\begin{example}
Let $\Lambda\colon V\circ \base\rt \base\circ V$ be a $\base$-law. This induces a distributive law on the free operad $\mm{Free}_{\cat{Op}_S}(V)$ and a codistributive law on the cofree cooperad $\mm{Cofree}_{\cat{Coop}_S}(V)$. 
\end{example}
\begin{definition}
Let $\phi\colon \CC\drt \mathscr{P}$ be a twisting morphism. We will say that a \emph{$\base$-law} on $\phi$ is the data of:
\begin{itemize}\setlength{\itemsep}{1pt}
\item a codistributive law $\Lambda_{\mathscr{C}}$ on $\mathscr{C}$.% (compatible with the coaugmentation).
\item a distributive law $\Lambda_{\mathscr{P}}$ on $\mathscr{P}$.% (compatible with the augmentation).
\end{itemize}
such that $\phi$ intertwines $\Lambda_{\mathscr{C}}$ and $\Lambda_{\mathscr{P}}$.
\end{definition}
\begin{lemma}
If $\phi\colon \mathscr{C}\drt \mathscr{P}$ is a twisting morphism and $\Lambda$ is a distributive law on $\phi$, then $\base\circ \phi\colon \base\circ\mathscr{C}\drt \base\circ \mathscr{P}$ is a $\base$-twisting morphism (see Construction \ref{constr:k-twisting morphisms}).
\end{lemma}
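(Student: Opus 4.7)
The plan is to verify the two defining conditions of a $\base$-twisting morphism for $\base \circ \phi$: it is a degree $+1$ map of symmetric $\base$-bimodules, and it satisfies the Maurer--Cartan equation in the convolution Lie algebra $\Hom^{\Sigma}_{\base\text{-bimod}}(\base \circ \mathscr{C}, \base \circ \mathscr{P})$ computed with the relative composition product $\circ_{\base}$.

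First I would check $\base$-bilinearity. The left $\base$-action on both $\base \circ \mathscr{C}$ and $\base \circ \mathscr{P}$ is simply multiplication on the leftmost $\base$-factor, so $\base \circ \phi$ is manifestly left $\base$-linear. The right $\base$-action is the one induced by the (co)distributive laws $\Lambda_{\mathscr{C}}$ and $\Lambda_{\mathscr{P}}$, so right $\base$-linearity of $\base \circ \phi$ follows at once from the hypothesis that $\phi$ intertwines $\Lambda_{\mathscr{C}}$ and $\Lambda_{\mathscr{P}}$ at the level of symmetric sequences.

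For the Maurer--Cartan equation, the key step is to identify the $\base$-relative convolution with the ordinary one. The (co)distributive laws produce a natural isomorphism
$$
(\base \circ V) \circ_{\base} (\base \circ W) \;\cong\; \base \circ (V \circ W)
$$
for symmetric sequences $V, W$ carrying compatible $\base$-laws. Under this identification, the $\base$-cooperad coproduct on $\base \circ \mathscr{C}$ agrees with $\base \circ \Delta_{\mathscr{C}}$ and the $\base$-operad composition on $\base \circ \mathscr{P}$ agrees with $\base \circ \mu_{\mathscr{P}}$; this is precisely how those two structures were defined earlier in the subsection. Consequently the $\base$-relative convolution product satisfies $(\base \circ \phi) \star_{\base} (\base \circ \phi) = \base \circ (\phi \star \phi)$, while tautologically $\partial(\base \circ \phi) = \base \circ \partial \phi$. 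Hence the $\base$-relative Maurer--Cartan equation for $\base \circ \phi$ is obtained simply by applying $\base \circ (-)$ to the ordinary Maurer--Cartan equation for $\phi$, which holds by hypothesis.

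The only genuine content is the diagram chase establishing the natural isomorphism above and its compatibility with the (co)composition maps. However, this is the very same chase that was used to set up the $\base$-(co)operad structures on $\base \circ \mathscr{C}$ and $\base \circ \mathscr{P}$ from the (co)distributive laws, so it is routine given the axioms recorded earlier in the subsection; no new input is required.
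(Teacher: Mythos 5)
Your proof is correct and follows essentially the same route as the paper: the paper's argument also reduces the Maurer--Cartan equation to the identity $(\base\circ f)\star(\base\circ g)=\base\circ(f\star g)$ under the isomorphism $(\base\circ V)\circ_{\base}(\base\circ W)\cong \base\circ(V\circ W)$ furnished by the (co)distributive laws. Your write-up just makes explicit the bilinearity check and the diagram chase that the paper leaves as "one can check."
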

\begin{proof}
	This follows from checking that given maps $f,g\colon \mathscr C \rt \mathscr P$ such that $f$ intertwines $\Lambda_{\mathscr{C}}$ and $\Lambda_{\mathscr{P}}$, the equation $(\base \circ f)\star( \base \circ g) = \base \circ (f \star g)$ is satisfied.
\end{proof}

\begin{example}\label{ex:klawbar}
Let $\Lambda_{\mathscr{P}}$ be a distributive law on an augmented $k$-operad $\mathscr{P}$. This extends to a canonical $\base$-law on the universal twisting morphism $\pi\colon \Bar(\mathscr{P})\drt \mathscr{P}$ such that $\base\circ \pi$ is the universal twisting morphism of the $\base$-operad $\base\circ\mathscr{P}$.
In particular, if $\PP$ is splendid, then $\base\circ \PP$ is a splendid $\base$-operad.
\end{example}

The following proposition shows that under good conditions, distributive laws are compatible with Koszul duality.

\begin{proposition}\label{prop:klawquadratic}
Let $\mathscr{P}=\mm{Q}(V, R)$ be an $S$-coloured quadratic operad and consider a $\base$-law $\Lambda\colon V\circ \base \rt \base\circ V$ such that the induced distributive law on $\mm{Free}_{\cat{Op}_S}(V)$ preserves the quadratic relations $R$. In other words, $\Lambda$ induces a distributive law on $\mathscr{P}$. Then:
\begin{enumerate}
\item $\Lambda[1]\colon V[1]\circ \base \rt \base\circ V[1]$ induces a codistributive law on the quadratic dual $\mathscr{P}^\antishriek$.
\item $\Lambda$ and $\Lambda[1]$ together determine a $\base$-law on the twisting morphism $\mathscr{P}^\antishriek\drt \mathscr{P}$.
\item If $\mathscr{P}^\antishriek\drt \mathscr{P}$ is weakly Koszul, then the induced twisting morphism $\base\circ \mathscr{P}^\antishriek\drt \base\circ\mathscr{P}$ is weakly Koszul over $\base$.
\end{enumerate}
\end{proposition}
\begin{proof}
For (1), one can check that $\Lambda[1]$ preserves the quadratic relations. For (2), note that the maps 
$$
\mm{Q}^{\mm{co}}(V[1], R[2])\rt V[1]\rt V\rt \mm{Q}(V, R)
$$
are all compatible with the distributive law by construction. Finally, for (3), the map of cooperads $\phi'\colon \PP^{\antishriek}\rt \Bar(\mathscr{P})$ is compatible with the codistributive law on $\PP^\antishriek$ and that of Example \ref{ex:klawbar} (both ultimately arise from $\Lambda$). Consequently, there is an induced map of $\base$-coalgebras 
$$\begin{tikzcd}
\base\circ \phi'\colon \base\circ \mm{Q}^{\mm{co}}(V[1], R[2])\arrow[r] & \base\circ \Bar(\mathscr{P}).
\end{tikzcd}$$
Since the composition product (over $k$) preserves quasi-isomorphisms, the result follows.
\end{proof}

\subsection{\texorpdfstring{Koszul self-duality of $\mathscr{O}^\mm{sym}$ and proof of Theorem \ref{thm:main operad}}{Proof of Theorem \ref{thm:main operad}}}\label{sec:proof of thm:main operad}
Let us now spell out how the operad $\mathscr{O}^\mm{sym}$ for nonunital symmetric operads, discussed in Section \ref{sec:operadicdeftheory}, fits into the framework from the previous section. To this end, notice that the presentation of the $\mathbb Z_{\geq 0}$-coloured operad of nonsymmetric operads $\mathscr O^{\mm{ns}}$ given in Definition \ref{def:op of nsops} is quadratic and therefore fits the framework of (coloured) Koszul duality:

\begin{proposition}[{\cite[Theorem 4.3]{van2003coloured}}]\label{prop:vdL}
The quadratic $\mathbb{Z}_{\geq 0}$-coloured operad $\mathscr{O}^{\mm{ns}}$ is Koszul, and it is isomorphic to its Koszul dual operad ${\mathscr{O}^{\mm{ns}}}^!=\mathscr{O}^{\mm{ns}}$.
\end{proposition}

A priori a similar result is not expectable for the operad of symmetric operads since the relations for the symmetric group are not quadratic. Dehling and Vallette \cite{dehling2015symmetric} used curved Koszul duality theory to construct an appropriate cofibrant replacement functor over any ring. Crucially, they observed that the symmetric group data could be obtained as a distributive law.

\begin{proposition}[{\cite[Proposition 1.9]{dehling2015symmetric}}]\label{prop:vallettedehling}
There is a $k[\Sigma]$-law on the quadratic data generating $\mathscr{O}^{\mm{ns}}$, such that 
$$
k[\Sigma]\circ \mathscr{O}^{\mm{ns}}\cong \mathscr{O}^{\mm{sym}}.
$$
\end{proposition}
Following the previous section we can therefore interpret $\mathscr{O}^{\mm{sym}}$ as a quadratic $k[\Sigma]$-operad which is Koszul. Using the canonical equivalence $\mm{inv}\colon \Sigma\rt \Sigma^\op$, sending a permutation to its inverse, to identify $k[\Sigma]$-operads with $k[\Sigma^{\op}]$-operads, the Koszul dual of $\mathscr{O}^\mm{sym}$ can be viewed as a $k[\Sigma]$-operad as well.

\begin{corollary}[The operad of symmetric operads is Koszul]\label{cor:OpisKoszul}
The quadratic $k[\Sigma]$-operad $\mathscr{O}^{\mm{sym}}$ is self-dual in the sense that
$$
\mf{D}_{k[\Sigma]}({\mathscr{O}^{\mm{sym}}})\{1\} \simeq \big(\mathscr{O}^{\mm{sym}}\big){}^!\cong \mathscr{O}^{\mm{sym}}.
$$
\end{corollary}
\begin{proof}
Propositions \ref{prop:vdL} and \ref{prop:vallettedehling}, together with Proposition \ref{prop:klawquadratic} imply that $\mc{O}^\mm{sym}$ is a (weakly) Koszul operad with quadratic dual cooperad $k[\Sigma]\circ (\mathscr{O}^{\mm{ns}})^{\text{!`}}$. Since $k[\Sigma]\simeq k[\Sigma^{\op}]$ and using Proposition \ref{prop:vdL} once more, one finds that the Koszul dual of $\mathscr{O}^\mm{sym}$ is $k[\Sigma]\circ (\mathscr{O}^{\mm{ns}}){}^!\cong \mathscr{O}^\mm{sym}$. 
\end{proof}
\begin{proposition}\label{prop:bar of operad is operadic bar}
Consider the $\base$-twisting morphism $\big(\mathscr{O}^{\mm{sym}}\big)^{\antishriek}\rt \mathscr{O}^{\mm{sym}}$ relative to $k[\Sigma]$. The induced bar-cobar adjunction can be identified with the usual bar-cobar adjunction
$$\begin{tikzcd}
\Bar\colon \cat{Op}^{\mm{sym}}\arrow[r, yshift=0.8ex] & \cat{CoOp}^{\mm{sym}}\colon \Omega\arrow[l, yshift=-0.8ex]
\end{tikzcd}$$
between nonunital $k$-operads and $k$-cooperads.
\end{proposition}
\begin{proof}[Proof (sketch)]
Note that coalgebras for the $k[\Sigma]$-cooperad $\big(\mathscr{O}^{\mm{sym}}\big){}^{\antishriek}=k[\Sigma]\circ \big(\mathscr{O}^{\mm{ns}}\big){}^{\text{!`}}$ are symmetric sequences with a (conilpotent) non-counital cooperad structure. The bar construction then takes the cofree $\big(\mathscr{O}^{\mm{sym}}\big){}^{\antishriek}$-coalgebra (in symmetric sequences) with some differential. Unraveling the definitions, this is exactly the cofree cooperad with the bar differential.
\end{proof}

\subsection{\texorpdfstring{Relating operads over $k$ and $k[\Sigma]$ and proof of Proposition \ref{prop:adjunction Perm Osym}}{Proof of Proposition \ref{prop:adjunction Perm Osym}}}\label{sec:adjunctions for Perm and Osym}
In this final section, we will describe a functor $L\colon \cat{Op}_k\rt \cat{Op}_{k[\Sigma]}$ associating to each $k$-operad a $k[\Sigma]$-operad, and show that $L$ preserves Koszul operads (see Proposition \ref{prop:adjunction Perm Osym}). This was already used in Section \ref{sec:operadicdeftheory} to relate permutative algebras and nonunital operads using a map of $k[\Sigma]$-operads $\mathscr{O}^\mm{sym}\rt L(\Perm)$.
It will be convenient to compare operads over $k$ and over $k[\Sigma]$ in two steps, passing by the linear category $\base^{\mm{ns}}$ with non-negative integers as objects and morphisms given by multiples of the identities.

\begin{construction}\label{con:weights}
	Consider the following two adjoint pairs of functors:
	$$\begin{tikzcd}[column sep=2pc]
		\wt\colon \Mod^{\dg}_k\arrow[r, yshift=0.8ex] & \Mod^{\dg}_{\base^{\mm{ns}}}\cocolon \Tot\arrow[l, yshift=-0.8ex] & 
		\wt\colon \cat{BiMod}_k^{\Sigma, \dg}\arrow[r, yshift=0.8ex] & \cat{BiMod}_{\base^{\mm{ns}}}^{\Sigma, \dg}\cocolon \Tot.\arrow[l, yshift=-0.8ex]
	\end{tikzcd}$$
	Here the first adjunction between categories of modules simply sends a $k$-module $M$ to $\mm{wt}(M)(n)=M$ and a $\base^{\mm{ns}}$-module $N$ to $\Tot(N)=\prod_n N(n)$. The second adjunction, at the level of symmetric bimodules, has left adjoint
	$$
	\mm{wt}(\mathscr{M})(n_1, \dots, n_k; n_0)=\left\{\begin{array}{cl} \mathscr{M}(k) & \text{if } n_1+\dots+n_k=n_0+k-1\\
		0 & \text{otherwise}\end{array}\right.
	$$
	and right adjoint 
	$$
	\Tot(\mathscr{N})(k)=\prod_{n_1, \dots, n_k}  \mathscr{N}(n_1, \dots, n_k; n_1+\dots+n_k+1-k).
	$$
	Note that viewing a module as a symmetric bimodule concentrated in arity $0$ does \emph{not} identify the two version of $\wt$ and $\Tot$. All of these functors preserve quasi-isomorphisms.
	
	The explicit description of the relative composition product shows that $\wt$ is a (strong) monoidal functor which is (strongly) compatible with the action of symmetric bimodules on left modules, i.e.
	$$
	\wt(k)=\base^{\mm{ns}}, \qquad \wt(\mathscr{M}\circ_k \mathscr{N})\cong \wt(\mathscr{M})\circ_{\base^{\mm{ns}}}\wt(\mathscr{N}), \qquad \wt(\mathscr{M}\circ_k N)\cong \wt(\mathscr{M})\circ_{\base^{\mm{ns}}}\wt(N)
	$$
	for symmetric sequences $\mathscr{M}$ and $\mathscr{N}$ and a $k$-module $N$. It follows that $\Tot$ is lax monoidal, i.e.\ it preserves operads and algebras over them.
\end{construction}
\begin{remark}\label{rem:graded alg}
	Let $\QQ$ be a $k$-operad. Unraveling the definitions, $\wt(\QQ)$ is a $\base^{\mm{ns}}$-operad whose algebras can be identified with $\mathbb{Z}_{\geq 0}$-graded $\QQ$-algebras, where each $n$-ary operation in $\QQ$ has weight $1-n$. In this case, the adjoint pair $\wt\colon \Alg_{\QQ}\leftrightarrows \Alg_{\wt(\QQ)}\cocolon\Tot$ sends a $\QQ$-algebra $A$ to the $\mathbb{Z}_{\geq 0}$-graded algebra $\wt(A)(p)=A$, and $\Tot(B)=\prod_n B(n)$.
\end{remark}
\begin{construction}\label{con:trivial symmetry}
	Let $\pi\colon k[\Sigma]\rt \base^{\mm{ns}}$ be the evident $k$-linear functor sending $n$ to $n$ and all permutations to the identity. This induces restriction and (co)induction functors at the level of modules, and similarly at the level of symmetric bimodules
	$$\begin{tikzcd}[column sep=2pc]
		\pi_!\colon \cat{BiMod}_{k[\Sigma]}^{\Sigma, \dg}\arrow[r, yshift=0.8ex] & \cat{BiMod}_{\base^{\mm{ns}}}^{\Sigma, \dg}\cocolon \pi^*\arrow[l, yshift=-0.8ex] & \pi^*\colon \cat{BiMod}_{\base^{\mm{ns}}}^{\Sigma, \dg}\arrow[r, yshift=0.8ex] & \cat{BiMod}_{k[\Sigma]}^{\Sigma, \dg}\cocolon \pi_*.\arrow[l, yshift=-0.8ex]
	\end{tikzcd}$$
	Explicitly, $\pi^*\mc{M}(n_1, \dots, n_k; n_0)$ carries the trivial $\Sigma_{n_1}\times\dots \times\Sigma_{n_0}$-action and $\pi_!$ and $\pi_*$ take the coinvariants and invariants with respect to this action.
	
	As is the case for any dg-functor, $\pi_!$ is strong monoidal for the relative composition product (as follows from the explicit description in Section \ref{sec:operads over dg-cat}). Consequently, the fully faithful functor $\pi^*$ is lax monoidal. In fact, the structure maps for the lax monoidal structure on $\pi^*$ have the following additional property: the lax monoidality and lax unitality maps induce isomorphisms
	\begin{equation}\label{diag:almost monoidal}\begin{tikzcd}[row sep=0.1pc]
			\pi^*\mathscr{M}\circ_{k[\Sigma]} \pi^*\mathscr{N}\arrow[r, "\cong"] & \pi^*(\mathscr{M}\circ_{\base^{\mm{ns}}}\mathscr{N}) \\
			\pi^*\mathscr{M}\circ_{k[\Sigma]} \big(k[\Sigma]\oplus \pi^*\mathscr{N}\big)\arrow[r, "\cong"] & \pi^*\mathscr{M}\circ_{k[\Sigma]} \big(\pi^*\base^{\mm{ns}}\oplus \pi^*\mathscr{N}\big).
	\end{tikzcd}\end{equation}
	Indeed, this follows from the explicit description of the relative tensor product and the fact that trivial representations are closed under tensor products.
	
	Now let us write $\cat{X}^{\dg}_{k[\Sigma]}$ for the category of  retract diagrams $k[\Sigma]\rt \cat{M}\rt k[\Sigma]$ of symmetric $k[\Sigma]$-bimodules (and likewise for $\base^{\mm{ns}}$). The pointwise composition product endows this category with a monoidal structure, such that associative algebras are precisely (augmented) $k[\Sigma]$-operads. The adjoint pair $(\pi^*, \pi_*)$ then induces an adjoint pair
	$$\begin{tikzcd}
		\ol{\pi}^*\colon \cat{X}^{\dg}_{\base^{\mm{ns}}}\arrow[r, yshift=0.8ex] & \cat{X}_{k[\Sigma]}^{\dg}\colon \ol{\pi}_*.\arrow[l, yshift=-0.8ex]
	\end{tikzcd}$$
	Explicitly, $\ol{\pi}^*(\base^{\mm{ns}}\oplus \ol{\mathscr{M}})=k[\Sigma]\oplus \pi^*\ol{\mathscr{M}}$ and similarly for $\ol{\pi}_*$. The two natural isomorphisms \eqref{diag:almost monoidal} now imply that $\ol{\pi}^*$ is \emph{strong} monoidal. It follows that there are adjoint pairs
	$$\begin{tikzcd}
		\ol{\pi}^*\colon \cat{Op}_{\base^{\mm{ns}}}\arrow[r, yshift=0.8ex] & \cat{Op}_{k[\Sigma]}\colon \ol{\pi}_*\arrow[l, yshift=-0.8ex] & \pi^*\colon \Alg_{\PP} \arrow[r, yshift=0.8ex] & \cat{Op}_{\ol{\pi}^*\PP}\colon \pi_*\arrow[l, yshift=-0.8ex]
	\end{tikzcd}$$
	between $\base^{\mm{ns}}$-operads and $k[\Sigma]$-operads, and between algebras over a $\base^{\mm{ns}}$-operad $\PP$ and algebras over $\ol{\pi}_*\PP$.
\end{construction}
\begin{remark}\label{rem:algebra with trivial sym}
	Given a $\base^{\mm{ns}}$-operad $\QQ$, an algebra over $\ol{\pi}^*\QQ$ is simply given by a symmetric sequence $\cat{A}$, together with operations $\cat{A}(n_1)\otimes \dots \otimes \cat{A}(n_k)\rt \cat{A}(n_0)$ for each element of $\QQ(n_1, \dots, n_k; n_0)$ that are \emph{invariant} under pre- and postcomposition with some $\sigma\in \Sigma_{n_i}$. This induces a $\PP$-algebra structure on the invariants of the $\Sigma_{n_1}\times \dots \times \Sigma_{n_0}$-action.
\end{remark}
Combining Construction \ref{con:weights} and Construction \ref{con:trivial symmetry} yields a (left adjoint) functor
$$
L=\ol{\pi}^*\circ \wt\colon \cat{Op}_k\rt \cat{Op}_{k[\Sigma]}.
$$
Using this, we prove Proposition \ref{prop:adjunction Perm Osym}:
\begin{proof}[Proof of Proposition \ref{prop:adjunction Perm Osym}]
Since both $\ol{\pi}^*$ and $\wt$ are strong monoidal for the composition product, they preserve quadratic presentations. Furthermore, the explicit formulas in Construction \ref{con:weights} and \ref{con:trivial symmetry} show that they commute with linear duality and preserve all quasi-isomorphisms. It follows that for any Koszul binary quadratic operad $\QQ$, the $k[\Sigma]$-operad $L(\QQ)$ is Koszul and Koszul dual  to $L(\QQ^!)$ (relative to $k[\Sigma]$).

Let us now compare algebras over a $k$-operad $\QQ$ and over $L(\QQ)$. Remarks \ref{rem:graded alg} and \ref{rem:algebra with trivial sym} show that an $L(\QQ)$-algebra is given by a symmetric sequence $\cat{A}$, together with an operation $q\colon \cat{A}(n_1)\otimes \dots\otimes \cat{A}(n_k)\rt \cat{A}(n_1+\dots+n_k-k+1)$ for each $q\in \QQ(k)$ which is invariant under pre- and postcomposition with the symmetric group actions. 
Combining Constructions \ref{con:weights} and \ref{con:trivial symmetry} then shows that for any $k$-operad $\QQ$, there is an adjoint pair
\[\begin{tikzcd}
	L\colon \cat{Alg}_{\mathscr{Q}}^{\dg}\arrow[r, yshift=0.8ex] & \cat{Alg}_{L(\mathscr{Q})}^{\dg}\colon R\arrow[l, yshift=-0.8ex]
\end{tikzcd}\]
as announced in Proposition \ref{prop:adjunction Perm Osym}.
\end{proof}

\appendix

\section{Operadic toolkit}\label{sec:operads}

In this section we introduce the operadic homotopical algebra required for our purposes throughout the text, notably in Section \ref{sec:relativekoszul}. The results from this section hold over a general dg-category $\base$ but many of them are standard when the base $\base$ is the point and similar statements can be found in \cite{LodayVallette2012} or in \cite{lefevre2003infini}. 

We recall that we work over a fixed field $k$, and we denote by $S$ the set of objects of the dg-category $\base$. When we use the term operad (resp.\ cooperad) we always mean \emph{unital augmented operad} (resp.\ counital coaugmented cooperad) unless otherwise explicitly written.

\subsection{Operads over a dg-category}\label{sec:operads over dg-cat}
A  \emph{symmetric $\base$-bimodule} in $S$-coloured symmetric sequences is a family of chain complexes over $k$
$$
V(\ul{c}; c_0)\coloneqq V(c_1, \dots, c_p; c_0), \quad c_i \in S
$$
together with maps
$$
	\base(c_0, d_0)\otimes V(\ul{c}; c_0) \otimes \bigotimes_{i} \base(d_{\phi^{-1}(i)}, c_i)\longrightarrow \base(d_1, \dots, d_p; d_0), \quad d_i \in S$$
for every $\phi\colon \{1, \dots, p\}\rt \{1, \dots, p\}$ which satisfy natural associativity conditions.

\begin{definition}\label{def:symmetricbimodules}
 We denote by
$
\cat{BiMod}^{\Sigma, \dg}_{\base}
$
the category of \emph{symmetric $\base$-bimodules}.
\end{definition}
Note that a symmetric $\base$-bimodule $M$ has an arity $p$ part $M(p)$, which is a $\base$-$\base^{\otimes p}$-bimodule with a $\Sigma_p$-action that is compatible with the right $\base^{\otimes p}$-structure. In arity $1$ this is simply a $\base$-bimodule in the usual sense, i.e.\ a functor $\base\otimes \base^{\op}\rt \cat{Ch}_k$, while a symmetric $\base$-bimodule in arity $0$ is a left $\base$-module $\base\rt \cat{Ch}_k$.

The category $\cat{BiMod}_{\base}^{\Sigma, \dg}$ has a (nonsymmetric) monoidal structure given by the relative composition product $\circ_{\base}$. An element in $M\circ_{\base} N$ can be identified with a tree of height $2$ with root vertex labeled by $\phi\in M(c_1, \dots, c_p; c_0)$ and all other vertices labeled by $\psi_1, \dots, \psi_p\in N$, with $\psi_i$ having an output of color $c_i$, subject to the relation that edges are $\base $ equivariant. In other words, for all $a\in \base (c_i,c'_i)$, 
$$
\text{labeling }(\phi\circ_i a), \psi_1, \dots, \psi_p \sim \text{ labeling }\phi, \psi_1, \dots, a\psi_i, \dots, \psi_q.
$$
\begin{proposition}\label{prop:modelstructures}
The following categories carry model structures, in which the fibrations are the surjections and the weak equivalences are the quasi-isomorphisms:
\begin{enumerate}
\item\label{it:operads} The category of \emph{$\base$-operads}, defined to be the category of augmented unital associative algebras in symmetric $\base$-bimodules
$$
\cat{Op}_{\base}^{\dg}\coloneqq \cat{Alg}^\mm{aug}\big(\cat{BiMod}_{\base}^{\Sigma, \dg}\big).
$$
\item\label{it:modules} For any associative algebra $\mathscr{P}$ in symmetric $\base$-bimodules, the categories of left and right $\mathscr P$-modules
$$
\cat{LMod}_{\mathscr{P}}^{\dg}\coloneqq\cat{LMod}_{\mathscr{P}}\big(\cat{BiMod}_{\base}^{\Sigma, \dg}\big)\qquad\text{and}\qquad \cat{RMod}_{\mathscr{P}}^{\dg}\coloneqq\cat{RMod}_{\mathscr{P}}\big(\cat{BiMod}_{\base}^{\Sigma, \dg}\big).
$$
In particular, the category $\cat{BiMod}_{\base}^{\Sigma, \dg}$ itself.
\item\label{it:algebras} For any $\base$-operad $\mathscr P$, the category of \emph{$\mathscr P$}-algebras, defined to be the category of left $\mathscr P$-modules that are concentrated in arity $0$
$$
\Alg_{\PP}^{\dg}\coloneqq \cat{LMod}_{\mathscr P}\big(\cat{LMod}^{\dg}_\base\big).
$$
\end{enumerate}
\end{proposition}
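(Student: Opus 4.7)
The plan is to establish each of the three model structures by transfer along a free--forgetful adjunction, starting from the projective model structure on chain complexes of $k$-vector spaces (where fibrations are surjections and weak equivalences are quasi-isomorphisms) and propagating it upwards using Quillen's path object argument. Throughout, the characteristic zero assumption ensures that the symmetric group actions do not create obstructions.

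First, I would observe that symmetric $\base$-bimodules can be identified with $\cat{Ch}_k$-enriched functors out of a certain dg-category $\mathcal{E}$ (built from $\base^{\op}$, $\base^{\otimes p}$ and the symmetric groups $\Sigma_p$ for all $p$), so that $\cat{BiMod}^{\Sigma, \dg}_\base$ inherits a cofibrantly generated projective model structure where fibrations and weak equivalences are objectwise (this is the standard projective model structure on dg-functors, and the generating (trivial) cofibrations are obtained from those of $\cat{Ch}_k$ by tensoring with the corepresentable functors). With this in hand, part \ref{it:modules} follows by transfer along the free--forgetful adjunction $\cat{BiMod}^{\Sigma, \dg}_\base\rightleftarrows \cat{LMod}^{\dg}_{\PP}$ given by $M\mapsto \PP\circ_\base M$: the smallness hypotheses of the transfer principle are clear because the forgetful functor preserves filtered colimits, and the path object argument goes through by tensoring a module with the Sullivan--de Rham algebra $\Omega[\Delta^1]$ of polynomial de Rham forms on the interval (it is a commutative dg-algebra quasi-isomorphic to $k$ that retracts onto $k\oplus k$, so $M\otimes \Omega[\Delta^1]$ provides a path object for $M$). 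The analogous argument with $\circ_\base$ replaced by $-\circ_\base \PP$ gives the right module case.

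Next, part \ref{it:operads} is established by the same transfer recipe applied to the free--forgetful adjunction $\cat{BiMod}^{\Sigma, \dg}_\base\rightleftarrows \cat{Op}^{\dg}_\base$. The free $\base$-operad monad is a coproduct indexed by rooted trees whose edges are labelled by objects of $S$; it preserves filtered colimits, and tensor with $\Omega[\Delta^1]$ again supplies a functorial path object because $\Omega[\Delta^1]$ is a commutative algebra, so that $\PP\otimes \Omega[\Delta^1]$ is naturally a $\base$-operad whenever $\PP$ is. This is where we use characteristic zero in an essential way: without it the interaction between the symmetric group actions and the free operad monad would obstruct the acyclicity of relative cell maps, which is precisely the analogue of the Berger--Moerdijk argument for coloured symmetric operads.

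Finally, part \ref{it:algebras} is obtained by specializing part \ref{it:modules} to the arity-zero part: the subcategory $\Alg^{\dg}_\PP\subseteq \cat{LMod}^{\dg}_\PP$ of modules concentrated in arity $0$ is closed under all limits and colimits, and one can equivalently describe the model structure by transferring from $\cat{LMod}^{\dg}_\base$ along the free algebra functor $V\mapsto \PP\circ_\base V$. The only real step here is checking that these two descriptions agree, which is immediate from the fact that both have fibrations and weak equivalences detected by the forgetful functor to $\cat{LMod}^{\dg}_\base$. The main technical obstacle throughout is the path object argument for part \ref{it:operads}, since one must verify that the tensor product $\PP\otimes \Omega[\Delta^1]$ defines a \emph{$\base$-operad} and not merely an operad over the base field; but this follows because $\Omega[\Delta^1]$ is a cdga over $k$ and $\base$ is a $k$-linear dg-category, so the tensor product is compatible with the $\base$-bimodule structure on each arity.
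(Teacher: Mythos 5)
Your proof is correct and is essentially the same argument the paper uses: transfer of the projective model structure along free--forgetful adjunctions, with acyclicity handled by the Hinich-style path object $(-)\otimes\Omega[\Delta^1]$ in characteristic zero. The paper simply packages this by observing that each of the three categories is a category of algebras over an auxiliary coloured dg-operad (e.g.\ $\base$-operads are algebras over an operad coloured by $\coprod_{n\geq 0}S^{\times n-1}$) and then citing Hinich's general transfer theorem, whereas you carry out the transfer explicitly in each case.
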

\begin{proof}
	The proposition follows essentially from the fact that over a field of characteristic zero, algebras over a coloured operad have a canonical model structure \cite{HinichRectification}. For example, \ref{it:operads} the category of augmented unital $\base$-operads can be identified with the category of nonunital operads in symmetric $\base$-bimodules; these are algebras over an operad with set of colours given by $\coprod_{n\geq 0} S^{\times n-1}$ (cf.\ Definition \ref{def:op of symops}). Something similar holds for \ref{it:modules} left and right modules, and for \ref{it:algebras} it suffices to observe that an algebra over a $\base$-operad is simply an algebra over its underlying $S$-coloured operad.
\end{proof}
\begin{example}[Free algebras]\label{ex:free}
Let $\PP$ be an $\base$-operad and $V$ a left $\base$-module. Then the free $\PP$-algebra on $V$ is given by the usual formula
$$
\PP(V)\coloneqq \PP\circ_\base V = \bigoplus_p \PP(p)\otimes_{\Sigma_p\ltimes \base^{\otimes p}} V^{\otimes p}.
$$
\end{example}
A symmetric $\base$-bimodule that is cofibrant for the model structure from Proposition \ref{prop:modelstructures}\ref{it:modules} is (in particular) given in each arity $p$ by a quasi-projective $\base$-$\base^{\otimes p}$-bimodule. For many practical purposes, it will suffice to impose a slightly weaker cofibrancy condition, concerning only the left $\base$-module structure:
\begin{definition}\label{def:cofibrantleftmodule}
A symmetric $\base$-bimodule $M$ is \emph{cofibrant as a left $\base$-module} if for each tuple of objects $c_i\in S$, the left $\base$-module $M(c_1, \dots, c_p; -)$ is cofibrant. This holds in particular if $M$ is cofibrant in the model structure of Proposition \ref{prop:modelstructures}. If $M$ and $N$ are cofibrant as left $\base$-modules, then $M\circ_\base N$ is as well.
\end{definition}
\begin{proposition}\label{prop:cofibrant algebras are cofibrant modules}
Let $\PP$ be a $\base$-operad which is cofibrant as a left $\base$-module. Then the forgetful functor $\Alg^{\dg}_{\PP}\rt \cat{LMod}^{\dg}_{\base}$ preserves cofibrant objects, i.e.\ every cofibrant $\PP$-algebra is also cofibrant as a left $\base$-module.
\end{proposition}
\begin{proof}
This follows from a variation of the argument from \cite[Appendix 5]{bergermoerdijk2003}. Let us consider the following two conditions on a $\PP$-algebra $A$:
\begin{enumerate}
\item for each cofibrant $\base$-module $W$, the $\PP$-algebra coproduct $A\sqcup \PP(W)$ is cofibrant as a $\base$-module.
\item $A$ is cofibrant as a $\base$-module and for each cofibration of $\base$-modules $V\rightarrowtail W$ and a map $V\to A$, the map $A\to A\sqcup_{\PP(V)}\PP(W)$ into the pushout of $\PP$-algebras is a cofibration of $\base$-modules.
\end{enumerate}
Clearly (2) implies (1), and the converse implication holds as well. To see this, we claim that there exists an increasing filtration on $A\sqcup_{\PP(V)}\PP(W)$ whose associated graded is $A\sqcup \PP(W/V)$. Assuming this, condition (1) implies that the cokernel of $i\colon A\hookrightarrow A\sqcup_{\PP(V)}\PP(W)$ admits an increasing filtration whose associated graded is the cokernel of the summand inclusion $A\hookrightarrow A\sqcup \PP(W/V)$. The latter is cofibrant, so that the cokernel of $i$ is cofibrant as well and $i$ is a cofibration of $\base$-modules (cf.\ Remark \ref{rem:filtrations}).

For the desired filtration, we can filter the $\base$-module $W$ by $F_0(W)=V$ and $F_1(W)=W$, put $A$ and $V$ in weight $0$ and compute the pushout $A\sqcup_{\PP(V)}\PP(W)$ in the category of $\PP$-algebras in filtered $\base$-modules. Since forgetting the filtration defines a symmetric monoidal left adjoint functor from filtered $\base$-modules to $\base$-modules, this provides a filtration on $A\sqcup_{\PP(V)}\PP(W)$. Likewise, taking the associated graded is symmetric monoidal, so that the associated graded can be identified with $A\sqcup_{\PP(V)} \PP(\mm{gr}(W))$. Since $\mm{gr}(W)=V\oplus (W/V)$ (with $W/V$ of weight $1$), this coincides with the graded $\PP$-algebra $A\sqcup \PP(W/V)$. 

Now let us say that a $\PP$-algebra $A$ is \emph{adequate} if it satisfies the equivalent conditions (1) and (2). Our goal will be to show that all cofibrant $\PP$-algebras are adequate. To see this, note that if $A$ is adequate and $V'\rt W'$ is a cofibration of $\base$-modules, then $A\sqcup_{\PP(V')}\PP(W')$ is adequate (one easily verifies (1) using condition (2) for $A$). The class of adequate $\PP$-algebras is therefore closed under iterated pushouts along generating cofibrations and under retracts. To conclude that it contains all cofibrant $\PP$-algebras, it remains to verify that the \emph{initial} $\PP$-algebra $\PP(0)$ is adequate, i.e.\ that for any cofibrant left $\base$-module $V$, the free $\PP$-algebra $\PP(V)$ is cofibrant as a left $\base$-module. This follows directly from the formula in Example \ref{ex:free} and the fact that $\PP$ was cofibrant as a left $\base$-module.
\end{proof}

Given a $\base$-operad $\mathscr P$ together with a right module $M$ and a left module $N$, we denote by $M\circ_\mathscr P N$ the coequalizer of $M\circ_\base \mathscr P \circ_\base N \rightrightarrows M\circ_\base N$.

\begin{lemma}\label{lem:circ preserves qi}
Let $\mathscr{P}\in\cat{Op}^{\dg}_{\base}$ and suppose that $M\in \cat{RMod}^{\dg}_{\mathscr{P}}$ and $N\in\cat{LMod}^{\dg}_{\PP}$ are cofibrant. Then the two functors
$$\begin{tikzcd}
M\circ_{\mathscr{P}}(-)\colon\cat{LMod}^{\dg}_{\cat{P}}\arrow[r] & \cat{BiMod}_{\base}^{\Sigma, \dg} & (-)\circ_{\mathscr{P}}N\colon\cat{RMod}^{\dg}_{\cat{P}}\arrow[r] & \cat{BiMod}_{\base}^{\Sigma, \dg}
\end{tikzcd}$$
both preserve quasi-isomorphisms.
\end{lemma}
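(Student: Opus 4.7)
The plan is to prove both statements by a standard cellular induction, reducing to the case of free modules and then using that we work over a field of characteristic zero. I will only spell out the argument for $M\circ_{\mathscr{P}}(-)$; the argument for $(-)\circ_{\mathscr{P}}N$ is entirely symmetric.

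First I would observe that since the functor $M\circ_{\mathscr{P}}(-)$ preserves retracts, filtered colimits, and direct sums of chain complexes, and since filtered colimits and retracts preserve quasi-isomorphisms, it suffices to check the statement on a class of cofibrant right $\mathscr{P}$-modules that generates all cofibrant objects up to these operations. The model structure of Proposition \ref{prop:modelstructures} is obtained from the free-forgetful adjunction $(-)\circ_{\base}\mathscr{P}\dashv \mathrm{forget}$ between symmetric $\base$-bimodules and right $\mathscr{P}$-modules, so every cofibrant right $\mathscr{P}$-module is a retract of a cell complex whose cells are attached along generating cofibrations of the form $(\partial D^n\hookrightarrow D^n)\otimes \Sigma^c_{\ul{c}}\circ_{\base}\mathscr{P}$, where $\Sigma^c_{\ul{c}}$ denotes the free symmetric $\base$-bimodule at the colour tuple $(\ul{c};c)$. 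Consequently, any cofibrant $M$ admits an exhaustive filtration $0=M_{(0)}\subseteq M_{(1)}\subseteq \cdots$ by right $\mathscr{P}$-modules whose successive quotients are of the form $V_\alpha\circ_{\base}\mathscr{P}$ with $V_\alpha$ a \emph{cofibrant} symmetric $\base$-bimodule.

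Next I would reduce the problem to a statement about the composition product over $\base$ alone. For any symmetric $\base$-bimodule $V$, we have the identification $(V\circ_\base \mathscr{P})\circ_{\mathscr{P}} N \cong V\circ_{\base} N$. Since each inclusion $M_{(i)}\hookrightarrow M_{(i+1)}$ sits in a short exact sequence with quotient $V_\alpha\circ_\base \mathscr{P}$, applying $-\circ_{\mathscr{P}}N$ gives short exact sequences of symmetric $\base$-bimodules (the tensor products involved are levelwise direct sums, hence exact). A straightforward spectral sequence / five-lemma argument then shows that $M\circ_{\mathscr{P}}(-)$ preserves quasi-isomorphisms as soon as $V\circ_\base (-)$ does for every cofibrant symmetric $\base$-bimodule $V$.

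The core of the argument is thus to show that, for $V$ a cofibrant symmetric $\base$-bimodule, the functor $V\circ_\base N = \bigoplus_{p\geq 0} V(p)\otimes_{\Sigma_p\ltimes \base^{\otimes p}} N^{\otimes p}$ preserves quasi-isomorphisms in $N$. Here I would exploit characteristic zero: by Maschke's theorem, $k[\Sigma_p]$ is semisimple, so taking $\Sigma_p$-coinvariants is exact; and because $V$ is cofibrant, each $V(p)$ is cofibrant as a right $\base^{\otimes p}$-module (this is what the generating cofibrations ensure), which makes $V(p)\otimes_{\base^{\otimes p}}-$ exact on quasi-isomorphisms. Combining these, $V\circ_\base (-)$ preserves quasi-isomorphisms, which completes the inductive step.

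The main obstacle is making precise the claim that cofibrancy of $V$ as a symmetric $\base$-bimodule is strong enough to ensure the exactness of $V(p)\otimes_{\Sigma_p\ltimes \base^{\otimes p}}-$ for \emph{arbitrary} (not necessarily cofibrant) left $\base$-modules. This is precisely where the characteristic-zero hypothesis is essential: without it, one would need to derive the symmetric-group coinvariants separately, and the whole argument would only furnish homotopy-invariant information after deriving $\circ_{\mathscr{P}}$. Over a field of characteristic zero the two derivations coincide with their underived counterparts on cofibrant $V$, and the argument closes.
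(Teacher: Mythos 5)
Your argument is correct, but it takes a different route from the paper's. The paper resolves $M$ by the two-sided simplicial bar construction $\cdots \rightrightarrows M\circ\PP\circ\PP \rightrightarrows M\circ\PP \rt M$ (with $\circ$ the composition product over $k$), observes that cofibrancy of $M$ makes it quasi-free so that this augmented simplicial object has extra degeneracies without differentials, and then applies $-\circ_{\PP}X\rt -\circ_{\PP}Y$ levelwise, invoking the fact that the plain composition product over $k$ preserves quasi-isomorphisms (the characteristic-zero/Maschke input). You instead run a cellular induction over the cell filtration of $M$, reduce via $(V\circ_\base\PP)\circ_\PP N\cong V\circ_\base N$ to showing $V\circ_\base(-)$ preserves quasi-isomorphisms for cofibrant $V$, and then isolate the same two ingredients: exactness of $\Sigma_p$-coinvariants in characteristic zero and cofibrancy of $V(p)$ over $\base^{\otimes p}$. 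Both reductions are sound; the bar-resolution argument is shorter and treats the two one-sided statements symmetrically in one stroke, while your version is more elementary (no appeal to extra degeneracies) and makes more explicit exactly where the hypotheses enter. Two small points worth tightening: the exactness of $0\rt M_{(i)}\circ_\PP N\rt M_{(i+1)}\circ_\PP N\rt (V_\alpha\circ_\base\PP)\circ_\PP N\rt 0$ is best justified by noting that the cell attachment splits as a map of graded (non-dg) right $\PP$-modules, so the sequence is split exact before adding differentials; and the displayed formula $V\circ_\base N=\bigoplus_p V(p)\otimes_{\Sigma_p\ltimes\base^{\otimes p}}N^{\otimes p}$ is literally valid only for $N$ concentrated in arity $0$ — for a general left $\PP$-module the composition product is a sum over two-level trees — but the Maschke-plus-cofibrancy argument applies verbatim to that larger expression.
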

\begin{proof}
We will only deal with the first functor, the other is similar. Consider the simplicial resolution of $M$ as a right $\PP$-module $M\circ\PP\circ\PP \rightrightarrows M\circ \PP\rightarrow M$, where $\circ$ is the composition product for $S$-coloured symmetric sequences of chain complexes. Since $M$ is cofibrant, it is quasi-free as a right $\PP$-module; in particular, without differentials this augmented simplicial object has extra degeneracies. Taking the relative composition product over $\PP$ with a quasi-isomorphism $X\rt Y$ yields a map of augmented simplicial objects
$$\begin{tikzcd}
M\circ_{\PP} X\arrow[d] & M\circ X\arrow[d, "\sim"] \arrow[l] & M\circ \PP\circ X\arrow[d, "\sim"]\arrow[l, yshift=-0.5ex]\arrow[l, yshift=0.5ex] & \dots\arrow[l, yshift=-1ex]\arrow[l, yshift=1ex]\arrow[l]\\
M\circ_{\PP} Y & M\circ Y \ar[l] & M\circ \PP\circ X\arrow[l, yshift=-0.5ex]\arrow[l, yshift=0.5ex] & \dots\arrow[l, yshift=-1ex]\arrow[l, yshift=1ex]\arrow[l]
\end{tikzcd}$$
Since the composition product $\circ$ preserves quasi-isomorphisms, all marked vertical maps are quasi-isomorphisms. Without differentials, the rows are augmented simplicial objects with (natural) extra degeneracies, so that the above diagram provides a simplicial resolution of the map $M\circ_\PP X\rt M\circ_\PP Y$ and the result follows.
\end{proof}
\begin{remark}\label{rem:derivedcomposition}
Lemma \ref{lem:circ preserves qi} implies that the composition product has a left derived functor, which we will denote by
$$
M\circ^h_{\PP} N
$$
and which can be computed by taking a cofibrant resolution of either $M$ or $N$. A quasi-isomorphism $\PP\rt \QQ$ induces a quasi-isomorphism $M\circ^h_{\PP} N\rt M\circ^h_{\QQ} N$ for any $M\in\cat{RMod}^{\dg}_\QQ$ and $N\in\cat{LMod}^{\dg}_\QQ$.
\end{remark}
\begin{corollary}\label{cor:quasiiso gives equivalence}
Given a map $f\colon \PP\rt \QQ$ in $\cat{Op}^{\dg}_{\base}$, there are Quillen adjunctions
$$\begin{tikzcd}
f_!\colon \Alg^{\dg}_{\PP}\arrow[r, yshift=0.8ex] & \Alg^{\dg}_{\QQ}\arrow[l, yshift=-0.8ex]\colon f^* & f_!\colon \cat{LMod}^{\dg}_{\PP}\arrow[r, yshift=0.8ex] & \cat{LMod}^{\dg}_{\QQ}\arrow[l, yshift=-0.8ex]\colon f^*
\end{tikzcd}$$
given by restriction and induction. When $f$ is a quasi-isomorphism, these are Quillen equivalences.
\end{corollary}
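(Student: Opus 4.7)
The plan is to handle both the Quillen adjunction and the Quillen equivalence parts in turn, reducing everything to the behaviour of the relative composition product already established in Lemma \ref{lem:circ preserves qi} and Remark \ref{rem:derivedcomposition}.

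First, I would check the Quillen adjunction. Both model structures on $\Alg^{\dg}_{\mathscr{P}}$, $\Alg^{\dg}_{\mathscr{Q}}$ (and similarly for modules) have as fibrations the surjections and as weak equivalences the quasi-isomorphisms, by Proposition \ref{prop:modelstructures}. The restriction functor $f^*$ leaves the underlying left $\base$-module of an algebra (or module) unchanged, so it obviously preserves surjections and quasi-isomorphisms. Hence $(f_!,f^*)$ is a Quillen adjunction in both settings.

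Next, assume $f\colon \mathscr{P}\to \mathscr{Q}$ is a quasi-isomorphism. Since every object is fibrant in these model structures, the pair is a Quillen equivalence as soon as the derived unit $M\rightarrow f^*\mathbb{L}f_!M$ is a weak equivalence on every cofibrant $M$. For the left-module case, $f_!M=\mathscr{Q}\circ_{\mathscr{P}}M$, and cofibrancy of $M$ means this computes the derived composition product $\mathscr{Q}\circ^h_{\mathscr{P}}M$ (Lemma \ref{lem:circ preserves qi}). Since $\mathscr{P}$ is cofibrant as a right $\mathscr{P}$-module, $\mathscr{P}\circ^h_{\mathscr{P}}M=M$. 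The unit map factors as
$$
M \;=\; \mathscr{P}\circ^h_{\mathscr{P}}M \;\longrightarrow\; \mathscr{Q}\circ^h_{\mathscr{P}}M \;=\; f^*f_!M,
$$
and this is a quasi-isomorphism by the second part of Remark \ref{rem:derivedcomposition} applied to the quasi-isomorphism $f\colon \mathscr{P}\to\mathscr{Q}$ of right $\mathscr{P}$-modules.

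For the algebra case, I would use the same strategy after reducing to the module statement. A cofibrant $\mathscr{P}$-algebra $A$ is a retract of a cell algebra built by transfinite composition of pushouts along free-algebra maps $\mathscr{P}(V)\to \mathscr{P}(W)$ with $V\to W$ a cofibration of left $\base$-modules; viewed as a left $\mathscr{P}$-module concentrated in arity $0$, $\mathscr{P}(V)$ is exactly the free left $\mathscr{P}$-module $\mathscr{P}\circ V$, so such a cell algebra is also cofibrant as a left $\mathscr{P}$-module. Hence the previous argument applies verbatim, giving a quasi-isomorphism $A=\mathscr{P}\circ^h_{\mathscr{P}}A\rightarrow \mathscr{Q}\circ^h_{\mathscr{P}}A=f^*f_!A$.

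The only mildly delicate point, and the one I would flag as the main obstacle, is the verification that cofibrant $\mathscr{P}$-algebras are cofibrant as left $\mathscr{P}$-modules; this uses that we are in characteristic zero (so that the symmetric group coinvariants in the free-algebra functor are well-behaved on cofibrant $\base$-modules, cf.\ Assumption \ref{ass:cofibrancy}) and the identification of free algebras with free left modules in arity $0$. Once this is in place, the rest of the proof is a direct application of the machinery of Lemma \ref{lem:circ preserves qi} and Remark \ref{rem:derivedcomposition}.
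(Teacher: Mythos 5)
Your proposal is correct and follows essentially the same route as the paper: the Quillen adjunction is immediate because $f^*$ preserves (and detects) surjections and quasi-isomorphisms, and the Quillen equivalence reduces to the quasi-isomorphism $M=\PP\circ^h_{\PP}M\rt \QQ\circ^h_{\PP}M$ for cofibrant $M$, which is exactly the content of Lemma \ref{lem:circ preserves qi} (the paper phrases this via the counit $\QQ\circ_{\PP}M\rt M$, but the computation is the same). Your explicit verification that cofibrant $\PP$-algebras are cofibrant as left $\PP$-modules is a detail the paper leaves implicit, and it is handled correctly via the cell structure.
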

\begin{proof}
The functor $f^*$ clearly preserves (and detects) fibrations and quasi-isomorphisms. When $f$ is a quasi-isomorphism, $(f_!, f^*)$ is a Quillen equivalence because the counit $f_!f^*(M)=\QQ\circ_{\PP} M\rt M$ is a quasi-isomorphism for all cofibrant $M$ by Lemma \ref{lem:circ preserves qi}.
\end{proof}

\subsubsection*{Dualizing}
Given two dg-categories $\base_1$ and $\base_2$, one can take the \emph{exterior Hadamard tensor product}
\begin{equation}\label{eq:exterior hadamard}
\begin{tikzcd}
\cat{BiMod}_{\base_1}^{\Sigma, \dg}\times \cat{BiMod}_{\base_2}^{\Sigma, \dg}\arrow[r] & \cat{BiMod}_{\base_1\otimes \base_2}^{\Sigma, \dg}; \hspace{4pt} (M_1, M_2)\arrow[r, mapsto] & M_1\otimes M_2
\end{tikzcd}\end{equation}
where for any $c_i\in \base_1$ and $d_i\in \base_2$, 
$$
\big(M_1\otimes M_2\big)\big((c_1, d_1), \dots, (c_p, d_p); (c_0, d_0)\big)=M_1(c_1, \dots, c_p; c_0)\otimes M_2(d_1, \dots, d_p; d_0).
$$
From the description of the composition product, one sees that it is compatible with the exterior Hadamard tensor product in the sense that there is a natural morphism
\begin{equation}\label{eq:hadarmardovercomposition}
\begin{tikzcd}
\big(M_1\circ_{\base_1} N_1\big)\otimes \big(M_2\circ_{\base_2} N_2\big)\arrow[r] & \big(M_1\otimes M_2\big)\circ_{\base_1\otimes \base_2} \big(N_1\otimes N_2\big).
\end{tikzcd}\end{equation}
An element in the domain can be represented by a tensor product of two trees of height two, with vertices labeled by $M_1$ and $N_1$, resp.\ by $M_2$ and $N_2$. Such a tensor product is sent to zero if the two trees are different and if the trees are the same, one labels its vertices by the corresponding elements in $M_1\otimes M_2$ and $N_1\otimes N_2$.

The exterior Hadamard tensor product preserves colimits in both of its variables. It follows that there are functors
$$\begin{tikzcd}[row sep=0pc]
\ul{\Hom}_{\base_1}(-, -)\colon \Big(\cat{BiMod}_{\base_1}^{\Sigma, \dg}\Big)^{\op}\times \cat{BiMod}_{\base_1\otimes \base_2}^{\Sigma, \dg}\arrow[r] & \cat{BiMod}_{\base_2}^{\Sigma, \dg}\\
\ul{\Hom}_{\base_2}(-, -)\colon \Big(\cat{BiMod}_{\base_2}^{\Sigma, \dg}\Big)^{\op}\times \cat{BiMod}_{\base_1\otimes \base_2}^{\Sigma, \dg}\arrow[r] & \cat{BiMod}_{\base_1}^{\Sigma, \dg}
\end{tikzcd}$$
such that for $M_1\in \cat{BiMod}_{\base_1}^{\Sigma, \dg}, M_2\in \cat{BiMod}_{\base_2}^{\Sigma, \dg}$ and $N\in \cat{BiMod}_{\base_1\otimes \base_2}^{\Sigma, \dg}$ there are natural bijections
$$
\Hom\big(M_1, \ul{\Hom}_{\base_2}(M_2, N)\big)\cong \Hom\big(M_1\otimes M_2, N\big)\cong \Hom\big(M_2, \ul{\Hom}_{\base_1}(M_1, N)\big).
$$
We will be interested in applying this to the case where $\base_1=\base$ and $\base_2=\base^\op$ is its opposite.
\begin{definition}\label{def:dual symmetric sequence}
Let $\mm{End}(\base)$ denote the endomorphism operad of $\base$, considered as a left $\base\otimes \base^{\op}$-module. More precisely, $\mm{End}(\base)$ has set of colours $S\times S$ and $p$-ary morphisms $\big((c_1, d_1), \dots, (c_p, d_p)\big)\rt (c_0, d_0)$ given by $k$-linear maps
$$\begin{tikzcd}
\base(c_1, d_1)\otimes \dots \base(c_p, d_p)\arrow[r] & \base(c_0, d_0).
\end{tikzcd}$$
This is a (non-augmented) $\base\otimes \base^{\op}$-operad. We define the \emph{dual} of a symmetric $\base$-bimodule $M$ to be the symmetric $\base^{\op}$-bimodule
$$
M^\vee\coloneqq \ul{\Hom}_{\base}\big(M, \mm{End}(\base)\big).
$$
\end{definition}
Unravelling the definition, one sees that $M^\vee$ is given in arity $p$ by the dual $M^\vee(p)=\Hom_{\base}(M(p), \base)$ with respect to the \emph{left} $\base$-module structure on $M(p)$. The right $\base$-action on $\base$ and the right $\base^{\otimes p}$-action on $M(p)$ endow $M^\vee$ with the structure of a symmetric $\base^{\op}$-module. Explicitly, we have
\begin{equation}\label{eq:dual sequence}
M^\vee(c_1, \dots, c_p; c_0)=\Hom_\base\big(M(c_1, \dots, c_p; -), \base(c_0; -)\big).
\end{equation}
Note that taking duals is only homotopically well-behaved on symmetric $\base$-bimodules that are cofibrant as left $\base$-modules (Definition \ref{def:cofibrantleftmodule}).

\subsubsection*{Cooperads over a dg-category}
\begin{definition}\label{def:cooperad}
	A \emph{$\base$-cooperad} $\CC$ is a coaugmented counital coalgebra in the category $\cat{BiMod}_{\base}^{\Sigma, \dg}$. We will say that $\CC$ is \emph{filtered-cofibrant} as a left $\base$-module if it admits an exhaustive filtration
	$$
	\base=F_0\CC\subseteq F_1\CC\subseteq F_2\CC\subseteq \dots
	$$
	such that $\Delta(F_r\CC)\subseteq \bigoplus_{p+q=r} F_p\CC\circ F_q\CC$ and each $F_r\CC$ is cofibrant as a left $\base$-module. The first condition implies that $\CC$ is conilpotent and the second is equivalent to the associated graded $\mm{gr}(\CC)$ being cofibrant as a left $\base$-module.
\end{definition}
\begin{remark}
Recall that one can always endow a $\base$-cooperad with its coradical filtration, where $F_r\CC=\base\oplus \ker(\ol{\Delta}^r)$. If $\CC(0)=0$ and $\CC(1)=\base$, then $\CC$ is filtered-cofibrant as a left $\base$-module if and only if it is cofibrant as a left $\base$-module, using the filtration by arity.
\end{remark}

\begin{proposition}\label{prop:dual of cooperad is operad}
Let $\CC$ be a $\base$-cooperad and let $\CC^\vee$ be its (left) $\base$-linear dual. Then $\CC^\vee$ has the natural structure of an operad. If $C$ is a $\CC$-coalgebra, then the dual $C^\vee$ has a natural $\CC^\vee$-algebra structure.
\end{proposition}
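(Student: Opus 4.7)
The strategy is to dualize the cooperad structure on $\CC$ using a lax monoidal comparison map for the linear dual. Concretely, I will construct a natural transformation
\[
\lambda_{M,N}\colon M^\vee \circ_{\base^\op} N^\vee \longrightarrow (M \circ_\base N)^\vee
\]
for any two symmetric $\base$-bimodules $M, N$, and then define the operad structure on $\CC^\vee$ as the composite
\[
\CC^\vee \circ_{\base^\op} \CC^\vee \xrightarrow{\lambda_{\CC,\CC}} (\CC \circ_\base \CC)^\vee \xrightarrow{\Delta^\vee} \CC^\vee,
\]
with unit $\base^\op \to \CC^\vee$ obtained by dualizing the counit $\CC \to \base$.

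To construct $\lambda_{M,N}$, I will unravel the formula $M \circ_\base N = \bigoplus_p M(p) \otimes_{\Sigma_p \ltimes \base^{\otimes p}} N^{\otimes p}$. An element of $M^\vee \circ_{\base^\op} N^\vee$ is represented by a tree of height two whose root is labelled by a $\base$-linear map $\phi\colon M(\ul c) \to \base(c_0,-)$ and whose leaves are labelled by maps $\psi_i\colon N(\ul{d_i}) \to \base(c_i,-)$; modulo the $\base^\op$-equivariance relations these assemble, using composition in $\base$, into a $\base$-linear map $M(\ul c)\otimes_{\base^{\otimes p}} N(\ul{d_1})\otimes\cdots\otimes N(\ul{d_p}) \to \base$, i.e.\ an element of $(M\circ_\base N)^\vee$. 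Equivariance of $\phi$ and $\psi_i$ with respect to the right $\base^{\otimes p}$-action ensures that $\lambda_{M,N}$ is well defined modulo the defining relations of the relative composition product. The coherence of this construction (associativity and unitality, naturality in $M,N$) is straightforward bookkeeping from \eqref{eq:hadarmardovercomposition} and the definition of $\mm{End}(\base)$.

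Associativity and unitality of the resulting product on $\CC^\vee$ then follow by dualizing coassociativity and counitality of $\Delta$, together with the (strict) associativity of $\lambda$ expressed as a commuting square relating $\lambda_{M\circ N, P}\circ(\lambda_{M,N}\circ \mm{id})$ to $\lambda_{M, N\circ P}\circ(\mm{id}\circ \lambda_{N,P})$. For the algebra statement, a $\CC$-coalgebra structure on $C$ is a map $\rho\colon C\to \CC\circ_\base C$ satisfying coassociativity and counitality with respect to $\Delta$; since $C$ is concentrated in arity $0$, its dual $C^\vee$ is a left $\base^\op$-module, and we define the $\CC^\vee$-action as
\[
\CC^\vee \circ_{\base^\op} C^\vee \xrightarrow{\lambda_{\CC,C}} (\CC\circ_\base C)^\vee \xrightarrow{\rho^\vee} C^\vee.
\]
Compatibility with the operad structure on $\CC^\vee$ follows by dualizing the coalgebra axioms for $\rho$.

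The only mildly subtle point, which will be the main thing to check carefully, is that $\lambda_{M,N}$ is well defined on the relative composition product and is compatible with the symmetric group actions; this is where the $\base$-linearity (as opposed to merely $k$-linearity) of the duality is essential, and it uses the fact that $\mm{End}(\base)$ is itself canonically a $\base\otimes\base^{\op}$-operad so that pre- and post-composition with morphisms of $\base$ interact correctly. No cofibrancy hypothesis is needed here, because the construction of $\lambda$ is purely algebraic; homotopical content only enters when one wishes $(-)^\vee$ to preserve quasi-isomorphisms, which is where Assumption \ref{ass:cofibrancy} (cofibrancy as a left $\base$-module) is invoked elsewhere in the paper.
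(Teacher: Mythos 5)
Your proposal is correct and follows essentially the same route as the paper: the whole content is the lax monoidal comparison map $\lambda_{M,N}\colon M^\vee\circ_{\base^\op}N^\vee\to (M\circ_\base N)^\vee$, after which the operad and algebra structures are obtained by dualizing $\Delta$ and $\rho$. The only difference is presentational: you build $\lambda$ by hand on tree-labelled representatives and check well-definedness modulo the relations of the relative composition product, whereas the paper obtains the same map as the adjoint of an evaluation composite through the $\base^{\op}\otimes\base$-operad $\mm{End}(\base)$ using the compatibility \eqref{eq:hadarmardovercomposition}, which absorbs the equivariance bookkeeping automatically.
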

\begin{proof}
It suffices to verify that the functor $(-)^\vee$ is lax monoidal, in the sense that there is a natural map $M^\vee\circ_{\base^{\op}} N^\vee\rt (M\circ_\base N)^\vee$. This map is the adjoint of
$$\begin{tikzcd}[row sep=0pc, column sep=1.5pc]
\big(M^\vee\circ_{\base^{\op}} N^\vee\big)\otimes \big(M\circ_\base N\big)\arrow[r, "\eqref{eq:hadarmardovercomposition}"] & \big(M^\vee\otimes M\big)\circ_{\base^{\op}\otimes\base} \big(N^\vee\otimes N\big)\\
\phantom{\big(M^\vee\circ_{\base^{\op}} N^\vee\big)\otimes \big(M\circ_\base N\big)} \arrow[r] & \mm{End}(\base)\circ_{\base^{\op}\otimes\base}\mm{End}(\base)\arrow[r] & \mm{End}(\base),
\end{tikzcd}$$
where the second map arises from the evaluation map $\ul{\Hom}_\base(M, \mm{End}(\base))\otimes M\rt \mm{End}(\base)$ and the last map uses that $\mm{End}(\base)$ is a (non-augmented) $\base^{\op}\otimes\base$-operad.
\end{proof}

\subsection{All we need about bar-cobar for operads}\label{sec:barcobar}

From now on, all $\base$-objects (bimodules, operads) that we consider are assumed to be as in Assumption \ref{ass:cofibrancy}: they are cofibrant as left $\base$-modules, and filtered-cofibrant in the case of cooperads.

\begin{definition}[Bar-cobar constructions, see  \cite{ginzburg1994koszul} and Section 6.5 of \cite{LodayVallette2012}]
		Given a $\base$-operad $\mathscr P$, its \textit{bar construction} $\Bar \mathscr P$ is the $\base$-cooperad constructed as the cofree conilpotent  $\base$-cooperad on the augmentation ideal $\ol{\mathscr{P}}[1]$, i.e.\
	$$
	\Bar\mathscr{P} =T^c_{\base}\big(\ol{\mathscr{P}}[1]\big) = \base\oplus \ol{\mathscr{P}}[1]\oplus \ol{\mathscr{P}}[1]\circ_{\base}\ol{\mathscr{P}}[1]\oplus\dots
	$$
	with an additional bar differential given by contraction of trees along inner edges.
	
	Similarly, given a conilpotent $\base$-cooperad $\mathscr C$, its cobar construction $\Omega\mathscr{C}$ is the free graded $\base$-operad on the coaugmentation coideal $\ol{\mathscr{C}}[-1]$, i.e.\
	$$
	\Omega\mathscr{C} =T_{\base}\big(\ol{\mathscr{C}}[-1]\big) = \base\oplus \ol{\mathscr{C}}[-1]\oplus \ol{\mathscr{C}}[-1]\circ_{\base}\ol{\mathscr{C}}[-1]\oplus\dots
	$$
	with an additional cobar differential given by decomposing trees along inner edges.
\end{definition}

\begin{construction}[$\base$-twisting morphisms]\label{constr:k-twisting morphisms}
Let $M$ and $N$ be symmetric $\base$-bimodules. Their infinitesimal composition product $M\circ_{(1)} N$ is the subobject of $M\circ_{\base} N$ given by trees with 2 vertices, with root vertex labeled by $M$ and the other vertex labeled by $N$. There is a natural retraction
$$
M\circ_{(1)} N\longrightarrow M\circ_{\base} N\longrightarrow M\circ_{(1)} N
$$
where the projection quotients out trees labeled by $M$ and $N$ with more than two vertices.

Let $\mathscr{P}$ be a $\base$-operad and $\mathscr{C}$ a conilpotent $\base$-cooperad. A \emph{twisting morphism} $\phi\colon \CC\drt \PP$ is a map of symmetric $\base$-bimodules of cohomological degree $1$, which vanishes both after composing with the augmentation and coaugmentation map, such that:

\begin{equation}\label{eq:MC on def complexes}
d\phi + \phi\star \phi = 0
\end{equation}
where $\phi\star \phi$ is the composite
$$
\mathscr{C}\longrightarrow \mathscr{C}\circ_{\base} \mathscr{C}\longrightarrow \mathscr{C}\circ_1 \mathscr{C} \xrightarrow{\ \phi\circ_1 \phi\ } \mathscr{P}\circ_1 \mathscr{P}\longrightarrow\mathscr{P}\circ_{\base} \mathscr{P}\longrightarrow \mathscr{P},
$$
and $d$ denotes the commutator of differentials in $\Hom_{\cat{BiMod}_{\base}^{\Sigma, \dg}}(\mathscr C, \mathscr P)$. 
We denote by $\Tw(\mathscr{C}, \mathscr{P})\subset \Hom_{\cat{BiMod}_{\base}^{\Sigma, \dg}}(\mathscr C, \mathscr P)$ the set of twisting morphisms.
\end{construction}
\begin{remark}\label{rem:convolutionoperad}
Similar to \cite[Proposition 6.4.3]{LodayVallette2012}, one checks that the sequence of complexes
$$
\Conv(\CC, \PP)(p)\coloneqq \Hom_{\base\otimes (\base^{\op})^{\otimes p}}\big(\CC(p), \PP(p)\big)
$$
has the structure of an (ordinary) operad in chain complexes, called the \emph{convolution operad}. As in \cite[Proposition 6.4.5]{LodayVallette2012}, it follows that $\big(\Hom_{\cat{BiMod}_{\base}^{\Sigma, \dg}}(\mathscr C, \mathscr P),\star,d\big)$ is a pre-Lie algebra and the twisting morphisms are its Maurer--Cartan elements. If $\CC$ or $\PP$ is 1-reduced, i.e.\ zero in arity $0$ and $\base$ in arity $1$, then \cite[Section 7]{Wierstra} shows that such Maurer--Cartan elements correspond bijectively to maps of operads
$$\begin{tikzcd}
L_\infty\{-1\}\arrow[r] & \Conv(\CC, \PP)
\end{tikzcd}$$
from the operadic suspension of the $L_\infty$-operad: the value on the generating $p$-ary operation $l_p$ of $L_\infty\{-1\}$ is given by $\phi_p\colon \CC(p)\rt \PP(p)$.
\end{remark}
\begin{proposition}\label{prop:Tw adjunction}
Let $\mathscr{C}$ be a conilpotent $\base$-cooperad and $\mathscr{P}$ a $\base$-operad. Then there are natural bijections
$$
\Hom_{\cat{CoOp}^{\dg}_{\base}}\big(\mathscr{C}, \Bar\mathscr{P}\big)\cong \Tw(\mathscr{C}, \mathscr{P})\cong \Hom_{\cat{Op}^{\dg}_{\base}}\big(\Omega\mathscr{C}, \mathscr{P}\big).
$$
\end{proposition}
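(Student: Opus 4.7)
The plan is to establish the two bijections independently by exploiting the universal properties of the free $\base$-operad and the cofree conilpotent $\base$-cooperad, and then translating compatibility with the bar/cobar differential into the Maurer--Cartan equation \eqref{eq:MC on def complexes}. I will concentrate on the cobar side, since the bar side is entirely dual.

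First, without differentials, $\Omega\CC = T_{\base}(\ol{\CC}[-1])$ is the free \emph{graded} $\base$-operad on the symmetric $\base$-bimodule $\ol{\CC}[-1]$. By the universal property of the free $\base$-operad, graded maps of $\base$-operads $\Omega\CC \to \PP$ are in natural bijection with morphisms of symmetric $\base$-bimodules $\ol{\CC}[-1] \to \ol{\PP}$ of cohomological degree $0$, or equivalently with degree $+1$ morphisms $\phi\colon \CC \to \PP$ that vanish on the coaugmentation and whose image lies in the augmentation ideal. This is exactly the underlying data of a twisting morphism.

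Next I would analyze what it means for such a map $\Omega\CC \to \PP$ to commute with the differentials. The differential on $\Omega\CC$ splits as $d = d_1 + d_2$, where $d_1$ is induced by the internal differential of $\CC$ (shifted) and $d_2$ is the cobar differential, determined on generators by the infinitesimal decomposition
$$
\ol{\CC}[-1] \longrightarrow \ol{\CC}[-1]\circ_{(1)} \ol{\CC}[-1] \subset \Omega\CC
$$
coming from the reduced coproduct $\ol{\Delta}_{(1)}$. Because any operad map out of $\Omega\CC$ is determined by its restriction to generators, compatibility with the differential reduces to an identity to be checked on $\ol{\CC}[-1]$: unwinding conventions, it states precisely that $\partial\phi + \phi\star\phi = 0$, which is the Maurer--Cartan equation in the convolution pre-Lie algebra (cf. Remark \ref{rem:convolutionoperad}). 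Here the conilpotence (or rather the fact that the coproduct lands in the infinitesimal part on generators) is what makes the sum defining $\phi\star\phi$ well-defined and finite on each element.

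For the bar side, I would run the dual argument. Since $\CC$ is conilpotent and filtered-cofibrant, any morphism $\CC \to \Bar\PP$ of $\base$-cooperads is determined (via the cofree conilpotent cooperad universal property applied to $\Bar\PP = T^c_{\base}(\ol{\PP}[1])$) by its projection onto cogenerators $\ol{\PP}[1]$, that is, by a degree $+1$ map $\CC \to \PP$ vanishing appropriately. The bar differential on $\Bar\PP$ has an internal part and a part induced by the infinitesimal composition $\ol{\PP}\circ_{(1)}\ol{\PP}\to \ol{\PP}$; projecting the chain map condition onto cogenerators yields again exactly the Maurer--Cartan equation.

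The one point that deserves care, and which I expect to be the main technical hurdle in the $\base$-relative setting, is the universal property of the cofree conilpotent $\base$-cooperad: one needs that for $\CC$ conilpotent and filtered-cofibrant over $\base$, the projection $\mm{Hom}_{\cat{CoOp}^{\dg}_{\base}}(\CC,\Bar\PP) \to \mm{Hom}(\CC,\ol{\PP}[1])$ really is a bijection. This reduces to reconstructing a cooperad map from its corestriction to cogenerators by iterating the coproduct, and the filtered-cofibrancy assumption (Assumption \ref{ass:cofibrancy}) ensures that the resulting infinite sum is well-defined level by level along the conilpotent filtration. Once this universal property is in hand, the remainder of the argument is a formal computation identical in spirit to the absolute case \cite[Ch.~6]{LodayVallette2012}, with the composition product $\circ$ replaced throughout by $\circ_{\base}$.
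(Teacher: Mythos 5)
Your proposal is correct and follows the same route as the paper's (quite terse) proof: the universal property of the free operad identifies degree-compatible operad maps $\Omega\CC\to\PP$ with degree~$1$ maps $\ol{\CC}\to\ol{\PP}$, compatibility with differentials is exactly the Maurer--Cartan equation \eqref{eq:MC on def complexes}, and the bar side is handled by the dual argument on conilpotent cooperads (cf.\ \cite[Theorem 6.5.7]{LodayVallette2012}). Your extra attention to the cofree-conilpotent universal property in the $\base$-relative setting is a reasonable elaboration of what the paper leaves implicit.
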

\begin{proof}
Maps of bimodules $\varphi \colon \mathscr C \to \mathscr D$ which vanish both when composed with the augmentation and coaugmentation map are in one-to-one correspondence with maps of augmented operads from the free operad generated by $\ol{\mathscr C}$ to $\mathscr D$. One can check that the compatibility with the differentials is given exactly by equation \eqref{eq:MC on def complexes}.
A dual argument on the category of conilpotent cooperads shows that $\Hom_{\cat{CoOp}^{\dg}_{\base}}\big(\mathscr{C}, \Bar\mathscr{P}\big)\cong \Tw(\mathscr{C}, \mathscr{P})$, see \cite[Theorem 6.5.7]{LodayVallette2012} for the case $\base=k$.
\end{proof}

\begin{lemma}\label{lem:operadicbarpreservesqisos}
Let $\mathscr{P}\to \mathscr{Q}$ be a quasi-isomorphism between two $\base$-operads which are cofibrant as left $\base$-modules. Then the map $\Bar\mathscr{P}\rt \Bar\mathscr{Q}$ is a quasi-isomorphism of $\base$-cooperads, which are filtered-cofibrant as  left $\base$-modules.
\end{lemma}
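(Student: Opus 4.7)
I would prove both assertions by filtering the bar construction by the number of tensor factors. Define
$$F_r\Bar\mathscr{P} \coloneqq \bigoplus_{k \leq r} \ol{\mathscr{P}}[1]^{\circ_\base k} \subseteq \Bar\mathscr{P}.$$
The bar differential decomposes as the internal differential of $\mathscr{P}$ (preserving $r$) plus the contraction of inner edges via the composition of $\mathscr{P}$ (strictly decreasing $r$). Hence each $F_r\Bar\mathscr{P}$ is a subcomplex, the filtration is exhaustive with $F_0\Bar\mathscr{P}=\base$, and the associated graded $\mm{gr}_r\Bar\mathscr{P}=\ol{\mathscr{P}}[1]^{\circ_\base r}$ carries only the internal differential coming from $\mathscr{P}$. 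The comultiplication compatibility $\Delta(F_r)\subseteq \bigoplus_{p+q=r}F_p\circ_\base F_q$ required in Definition \ref{def:cooperad} is immediate from the cofree cooperad structure of $T^c_\base$.

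For the quasi-isomorphism assertion, since the augmentation $\mathscr{P}\to\base$ is a retraction, the splitting $\mathscr{P}\cong \base\oplus\ol{\mathscr{P}}$ shows that $\ol{\mathscr{P}}\to \ol{\mathscr{Q}}$ is a quasi-isomorphism between symmetric $\base$-bimodules that are cofibrant as left $\base$-modules. Iterating the fact that the relative composition product $\circ_\base$ preserves quasi-isomorphisms between left-$\base$-cofibrant bimodules (the key technical input, addressed below), the induced map $\ol{\mathscr{P}}[1]^{\circ_\base r}\to \ol{\mathscr{Q}}[1]^{\circ_\base r}$ is a quasi-isomorphism for every $r$. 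Applying the five lemma to the short exact sequences $0\to F_{r-1}\to F_r\to \mm{gr}_r\to 0$ and inducting on $r$ gives that $F_r\Bar\mathscr{P}\to F_r\Bar\mathscr{Q}$ is a quasi-isomorphism; since cohomology commutes with the filtered colimit $\Bar\mathscr{P}=\colim_r F_r\Bar\mathscr{P}$, it follows that $\Bar\mathscr{P}\to \Bar\mathscr{Q}$ is a quasi-isomorphism as well.

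For the filtered cofibrancy claim, the remark closing Definition \ref{def:cofibrantleftmodule} states that the composition product of left-$\base$-cofibrant bimodules is again left-$\base$-cofibrant, so inductively each $\ol{\mathscr{P}}[1]^{\circ_\base r}$, and therefore each $F_r\Bar\mathscr{P}$, is cofibrant as a left $\base$-module. Together with the comultiplication compatibility above, this witnesses $\Bar\mathscr{P}$ as filtered-cofibrant as a left $\base$-module, and the same argument applies to $\Bar\mathscr{Q}$.

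The hard part is the technical input used in the second paragraph. In the formula $M\circ_\base N=\bigoplus_p M(p)\otimes_{\Sigma_p\ltimes\base^{\otimes p}}N^{\otimes p}$, the $\Sigma_p$-(co)invariants are exact over a field of characteristic zero, so one reduces to the homotopical behaviour of $\otimes_{\base^{\otimes p}}$. When $N$ is left-$\base$-cofibrant, $N^{\otimes p}$ is left-$\base^{\otimes p}$-cofibrant (in the output colours), and tensoring against any $M(p)$ preserves quasi-isomorphisms; for the variation of $N$ one either adapts the simplicial bar resolution argument of Lemma \ref{lem:circ preserves qi} to the case $\PP=\base$ (where left-$\base$-cofibrancy provides the required extra degeneracies after forgetting differentials), or uses that over a characteristic-zero dg-category a left-$\base$-cofibrant module is a retract of a direct sum of shifts of representables $\base(c,-)$, for which the claim is obvious.
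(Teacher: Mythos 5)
Your proof is correct and follows essentially the same route as the paper: filter the bar construction by word length, identify the associated graded with the cofree construction on $\ol{\mathscr{P}}[1]$ carrying only the internal differential, deduce filtered-cofibrancy from the closure of left-$\base$-cofibrant bimodules under $\circ_\base$, and conclude the quasi-isomorphism from the fact that the composition product preserves quasi-isomorphisms between left-$\base$-cofibrant modules (the paper cites its Lemma \ref{lem:circ preserves qi} for this, which is the simplicial-resolution argument you sketch). The only difference is that you spell out the passage from the associated graded back to the filtered object via the five lemma and the filtered colimit, which the paper leaves implicit.
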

\begin{proof}
Endow both bar constructions with the (exhaustive) filtration by word length in $\ol{\mathscr{P}}$ and $\ol{\mathscr{Q}}$. The map on the associated graded is just the map $T^c(\ol{\mathscr{P}}[1])\rt T^c(\ol{\mathscr{Q}}[1])$. 
When $\PP$ and $\QQ$ are cofibrant as left $\base$-modules, these associated gradeds are cofibrant as left $\base$-modules, so that $\Bar\PP$ and $\Bar\QQ$ are filtered-cofibrant.
Using Lemma \ref{lem:circ preserves qi}, we conclude that the map at the level of the associated graded is a quasi-isomorphism.
\end{proof}
\begin{proposition}\label{prop:counit bar cobar}
Let $\mathscr{P}$ be a $\base$-operad which is cofibrant as a left $\base$-module. Then the counit of the bar-cobar adjunction $\Omega \Bar\mathscr{P}\rt \mathscr{P}$ is a quasi-isomorphism.
\end{proposition}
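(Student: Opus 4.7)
The plan is to adapt the classical argument from \cite[\S6.6]{LodayVallette2012} to the $\base$-linear setting by means of a two-sided bar construction and an extra-degeneracy / contracting-homotopy argument. Since the counit $\epsilon\colon \Omega\Bar\PP \to \PP$ is a map of $\base$-operads, it is enough to verify that it is a quasi-isomorphism at the level of the underlying symmetric $\base$-bimodules; in particular we may forget operad structures when convenient.

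\textbf{Step 1 (Two-sided bar construction).} Introduce the two-sided bar construction
\[
\mm{B}(\PP,\PP,\PP) := \PP\circ_{\base}\Bar\PP\circ_{\base}\PP,
\]
equipped with the twisted differential built from the universal twisting morphism $\pi\colon\Bar\PP\drt\PP$ acting on the outermost generators of each $\PP$-factor. It is a $\PP$-$\PP$-bimodule and admits a natural augmentation $\mm{B}(\PP,\PP,\PP)\to\PP$: collapse $\Bar\PP$ onto its weight-zero part $\base$ and compose the two outer copies of $\PP$. I would first show that this augmentation is a quasi-isomorphism. The complex $\mm{B}(\PP,\PP,\PP)$ is the normalized complex of the simplicial symmetric $\base$-bimodule $[n]\mapsto \PP\circ_{\base}\ol\PP^{\circ n}\circ_{\base}\PP$, and this simplicial object carries extra degeneracies obtained by inserting the unit $\base\to\PP$ on (say) the leftmost factor. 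Hence it admits a contracting homotopy of symmetric $\base$-bimodules, which implies the augmentation is a quasi-isomorphism. Cofibrancy of $\PP$ as a left $\base$-module, invoked through Lemma~\ref{lem:circ preserves qi}, guarantees that each iterated composition product $\PP\circ_{\base}\ol\PP^{\circ n}\circ_{\base}\PP$ computes the correct derived object, so that the contracting homotopy is genuinely realized at the chain level after normalization.

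\textbf{Step 2 (Comparison with $\Omega\Bar\PP$).} There is a natural map of $\base$-operads $\Omega\Bar\PP\to \mm{B}(\PP,\PP,\PP)$ obtained by using the two units $\base\to\PP$ to embed $\Omega\Bar\PP\cong \base\circ_\base\Omega\Bar\PP\circ_\base\base$ into $\mm{B}(\PP,\PP,\PP)$, together with the observation that $\Omega\Bar\PP$ is itself a retract of $\mm B(\PP,\PP,\PP)$ after collapsing the outer $\PP$-factors via the augmentation. One thus gets a factorization $\Omega\Bar\PP\to \mm{B}(\PP,\PP,\PP)\to \PP$ of the counit $\epsilon$, and both maps can be shown to be quasi-isomorphisms. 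For the first, I would filter $\Omega\Bar\PP$ by the total syzygy degree (the number of cobar generators) and $\mm{B}(\PP,\PP,\PP)$ by the total number of $\Bar\PP$-generators, so that on the associated graded the comparison map becomes an inclusion of a free $\base$-sub-bimodule onto a free $\PP$-$\PP$-bimodule on the same generators, with complementary summands shown to be acyclic by the same extra-degeneracy argument. Equivalently (and perhaps more cleanly), one may bypass the intermediate object entirely and filter $\Omega\Bar\PP$ directly by the number of internal edges, identifying the associated graded with an analogous simplicial object that admits its own contracting homotopy.

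\textbf{Main obstacle.} The principal difficulty, compared to the case where $\base=k$, is making sure that all relative composition products $\circ_\base$ behave homotopically correctly. This is precisely the role of Assumption~\ref{ass:cofibrancy} (cofibrancy of $\PP$ as a left $\base$-module, which by Definition~\ref{def:cofibrantleftmodule} is preserved by $\circ_\base$): it ensures that $\circ_\base$ preserves quasi-isomorphisms on the factors we care about (Lemma~\ref{lem:circ preserves qi}), that our filtrations are exhaustive and degreewise cofibrant, and that the contracting homotopies built from the unit of $\PP$ and the extra degeneracies really do trivialise the normalized complex. Once this bookkeeping is carried out, the argument reduces cleanly to the classical extra-degeneracy computation.
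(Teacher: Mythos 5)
Your main route through the two-sided bar construction has a genuine gap at its crucial step. First, the comparison map $\Omega\Bar\PP\rt \mm{B}(\PP,\PP,\PP)=\PP\circ_{\base}\Bar\PP\circ_{\base}\PP$ is never actually constructed, and I do not see a natural candidate: the underlying graded objects have incompatible shapes ($\Omega\Bar\PP$ is the free operad on $\ol{\Bar\PP}[-1]$, i.e.\ trees of trees with a degree shift on each outer vertex, whereas $\mm{B}(\PP,\PP,\PP)$ is a three-level composite with an unshifted copy of $\Bar\PP$ in the middle), and there is no map $\Omega\Bar\PP\rt\Bar\PP$ to collapse onto. The claim that $\Omega\Bar\PP$ is "a retract of $\mm{B}(\PP,\PP,\PP)$ after collapsing the outer $\PP$-factors" cannot be right, since that collapse lands in $\Bar\PP$, not $\Omega\Bar\PP$. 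Second, the identification of $\PP\circ_\base\Bar\PP\circ_\base\PP$ with the normalized chains of the simplicial object $[n]\mapsto \PP\circ_\base\ol\PP^{\circ n}\circ_\base\PP$ is false for operads: the composition product of symmetric sequences is additive only in its left variable, so the levelwise simplicial bar construction does not normalize to the cofree-cooperad (tree-wise) bar construction $T^c_\base(\ol\PP[1])$. The extra-degeneracy argument of Step~1 therefore proves acyclicity of the wrong complex; the acyclicity of the genuine twisted composite $\PP\circ_\pi\Bar\PP$ requires a separate argument (it is essentially Lemma~\ref{lem:twistedcomp}\ref{it:twisted comp is acyclic}), and in any case Step~2 is where the proof breaks.

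Your closing parenthetical — filter $\Omega\Bar\PP$ directly and contract the associated graded — is in fact the paper's proof, but as written it carries none of the content. Concretely: view an element of $\Omega\Bar\PP$ as an outer tree whose vertices are inner trees labelled by $\ol\PP$; filter by the bar weight (the number of inner, i.e.\ $\Bar$-, edges). The bar part of the differential contracts an inner edge and the cobar part converts an inner edge into an outer one, so on the associated graded only the internal differential of $\PP$ and the inner-to-outer piece survive; one then writes down an explicit contracting homotopy that converts an outer edge back into an inner edge, exhibiting the associated graded as a deformation retract of $\PP$. Cofibrancy of $\PP$ as a left $\base$-module enters exactly where you say it does (so the associated graded computes the right thing), but the substance of the proof is the filtration and the explicit homotopy, neither of which your proposal supplies.
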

\begin{proof}
Ignoring degrees, elements of $\Omega \Bar\mathscr{P}$ can be seen as trees whose vertices are themselves (``inner'') trees whose vertices are labeled by $\mathscr P$. Filtering by the number of inner edges (bar word length) and using the cofibrancy of $\mathscr P$ as a left $\base$-module we recover at the level of the associated graded only the piece of the differential corresponding to the one from $\mathscr P$ and a second one making an inner edges into an outer edge. 

One checks that the associated graded retracts into $\mathscr P$ by constructing a homotopy that makes an outer edge into an inner edge.
\end{proof}
\begin{definition}[Twisted composition products]\label{def:twisted comp product} Given a twisting morphism $\phi\colon \CC\drt\PP$, the \emph{twisted composition product} $\CC\circ_\phi \PP$ \cite[Section 6.4.11]{LodayVallette2012} is the symmetric $\base$-bimodule $\CC\circ_\base \PP$, but with differential twisted by the map
$$\begin{tikzcd}[column sep=1.1pc]
\CC\circ_\base\PP\arrow[rr, "\Delta_{(1)}\circ 1"] & & \big(\CC\circ_{(1)}\CC\big)\circ_\base \PP\hspace{2pt}\arrow[r, hookrightarrow] & \CC\circ_\base\CC\circ_\base \PP \arrow[rr, "1\circ\phi\circ 1"] & & \CC\circ_\base\PP\circ_\base\PP\arrow[rr, "1\circ\mu"] & & \CC\circ_\base\PP.
\end{tikzcd}$$
Similarly, the twisted composition product $\PP\circ_\phi \CC$ has differential twisted by
$$\begin{tikzcd}[column sep=1.1pc]
\PP\circ_\base \CC\arrow[rr, "1\circ \Delta"] & & \PP\circ_\base\CC\circ_\base\CC\arrow[rr, "1\circ\phi\circ 1"] & & \PP\circ_\base\PP\circ_\base \CC\arrow[r, two heads] & \big(\PP\circ_{(1)} \PP\big)\circ_\base\CC\arrow[rr, "\mu_{(1)}\circ 1"] & & \PP\circ_\base\CC.
\end{tikzcd}$$
\end{definition}
\begin{example}\label{ex:twisted comp universal}
For the universal twisting morphism $\pi\colon\Bar\PP\drt \PP$, elements of $\Bar\PP\circ_\pi \PP$ can be identified with trees whose vertices are labeled by elements in $\ol{\PP}[1]$, or by elements of $\ol{\PP}$ for (some of the) leaf vertices. The differential then has three parts: (a) applying the differential of $\ol{\PP}$ to vertices, (b) contracting inner edges between $\ol{\PP}[1]$-labeled trees and (c) replacing an $\ol{\PP}[1]$-labeled vertex with only $\PP$-labeled vertices above it by a $\ol{\PP}$-labeled vertex and contracting (at the same time) all inner edges above it.

Similarly, $\Omega\CC\circ_\iota \CC$ consists of trees with vertices labeled by $\ol{\CC}[-1]$, or by $\ol{\CC}$ for (some of the) leaf vertices, with differential having three terms: (a) applying the differential of $\ol{\CC}$, (b) partially decomposing along inner edges between $\ol{\CC}[-1]$-labeled vertices and \hypertarget{it:twistedcompdifferential}{(c)} decomposing a $\ol{\CC}$-labeled leaf vertex into height 2 trees with root vertex labeled by $\ol{\CC}[-1]$.
\end{example}
\begin{lemma}\label{lem:twistedcomp}
Let $\phi\colon \CC\drt\PP$ be a twisting morphism, where $\CC$ and $\PP$ are filtered-cofibrant, resp.\ cofibrant as left $\base$-modules.
\begin{enumerate}
\item\label{it:twisted comp is flat} Let $M\rt N$ be a quasi-isomorphism between left $\PP$-modules that are cofibrant as left $\base$-modules. Then $(\CC\circ_\phi \PP)\circ_{\PP}M\rt (\CC\circ_\phi \PP)\circ_{\PP}N$ is a quasi-isomorphism between filtered-cofibrant left $\base$-modules.

\item\label{it:twisted comp is flat 2} Let $M\rt N$ be a quasi-isomorphism between right $\PP$-modules. Then $M\circ_{\PP}(\PP\circ_\phi \CC)\rt N\circ_{\PP}(\PP\circ_\phi \CC)$ is a quasi-isomorphism.

\item\label{it:twisted comp is acyclic} The maps $\Bar\PP\circ_\pi \PP\rt \base$ and $\Omega\CC\circ_\iota \CC\rt \base$ are quasi-isomorphisms.
\end{enumerate}
\end{lemma}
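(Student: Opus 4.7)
The approach proceeds in three steps. First I would establish a base-change identification reducing parts (1) and (2) to the analysis of twisted composition products involving only the left (resp.\ right) module directly. Then I would run a coradical filtration argument for (1) and (2). Finally, for part (3) I would adapt the classical explicit contracting homotopies to the relative setting over $\base$.

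The starting observation is the canonical identification
\[
(\CC \circ_\phi \PP) \circ_\PP M \cong \CC \circ_{\phi,M} M,
\]
where the right-hand side is $\CC \circ_\base M$ equipped with the differential twisted by
\[
\CC \circ_\base M \xrightarrow{\Delta_{(1)} \circ 1} \CC \circ_\base \CC \circ_\base M \xrightarrow{1 \circ \phi \circ 1} \CC \circ_\base \PP \circ_\base M \xrightarrow{1 \circ \mu_M} \CC \circ_\base M,
\]
with $\mu_M$ the $\PP$-action on $M$. This follows immediately from $\PP \circ_\PP M = M$ and the fact that the twisting part of the differential on $\CC \circ_\phi \PP$ only uses the left factor $\CC$ and the $\PP$-multiplication, which descends to a $\PP$-action on $M$ after composing over $\PP$. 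An analogous identification applies to $M \circ_\PP (\PP \circ_\phi \CC)$, giving (2) from (1) mutatis mutandis.

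For part (1), I would filter $\CC \circ_{\phi,M} M$ by the coradical filtration of $\CC$, setting $F_r := F_r\CC \circ_\base M$. The twisted piece of the differential factors through $\Delta_{(1)}$ and therefore strictly decreases coradical degree, so the associated graded complex is simply $\mm{gr}(\CC) \circ_\base M$ endowed only with the intrinsic $\base$-linear differentials. Since $\mm{gr}(\CC)$ is cofibrant as a left $\base$-module (by the filtered-cofibrancy hypothesis on $\CC$), and since the characteristic-zero assumption makes $\Sigma_p$-coinvariants exact, the functor $\mm{gr}(\CC) \circ_\base (-)$ preserves quasi-isomorphisms between cofibrant left $\base$-modules. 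A standard spectral sequence/colimit argument on the filtration then yields the quasi-isomorphism, and the same filtration exhibits $\CC \circ_\phi M$ as filtered-cofibrant as a left $\base$-module. Part (2) follows by the symmetric argument, using the cofibrancy of $\PP$ as a left $\base$-module.

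For part (3), the plan is to produce explicit $\base$-linear contracting homotopies generalizing \cite[Proposition 6.5.14]{LodayVallette2012}. Using the tree description in Example \ref{ex:twisted comp universal}, an element of $\Bar\PP \circ_\pi \PP$ is a tree whose inner vertices are labelled by $\ol{\PP}[1]$ and whose leaves carry optional $\ol{\PP}$-decorations; the augmentation $\epsilon$ projects onto the subcomplex of trees with only a single $\base$-leaf. The contracting homotopy $h$ grafts a top-most $\ol{\PP}$-decoration as a new $\ol{\PP}[1]$-labelled inner vertex (shifted in degree), and a direct sign-book-keeping calculation, driven by the three terms of the twisted differential, gives $dh + hd = \mm{id} - \eta\epsilon$. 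The case of $\Omega\CC \circ_\iota \CC$ is formally dual: $h$ now demotes a root $\ol{\CC}[-1]$-vertex into a fresh $\ol{\CC}$-leaf. The main obstacle — and essentially the only one — is to verify that these combinatorial formulas descend to well-defined $\base$-linear maps on the relative composition products. This is where the standing cofibrancy assumptions are used: cofibrancy of $\PP$ as a left $\base$-module (resp.\ filtered-cofibrancy of $\CC$) ensures the existence of the $\base$-equivariant splittings required to make the grafting/degrafting operations well-defined after passing from the free (resp.\ cofree) constructions to the actual composition products over $\base$.
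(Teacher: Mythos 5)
Your parts (1) and (2) are essentially the paper's argument: the identification $(\CC\circ_\phi\PP)\circ_\PP M\cong \CC\circ_\base M$ with twisted differential is correct, and the coradical filtration of $\CC$ reduces everything to $\mm{gr}(\CC)\circ_\base(-)$ preserving quasi-isomorphisms (Lemma \ref{lem:circ preserves qi}). One small slip: in part (2) the cofibrancy you need is again that of $\mm{gr}(\CC)$ as a left $\base$-module (the functor being applied is $(-)\circ_\base\mm{gr}(\CC)$, and $M,N$ carry no cofibrancy hypothesis), not the cofibrancy of $\PP$.

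Part (3) has a genuine gap. The naive grafting homotopy does not satisfy $dh+hd=\mm{id}-\eta\epsilon$ on the full complex $\Bar\PP\circ_\pi\PP$, and the difficulty is not $\base$-linearity. Consider the tree $T$ with root vertex labelled by $\mu\in\ol{\PP}[1](2)$ and two topmost $\ol{\PP}$-labelled vertices $a,b$. A $\Sigma$-equivariant grafting homotopy must sum over both topmost positions, $h(T)=\pm\,\mu(a[1],b)\pm\,\mu(a,b[1])$. Applying the differential of Example \ref{ex:twisted comp universal}: part (c) of $d$ returns $T$ from \emph{each} summand (coefficient $2$ rather than $1$), while part (b) contracts the newly created inner edges and produces the extra terms $(\mu\circ_1 a)[1]\,(b)$ and $(\mu\circ_2 b)[1]\,(a)$; meanwhile $hd(T)=\pm(\mu\circ(a,b))[1]$. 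These leftover terms are linearly independent in general and do not cancel, so the homotopy identity fails on the nose. (Choosing a single topmost vertex instead of summing is not $\Sigma$-equivariant and fails for the same reason once there is branching; the classical ``extra degeneracy'' argument works only in the linear, i.e.\ associative-algebra, case.) The same problem occurs for $\Omega\CC\circ_\iota\CC$, where the cocomposition part of the differential interferes with the proposed de-suspension of vertices.

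The missing idea is to filter \emph{before} constructing the homotopy: filter $\Bar\PP\circ_\pi\PP$ by the number of inner edges, and $\Omega\CC\circ_\iota\CC$ by the total filtration induced from the filtration on $\CC$. On the associated graded the edge-contraction part of the differential (resp.\ the cocomposition, which vanishes on $\mm{gr}(\ol{\CC})$) disappears, only the relabelling term (c) survives on topmost vertices, and your grafting/degrafting map is then an honest contracting homotopy. This filtration step is exactly what the paper's proof does, and it is essential rather than bookkeeping; the cofibrancy hypotheses enter to control the associated graded, not to make the homotopy well defined.
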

\begin{proof}
For \ref{it:twisted comp is flat}, filter $\CC\circ_\phi \PP$ using the filtration on $\CC$. The associated graded is $\mm{gr}(\CC)\circ_\base \PP$. The map $(\CC\circ_\phi \PP)\circ_{\PP}M\rt (\CC\circ_\phi \PP)\circ_{\PP}N$ preserves the induced filtrations and is given on the associated graded by $\mm{gr}(\CC)\circ_\base M\rt\mm{gr}(\CC)\circ_\base N$. This is a quasi-isomorphism by Lemma \ref{lem:circ preserves qi}. The same argument applies to \ref{it:twisted comp is flat 2}.

For \ref{it:twisted comp is acyclic}, filter $\Bar\PP\circ_\pi \PP$ by the number of inner edges. On the associated graded, one can then construct a contracting homotopy replacing a $\ol{\PP}$-labeled leaf vertex by a $\ol{\PP}[1]$-labeled leaf vertex. 

Similarly, the filtration on $\CC$ induces a total filtration on $\Omega\CC\circ_\psi \CC$. The associated graded consists of trees with vertices labeled by the associated graded $\mm{gr}(\ol{\CC})[-1]$, or $\mm{gr}(\ol{\CC})$ for (some) leaf vertices. Since the cocomposition vanishes on $\mm{gr}(\ol{\CC})$, the differential has two remaining contributions: (a) the differential on $\mm{gr}(\ol{\CC})$ and its shift and \hyperlink{it:twistedcompdifferential}{(c)} sending a $\mm{gr}(\ol{\CC})$-labeled leaf vertex to the corresponding $\mm{gr}(\ol{\CC})[-1]$-labeled vertex. This has a contracting homotopy by replacing $\mm{gr}(\ol{\CC})[-1]$-labeled leaf vertices by $\mm{gr}(\ol{\CC})$-labeled leaf vertices.
\end{proof}
\begin{corollary}\label{cor:bar of operad as derived comp}
Let $\PP$ be a $\base$-operad which is cofibrant as a left $\base$-module. Then $\Bar\PP\simeq \base\circ^h_\PP\base$. 
\end{corollary}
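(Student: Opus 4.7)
The plan is as follows. The key input is Lemma \ref{lem:twistedcomp}, parts \ref{it:twisted comp is acyclic} and \ref{it:twisted comp is flat}, which say respectively that the augmentation $\Bar\PP\circ_\pi\PP\rto{\sim}\base$ is a quasi-isomorphism of right $\PP$-modules, and that the functor $(\Bar\PP\circ_\pi\PP)\circ_\PP(-)$ preserves quasi-isomorphisms between left $\PP$-modules that are cofibrant as left $\base$-modules. In this sense, $\Bar\PP\circ_\pi\PP$ serves as a flat resolution of $\base$ and is well suited to compute the derived composition product.

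The approach is to compute $\base\circ^h_\PP\base$ by means of this resolution. Choose a cofibrant replacement $\tilde\base\rto{\sim}\base$ of $\base$ as a left $\PP$-module; such $\tilde\base$ is automatically cofibrant as a left $\base$-module. Then $\base\circ^h_\PP\base\simeq \base\circ_\PP\tilde\base$ by Remark \ref{rem:derivedcomposition}, and there is a zig-zag
$$
\base\circ_\PP\tilde\base \;\lto{\sim}\; (\Bar\PP\circ_\pi\PP)\circ_\PP\tilde\base \;\rto{\sim}\; (\Bar\PP\circ_\pi\PP)\circ_\PP\base.
$$
The left arrow is a quasi-isomorphism by Lemma \ref{lem:circ preserves qi}, applied to the quasi-isomorphism $\Bar\PP\circ_\pi\PP\rto{\sim}\base$ of right $\PP$-modules and to the cofibrant left $\PP$-module $\tilde\base$. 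The right arrow is a quasi-isomorphism by Lemma \ref{lem:twistedcomp}\ref{it:twisted comp is flat}, applied to $\tilde\base\rto{\sim}\base$, since both sides are cofibrant as left $\base$-modules.

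It then remains to identify $(\Bar\PP\circ_\pi\PP)\circ_\PP\base$ with $\Bar\PP$ at the chain level. As a graded symmetric $\base$-bimodule, this is $\Bar\PP\circ_\base\PP\circ_\PP\base = \Bar\PP\circ_\base\base = \Bar\PP$. The twisted differential on $\Bar\PP\circ_\pi\PP$ is defined by infinitesimally cocomposing an element of $\Bar\PP$, applying the universal twisting morphism $\pi$ to one factor so as to land in $\ol{\PP}\subset \PP$, and then multiplying into the right-hand copy of $\PP$. After passing to $(-)\circ_\PP\base$, the resulting element of $\PP$ acts on $\base$ via the augmentation $\PP\to\base$, which annihilates $\ol{\PP}$; hence the twisted contribution to the differential vanishes, and what is left is exactly the intrinsic bar differential on $\Bar\PP$. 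The main subtlety is bookkeeping: carefully verifying that the combination of flatness and acyclicity recorded in Lemma \ref{lem:twistedcomp} is strong enough to validate the zig-zag above \emph{without} requiring $\Bar\PP\circ_\pi\PP$ to be genuinely cofibrant as a right $\PP$-module.
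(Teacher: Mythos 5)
Your proof is correct and is essentially the argument the paper intends: the corollary is stated as an immediate consequence of Lemma \ref{lem:twistedcomp}, using $\Bar\PP\circ_\pi\PP\rto{\sim}\base$ as a resolution of $\base$ whose composition with $\circ_\PP$ preserves quasi-isomorphisms (exactly as in Remark \ref{rem:bar of algebra as derived comp}), so that $\base\circ^h_\PP\base\simeq(\Bar\PP\circ_\pi\PP)\circ_\PP\base\cong\Bar\PP$. Your zig-zag through a cofibrant replacement $\tilde\base$ correctly handles the point that $\Bar\PP\circ_\pi\PP$ need not be cofibrant as a right $\PP$-module.
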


\subsection{All we need about bar-cobar for algebras}\label{sec:barcobaralgebras}

Let $\mathscr C$ be a $\base$-cooperad and $\mathscr P$ a $\base$-operad, which are filtered-cofibrant, resp.\ cofibrant, as left $\base$-modules.

\begin{definition}\label{def:koszultwisting}
A twisting morphism $\phi\colon \mathscr{C}\drt \mathscr{P}$ is said to be \emph{Koszul} if $\phi$ induces a quasi-isomorphism $\Omega\mathscr{C}\rt \mathscr{P}$.

We will say that it is \emph{weakly Koszul} if instead the map $\mathscr{C}\rt \Bar\mathscr{P}$ is a quasi-isomorphism. Since the bar construction preserves quasi-isomorphisms (Lemma \ref{lem:operadicbarpreservesqisos}), Koszul morphisms induce a quasi-isomorphism $\Bar \Omega \mathscr C\xrightarrow{\sim} \Bar \mathscr P$ and are therefore weakly Koszul.
\end{definition}
\begin{definition}
Let $\phi\colon \mathscr{C}\drt \mathscr{P}$ be a twisting morphism, $C$ a $\mathscr{C}$-coalgebra (in left $\base$-modules) and $A$ a $\mathscr{P}$-algebra (in left $\base$-modules). A twisting morphism $f\colon C\rt A$ over $\phi$ is a left $\base$-linear map of degree $0$ satisfying
$$
d f+ \phi\circ f =0
$$
where $\phi\circ f: C\rt A$ is given by 
$$
C\rt \mathscr{C}\circ_{\base} C\rto{\phi\circ f} \mathscr{P}\circ_{\base} A \rt A.
$$
We denote by $\Tw_\phi(C, A)$ the set of twisting morphisms over $\phi$.
\end{definition}
\begin{remark}\label{rem:convolutionalgebra}
If $C$ is a conilpotent $\CC$-coalgebra and $A$ is a $\PP$-algebra, then one can check that the complex $\Hom_{\base}(C, A)$ has the structure of an algebra over the convolution operad $\Conv(\CC, \PP)$ of Remark \ref{rem:convolutionoperad} \cite[Proposition 7.1]{Wierstra}. 

If $\CC$ or $\PP$ is 1-reduced, then a twisting morphism $\phi$ determines a map $L_\infty\{-1\}\rt \Conv(\CC, \PP)$, so that $\Hom_{\base}(C, A)$ has a shifted $L_\infty$-structure. As in loc.\ cit.\ the value $l_p(f_1, \dots, f_p)$ of the generating $p$-ary operation $l_p$ in $L_\infty\{-1\}$ is given by
$$\begin{tikzcd}[column sep=2.6pc]
C\arrow[r] & \CC(p)\otimes_{\base^{\otimes p}} C^{\otimes p}\arrow[rrr, "\sum_\sigma \phi(p)\otimes f_{\sigma(1)}\otimes\dots \otimes f_{\sigma(p)}"] & & & \PP(p)\otimes_{\base^{\otimes p}} A^{\otimes p}\arrow[r] & A,
\end{tikzcd}$$ 
where the sum runs over $\sigma\in \Sigma_p$. The twisting morphisms $f\colon C\rt A$ are exactly the degree $1$ elements of this $L_\infty$-algebra satisfying the Maurer--Cartan equation $\sum_n \frac{1}{n!}l_n(f, \dots, f)=0$ \cite[Theorem 7.1]{Wierstra}. Note that the infinite sum becomes finite when evaluated at some $c\in C$, because $C$ is a conilpotent $\CC$-coalgebra.
\end{remark}

\begin{definition}[Bar-cobar construction for algebras]\label{def:bar-cobar algebras}
Given a twisting morphism $\varphi \colon \mathscr C \to \mathscr P$ and $C$ a $\mathscr{C}$-coalgebra, we define the cobar construction $\Omega_\phi C$, to be the free $\mathscr{P}$-algebra on $C$, $\mathscr P \circ_\base C$, with differential given on generators by $d(c) = d_C(c) + \delta(c)$ with $\delta\colon C\rt \mathscr{C}\circ_\base C\rt \mathscr{P}\circ_\base C$.

 Similarly, given $A$ a $\mathscr{P}$-algebra, its bar construction $\Bar_\phi A$ is the cofree $\mathscr{C}$-coalgebra on $A$, $\mathscr C \circ_\base A$, with differential given by $d_A+\delta$ with $\delta$ onto generators given by $\mathscr{C}\circ_\base A\rt \mathscr{P}\circ_\base A\rt A$.
\end{definition}
\begin{remark}\label{rem:bar of algebra as derived comp}
One can also identify using twisted composition products (Definition \ref{def:twisted comp product}) as $\Omega_\phi C\cong \big(\PP\circ_\phi \CC\big)\circ^{\CC} C$ and $\Bar_\phi=\big(\CC\circ_\phi\PP\big)\circ_\PP A$.  In particular, if $\pi\colon \Bar\PP\rt \PP$ is the universal twisting morphism, then Lemma \ref{lem:twistedcomp} shows that for every $\PP$-algebra which is cofibrant as a left $\base$-module,
$$
\Bar_\pi A\cong \big(\Bar\PP\circ_\pi \PP\big)\circ_\PP A \simeq \base\circ_\PP^h A.
$$
\end{remark}

\begin{proposition}\label{prop:cobardjointtobar}
There are natural bijections
$$
\Hom_{\cat{Alg}^{\dg}_{\mathscr{P}}}\big(\Omega_\phi C, A\big) \cong \Tw_\phi(C, A)\cong \Hom_{\cat{CoAlg}^{\dg}_{\mathscr{C}}}\big(C, \Bar_\phi A\big).
$$
\end{proposition}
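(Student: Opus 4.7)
The plan is to establish both bijections by the standard pattern for free/cofree adjunctions, namely: restrict to the generators/cogenerators to obtain a $\base$-linear map, then show that compatibility with the differentials translates exactly into the twisting morphism equation. The two bijections are essentially formal mirror images of one another, so I would carry out one in detail and then indicate the dualization for the other.

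First I would treat the bijection $\Hom_{\cat{Alg}^{\dg}_{\mathscr{P}}}(\Omega_\phi C, A) \cong \Tw_\phi(C, A)$. Forgetting differentials, $\Omega_\phi C = \PP \circ_\base C$ is the free $\PP$-algebra on the left $\base$-module $C$, so morphisms of graded $\PP$-algebras to $A$ correspond bijectively to degree $0$ left $\base$-linear maps $f \colon C \to A$; an extension $\tilde f \colon \PP \circ_\base C \to A$ to a map of graded $\PP$-algebras is obtained by composing $\mathrm{id}_\PP \circ f$ with the action map $\mu_A \colon \PP \circ_\base A \to A$. Next I would unwind what it means for $\tilde f$ to commute with the differentials: on generators the twisted differential on $\Omega_\phi C$ is $d_C + \delta$ with $\delta \colon C \to \CC \circ_\base C \xrightarrow{\phi \circ \mathrm{id}} \PP \circ_\base C$, and demanding $\tilde f \, d = d_A \, \tilde f$ on generators yields $d_A f - f d_C + \tilde f \circ \delta = 0$, which unwinds via the action of $\PP$ on $A$ to precisely $\partial f + \phi \circ f = 0$. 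Since the $\PP$-algebra differential on $\PP \circ_\base C$ is a derivation, compatibility on generators suffices, giving the desired bijection.

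For the other bijection $\Tw_\phi(C, A) \cong \Hom_{\cat{CoAlg}^{\dg}_{\mathscr{C}}}(C, \Bar_\phi A)$, I would dualize. Since $\CC$ is conilpotent, morphisms of graded conilpotent $\CC$-coalgebras $C \to \CC \circ_\base A = \Bar_\phi A$ correspond bijectively to degree $0$ left $\base$-linear maps $f \colon C \to A$ obtained by composing with the projection onto cogenerators; conversely such an $f$ extends uniquely as $(\mathrm{id}_\CC \circ f) \circ \Delta_C$. Compatibility with the twisted differential on $\Bar_\phi A$, which on cogenerators is $d_A + \delta$ with $\delta \colon \CC \circ_\base A \xrightarrow{\phi \circ \mathrm{id}} \PP \circ_\base A \xrightarrow{\mu_A} A$, again translates on the cogenerating component into $\partial f + \phi \circ f = 0$. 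Uniqueness of the coderivation lift of a map to cogenerators ensures that checking the equation on cogenerators is sufficient.

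The only nontrivial point is the bookkeeping for coderivations on the cofree conilpotent $\CC$-coalgebra $\CC \circ_\base A$ relative to $\base$; the key fact I would invoke is that a coderivation into the cofree conilpotent cogenerated coalgebra is determined by its projection onto the cogenerators, and this holds in this relative setting because $\CC$ is filtered-cofibrant (so the coradical / arity filtration splits nicely on the nose at the graded level), making the conilpotent formalism work identically to the $\base = k$ case treated in \cite[Proposition 11.3.1]{LodayVallette2012}. Given these structural facts, the proof is essentially formal; the main bookkeeping subtlety is simply verifying that the sign conventions in $\partial$ and $\phi \circ f$ match on both sides, which I would check once on generators and once on cogenerators.
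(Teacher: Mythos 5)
Your proposal is correct and follows exactly the route the paper takes: the paper's proof simply says the argument is analogous to Proposition \ref{prop:Tw adjunction} (free/cofree adjunction on (co)generators plus the observation that compatibility with the twisted differentials is the equation $\partial f+\phi\circ f=0$), citing \cite[Proposition 11.3.1]{LodayVallette2012}, and you have spelled out precisely that standard argument including the conilpotency point needed on the coalgebra side.
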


\begin{proof}
	The proof is similar to Proposition \ref{prop:Tw adjunction}, see also \cite[Proposition 11.3.1]{LodayVallette2012}.
\end{proof}

\begin{lemma}\label{lem:algbarpreswe}
Let $\phi\colon \mathscr{C}\drt \mathscr{P}$ be a twisting morphism and $A$ a $\mathscr P$-algebra. Then:
\begin{enumerate}
\item $\Bar_\phi$ preserves quasi-isomorphisms between $\mathscr{P}$-algebras that are cofibrant as left $\base$-modules.
\item\label{it:barofcofibrant} If $A$ is cofibrant as a left $\base$-module and $\cat{C}$ is filtered-cofibrant as a left $\base$-module, then $\Bar_\phi(A)$ is filtered-cofibrant as a left $\base$-module.
\item In the setting of \ref{it:barofcofibrant}, $\Omega_\phi \Bar_\phi(A)$ is a cofibrant $\PP$-algebra.
\end{enumerate} 
\end{lemma}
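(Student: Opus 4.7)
The plan is to address the three claims in order, leveraging the identifications of the bar construction in terms of twisted composition products (Remark \ref{rem:bar of algebra as derived comp}) together with the filtration on $\CC$ coming from the filtered-cofibrancy assumption.

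For (1), recall the identification $\Bar_\phi(A)\cong (\CC\circ_\phi \PP)\circ_{\PP} A$ as left $\base$-modules. Viewing a $\PP$-algebra $A$ as a left $\PP$-module concentrated in arity $0$, the first statement reduces directly to Lemma \ref{lem:twistedcomp}\ref{it:twisted comp is flat}: a quasi-isomorphism $A\rt B$ between $\PP$-algebras cofibrant as left $\base$-modules induces a quasi-isomorphism $(\CC\circ_\phi \PP)\circ_{\PP}A\rt (\CC\circ_\phi \PP)\circ_{\PP}B$. Nothing else is required here.

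For (2), note that as a left $\base$-module (ignoring the bar differential), $\Bar_\phi(A)=\CC\circ_{\base}A$. Choose an exhaustive filtration $\base=F_0\CC\subseteq F_1\CC\subseteq\cdots$ witnessing the filtered-cofibrancy of $\CC$, and equip $\Bar_\phi(A)$ with the induced filtration $F_r\Bar_\phi(A):=F_r\CC\circ_{\base}A$. The bar differential $\delta\colon\CC\circ_\base A\rt A\subseteq F_0\Bar_\phi(A)$ preserves this filtration, and one computes the associated graded to be $\mathrm{gr}(\CC)\circ_\base A$, which is cofibrant as a left $\base$-module because $\mathrm{gr}(\CC)$ and $A$ are (cf.\ Definition \ref{def:cofibrantleftmodule}). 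This exhibits $\Bar_\phi(A)$ as filtered-cofibrant.

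For (3), I would exhibit $\Omega_\phi\Bar_\phi(A)$ as a cell $\PP$-algebra using the same filtration. Without differentials, $\Omega_\phi\Bar_\phi(A)=\PP\circ_\base \CC\circ_\base A$ is the free $\PP$-algebra on the $\base$-module $\Bar_\phi(A)$. Define a filtration of $\PP$-algebras by letting $G_r:= \PP\circ_\base F_r\Bar_\phi(A)$, so that $G_r$ is obtained from $G_{r-1}$ by a pushout of $\PP$-algebras
\[
\xymatrix{
\PP\circ_\base F_{r-1}\Bar_\phi(A)\ar[r]\ar[d] & G_{r-1}\ar[d]\\
\PP\circ_\base F_r\Bar_\phi(A)\ar[r] & G_r.
}
\]
The key observation, which I expect to be the main subtlety, is that the total differential on $\Omega_\phi\Bar_\phi(A)$—consisting of the internal differentials on $\PP,\CC,A$, the cobar twisting via $\bar\Delta\colon \CC\rt \CC\circ_{(1)}\CC$ followed by $\phi$ on the outer factor, and the bar twisting via $\phi$ followed by multiplication in $\PP$—is lower-triangular with respect to this filtration. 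Both twistings strictly decrease filtration: the cobar twisting does because $\phi$ vanishes on $F_0\CC=\base$, so the reduced cocomposition produces an outer $\CC$-factor of strictly positive filtration which is absorbed into $\PP$; and the bar twisting lands in $\PP\circ_\base A=G_0$. Consequently, each inclusion $G_{r-1}\hookrightarrow G_r$ is, up to isomorphism, a pushout along a generating cofibration of $\PP$-algebras $\PP\circ_\base V\rt \PP\circ_\base W$ where $V\rt W$ is the cofibration $F_{r-1}\Bar_\phi(A)\rt F_r\Bar_\phi(A)$ of left $\base$-modules (this step uses the filtered-cofibrancy established in (2)). Writing $\Omega_\phi\Bar_\phi(A)=\mathrm{colim}_r G_r$ then presents it as a transfinite composition of cofibrations starting from the initial $\PP$-algebra $G_{-1}=\base$, and hence as a cofibrant $\PP$-algebra.
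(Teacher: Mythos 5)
Your proof is correct and follows essentially the same route as the paper: parts (1) and (2) via Lemma \ref{lem:twistedcomp} and the filtration induced from the filtered-cofibrancy of $\CC$, and part (3) by filtering $\Omega_\phi\Bar_\phi(A)$ by the subalgebras generated by $F_r\Bar_\phi(A)$ and observing that the twisted differentials strictly decrease filtration, so that the colimit is a cell $\PP$-algebra. Your write-up is merely more explicit about the lower-triangularity of the differential than the paper's brief inductive argument.
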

\begin{proof}
The first two points follow from Lemma \ref{lem:twistedcomp} and Remark \ref{rem:bar of algebra as derived comp}. In particular, the proof of Lemma \ref{lem:twistedcomp} shows that $\Bar_\phi(A) = (\mathscr C \circ_\base A, d_A + d_\Bar)$ carries a filtration induced from the filtration on $\CC$.

For the third point, note that $\Omega_\phi\Bar_\phi(A)$ inherits a filtration by subalgebras from the filtration on $\Bar_\phi(A)$. Since $\mm{gr}(\Bar_\phi(A))$ is a trivial coalgebra, $\mm{gr}(\Omega_\phi\Bar_\phi(A))$ is the free $\PP$-algebra on $\mm{gr}(\CC)\circ_\base A$. Since $\mm{gr}(\CC)\circ_\base A$ is cofibrant as a graded left $\base$-module, an inductive argument shows that $\Omega_\phi\Bar_\phi(A)$ is cofibrant (see also \cite[Proposition 2.8]{vallette2014homotopy}).
\end{proof}

\begin{lemma}\label{lem:algbarresol}
	\leavevmode
	\begin{enumerate}
\item\label{it:algebra bar-cobar universal} Let $\CC$ be filtered-cofibrant as a left $\base$-module and let $\iota\colon \CC\rt \Omega\CC$ be the universal twisting morphism. Then the counit $\Omega_\iota \Bar_\iota A\rt A$ is a quasi-isomorphism for all $A\in\Alg_{\Omega\CC}$ which are cofibrant as left $\base$-modules.

\item\label{it:algebra bar-cobar koszul} Let $\phi \colon \mathscr C \to \mathscr P$ be a Koszul twisting morphism. Then $\Omega_\phi \Bar_\phi B\rt B$ is a quasi-isomorphism for all $B\in\Alg_{\PP}$ which are cofibrant as left $\base$-modules.
\end{enumerate}

\end{lemma}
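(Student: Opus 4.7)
The plan is to prove part \ref{it:algebra bar-cobar universal} directly via a filtration argument, and then deduce part \ref{it:algebra bar-cobar koszul} by comparison along the quasi-isomorphism of operads $r\colon \Omega\CC\xrightarrow{\sim}\mathscr{P}$ provided by the Koszul hypothesis.

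For part \ref{it:algebra bar-cobar universal}, I will work with the universal twisting morphism $\iota\colon\CC\drt\Omega\CC$ and endow $\Omega_\iota\Bar_\iota A \cong \Omega\CC\circ_{\base}\CC\circ_{\base} A$ with a complete, exhaustive, bounded-below filtration by left $\base$-modules that is preserved by the differential. The natural choice is to filter by the total number of elements of $\ol{\CC}$ appearing, counting both those in the outer $\Omega\CC = T_\base(\ol{\CC}[-1])$ and those in the inner $\CC$; this places the image of the counit $A\hookrightarrow \Omega_\iota\Bar_\iota A$ in filtration degree $0$. On the associated graded, only those pieces of the combined bar--cobar differential that transfer an element of $\ol{\CC}$ between the outer and inner factor via $\iota$ (or that apply the internal differential of $\CC$) survive. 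Using the filtered-cofibrancy of $\CC$ together with the cofibrancy of $A$ as a left $\base$-module (so that $-\circ_{\base} A$ is homotopically well-behaved by Lemma \ref{lem:circ preserves qi}), one identifies the augmented associated graded with $(\Omega\CC\circ_\iota\CC)\circ_{\base} A \to \base\circ_{\base} A = A$, which is a quasi-isomorphism by Lemma \ref{lem:twistedcomp}\ref{it:twisted comp is acyclic}. A standard convergence argument for bounded-below complete Hausdorff filtrations then promotes this to a quasi-isomorphism $\Omega_\iota\Bar_\iota A\xrightarrow{\sim} A$.

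For part \ref{it:algebra bar-cobar koszul}, factor the Koszul twisting morphism $\phi$ as $\phi = r\circ\iota$ with $r\colon\Omega\CC\xrightarrow{\sim}\mathscr{P}$. For $B$ a $\mathscr{P}$-algebra cofibrant as a left $\base$-module, $r^*B$ is an $\Omega\CC$-algebra still cofibrant as a left $\base$-module, so part \ref{it:algebra bar-cobar universal} supplies a quasi-isomorphism $\Omega_\iota\Bar_\iota(r^*B)\xrightarrow{\sim} r^*B$. One then forms the commuting square of $\Omega\CC$-algebras
\[
\xymatrix{
\Omega_\iota\Bar_\iota(r^*B) \ar[r]\ar[d]_{\sim} & r^*\Omega_\phi\Bar_\phi B\ar[d] \\
r^*B \ar@{=}[r] & r^*B
}
\]
in which the top horizontal arrow is induced by the naturality of the bar-cobar adjunction under the evident map of twisting morphisms $(\iota,\mm{id})\to(\phi,r)$. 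On underlying left $\base$-modules this is, up to filtration-preserving corrections coming from the action, the map $r\circ_{\base} 1\circ_{\base} 1\colon \Omega\CC\circ_{\base}\CC\circ_{\base} B\to \mathscr{P}\circ_{\base}\CC\circ_{\base} B$; filtering both sides by the bar word length in $\CC$ and applying Lemma \ref{lem:circ preserves qi} (with $\CC\circ_{\base} B$ cofibrant as a left $\base$-module), the top map is a quasi-isomorphism. Two-out-of-three then yields that $\Omega_\phi\Bar_\phi B\to B$ is a quasi-isomorphism.

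The main obstacle will be the identification of the $E^1$-page in part \ref{it:algebra bar-cobar universal}: the bar and cobar differentials interleave via $\iota$ in a combinatorially intricate manner, and one must align the filtration witnessing the filtered-cofibrancy of $\CC$ with the chosen weight filtration on the total complex before one can invoke Lemma \ref{lem:twistedcomp}\ref{it:twisted comp is acyclic}. Once this bookkeeping is done, the remaining ingredients (acyclicity of the twisted composition products and flatness of $-\circ_{\base} A$) are formal consequences of the tools established earlier in Section \ref{sec:operads}.
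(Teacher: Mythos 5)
The weak point is the filtration you choose in part \ref{it:algebra bar-cobar universal}. On $\Omega_\iota\Bar_\iota A\cong\Omega\CC\circ_\base\CC\circ_\base A$ the differential has, besides the internal differentials, three kinds of terms: (i) the cobar differential of the outer factor $\Omega\CC=T_\base(\ol{\CC}[-1])$, which splits one $\ol{\CC}[-1]$-labelled vertex into two; (ii) the twisting term of $\Omega_\iota$, which decomposes the inner $\CC$-part and moves a factor into the outer $\Omega\CC$; and (iii) the bar differential of $\Bar_\iota A$, which consumes an inner $\ol{\CC}$-labelled vertex by letting it act on its $A$-inputs. Your grading by the total number of $\ol{\CC}$-elements is raised by (i) and (ii) and lowered by (iii), so it is not preserved by the differential in either direction; and even after choosing a direction, the associated graded could not be $(\Omega\CC\circ_\iota\CC)\circ_\base A$, because (i) and (ii) are exactly the nontrivial parts of the twisted differential on $\Omega\CC\circ_\iota\CC$ and would be the terms killed by your weight. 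The correct weight is the number of $A$-factors (the leaves): it is untouched by (i) and (ii) and strictly decreased by (iii) in arities $\geq 2$ (low arities are handled by refining with the conilpotency filtration of $\CC$), so that on the associated graded precisely the action on $A$ drops out and one is left with $(\Omega\CC\circ_\iota\CC)\circ_\base A\rt A$, to which Lemma \ref{lem:twistedcomp} applies. This is the filtration the paper uses.

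Part \ref{it:algebra bar-cobar koszul} is sound in outline and close to the paper's argument, which observes that $\Omega_\phi\Bar_\phi B$ is literally $f_!\big(\Omega_\iota\Bar_\iota(f^*B)\big)$ for $f\colon\Omega\CC\rt\PP$ and concludes from part \ref{it:algebra bar-cobar universal}, the cofibrancy of $\Omega_\iota\Bar_\iota(f^*B)$ (Lemma \ref{lem:algbarpreswe}) and the Quillen equivalence $(f_!,f^*)$ (Corollary \ref{cor:quasiiso gives equivalence}); your explicit comparison map is the unit of this adjunction, and your word-length filtration is a reasonable way to verify by hand that it is a quasi-isomorphism. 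But since this step rests on part \ref{it:algebra bar-cobar universal}, the argument is incomplete until the filtration there is repaired.
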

\begin{proof}
For \ref{it:algebra bar-cobar universal}, note that $\Omega_\iota \Bar_\iota A$ consists of trees with vertices labeled by $\ol{\CC}[-1]$ or by $\ol{\CC}$ for (some of the) leaf vertices, and with leaves labeled by $A$. The differential has a contribution from the differential on $\Omega\CC\circ_\iota \CC$ (Example \ref{ex:twisted comp universal}) and a contribution by letting $\CC$-labeled leaf vertices act on their leaves. Filtering $\Omega_\iota \Bar_\iota A$ by the number of leaves, the associated graded is $\big(\Omega\CC\circ_\iota \CC\big)\circ_\base A$. The result then follows from Lemma \ref{lem:twistedcomp}.

For \ref{it:algebra bar-cobar koszul}, let $f\colon \Omega\CC\rt\PP$ be the induced map and notice that $\Omega_\phi \Bar_\phi B = f_!\big(\Omega_\iota \Bar_\iota (f^* B)\big)$. 
The result then follows from part \ref{it:algebra bar-cobar universal}, $(f_!, f^*)$ being a Quillen equivalence (Corollary \ref{cor:quasiiso gives equivalence}) and $\Omega_\iota \Bar_\iota (f^* B)$ being cofibrant (Lemma \ref{lem:algbarpreswe}).
\end{proof}

\subsection{Free resolutions of operads}\label{sec:freeoperads}

The remainder of this section is devoted to a proof of the following result, relating the homotopy-invariant condition appearing in Theorem \ref{thm:mainthm} to a more concrete condition in terms of quasi-free resolutions:
\begin{proposition}\label{prop:generatorsforoperad}
Let $\mathscr{P}$ be a connective 0-reduced $\base$-operad. Then the following are equivalent:
\begin{enumerate}
\item the symmetric sequence $\mathscr{P}^{\leq 1}\circ^h_{\mathscr{P}} \mathscr{P}^{\leq 1}$ is eventually highly connective.\footnote{Here we use the natural left and right actions of $\mathscr P$ on its quotient $\mathscr P^{\leq 1}$.}
\item $\mathscr{P}$ is quasi-isomorphic to a quasi-free, non-positively graded $\base$-operad with higher arity generators in increasingly negative degrees. More precisely, for every $n\in\mathbb{Z}$, there exists a $p(n)\in\mathbb{N}$ such that all generators of arity $\geq p(n)$ are in cohomological degrees $< n$.
\end{enumerate}
\end{proposition}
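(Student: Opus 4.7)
The plan is to deduce both implications from explicit formulas relating a quasi-free resolution of $\mathscr{P}$ to the derived composition product $\mathscr{P}^{\leq 1} \circ^h_\mathscr{P} \mathscr{P}^{\leq 1}$, in analogy with the cofibrant resolution $\mathscr{K} = \mm{Cone}(V \circ_\base \mathscr{Q} \to \mathscr{Q})$ of $\base$ as a right $\mathscr{Q}$-module used in the proof of Proposition \ref{prop:filtrationissues0-reduced}, and with Corollary \ref{cor:bar of operad as derived comp}.

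For (2) $\Rightarrow$ (1), I will take a quasi-free resolution $\mathscr{Q} = \mm{Free}(V) \to \mathscr{P}$ with $V$ non-positively graded and $V(p)$ concentrated in degrees $\leq g(p)$ with $g(p) \to -\infty$, and construct a cofibrant resolution of $\mathscr{Q}^{\leq 1}$ as a right $\mathscr{Q}$-module: the kernel of $\mathscr{Q} \twoheadrightarrow \mathscr{Q}^{\leq 1}$ is the operadic ideal generated by $V^{\geq 2}$, which admits at the graded level a free description in terms of compositions involving $V^{\geq 2}$ sandwiched between copies of $\mathscr{Q}(1)$. Taking the composition product of this resolution with $\mathscr{Q}^{\leq 1}$, one concludes that in arity $p \geq 2$ the underlying graded object of $\mathscr{Q}^{\leq 1} \circ^h_\mathscr{Q} \mathscr{Q}^{\leq 1}$ is built from $V(p) \otimes_{\base^{\otimes p}} \mathscr{Q}(1)^{\otimes p}[1]$ together with corrections involving iterated compositions with lower-arity generators, all of which sit in degrees $\leq g(p) + O(1)$ because $\mathscr{Q}(1) \simeq \mathscr{P}(1)$ is connective. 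This gives the eventual high connectivity required by (1).

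For (1) $\Rightarrow$ (2), I will construct $\mathscr{Q} = \mm{Free}(V) \to \mathscr{P}$ by induction on arity. First, take a cofibrant resolution $\mm{Free}(V(1)) \to \mathscr{P}(1) = \mathscr{P}^{\leq 1}$ of arity-1 dg-categories, which is possible with $V(1)$ in nonpositive degrees by the connectivity assumption. Inductively, assume that $\mathscr{Q}^{(p-1)} = \mm{Free}(V(1) \oplus \dots \oplus V(p-1)) \to \mathscr{P}$ is a quasi-isomorphism in arities $\leq p-1$, and add generators $V(p)$ in arity $p$ in order to kill unwanted cohomology classes and create needed ones, so as to obtain a quasi-isomorphism in arity $\leq p$. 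The degrees of the generators $V(p)$ required to do this are controlled by the arity-$p$ cohomology of the discrepancy complex between $\mathscr{Q}^{(p-1)}(p)$ and $\mathscr{P}(p)$, which by the same kind of explicit computation as in the forward direction is governed by $H^{*}\bigl(\mathscr{P}^{\leq 1} \circ^h_\mathscr{P} \mathscr{P}^{\leq 1}\bigr)(p)$ up to a degree shift. By hypothesis (1) this cohomology is eventually concentrated in arbitrarily negative degrees, so $V(p)$ can be chosen in degrees $\leq g(p)$ with $g(p) \to -\infty$.

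The main obstacle will be the two-sided arity-1 structure: since $\mathscr{P}(1)$ is an arbitrary connective dg-category rather than just $\base$, the cofibrant resolution of $\mathscr{P}^{\leq 1}$ as a right $\mathscr{P}$-module is more intricate than the resolution of $\base$ used in Section \ref{sec:cohsmall}. One must carefully account for the iterated compositions of arity-1 operations on both sides of the derived composition product, both when extracting the bound on $V(p)$ in the forward direction and when translating this bound back into a control on the inductive extension in the reverse direction.
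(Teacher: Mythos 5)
Your proposal is correct and follows essentially the same route as the paper: for (2) $\Rightarrow$ (1) it uses the cofiber sequence coming from the quasi-free right-module resolution $\mathscr{Q}^{\geq 2}\simeq \mathscr{Q}(1)\circ V^{\geq 2}\circ \mathscr{Q}$, identifying the arity-$p$ part of $\mathscr{P}^{\leq 1}\circ^h_{\mathscr{P}}\mathscr{P}^{\leq 1}$ with a shift of $\mathscr{Q}(1)\circ V(p)\circ \mathscr{Q}(1)$; for (1) $\Rightarrow$ (2) it runs the same induction on arity (the paper phrases this via the skeleton functors $\mm{sk}_p$ and isolates the comparison of cofibers as Lemma \ref{lem:skeletarelbar}), with the degrees of the new arity-$p$ generators bounded by the top cohomological degree of $\big(\mathscr{P}^{\leq 1}\circ^h_{\mathscr{P}}\mathscr{P}^{\leq 1}\big)(p)$ plus one. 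The two-sided arity-one issue you flag is exactly what the explicit formula $\mathscr{Q}(1)\circ V^{\geq 2}\circ \mathscr{Q}(1)$ resolves, so there is no gap.
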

\begin{remark}
Recall that every cofibrant $\base$-operad is the retract of an operad which is quasi-freely generated by a ($S$-coloured) symmetric sequence of graded vector spaces (one can take for instance its cobar-bar construction). Conversely, if $\mathscr{P}$ is quasi-freely generated by a symmetric sequence of graded vector spaces in nonpositive degree, then $\mathscr{P}$ is cofibrant.\footnote{More generally, a triangulated quasi-free operad is cofibrant, see \cite[Proposition B.6.10]{LodayVallette2012}.}
\end{remark}

For the remaining of the section, all (co)operads are 0-reduced (trivial in arity zero). We will make use of the following Quillen adjunction between the categories of 0-reduced (augmented) $\base$-operads
$$\begin{tikzcd}
\mm{sk}_p\colon \cat{Op}_{\base}^{\mm{nu},\dg}\arrow[r, yshift=0.8ex] & \cat{Op}_{\base}^{\mm{nu},\dg}\arrow[l, yshift=-0.8ex]\colon (-)^{\leq p}
\end{tikzcd}$$

The right adjoint is the ``truncation to arity at most $p$'' functor that quotients an operad $\mathscr P$ by the operadic ideal $\bigoplus_{k>p} \mathscr P(k)$. 
Its left adjoint is the ``$p$-skeleton'' functor that associates to $\mathscr Q$ the operad $\mm{sk}_p(\mathscr Q)$ which is given in arities $\leq p$ by $\mathscr{Q}$, and which is freely generated by this data. 

\begin{remark}\label{rem:skeletonofalmostquasiiso}
A map $f\colon \PP\rt \QQ$ between cofibrant operads induces a quasi-isomorphism $\mm{sk}_p\PP\rt \mm{sk}_p\QQ$ as soon as it induces a quasi-isomorphism in arity $\leq p$. Indeed, factor $f$ as an acyclic cofibration $\PP\rt \PP'$ followed by a fibration $f\colon \PP'\rt \QQ$ and use Lemma \ref{lem:addgenerators} to resolve $f'$ by a map which is an isomorphism in arities $\leq p$. The result then follows from the fact that $\mm{sk}_p$ is a left Quillen functor and which only depends on arity $(\leq p)$-parts.
\end{remark}
\begin{lemma}\label{lem:skeletarelbar}
Let $\mathscr{P}$ be a cofibrant 0-reduced $\base$-operad, let $p\geq 1$ and consider the cofiber sequences
$$\begin{tikzcd}[row sep=0.4pc]
\mm{sk}_p(\mathscr{P})\arrow[r] & \mathscr{P}\arrow[r] & X\\
\mathscr{P}^{\leq 1}\circ^h_{\mm{sk}_p(\mathscr{P})} \mathscr{P}^{\leq 1}\arrow[r] & \mathscr{P}^{\leq 1}\circ^h_{\mathscr{P}} \mathscr{P}^{\leq 1} \arrow[r] & Y.
\end{tikzcd}$$
There is a natural map $X\rt Y[-1]$, which is an equivalence in arity $p+1$. Furthermore, the map $\mathscr{P}^{\leq 1}\circ^h_{\mathscr{P}} \mathscr{P}^{\leq 1} \rt Y$ is an equivalence in arity $p+1$ as well.
\end{lemma}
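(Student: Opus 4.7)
The plan is to analyze both cofibers in arity $p+1$ via a word-length filtration argument on the bar construction, combined with a cofibrant-resolution computation.

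For any $\base$-operad $\QQ$, the filtration of $\Bar\QQ$ by word length consists of subcomplexes of left $\base$-modules (the bar differential strictly decreases word length), and the word-length-$1$ piece is $\ol{\QQ}[1]$ with only the internal differential (the bar differential vanishes on a single vertex since there are no inner edges to contract). Shifting gives a natural map of left $\base$-modules $\ol{\QQ}\to\Bar\QQ[-1]$. Applying this naturally to $\mm{sk}_p\PP\to\PP$ produces a commuting square, and taking horizontal cofibers yields the desired natural map $X\to Y[-1]$.

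To see that this is an equivalence in arity $p+1$, I would compare the vertical cofibers of this square. Up to a shift, the cofiber of $\ol{\QQ}\to\Bar\QQ[-1]$ is the word-length-$\geq 2$ part of $\Bar\QQ$, i.e., trees on $\ol{\QQ}[1]$ with at least two vertices. In arity $p+1$, every such tree has total arity $p+1$ with each vertex of arity $\geq 2$, so every vertex has arity $\leq p$. Because $\mm{sk}_p\PP$ and $\PP$ agree in arities $\leq p$, the word-length-$\geq 2$ pieces of $\Bar(\mm{sk}_p\PP)(p+1)$ and $\Bar\PP(p+1)$ are identified as chain complexes---including all components of the bar differential that stay within this part. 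By the octahedral axiom, the induced map $X\to Y[-1]$ is an equivalence in arity $p+1$.

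For the second claim, it suffices to show $\Bar(\mm{sk}_p\PP)(p+1)\simeq 0$. I would take a cofibrant resolution $\mathscr{F}(V)\xrightarrow{\sim}\PP$ with $V$ a cofibrant symmetric $\base$-bimodule concentrated in arities $\geq 2$ (available by Proposition \ref{prop:modelstructures}); by the adjunction $\mm{sk}_p\dashv (-)^{\leq p}$ one has $\mm{sk}_p\mathscr{F}(V)=\mathscr{F}(V^{\leq p})$, and since $\mm{sk}_p$ is a left Quillen functor, this gives a cofibrant resolution $\mathscr{F}(V^{\leq p})\xrightarrow{\sim}\mm{sk}_p\PP$. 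For any quasi-free $\base$-operad $\mathscr{F}(W)$, the Koszul-type complex $\mathscr{F}(W)\circ_\base(\base\oplus W[1])$ furnishes a cofibrant resolution of $\base$ as a left $\mathscr{F}(W)$-module, whence $\Bar\mathscr{F}(W)\simeq\base\circ^h_{\mathscr{F}(W)}\base\simeq\base\oplus W[1]$ via Corollary \ref{cor:bar of operad as derived comp}. Applied to $W=V^{\leq p}$, this gives $\Bar(\mm{sk}_p\PP)(p+1)\simeq V^{\leq p}(p+1)[1]=0$.

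The main technical subtlety lies in the matching of bar differentials on the word-length-$\geq 2$ pieces used in the first part; beyond that, the second part reduces to a standard Koszul-resolution computation for quasi-free operads.
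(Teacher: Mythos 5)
There is a genuine gap: your argument silently replaces $\PP^{\leq 1}\circ^h_{\PP}\PP^{\leq 1}$ by $\base\circ^h_{\PP}\base\simeq \Bar\PP$, and these only agree when $\PP(1)=\base$. The lemma is stated for \emph{0-reduced} operads, for which $\PP^{\leq 1}$ is the (possibly large) arity-one part $\PP(1)$, not the base $\base$; this generality is essential for the applications (e.g.\ the splendidness condition for the framed little discs operads involves $\PP(1)=H_\bullet(\mathrm{SO}_n)$). The conflation breaks your proof in three places. First, $Y$ is the cofiber of $\PP(1)\circ^h_{\mm{sk}_p(\PP)}\PP(1)\rt \PP(1)\circ^h_{\PP}\PP(1)$, not of $\Bar(\mm{sk}_p\PP)\rt\Bar\PP$, so your map $\ol{\QQ}\rt \Bar\QQ[-1]$ does not induce the map $X\rt Y[-1]$ required by the statement. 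Second, your arity count ("every vertex has arity $\geq 2$, hence arity $\leq p$") fails as soon as $\ol{\PP}$ contains unary operations: a tree of total arity $p+1$ with $\geq 2$ vertices can then contain a vertex of arity $p+1$ with unary vertices attached, so the word-length $\geq 2$ parts of $\Bar(\mm{sk}_p\PP)$ and $\Bar\PP$ need not coincide in arity $p+1$. Third, a cofibrant resolution $\mathscr{F}(V)\rto{\sim}\PP$ with $V$ concentrated in arities $\geq 2$ exists only when $\PP(1)=\base$; in general $V$ must contain arity-one generators, and then $V^{\leq p}(p+1)=0$ no longer gives the vanishing you need for the second claim.

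The paper's proof avoids all of this by never passing to $\Bar\PP$: it uses the cofiber sequence $\PP^{\geq 2}\circ^h_{\PP}\PP^{\leq 1}\rt\PP^{\leq 1}\rt\PP^{\leq 1}\circ^h_{\PP}\PP^{\leq 1}$ and the explicit identification $\PP^{\geq 2}\circ_{\PP}\PP^{\leq 1}\cong \PP^{\leq 1}\circ V^{\geq 2}\circ\PP^{\leq 1}$ for a quasi-free model $\mathrm{Free}(V)$, which keeps the $\PP(1)$-factors on both sides throughout. In the special case $\PP(1)=\base$ your argument is essentially a reformulation of this (the word-length $\geq 2$ quotient playing the role of the decomposables, and your Koszul complex the role of \eqref{eq:indec}), so the fix is either to restrict to 1-reduced operads---which does not suffice for the paper---or to redo your filtration with the bar construction taken relative to $\PP(1)$ rather than $\base$, tracking the left and right $\PP(1)$-module structures.
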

\begin{proof}
Let $\mathscr{P}^{\geq 2}$ denote the kernel of the quotient $\mathscr{P}\rt \mathscr{P}^{\leq 1}$, so that there is a cofiber sequence
\begin{equation}\label{diag:cof1}\begin{tikzcd}
\mathscr{P}^{\geq 2}\circ_{\mathscr{P}}^h \mathscr{P}^{\leq 1}\arrow[r] & \mathscr{P}^{\leq 1}\arrow[r] &\mathscr{P}^{\leq 1}\circ^h_{\mathscr{P}}\mathscr{P}^{\leq 1}.
\end{tikzcd}\end{equation}
Using the same cofiber sequence for $\mm{sk}_p(\mathscr{P})$ and unraveling the definitions, one sees that there is a natural cofiber sequence
$$\begin{tikzcd}
\mm{sk}_p(\mathscr{P})^{\geq 2}\circ_{\mm{sk}_p(\mathscr{P})}^h \mathscr{P}^{\leq 1}\arrow[r] & \mathscr{P}^{\geq 2}\circ_{\mathscr{P}}^h\mathscr{P}^{\leq 1}\arrow[r] & Y[-1].
\end{tikzcd}$$
There is a natural map $\mathscr{P}\rt \mathscr{P}^{\geq 2}\rt \mathscr{P}^{\geq 2}\circ^h_{\mathscr{P}}\mathscr{P}^{\leq 1}$ (the first one quotients out the arity $1$ part), and similarly for $\mm{sk}_p(\mathscr{P})$. The desired map $X\rt Y[-1]$ is the induced map on cofibers.

Now suppose that $\mathscr{P}=\big(\mm{Free}_{\cat{ Op}_\base}(V), d\big)$ is a cofibrant $\base$-operad, quasi-freely generated by a symmetric $\base$-bimodule $V$. Then $\mathscr{P}^{\geq 2}$ is a cofibrant right $\mathscr{P}$-module, given by $\mathscr{P}(1)\circ V^{\geq 2}\circ \mathscr{P}$ (with some differential), where $V^{\geq 2}$ is the arity $\geq 2$ piece of $V$. It follows that
\begin{equation}\label{eq:indec}
\mathscr{P}^{\geq 2}\circ_{\mathscr{P}} \mathscr{P}^{\leq 1}\simeq \mathscr{P}^{\leq 1}\circ V^{\geq 2}\circ \mathscr{P}^{\leq 1}
\end{equation}
with some differential. A similar equivalence holds for $\mm{sk}_p(\mathscr{P})$, which is a cofibrant suboperad of $\PP$ freely generated by $V^{\leq p}$, the arity $\leq p$ piece of $V$. One then deduces that
$$
Y[-1]\simeq \mathscr{P}(1)\circ V^{\geq p+1}\circ \mathscr{P}(1).
$$
In particular, it agrees with $\mathscr{P}^{\geq 2}\circ^h_{\mathscr{P}} \mathscr{P}(1)$ in arity $p+1$. Note that the above symmetric sequence consists exactly of the $(p+1)$-ary operations of $\mathscr{P}$, modulo those that are compositions of $(\leq p)$-ary operations. This is exactly the $(p+1)$-ary part of the cofiber $X$.
\end{proof}
\begin{corollary}\label{cor:equivoncot}
Let $f\colon \mathscr{P}\rt \mathscr{Q}$ be a map of connective 0-reduced $\base$-operads such that the map of symmetric $\base$-bimodules 
$$
\mathscr{P}(1)\circ^h_{\mathscr{P}} \mathscr{P}(1)\rt \mathscr{Q}(1)\circ^h_{\mathscr{Q}} \mathscr{Q}(1)
$$
is a quasi-isomorphism. Then $f$ is a quasi-isomorphism.
\end{corollary}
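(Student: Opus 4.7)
The plan is to proceed by induction on the arity: we will show that $f$ is a quasi-isomorphism in arity $\leq n$ for every $n\geq 1$. Replacing $\PP$ and $\QQ$ by cofibrant models and $f$ by a cofibration between them (which leaves the hypothesis unchanged since it is stated in homotopy-invariant terms), the cofiber sequences
$$\xymatrix{
\mm{sk}_p(\PP)\ar[r] & \PP\ar[r] & X_\PP, & \mm{sk}_p(\QQ)\ar[r] & \QQ\ar[r] & X_\QQ
}$$
from Lemma \ref{lem:skeletarelbar} are then levelwise cofiber sequences of left $\base$-modules, so $f$ is a quasi-isomorphism in arity $p+1$ as soon as both $\mm{sk}_p(f)$ and the induced map $X_\PP\rt X_\QQ$ are quasi-isomorphisms in that arity.

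For the base case $n=1$, observe that $\mathscr{P}^{\geq 2}\circ_\PP^h\mathscr{P}^{\leq 1}$ vanishes in arity $1$, so the cofiber sequence
$$\xymatrix{\mathscr{P}^{\geq 2}\circ_\PP^h \mathscr{P}^{\leq 1}\ar[r] & \mathscr{P}^{\leq 1}\ar[r] & \mathscr{P}^{\leq 1}\circ^h_{\PP}\mathscr{P}^{\leq 1}}$$
appearing in the proof of Lemma \ref{lem:skeletarelbar} identifies $\mathscr{P}(1)\circ^h_{\PP}\mathscr{P}(1)$ with $\mathscr{P}(1)$ in arity $1$, and similarly for $\QQ$. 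The hypothesis then immediately gives that $\mathscr{P}(1)\rt\mathscr{Q}(1)$ is a quasi-isomorphism.

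For the inductive step, assume $f$ is a quasi-isomorphism in arities $\leq p$. Then $\mm{sk}_p(f)$ is a quasi-isomorphism by Remark \ref{rem:skeletonofalmostquasiiso}, and it remains to check that $X_\PP\rt X_\QQ$ is a quasi-isomorphism in arity $p+1$. By Lemma \ref{lem:skeletarelbar}, this cofiber is naturally equivalent in arity $p+1$ to $Y_\PP[-1]$, where $Y_\PP$ is the cofiber of
$$\xymatrix{\mathscr{P}^{\leq 1}\circ^h_{\mm{sk}_p(\PP)}\mathscr{P}^{\leq 1}\ar[r] & \mathscr{P}^{\leq 1}\circ^h_\PP \mathscr{P}^{\leq 1},}$$
and analogously for $\QQ$. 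Now the right-hand term is sent to its $\QQ$-analogue by a quasi-isomorphism thanks to the hypothesis, while the left-hand term coincides with $\mm{sk}_p(\PP)^{\leq 1}\circ^h_{\mm{sk}_p(\PP)}\mm{sk}_p(\PP)^{\leq 1}$ and is therefore a functor of $\mm{sk}_p(\PP)$; it is thus also sent to its $\QQ$-analogue by a quasi-isomorphism, since $\mm{sk}_p(f)$ is one. Passing to cofibers gives the required quasi-isomorphism $Y_\PP\rt Y_\QQ$, and hence $X_\PP\rt X_\QQ$ in arity $p+1$, completing the induction. The only delicate point is making sure that all the comparison maps involved are natural in the operad and not just present as abstract equivalences; this is immediate from the construction of Lemma \ref{lem:skeletarelbar}, which is functorial in $\PP$.
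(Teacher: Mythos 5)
Your proof is correct and follows essentially the same route as the paper: induction on skeleta via Remark \ref{rem:skeletonofalmostquasiiso}, with Lemma \ref{lem:skeletarelbar} identifying the arity-$(p+1)$ part of the cofiber of $\mm{sk}_p(\mathscr{P})\rt\mathscr{P}$ with a shift of $Y$. The only (harmless) variation is in comparing $Y_{\mathscr{P}}$ with $Y_{\mathscr{Q}}$: you compare both terms of the defining cofiber sequence (using that $\mathscr{P}^{\leq 1}\circ^h_{\mm{sk}_p(\mathscr{P})}\mathscr{P}^{\leq 1}$ is a functor of $\mm{sk}_p(\mathscr{P})$), whereas the paper invokes the second assertion of Lemma \ref{lem:skeletarelbar}, namely that $\mathscr{P}^{\leq 1}\circ^h_{\mathscr{P}}\mathscr{P}^{\leq 1}\rt Y$ is already an equivalence in arity $p+1$.
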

\begin{proof}
We may assume that $\PP$ and $\QQ$ are cofibrant. In that case, the map $f$ induces a quasi-isomorphism in arity $\leq p$ if and only if the induced maps on $p$-skeleta $\mm{sk}_p\PP\rt \mm{sk}_p\QQ$ is a quasi-isomorphism (Remark \ref{rem:skeletonofalmostquasiiso}). We check this for all $p$ by induction. For $p=1$, note that $\mathscr{P}(1)\circ^h_{\mathscr{P}} \mathscr{P}(1)$ is given in arity 1 by $\PP(1)$; this follows from the cofiber sequence \ref{diag:cof1} and equation \ref{eq:indec}. 

Next, notice that the arity $(p+1)$-part of $\PP$ is quasi-isomorphic to the arity $(p+1)$-part of the cofiber $Y$ from Lemma \ref{lem:skeletarelbar}. If $f$ induces a quasi-isomorphism on $p$-skeleta, then this cofiber is quasi-isomorphic to the corresponding cofiber for $\QQ$. It then follows that $f$ also induces a quasi-isomorphism on $(p+1)$-skeleta.
\end{proof}

\begin{proof}[Proof of Proposition \ref{prop:generatorsforoperad}]
(2) $\Rightarrow$ (1): follows from the cofiber sequence \eqref{diag:cof1} and the identification \eqref{eq:indec}.

(1) $\Rightarrow$ (2): we can assume that $\PP$ is cofibrant to begin with. It then suffices to show that $\mathscr{P}$ admits a free resolution with all generators in degrees $\leq 0$ and with the following property at each arity $p\geq 2$:
\begin{itemize}[leftmargin=*]
\item[] If $H^*\big(\mathscr{P}^{\leq 1}\circ_{\mathscr{P}}\mathscr{P}^{\leq 1}\big)(p)$ is concentrated in degrees $\leq n(p)$, then the generators of arity $p$ are concentrated in degrees $\leq n(p)+1$. 
\end{itemize} 
We construct this resolution by induction on skeleta, using that
$$
\mathscr{P}^{\leq 1}\circ^h_{\mm{sk}_p(\mathscr{P})} \mathscr{P}^{\leq 1}\simeq \Big(\mathscr{P}^{\leq 1}\circ^h_{\mathscr{P}}\mathscr{P}^{\leq 1}\Big)^{\leq p}.
$$
For the $1$-skeleton $\mathscr{P}^{\leq 1}=\mm{sk}_1(\mathscr{P})$, there is no condition. Suppose we have found the desired presentation for $\mm{sk}_{p-1}(\mathscr{P})$. It follows from Lemma \ref{lem:skeletarelbar} that in arity $p$, the cohomology of the cofiber $\mm{sk}_p\mathscr{P}/\mm{sk}_{p-1}(\mathscr{P})$ is concentrated in degrees $\leq n(p)+1$ (and also in degrees $\leq 0$). This means that $\mm{sk}_p\mathscr{P}$ can be obtained from $\mm{sk}_{p-1}(\mathscr{P})$ by adding arity $p$ generators of degree $\leq n(p)+1$ (as well as generators of higher arity).
\end{proof}
In Section \ref{sec:cohsmall}, we will need a slight refinement of Proposition \ref{prop:generatorsforoperad} which provides a quasi-free resolution of the entire tower $\PP\rt \dots\rt \PP^{\leq n}\rt \dots$.
\begin{lemma}\label{lem:addgenerators}
Let $f\colon \mathscr{P}\rt \mathscr{Q}$ be a fibration of connective operads such that $f$ induces a trivial fibration in arities $\leq p$. For any cofibrant resolution $\tilde{\mathscr{Q}}\rto{\sim} \mathscr{Q}$, there exists a cofibrant resolution $\tilde{\mathscr P}$ of $\mathscr{P}$ which fits into a diagram
$$\begin{tikzcd}
\tilde{\mathscr{P}}\arrow[d, "\sim"{swap}]\arrow[r, "\tilde{f}"] & \tilde{\mathscr{Q}}\arrow[d, "\sim"]\\
\mathscr{P}\arrow[r, "f"{swap}] & \mathscr{Q}
\end{tikzcd}$$
such that $\tilde{f}$ induces an isomorphism in arities $\leq p$.
\end{lemma}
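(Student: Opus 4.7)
The plan is to construct $\tilde{\mathscr{P}}$ by starting with the $p$-skeleton of $\tilde{\mathscr{Q}}$ and cofibrantly resolving toward $\mathscr{P}\times_{\mathscr{Q}}\tilde{\mathscr{Q}}$, attaching cells only in arities $>p$.

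First I would form the pullback $\mathscr{R}:=\mathscr{P}\times_{\mathscr{Q}}\tilde{\mathscr{Q}}$ in $\cat{Op}_\base^\dg$. Because limits in $\cat{Op}_\base^\dg$ are computed arity- and colour-wise in chain complexes and fibrations are surjections, the model structure of Proposition \ref{prop:modelstructures} is right proper, so the projection $\pi_1\colon\mathscr{R}\to\mathscr{P}$ is a weak equivalence. The other projection $\pi_2\colon\mathscr{R}\to\tilde{\mathscr{Q}}$ is a fibration (base change of $f$), and in arities $\leq p$ it is a trivial fibration since $f^{\leq p}$ is. Replacing $\tilde{\mathscr{Q}}$ by a quasi-free model if necessary, the truncation $\tilde{\mathscr{Q}}^{\leq p}$ is cofibrant as a $\leq p$-arity $\base$-operad, so lifting against $\pi_2^{\leq p}$ yields a section $s^{\leq p}\colon\tilde{\mathscr{Q}}^{\leq p}\to\mathscr{R}^{\leq p}$. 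Through the adjunction $\mm{sk}_p\dashv (-)^{\leq p}$, this extends uniquely to an operad map $s\colon\mm{sk}_p(\tilde{\mathscr{Q}})\to\mathscr{R}$ whose composition $\pi_2\circ s$ is the counit map $\mm{sk}_p(\tilde{\mathscr{Q}})\to\tilde{\mathscr{Q}}$; in particular $\pi_2\circ s$ is an isomorphism in arities $\leq p$.

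Next, I would factor $s$ as a cofibration $\mm{sk}_p(\tilde{\mathscr{Q}})\hookrightarrow\tilde{\mathscr{P}}$ followed by a trivial fibration $\tilde{\mathscr{P}}\twoheadrightarrow\mathscr{R}$ by running the small object argument with generating (acyclic) cofibrations \emph{of arities $>p$ only}. This is legitimate because $s$ is already a trivial fibration (in fact an isomorphism) in arities $\leq p$, so the only lifting problems left to solve occur in arities $>p$. The output $\tilde{\mathscr{P}}$ is then cofibrant, agrees with $\tilde{\mathscr{Q}}$ in arities $\leq p$, and the composites $\tilde{\mathscr{P}}\to\mathscr{R}\xrightarrow{\pi_1}\mathscr{P}$ and $\tilde f\colon\tilde{\mathscr{P}}\to\mathscr{R}\xrightarrow{\pi_2}\tilde{\mathscr{Q}}$ are respectively a weak equivalence and a map that is an isomorphism in arities $\leq p$, with the required compatibility over $\mathscr{Q}$ built into the pullback.

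The main technical point to check is that one can restrict the small object argument to arity-$(>p)$ cell attachments while still obtaining a trivial fibration. This rests on the fact that the generating cofibrations of the model structure of Proposition \ref{prop:modelstructures} have well-defined arities and that attaching an arity-$q$ cell affects only arities $\geq q$; together with $s$ being an isomorphism in arities $\leq p$, every lifting test for an arity-$q'$ generating cofibration with $q'\leq p$ is already solved, so one never needs to attach cells there. A secondary (mild) obstacle is right properness of the model structure on $\base$-operads, which follows from the fact that all objects are fibrant and that pullbacks of surjective quasi-isomorphisms of complexes remain quasi-isomorphisms.
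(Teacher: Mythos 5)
Your argument is correct and follows essentially the same route as the paper: form the pullback $\mathscr{P}\times_{\mathscr{Q}}\tilde{\mathscr{Q}}$, use the adjunction $\mm{sk}_p\dashv(-)^{\leq p}$ to produce a map $\mm{sk}_p(\tilde{\mathscr{Q}})\rt \mathscr{P}\times_{\mathscr{Q}}\tilde{\mathscr{Q}}$, and factor it by attaching cells only in arities $>p$. Two cosmetic repairs: since the statement fixes $\tilde{\mathscr{Q}}$, do not replace it by a quasi-free model but instead lift $\tilde{\mathscr{Q}}\rt\tilde{\mathscr{Q}}^{\leq p}$ directly against the trivial fibration $(\mathscr{P}\times_{\mathscr{Q}}\tilde{\mathscr{Q}})^{\leq p}\rt\tilde{\mathscr{Q}}^{\leq p}$ using cofibrancy of $\tilde{\mathscr{Q}}$ itself; and since the resulting section $s$ is only a weak equivalence (a section of a trivial fibration, not necessarily surjective or an isomorphism) in arities $\leq p$, ask the second factor of the factorization to be a weak equivalence rather than a trivial fibration, which is all the lemma requires and is what the paper does.
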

\begin{proof}
	Since $\tilde{\mathscr Q}$ is cofibrant, there exists a lift
	
	$$\begin{tikzcd}
		& (\mathscr{P}\times_{\mathscr{Q}} \tilde{\mathscr{Q}})^{\leq p}\arrow[d, two heads, "\sim"]\\
		\tilde{\mathscr{Q}}\arrow[r]\arrow[ru, dotted] & (\tilde{\mathscr{Q}})^{\leq p}
	\end{tikzcd}$$
	and therefore, by adjuction we have a lift
$$\begin{tikzcd}
 & \mathscr{P}\times_{\mathscr{Q}} \tilde{\mathscr{Q}}\arrow[d]\\
\mm{sk}_p\tilde{\mathscr{Q}}\arrow[r]\arrow[ru, dotted, "g"{above}] & \tilde{\mathscr{Q}}.
\end{tikzcd}$$
 We can now factor the map $g$ into a cofibration followed by a weak equivalence $\mm{sk}_p\tilde{\mathscr{Q}}\hookrightarrow \tilde{\mathscr{P}}\stackrel{\sim}{\twoheadrightarrow} \mathscr P \times_{\mathscr Q} \tilde{\mathscr Q}$.

 Since all operads involved are connective, this can be done inductively by `adding cells to kill a cycle'. As $g$ is already a weak equivalence in arity $\leq p$, it suffices to add cells in arity $\geq p+1$, which does not change the arity $\leq p$ part. In particular, the composite map $\tilde{\mathscr{P}}\rt \tilde{\mathscr{Q}}$ induces a weak equivalence in arities $\leq p$.
\end{proof}
\begin{proposition}\label{prop:genfortruncations}
Let $\mathscr{P}$ be a connective $\base$-operad and consider the tower of $\base$-operads
$$\begin{tikzcd}
\mathscr{P}\arrow[r] & \dots \arrow[r] & \mathscr{P}^{\leq p}\arrow[r] & \mathscr{P}^{\leq p-1}\arrow[r] & \dots \arrow[r] & \mathscr{P}^{\leq 1}. 
\end{tikzcd}$$
Then there exists a resolution of this tower by a tower of quasi-free, non-positively graded $\base$-operads ${\mathscr{Q}}\rt \dots \rt {\mathscr{Q}}^{(p)}\rt \dots$ with the following properties:
\begin{enumerate}[label=(\alph*)]
\item\label{it:resola} Each ${\mathscr{Q}}^{(p)}\rt {\mathscr{Q}}^{(p-1)}$ induces an isomorphism in arity $\leq p-1$.
\item\label{it:resolb} Each ${\mathscr{Q}}^{(p)}$ has higher arity generators in increasingly negative degrees (in the sense of Proposition \ref{prop:generatorsforoperad}).
\item\label{it:resolc} ${\mathscr{Q}}$ is the limit of the tower.
\item\label{it:resold} For each $p\geq 2$, the generators of $\mathscr{Q}^{(p)}$ in arity $p$ are concentrated in degrees $\leq n(p)+1$, where $n(p)$ is such that
$$
H^*\big(\mathscr{P}^{\leq 1}\circ_{\mathscr{P}} \mathscr{P}^{\leq 1}\big)(p) = 0 \qquad\qquad *> n(p).
$$
\end{enumerate}
In particular, ${\mathscr{Q}}$ is a graded-free resolution of $\mathscr{P}$ with higher arity generators in increasingly negative degrees.
\end{proposition}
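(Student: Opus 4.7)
The plan is to construct the tower $\mathscr{Q}^{(p)}$ by induction on $p$, proceeding from the bottom up, and then to obtain $\mathscr{Q}$ as the inverse limit. Since property (a) forces $\mathscr{Q}^{(p)} \to \mathscr{Q}^{(p-1)}$ to be an isomorphism in arities $\leq p-1$, the limit will stabilize in each arity, so property (c) will be essentially automatic once the tower is built.

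For the base case $p=1$, I would simply pick any quasi-free resolution $\mathscr{Q}^{(1)} = \mm{Free}(V^{(1)}) \to \mathscr{P}^{\leq 1}$ with $V^{(1)}$ concentrated in arity $1$ and nonpositive degrees; here properties (b) and (d) are vacuous. For the inductive step, given $\mathscr{Q}^{(p-1)}$, the natural quotient $\mathscr{P}^{\leq p} \twoheadrightarrow \mathscr{P}^{\leq p-1}$ is a fibration which is an isomorphism (hence trivial fibration) in arities $\leq p-1$. Lemma \ref{lem:addgenerators} then produces a cofibrant resolution $\mathscr{Q}^{(p)} \to \mathscr{P}^{\leq p}$ together with a map $\mathscr{Q}^{(p)} \to \mathscr{Q}^{(p-1)}$ which is an isomorphism in arities $\leq p-1$, yielding (a). The key observation is that the proof of the lemma constructs $\mathscr{Q}^{(p)}$ from $\mathscr{Q}^{(p-1)}$ by attaching free cells only in arities $\geq p$, so property (a) and the cofibrancy of $\mathscr{Q}^{(p)}$ as a quasi-free operad are built in.

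The main obstacle, and the step requiring real care, is ensuring properties (b) and (d): controlling the cohomological degrees of the cells attached in each arity $q \geq p$. The idea is to perform the cell attachment arity by arity, choosing at each stage a minimal set of cells. Concretely, having constructed $\mathscr{Q}^{(p)}$ in arities $\leq q-1$ (agreeing with $\mathscr{Q}^{(p-1)}$ in arities $\leq p-1$ and with a freshly built piece in arities $p, \dots, q-1$), I would attach new generators in arity $q$ concentrated in degrees $\leq n(q)+1$, where $n(q)$ is as in (d). The justification that this suffices is exactly the argument in the proof of Proposition \ref{prop:generatorsforoperad}: by Lemma \ref{lem:skeletarelbar} applied to the (already constructed) $(q-1)$-skeleton of $\mathscr{Q}^{(p)}$, the cohomology of the arity-$q$ part of the cofiber $\mathscr{Q}^{(p)}/\mm{sk}_{q-1}\mathscr{Q}^{(p)}$ that needs to be killed is identified with $H^*\!\bigl((\mathscr{P}^{\leq p})^{\leq 1} \circ^h_{\mathscr{P}^{\leq p}} (\mathscr{P}^{\leq p})^{\leq 1}\bigr)(q)$, shifted up by one. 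Since this derived composition product coincides in arity $q \leq p$ with $\mathscr{P}(1) \circ^h_{\mathscr{P}} \mathscr{P}(1)$ in arity $q$, it is concentrated in degrees $\leq n(q)$, and so the required cells indeed sit in degrees $\leq n(q)+1$. This gives (d), and (b) follows from the assumption that $n(q) \to -\infty$ together with the fact that for $q > p$ one may freely place cells in even more negative degrees if needed.

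Finally, I would define $\mathscr{Q} := \lim_p \mathscr{Q}^{(p)}$. By (a), the tower is eventually constant in each arity, so the limit is computed arity-wise and agrees with $\mathscr{Q}^{(q+1)}$ in arity $\leq q$ for every $q$; this immediately gives (c) and shows $\mathscr{Q}$ is quasi-free with generator set the union of the generators of the $\mathscr{Q}^{(p)}$. The natural map $\mathscr{Q} \to \mathscr{P}$ is a quasi-isomorphism since it is so in each arity (using Remark \ref{rem:skeletonofalmostquasiiso}), and the higher-arity generators of $\mathscr{Q}$ lie in increasingly negative degrees by (d), proving the concluding ``In particular'' assertion.
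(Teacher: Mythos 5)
Your construction follows the paper's proof essentially step for step: build the tower inductively via Lemma \ref{lem:addgenerators} (attaching cells only in arities $\geq p$ on top of $\mm{sk}_{p-1}\mathscr{Q}^{(p-1)}$, which gives (a)), control the arity-$p$ cells via Lemma \ref{lem:skeletarelbar} together with the observation that $(\mathscr{P}^{\leq p})^{\leq 1}\circ^h_{\mathscr{P}^{\leq p}}(\mathscr{P}^{\leq p})^{\leq 1}$ agrees with $\mathscr{P}^{\leq 1}\circ^h_{\mathscr{P}}\mathscr{P}^{\leq 1}$ in arities $\leq p$ (which gives (d)), and take the arity-wise stationary limit (which gives (c)). All of that is correct and is exactly what the paper does.

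The one step whose justification is wrong is property (b), concerning the cells attached in arities $q>p$. First, the proposition does not assume $\mathscr{P}$ is splendid, so you cannot appeal to ``$n(q)\to-\infty$'': condition (b) is a statement about each $\mathscr{Q}^{(p)}$ individually and must hold for an arbitrary connective $\mathscr{P}$. Second, you cannot ``freely place cells in even more negative degrees if needed'' --- the degrees of the generators you must attach are dictated by the cohomology of the relative cofiber you are killing, not chosen at will. The correct argument (the one the paper uses) is that $\mathscr{P}^{\leq p}$ vanishes in arities $\geq p+1$ and therefore automatically satisfies the equivalent conditions of Proposition \ref{prop:generatorsforoperad}: as in Lemma \ref{lem:filtrationissuestrunc}, an operad concentrated in arities $\leq p$ has $(\mathscr{P}^{\leq p})^{\leq 1}\circ^h_{\mathscr{P}^{\leq p}}(\mathscr{P}^{\leq p})^{\leq 1}$ concentrated in degrees $\leq -q/p$ in arity $q$, so the minimal cells needed in arities $q>p$ necessarily sit in increasingly negative degrees. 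With that substitution your argument is complete; note also that the concluding ``in particular'' about $\mathscr{Q}$ itself is the only place where $n(p)\to-\infty$ (i.e.\ splendidness of $\mathscr{P}$) is actually used.
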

\begin{proof}
We can assume from the start that $\PP$ is already cofibrant, and then construct such a tower of free resolutions inductively, as in Lemma \ref{lem:addgenerators}. In each inductive step, it suffices to add generators of arity $\geq p$ to $\mm{sk}_{p-1}\big({\mathscr{Q}}^{(p-1)}\big)$. In particular, we can always arrange for condition \ref{it:resola}.

To see what kind of generators have to be added in arity $p$, note that there is a quasi-isomorphism 
$$\begin{tikzcd}
\mm{sk}_{p-1}\big({\mathscr{Q}}^{(p-1)}\big)\arrow[r] & \mm{sk}_{p-1}(\mathscr{P}^{\leq p})\cong \mm{sk}_{p-1}(\PP)
\end{tikzcd}$$
since both are quasi-isomorphic in arities $\leq p-1$ (Remark \ref{rem:skeletonofalmostquasiiso}). One deduces that the cofiber of $\mm{sk}_{p-1}\big({\mathscr{Q}}^{(p-1)}\big)\rt \mathscr{P}^{\leq p}$ is given in arity $p$ by the arity $p$ part of $ \mathscr{P}^{\leq 1}\circ^h_{\mathscr{P}^{\leq p}}\mathscr{P}^{\leq 1}$. Since
$$\begin{tikzcd}
\mathscr{P}^{\leq 1}\circ_{\mathscr{P}} \mathscr{P}^{\leq 1}\arrow[r] & \mathscr{P}^{\leq 1}\circ_{\mathscr{P}^{\leq p}} \mathscr{P}^{\leq 1}
\end{tikzcd}$$
is an equivalence in arity $\leq p$, is follows that we only have to add arity $p$ generators in degrees $\leq n(p)+1$ (together with generators of higher arity). This makes sure that we can arrange for condition \ref{it:resold}.

For the remaining generators that we have to add, note that $\mathscr{P}^{\leq p}$ satisfies the equivalent conditions of Proposition \ref{prop:generatorsforoperad}, since it is zero in arities $\geq p+1$. This implies that it suffices to add higher arity generators in increasingly negative degrees, so that we can arrange for \ref{it:resolc}.

Finally, define ${\mathscr{Q}}$ to be the limit of the tower. Since the tower becomes stationary in every fixed arity, it follows that ${\mathscr{Q}}$ is graded-free. Furthermore, one sees that the arity $p$ generators of ${\mathscr{Q}}$ are concentrated in degrees $\leq n(p)+1$, so they sit in increasingly negative degrees by the assumption on $\mathscr{P}$.
\end{proof}

{\small
\bibliographystyle{alpha}
\bibliography{biblio}
}

\end{document}